\numberwithin{equation}{section}
\colorlet{darkblue}{blue!70!black}
\colorlet{darkred}{red!70!black}
\colorlet{darkgreen}{green!70!black}
\colorlet{darkmagenta}{magenta!70!black}
\colorlet{darkwhite}{white!65!black}
\newenvironment{tric}
    {\begin{tikzpicture}[scale= 0.5,line width=0.45mm,draw=darkblue,double distance=0.45mm,
        baseline={([yshift=-.8ex]current bounding box.center)}] }
    {\end{tikzpicture}}
\newenvironment{Gtric}
    {\begin{tikzpicture}[scale=  0.5, 
    line width=0.55mm, draw=darkwhite, double distance=0.55mm,
        baseline={([yshift=-.8ex]current bounding box.center)}] }
    {\end{tikzpicture}}
\newenvironment{FGtric}
    {\begin{tikzpicture}[scale=  0.5, 
    line width=0.55mm, draw=Apricot, double distance=0.55mm,
        baseline={([yshift=-.8ex]current bounding box.center)}] }
    {\end{tikzpicture}}
\newenvironment{tricc}
    {\begin{tikzpicture}[scale= 0.4,line width=0.45mm,,draw=darkgreen,double distance=0.45mm,
        baseline={([yshift=-.8ex]current bounding box.center)}] }
    {\end{tikzpicture}}
\newtheorem{thm}{Theorem}[section]
\newtheorem{lemma}[thm]{Lemma}
\newtheorem{proposition}[thm]{Proposition}
\newtheorem{corollary}[thm]{Corollary}
\newtheorem{cor}[thm]{Corollary}
\newtheorem{notation}[thm]{Notation}
\newtheorem{example}[thm]{Example}
\theoremstyle{definition}
\newtheorem{definition}[thm]{Definition}
\newtheorem{defn}[thm]{Definition}
\theoremstyle{remark}
\newtheorem{remark}[thm]{Remark}
\newtheorem{rmk}[thm]{Remark}
\DeclareMathOperator{\id}{id}
\DeclareMathOperator{\tr}{trace}
\DeclareMathOperator{\wt}{\rm{wt}}
\DeclareMathOperator{\Hom}{\rm{Hom}}
\DeclareMathOperator{\End}{\rm{End}}
\DeclareMathOperator{\RepC}{\textbf{Rep}(U_{\kk}(\om))}
\DeclareMathOperator{\StdRep}{\textbf{StdFund}(U_{\R}(\om))}
\DeclareMathOperator{\StdRepK}{\textbf{StdFund}(U_{\K}(\om))}
\DeclareMathOperator{\StdRepA}{\textbf{StdFund}(U_{\A}(\om))}
\DeclareMathOperator{\StdRepC}{\textbf{StdFund}(U_{\kk}(\om))}
\DeclareMathOperator{\Fund}{\textbf{Fund}(U_{\R}(\om))}
\DeclareMathOperator{\FundK}{\textbf{Fund}(U_{\K}(\om))}
\DeclareMathOperator{\FundA}{\textbf{Fund}(U_{\A}(\om))}
\DeclareMathOperator{\Fundk}{\textbf{Fund}(U_{\kk}(\om))}
\DeclareMathOperator{\Web}{\textbf{Web}_{\R}(O(m))}
\DeclareMathOperator{\WebK}{\textbf{Web}_{\K}(O(m))}
\DeclareMathOperator{\WebA}{\textbf{Web}_{\A}(O(m))}
\DeclareMathOperator{\WebC}{\textbf{Web}_{\kk}(O(m))}
\DeclareMathOperator{\WebB}{\textbf{StdWeb}_{\R}^{\beta}(O(m))}
\DeclareMathOperator{\StdWeb}{\textbf{StdWeb}_\R(O(m))}
\DeclareMathOperator{\StdWebK}{\textbf{StdWeb}_{\K}(O(m))}
\DeclareMathOperator{\StdWebA}{\textbf{StdWeb}_{\A}(O(m))}
\DeclareMathOperator{\StdWebC}{\textbf{StdWeb}_{\kk}(O(m))}
\DeclareMathOperator{\BMW}{\textbf{BMW}_\R(O(m))}
\DeclareMathOperator{\BMWA}{\textbf{BMW}_{\A}(O(m))}
\DeclareMathOperator{\BMWK}{\textbf{BMW}_{\K}(O(m))}
\DeclareMathOperator{\BMWC}{\textbf{BMW}_{\kk}(O(m))}
\DeclareMathOperator{\smbase}{\mathfrak{b}}
\DeclareMathOperator{\base}{\mathfrak{B}}
\DeclareMathOperator{\wta}{{\mathbf{a}}}
\DeclareMathOperator{\wtb}{\mathbf{b}}
\DeclareMathOperator{\wtk}{\gamma}
\DeclareMathOperator{\wtl}{\delta}
\DeclareMathOperator{\brweb}{{^\R\beta}_{1,1}}
\DeclareMathOperator{\brwebi}{{^\R\beta}^{-1}_{1,1}}
\DeclareMathOperator{\brwebK}{{^{\K}\beta}_{1,1}}
\DeclareMathOperator{\brwebC}{{^{\kk}\beta}_{1,1}}
\DeclareMathOperator{\brrep}{{\beta}_{V_\R,V_\R}}
\DeclareMathOperator{\brrepK}{{\beta}_{V_{\K},V_{\K}}}
\DeclareMathOperator{\brrepC}{{\beta}_{V_{\kk},V_{\kk}}}
\DeclareMathOperator{\brrepi}{{\beta^{-1}}_{V_\R,V_\R}}
\DeclareMathOperator{\brrepKi}{{\beta^{-1}}_{V_{\K},V_{\K}}}
\DeclareMathOperator{\one}{\textbf{1}}
\DeclareMathOperator{\detR}{\mathrm{det}_{\R}}
\DeclareMathOperator{\K}{\mathbb{F}}
\DeclareMathOperator{\A}{\mathbb{A}}
\DeclareMathOperator{\kk}{\mathbb{C}}
\DeclareMathOperator{\om}{\mathfrak{o}_m}
\DeclareMathOperator{\som}{\mathfrak{so}_m}
\DeclareMathOperator{\rk}{rk}
\newcommand{\R}{\textbf{R}}
\newcommand{\EQ}{\text{Equation }}
\newcommand{\RL}{\text{Relation }}
\newcommand{\EQS}{\text{Equations }}
\author{Elijah Bodish}
\address[E.B.]{
  Department of Mathematics, Indiana University Bloomington, Bloomington, IN, USA}
\email{ebodish@iu.edu}
\author{Haihan Wu}
\address[H.W.]{
  Department of Mathematics, Johns Hopkins University, Baltimore, MD, USA}
\email{hwu125@jh.edu}
\date{\today}
\title{Webs for the quantum orthogonal group}
\begin{document}

\maketitle

\begin{abstract}
We give a generators and relations presentation for the full monoidal subcategory of representations of the quantum orthogonal group generated by the quantum exterior powers of the defining representation.
\end{abstract}

\section{Introduction}

With the goal of calculating Reshetkhin-Turaev invariants in mind, Kuperberg proposed the problem of giving a generators and relations presentation of the $q$-analogue of the category of fundamental representations for any simple Lie algebra. In \cite{Kupe}, he gives such presentations, where the generating morphisms are trivalent graphs, for each rank two simple Lie algebra. Thus, the study of ``web categories" was born. Since then, the main results are analogous presentations for $\mathfrak{sl}_n$ \cite{CKM} and $\mathfrak{sp}_{2n}$ \cite{bodish2021type}. 

However, there has been limited progress in the case of special orthogonal Lie algebras. The monoidal category generated by the vector representation and the spin representation for $U_q(\mathfrak{so}_7)$ is studied using diagrammatics in \cite{West-Spin}. However, the second fundamental representation which corresponds to the exterior square of the vector representation is not included as a generating object. Webs describing intertwiners for the co-ideal subalgebra $U_q'(\mathfrak{so}_m)\subset U_q(\mathfrak{gl}_m)$\footnote{Precisely, restricting the $U_q(\mathfrak{gl}_m)$ coproduct to the subalgebra $U_q'(\som)\subset U_q(\mathfrak{gl}_m)$ one obtains a map $U_q'(\som)\rightarrow U_q'(\som)\otimes U_q(\mathfrak{gl}_m)$.} which is a non-standard quantization of $U(\mathfrak{so}_m)$, appear in \cite{ST-BCDHowe}. When $q=1$, this non-standard quantization specializes to $U(\som)$. However, since $U'_q(\som)$ is not a Hopf algebra, the category of modules over $U'_q(\som)$ is not a monoidal category. Thus, over $\mathbb{C}(q)$, there will not be a relation between the usual $q$-analogue of the category of representations of $\som$, i.e. type-$\textbf{1}$ modules over the quantum group $U_q(\mathfrak{so}_m)$, and the webs in \cite{ST-BCDHowe}.

The present work is a step in the direction of defining webs for $\som$, in the presence of some simplifying assumptions. First, we ignore spin representations, i.e. we work with $SO(m)$ instead of $\mathfrak{so}(m)$. Second, we focus on $O(m)$, which is a $\mathbb{Z}/2$ extension of $SO(m)$. The complex orthogonal group, or its $q$-analogue, acts on each exterior power of the defining representation irreducibly, and these representations are pairwise non-isomorphic. In this paper we give a generators and relations presentation for the monoidal category generated by these exterior powers.

\subsection{Results}

\def\Skeina
{\begin{tric}
\draw (0.7,0) circle (0.7);
\end{tric}
}

\def\Skeinc
{\begin{tric}
\draw[scale= 0.8] (0,0.7)--(0,1.5) (0,-0.7)--(0,-1.5)
      (0,0.7)..controls(-0.5,0.7)and(-0.5,-0.7)..(0,-0.7)  (0,0.7)..controls(0.5,0.7)and(0.5,-0.7)..(0,-0.7);
\end{tric}
}

\def\Skeinca
{\begin{tric}
\draw [scale=0.8] (0,1.5)--(0,-1.5);
\end{tric}
}

\def\Skeind
{\begin{tric}
\draw [scale=0.5] (-1,0)--(1,0)--(0,1.732)--cycle
                  (-1,0)--(-2,-0.577) (1,0)--(2,-0.577) (0,1.732)--(0,2.887);
\end{tric}
}

\def\Skeinda
{\begin{tric}
\draw[scale=0.5]
    (0,0.577)--(-2,-0.577) (0,0.577)--(2,-0.577) (0,0.577)--(0,2.887);
\end{tric}
}

\def\Skeindb
{\begin{tric}
\draw [scale=0.65] (0,0.7)..controls(0.5,0.7)and(0.5,1.732)..(0,1.732)  (0,0.7)..controls(-0.5,0.7)and(-0.5,1.732)..(0,1.732)  
(-1,-0.5)--(0,0)--(1,-0.5) (0,1.732)--(0,2.887);
\draw[double,thin,scale=0.65]  (0,0)--(0,0.7) ;
\end{tric}
}

\def\Skeindc
{\begin{tric}
\draw [scale=0.65] (0,0.7)..controls(0.5,0.7)and(0.5,1.732)..(0,1.732)  (0,0.7)..controls(-0.5,0.7)and(-0.5,1.732)..(0,1.732)  
(-1,-0.5)--(0,0)--(1,-0.5) (0,0)--(0,0.7) (0,1.732)--(0,2.887)  ;
\end{tric}
}

\def\Skeindd
{\begin{tric}
\draw [scale=0.65] (0,0.7)..controls(0.5,0.7)and(0.5,1.732)..(0,1.732)  (0,0.7)..controls(-0.5,0.7)and(-0.5,1.732)..(0,1.732)  
(-1,-0.5)..controls(0,0)..(1,-0.5)  (0,1.732)--(0,2.887)  ;
\end{tric}
}

\def\Skeine
{\begin{tric}
\draw [scale=0.4](-1,-1)--(-1,1)--(1,1)--(1,-1)--cycle;
\draw [scale=0.4](-1,-1)--(-2,-2) (-1,1)--(-2,2) (1,1)--(2,2) (1,-1)--(2,-2);
\end{tric}
}

\def\Skeinea
{\begin{tric}
\draw[scale=0.35] (-2,-1.732)--(-1,0)--(-2,1.732) (2,1.732)--(1,0)--(2,-1.732) (-1,0)--(1,0);
\end{tric}
}

\def\Skeineb
{\begin{tric}
\draw [scale=0.35]  (-1.732,2)--(0,1)--(1.732,2) (1.732,-2)--(0,-1)--(-1.732,-2) (0,-1)--(0,1);
\end{tric}
}

\def\Skeinec
{\begin{tric}
\draw [scale=0.3] (-2,-2)..controls(-1,-1)and(-1,1)..(-2,2) (2,2)..controls(1,1)and(1,-1)..(2,-2);
\end{tric}
}

\def\Skeined
{\begin{tric}
\draw  [scale=0.3] (-2,2)..controls(-1,1)and(1,1)..(2,2) (2,-2)..controls(1,-1)and(-1,-1)..(-2,-2) ;
\end{tric}
}

\def\Skeinef
{\begin{tric}
\draw [scale=0.3](2,2)--(-2,-2)  (-2,2)--(2,-2);
\end{tric}
}

\def\Skeinf
{\begin{tric}
\draw [scale=0.5] (18:1)--(90:1)--(162:1)--(234:1)--(306:1)--cycle
      (18:1)--(18:2) (90:1)--(90:2) (162:1)--(162:2) (234:1)--(234:2) (306:1)--(306:2);
\end{tric}
}

\def\Skeinfa
{\begin{tric}
\draw [scale=0.5] (18:1)--(90:1)--(162:1)
      (18:1)--(18:2) (90:1)--(90:2) (162:1)--(162:2) (162:1)..controls(234:1)..(234:2) (18:1)..controls(306:1)..(306:2);
\end{tric}
}

\def\Skeinfb
{\begin{tric}
\draw [scale=0.5] (90:1)--(162:1)--(234:1)
      (90:1)..controls(18:1)..(18:2) (90:1)--(90:2) (162:1)--(162:2) (234:1)--(234:2) (234:1)..controls(306:1)..(306:2);
\end{tric}
}

\def\Skeinfc
{\begin{tric}
\draw [scale=0.5] (162:1)--(234:1)--(306:1)
      (306:1)..controls(18:1)..(18:2) (162:1)..controls(90:1)..(90:2) (162:1)--(162:2) (234:1)--(234:2) (306:1)--(306:2);
\end{tric}
}

\def\Skeinfd
{\begin{tric}
\draw [scale=0.5] (234:1)--(306:1)--(18:1)
      (18:1)--(18:2) (18:1)..controls(90:1)..(90:2) (234:1)..controls(162:1)..(162:2) (234:1)--(234:2) (306:1)--(306:2);
\end{tric}
}

\def\Skeinfe
{\begin{tric}
\draw [scale=0.5] (306:1)--(18:1)--(90:1)
      (18:1)--(18:2) (90:1)--(90:2) (90:1)..controls(162:1)..(162:2) (306:1)..controls(234:1)..(234:2) (306:1)--(306:2);
\end{tric}
}

\def\Skeinff
{\begin{tric}
\draw  [scale=0.5] (90:1)..controls(18:1)..(18:2) (90:1)--(90:2)   
      (90:1)..controls(162:1)..(162:2) (234:2)..controls(234:1)and(306:1)..(306:2);
\end{tric}
}

\def\Skeinfg
{\begin{tric}
\draw    [scale=0.5] (162:1)..controls(90:1)..(90:2)
        (162:1)--(162:2) (162:1)..controls(234:1)..(234:2) (18:2)..controls(18:1)and(306:1)..(306:2);
\end{tric}
}

\def\Skeinfh
{\begin{tric}
\draw    [scale=0.5]  (18:2)..controls(18:1)and(90:1)..(90:2) 
         (234:1)..controls(162:1)..(162:2) (234:1)--(234:2) (234:1)..controls(306:1)..(306:2);
\end{tric}
}

\def\Skeinfi
{\begin{tric}
\draw  [scale=0.5] (306:1)..controls(18:1)..(18:2)  
      (90:2)..controls(90:1)and(162:1)..(162:2)
      (306:1)..controls(234:1)..(234:2) (306:1)--(306:2);
\end{tric}
}

\def\Skeinfj
{\begin{tric}
\draw      [scale=0.5]  (18:1)--(18:2) (18:1)..controls(90:1)..(90:2)
           (234:2)..controls(234:1)and(162:1)..(162:2)  (18:1)..controls(306:1)..(306:2);
\end{tric}
}

\def\Skeinfk
{\begin{tric}
\draw      [scale=0.5]  (90:-0.7)--(90:2) 
           (306:2)--(90:-0.7)--(234:2) (162:2)..controls(162:0.5)and(18:0.5)..(18:2);
\end{tric}
}

\def\Skeinfl
{\begin{tric}
\draw      [scale=0.5]  (162:-0.7)--(162:2) 
           (18:2)--(162:-0.7)--(306:2) (234:2)..controls(234:0.5)and(90:0.5)..(90:2);
\end{tric}
}

\def\Skeinfm
{\begin{tric}
\draw      [scale=0.5]  (234:-0.7)--(234:2) 
           (90:2)--(234:-0.7)--(18:2) (306:2)..controls(306:0.5)and(162:0.5)..(162:2);
\end{tric}
}

\def\Skeinfn
{\begin{tric}
\draw      [scale=0.5]  (306:-0.7)--(306:2) 
           (162:2)--(306:-0.7)--(90:2) (18:2)..controls(18:0.5)and(234:0.5)..(234:2);
\end{tric}
}

\def\Skeinfo
{\begin{tric}
\draw      [scale=0.5]  (18:-0.7)--(18:2) 
           (234:2)--(18:-0.7)--(162:2) (90:2)..controls(90:0.5)and(306:0.5)..(306:2);
\end{tric}
}

\def\Skeing
{\begin{tric}
\draw[scale=0.8] (0,-1.5)--(0,0) (0,0)..controls(0.7,0.5)and(0.7,1.5)..(0,1.5)..controls(-0.7,1.5)and(-0.7,0.5)..(0,0);
\end{tric}
}

\def\Skeinh
{\begin{tric}
\draw[scale=0.7,double,thin] (0,0.75)--(0,-0.75);
\draw [scale=0.7] (-0.5,1.616)--(0,0.75)--(0.5,1.616) (-0.5,-1.616)--(0,-0.75)--(0.5,-1.616);
\end{tric}
}

\def\Verta
{\begin{tric}
\draw [scale=0.8] (0,0)--(90:1) (0,0)--(210:1) (0,0)--(330:1);
\draw (90:1)node[black,anchor=south,scale=0.7]{$\varpi$}
      (210:1)node[black,anchor=north,scale=0.7]{$\varpi$}
      (330:1)node[black,anchor=north,scale=0.7]{$\varpi$}; 
\end{tric}
}

\def\Vertb
{\begin{tric}
\draw (0,0)--(1,1) (0,1)--(1,0);
\draw (0,0)node[black,below,scale=0.7]{$\varpi$}
      (1,0)node[black,below,scale=0.7]{$\varpi$}
      (0,1)node[black,above,scale=0.7]{$\varpi$}
      (1,1)node[black,above,scale=0.7]{$\varpi$};
\end{tric}
}

\def\Skeini{
\begin{tric}
\draw [double,thin](0,0.7)--(0,1.5) (0,-0.7)--(0,-1.5);
\draw (0,0.7)..controls(-0.5,0.7)and(-0.5,-0.7)..(0,-0.7)  
      (0,0.7)..controls(0.5,0.7)and(0.5,-0.7)..(0,-0.7);
\end{tric}
}

\def\Skeinia{
\begin{tric}
\draw[double,thin] (0,1.5)--(0,-1.5);
\end{tric}
}

\def\Skeinj{
\begin{tric}
\draw[double,thin](0,-1.5)--(0,0);
\draw (0,0)..controls(0.7,0.5)and(0.7,1.5)..(0,1.5)..controls(-0.7,1.5)and(-0.7,0.5)..(0,0);
\end{tric}
}

\def\Skeinja{
\begin{tric}
\draw[double,thin](0,-0.5)--(0,0) (0,-1.4)--(0,-2);
\draw (0,0)..controls(0.4,0.5)and(0.4,1)..(0,1)..controls(-0.4,1)and(-0.4,0.5)..(0,0);
\draw(0,-1.4)..controls(0.4,-1.4)and(0.4,-0.5)..(0,-0.5)
     (0,-1.4)..controls(-0.4,-1.4)and(-0.4,-0.5)..(0,-0.5);
\end{tric}
}

\def\Skeinjb{
\begin{tric}
\draw[double,thin] (0,-1.4)--(0,-2);
\draw (0,0)..controls(0.4,0.5)and(0.4,1)..(0,1)..controls(-0.4,1)and(-0.4,0.5)..(0,0)  (0,-0.5)--(0,0);
\draw(0,-1.4)..controls(0.4,-1.4)and(0.4,-0.5)..(0,-0.5)
     (0,-1.4)..controls(-0.4,-1.4)and(-0.4,-0.5)..(0,-0.5);
\end{tric}
}

\def\Skeinjc{
\begin{tric}
\draw[double,thin] (0,-1.4)--(0,-2);
\draw (0,0.5)ellipse(0.3 and 0.5);
\draw(0,-1.4)..controls(0.4,-1.4)and(0.4,-0.5)..(0,-0.5)
     (0,-1.4)..controls(-0.4,-1.4)and(-0.4,-0.5)..(0,-0.5);
\end{tric}
}

\def\Skeinjd{
\begin{tric}
\draw[double,thin](0,-1.5)--(0,-0.5);
\draw (0,-0.5)..controls(0.4,0.5)and(0.4,1.5)..(0,1.5)..controls(-0.4,1.5)and(-0.4,0.5)..(0,-0.5);
\end{tric}
}

\def\Skeinje{
\begin{tric}
\draw[double,thin] (0,-1.5)--(0,-0.5);
\draw (0.4,0.5)..controls(0.4,1.5)..(0,1.5)..controls(-0.4,1.5)..(-0.4,0.5)
      (0.4,0.5)--(0,-0.5)--(-0.4,0.5) (0.4,0.5)--(-0.4,0.5);
\end{tric}
}

\def\Skeink{
\begin{tric}
\draw [double,thin](0,-0.7)--(0,-1.5);
\draw (0,0.7)--(0,1.5) 
      (0,0.7)..controls(-0.5,0.7)and(-0.5,-0.7)..(0,-0.7)  
      (0,0.7)..controls(0.5,0.7)and(0.5,-0.7)..(0,-0.7);
\end{tric}
}

\def\Skeinl
{\begin{tric}
\draw [scale=0.65] (-1,0)--(1,0)--(0,1.732)--cycle
                   (0,1.732)--(0,2.887);
\draw[double,thin,scale=0.65] (-1,0)--(-2,-0.577) (1,0)--(2,-0.577);
\end{tric}
}

\def\Skeinm
{\begin{tric}
\draw [scale=0.65] (-1,0)--(1,0)--(0,1.732)--cycle
                  (-1,0)--(-2,-0.577) (1,0)--(2,-0.577) ;
\draw[double,thin,scale=0.65] (0,1.732)--(0,2.887);
\end{tric}
}

\def\Skeinmaa
{\begin{tric}
\draw[scale=0.65]
    (0,0.577)--(-2,-0.577) (0,0.577)--(2,-0.577) ;
\draw[double,thin,scale=0.65] (0,0.577)--(0,2.887);
\end{tric}
}

\def\Skeinma
{\begin{tric}
\draw[scale=0.65]
    (0,0.577)--(-1,-0.577) (0,0.577)--(1,-0.577) ;
\draw[double,thin,scale=0.65] (0,0.577)--(0,2.887);
\end{tric}
}

\def\Skeinmb
{\begin{tric}
\draw [scale=0.65] (0,0.7)..controls(0.5,0.7)and(0.5,1.732)..(0,1.732)  (0,0.7)..controls(-0.5,0.7)and(-0.5,1.732)..(0,1.732)  
(-1,-0.5)--(0,0)--(1,-0.5);
\draw[double,thin,scale=0.65] (0,1.732)--(0,2.887) (0,0)--(0,0.7) ;
\end{tric}
}

\def\Skeinmc
{\begin{tric}
\draw [scale=0.65] (0,0.7)..controls(0.5,0.7)and(0.5,1.732)..(0,1.732)  (0,0.7)..controls(-0.5,0.7)and(-0.5,1.732)..(0,1.732)  
(-1,-0.5)--(0,0)--(1,-0.5) (0,0)--(0,0.7);
\draw[double,thin,scale=0.65] (0,1.732)--(0,2.887)  ;
\end{tric}
}

\def\Skeinmd
{\begin{tric}
\draw [scale=0.65] (0,0.7)..controls(0.5,0.7)and(0.5,1.732)..(0,1.732)  (0,0.7)..controls(-0.5,0.7)and(-0.5,1.732)..(0,1.732)  
(-1,-0.5)..controls(0,0)..(1,-0.5) ;
\draw[double,thin,scale=0.65] (0,1.732)--(0,2.887)  ;
\end{tric}
}

\def\Skeinn
{\begin{tric}
\draw[scale=0.65] (1,0)--(0,1)--(-1,0)--(0,-1)--cycle 
      (1,0)--(2,0) (0,1)--(0,2) (-1,0)--(-2,0) ;
\draw[double,thin,scale=0.65] (0,-1)--(0,-2);
\end{tric}
}

\def\Skeinna
{\begin{tric}
\draw [scale=0.65](1,0)..controls(0,1)..(0,2)    (0,-1)..controls(-1,0)..(-2,0) 
      (0,-1)--(1,0)--(2,0)   ;
\draw[double,thin,scale=0.65] (0,-1)--(0,-2);
\end{tric}
}

\def\Skeinnb
{\begin{tric}
\draw [scale=0.65](-1,0)..controls(0,1)..(0,2)    (0,-1)..controls(1,0)..(2,0) 
      (0,-1)--(-1,0)--(-2,0) ;
\draw[double,thin,scale=0.65] (0,-1)--(0,-2);
\end{tric}
}

\def\Skeinnc
{\begin{tric}
\draw[scale=0.63] (1,0)--(0,-1)--(-0.2,0.2)--cycle  
      (1,0)--(2,0)  (0,2)--(-0.7,0.7)--(-2,0) ;
\draw[double,thin,scale=0.63] (0,-1)--(0,-2)  (-0.7,0.7)--(-0.2,0.2);
\end{tric}
}

\def\Skeinnd
{\begin{tric}
\draw[scale=0.63] (1,0)--(0,-1)  (1,0)..controls(-0.2,0.2)..(0,-1)  
      (1,0)--(2,0)  (0,2)..controls(-0.7,0.7)..(-2,0) ;
\draw[double,thin,scale=0.63] (0,-1)--(0,-2) ;
\end{tric}
}

\def\Skeinne
{\begin{tric}
\draw[scale=0.63] (1,0)--(0,-1)--(-0.2,0.2)--cycle  
      (1,0)--(2,0)  (0,2)--(-0.7,0.7)--(-2,0)  (-0.7,0.7)--(-0.2,0.2) ;
\draw[double,thin,scale=0.63] (0,-1)--(0,-2)  ;
\end{tric}
}

\def\ReidemAa
{\begin{tric}
\draw (0.1,-1)..controls (-0.1,0.3) and (-0.2,0.5) .. (-0.4,0.5) 
       (-0.4,0.5)..controls(-0.5,0.5)and(-0.7,0.3)..(-0.7,0);
\draw (0.1,1)..controls  (-0.1,-0.3)and (-0.2,-0.5) .. (-0.4,-0.5) 
       (-0.4,-0.5)..controls(-0.5,-0.5)and(-0.7,-0.3)..(-0.7,0);
\end{tric}
}

\def\ReidemAb
{\begin{tric}
\draw (0,-1)--(0,1);
\end{tric}
}

\def\ReidemBa
{\begin{tric}
\draw (0,-1)..controls(1,-0.3) and (1,0.3) .. (0,1)
      (1,-1)..controls(0,-0.3) and (0,0.3) .. (1,1);
\end{tric}
}

\def\ReidemBb
{\begin{tric}
\draw (0,-1)-- (0,1)
      (1,-1)-- (1,1);
\end{tric}
}

\def\ReidemCa
{\begin{tric}
\draw[scale=0.75] (-1,-0.866)--(0.5,1.732) (1,-0.866)--(-0.5,1.732) (-1.5,0)--(1.5,0);
\end{tric}
}

\def\ReidemCb
{\begin{tric}
\draw[scale=0.75] (-1,0.866)--(0.5,-1.732) (1,0.866)--(-0.5,-1.732) (-1.5,0)--(1.5,0);
\end{tric}
}

\def\ReidemDa
{\begin{tric}
\draw      [scale=0.55]  (90:0)--(90:2) 
           (306:2)--(90:0)--(234:2) (162:2)--(18:2);
\end{tric}
}

\def\ReidemDb
{\begin{tric}
\draw       [scale=0.55]  (90:0)--(90:2) 
           (306:2)--(90:0)--(234:2) (162:2)..controls(234:1.7)and(306:1.7)..(18:2);
\end{tric}
}

\def\ReidemSa
{\begin{tric}
\draw    (0,0.2)--(0,1) (-0.5,-1)--(0,0.2)--(0.5,-1) ;
\end{tric}
}

\def\ReidemSb
{\begin{tric}
\draw   (0,0.2)--(0,1)
        (0,0.2)..controls(-0.5,-0.4)..(0.5,-1)
        (0,0.2)..controls(0.5,-0.4)..(-0.5,-1);
\end{tric}
}

\def\IHXXX
{\begin{tric}
\draw   (-0.4,0)--(0.4,0) (-0.7,-0.5)--(-0.4,0) (0.4,0)--(0.7,-0.5) 
        (-0.4,0)--(0.7,1) (0.4,0)--(-0.7,1);
\end{tric}
}

\def\FourTa
{\begin{tric}
\draw[scale=0.7]  (-50:2) arc (-50:10:2) (90:2) arc (90:130:2) (190:2) arc (190:240:2);
\draw[scale=0.7] (-30:2)--(110:2) (-10:2)--(220:2);
\draw[scale=0.7] (-25:2.1)node[below,scale=0.7]{$k$} (-10:2)node[right,scale=0.7]{$k+1$} (110:2)node[above,scale=0.7]{$f(k)$}(225:2.1)node[left,scale=0.7]{$f(k+1)$};
\filldraw[scale=0.7] (35:2) circle (1pt) (50:2) circle (1pt) (65:2) circle (1pt);
\filldraw [scale=0.7](145:2) circle (1pt) (160:2) circle (1pt) (175:2) circle (1pt);
\filldraw[scale=0.7] (260:2) circle (1pt) (275:2) circle (1pt) (290:2) circle (1pt);
\end{tric}
}

\def\FourTb
{\begin{tric}
\draw[scale=0.7]  (-50:2) arc (-50:10:2) (90:2) arc (90:130:2) (190:2) arc (190:240:2);
\draw[scale=0.7] (-30:2)--(220:2) (-10:2)--(110:2);
\draw [scale=0.7](-25:2.1)node[below,scale=0.7]{$k$} (-10:2)node[right,scale=0.7]{$k+1$} (110:2)node[above,scale=0.7]{$f(k)$}(225:2.1)node[left,scale=0.7]{$f(k+1)$};
\filldraw [scale=0.7](35:2) circle (1pt) (50:2) circle (1pt) (65:2) circle (1pt);
\filldraw [scale=0.7](145:2) circle (1pt) (160:2) circle (1pt) (175:2) circle (1pt);
\filldraw[scale=0.7] (260:2) circle (1pt) (275:2) circle (1pt) (290:2) circle (1pt);
\end{tric}
}

\def\FourTc
{\begin{tric}
\draw  (-50:2) arc (-50:10:2) (90:2) arc (90:130:2) (190:2) arc (190:240:2);
\draw (-30:1)--(220:2) (-30:1)--(110:2) (-30:1)--(-30:2); 
\draw (-25:2.1)node[below,scale=0.7]{$k$} (110:2)node[above,scale=0.7]{$f(k)-1$}(225:2.1)node[left,scale=0.7]{$f(k+1)-1$};
\filldraw (35:2) circle (1pt) (50:2) circle (1pt) (65:2) circle (1pt);
\filldraw (145:2) circle (1pt) (160:2) circle (1pt) (175:2) circle (1pt);
\filldraw (260:2) circle (1pt) (275:2) circle (1pt) (290:2) circle (1pt);
\end{tric}
}

\def\FourTd
{\begin{tric}
\draw [scale=0.7] (-50:2) arc (-50:10:2) (90:2) arc (90:150:2) (190:2) arc (190:240:2);
\draw [scale=0.7](110:2)--(-30:2)  (130:2)--(220:2) ;
\draw [scale=0.7](-25:2.1)node[below,scale=0.7]{$k$} (90:2)node[above,scale=0.7]{$f(k)-1$} 
(150:2.4)node[above,scale=0.7]{$f(k)$}  (225:2.1)node[left,scale=0.7]{$f(k+1)$};
\filldraw[scale=0.7] (35:2) circle (1pt) (50:2) circle (1pt) (65:2) circle (1pt);
\filldraw[scale=0.7](160:2) circle (1pt) (170:2) circle (1pt) (180:2) circle (1pt);
\filldraw[scale=0.7](260:2) circle (1pt) (275:2) circle (1pt) (290:2) circle (1pt);
\end{tric}
}

\def\FourTe
{\begin{tric}
\draw [scale=0.7] (-50:2) arc (-50:10:2) (90:2) arc (90:150:2) (190:2) arc (190:240:2);
\draw [scale=0.7](110:2)--(220:2)  (130:2)--(-30:2) ;
\draw [scale=0.7](-25:2.1)node[below,scale=0.7]{$k$} (90:2)node[above,scale=0.7]{$f(k)-1$} 
(150:2.4)node[above,scale=0.7]{$f(k)$}  (225:2.1)node[left,scale=0.7]{$f(k+1)$};
\filldraw [scale=0.7](35:2) circle (1pt) (50:2) circle (1pt) (65:2) circle (1pt);
\filldraw [scale=0.7](160:2) circle (1pt) (170:2) circle (1pt) (180:2) circle (1pt);
\filldraw [scale=0.7](260:2) circle (1pt) (275:2) circle (1pt) (290:2) circle (1pt);
\end{tric}
}

\def\FourTf
{\begin{tric}
\draw  (-50:2) arc (-50:10:2) (90:2) arc (90:130:2) (190:2) arc (190:260:2);
\draw (240:2)--(-30:2) (220:2)--(110:2); 
\draw (-25:2.1)node[below,scale=0.7]{$k$} (110:2)node[above,scale=0.7]{$f(k)-1$}(225:2.1)node[left,scale=0.7]{$f(k+1)-1$} (250:2.1)node[left,scale=0.7]{$f(k+1)$};
\filldraw (35:2) circle (1pt) (50:2) circle (1pt) (65:2) circle (1pt);
\filldraw (145:2) circle (1pt) (160:2) circle (1pt) (175:2) circle (1pt);
\filldraw (270:2) circle (1pt) (285:2) circle (1pt) (300:2) circle (1pt);
\end{tric}
}

\def\FourTg
{\begin{tric}
\draw  (-50:2) arc (-50:10:2) (90:2) arc (90:130:2) (190:2) arc (190:260:2);
\draw (240:2)--(110:2) (220:2)--(-30:2); 
\draw (-25:2.1)node[below,scale=0.7]{$k$} (110:2)node[above,scale=0.7]{$f(k)-1$}(225:2.1)node[left,scale=0.7]{$f(k+1)-1$} (250:2.1)node[left,scale=0.7]{$f(k+1)$};
\filldraw (35:2) circle (1pt) (50:2) circle (1pt) (65:2) circle (1pt);
\filldraw (145:2) circle (1pt) (160:2) circle (1pt) (175:2) circle (1pt);
\filldraw (270:2) circle (1pt) (285:2) circle (1pt) (300:2) circle (1pt);
\end{tric}
}

\def\Vertc
{\begin{tric}
\draw [scale=0.8] (0,0)--(90:1) (0,0)--(210:1) (0,0)--(330:1);
\draw (90:1)node[black,anchor=south,scale=0.7]{$k+1$}
      (210:1)node[black,anchor=north,scale=0.7]{$1$}
      (330:1)node[black,anchor=north,scale=0.7]{$k$}; 
\end{tric}
}

\def\Vertd
{\begin{tric}
\draw [scale=0.8] (0,0)--(90:1) (0,0)--(210:1) (0,0)--(330:1);
\draw (90:1)node[black,anchor=south,scale=0.7]{$k+1$}
      (210:1)node[black,anchor=north,scale=0.7]{$k$}
      (330:1)node[black,anchor=north,scale=0.7]{$1$}; 
\end{tric}
}

\def\Skeinak
{\begin{tric}
\draw (0.7,0) circle (0.7);
\draw (0,0)node[left,black,scale=0.7]{$k$}; 
\end{tric}
}

\def\Skeinaone
{\begin{tric}
\draw (0.7,0) circle (0.7);
\draw (0.7,0.7)node[above,black,scale=0.7]{$1$}; 
\end{tric}
}

\def\Skeingg
{\begin{tric}
\draw[scale=0.8] (0,-1)--(0,0) (0,0)..controls(0.7,0.5)and(0.7,1.5)..(0,1.5)..controls(-0.7,1.5)and(-0.7,0.5)..(0,0);
\draw (0,-0.8)node[below,black,scale=0.7]{$2$};
\draw (0,1.2)node[above,black,scale=0.7]{$1$};
\end{tric}
}

\def\Skeinbigon
{\begin{tric}
\draw[scale= 0.8] (0,0.7)--(0,1.5) (0,-0.7)--(0,-1.5)
        (0,1.5)node[above,black,scale=0.7]{$k$}
        (0,-1.5)node[below,black,scale=0.7]{$k$}
      (0,0.7)..controls(-0.5,0.7)and(-0.5,-0.7)..(0,-0.7) node[left,midway,black,scale=0.7]{$1$} (0,0.7)..controls(0.5,0.7)and(0.5,-0.7)..(0,-0.7)
      node[right,midway,black,scale=0.7]{$k-1$} ;
\end{tric}
}

\def\Skeinbigona
{\begin{tric}
\draw [scale=0.8] (0,1.5)--(0,-1.5)  node[below,black,scale=0.7]{$k$} ;
\end{tric}
}

\def\SkeinIH
{\begin{tric}
\draw[scale=0.7] (-1.7,-1.732)--(-1,0)--(-1.7,1.732) (1.7,1.732)--(1,0)--(1.7,-1.732) 
(-1.7,-1.732)node[below,black,scale=0.7]{$k$}
(-1.7,1.732)node[above,black,scale=0.7]{$1$}
(1.7,1.732)node[above,black,scale=0.7]{$1$}
(1.7,-1.732) node[below,black,scale=0.7]{$k$}
(-1,0)--(1,0)node[below,midway,black,scale=0.7]{$k+1$};
\end{tric}
}

\def\GrSkeinIH
{\begin{tric}
\draw[darkgreen,scale=0.7] (-2,-1.732)--(-1,0)--(-2,1.732) (2,1.732)--(1,0)--(2,-1.732) 
(-2,-1.732)node[below,black,scale=0.7]{$k$}
(-2,1.732)node[above,black,scale=0.7]{$1$}
(2,1.732)node[above,black,scale=0.7]{$1$}
(2,-1.732) node[below,black,scale=0.7]{$k$}
(-1,0)--(1,0)node[below,midway,black,scale=0.7]{$k+1$};
\end{tric}
}

\def\SkeinIHa
{\begin{tric}
\draw [scale=0.7]  (-1.74,2)--(0,1)--(1.74,2) 
(1.74,-3)--(0,0) node[right,pos=0.7,black,scale=0.7]{$1$}
(0,0)--(-1.74,-3) node[left,pos=0.3,black,scale=0.7]{$1$}
(0,0)--(0,1) node[left,midway,black,scale=0.7]{$2$}
(-1.74,-3) node[below,black,scale=0.7]{$k$}
(1.74,-3) node[below,black,scale=0.7]{$k$}
(-1.74,2) node[above,black,scale=0.7]{$1$}
(1.74,2) node[above,black,scale=0.7]{$1$}
(1.16,-2)--(-1.16,-2)node[below,midway,black,scale=0.7]{$k-1$};
\end{tric}
}

\def\GrSkeinIHa
{\begin{tric}
\draw [darkgreen, scale=0.7]  (-1.74,2)--(0,1)--(1.74,2) 
(1.74,-3)--(0,0) node[right,pos=0.7,black,scale=0.7]{$1$}
(0,0)--(-1.74,-3) node[left,pos=0.3,black,scale=0.7]{$1$}
(0,0)--(0,1) node[left,midway,black,scale=0.7]{$2$}
(-1.74,-3) node[below,black,scale=0.7]{$k$}
(1.74,-3) node[below,black,scale=0.7]{$k$}
(-1.74,2) node[above,black,scale=0.7]{$1$}
(1.74,2) node[above,black,scale=0.7]{$1$}
(1.16,-2)--(-1.16,-2)node[below,midway,black,scale=0.7]{$k-1$};
\end{tric}
}

\def\SkeinIHb
{\begin{tric}
\draw[scale=0.7] (-1.7,-1.8)--(-1,0)--(-1.7,1.8) (1.7,1.8)--(1,0)--(1.7,-1.8) 
(-1.7,-1.7)node[below,black,scale=0.7]{$k$}
(-1.7,1.732)node[above,black,scale=0.7]{$1$}
(1.7,1.732)node[above,black,scale=0.7]{$1$}
(1.7,-1.732) node[below,black,scale=0.7]{$k$}
(-1,0)--(1,0)node[below,midway,black,scale=0.7]{$k-1$};
\end{tric}
}

\def\GrSkeinIHb
{\begin{tric}
\draw[darkgreen,scale=0.7] (-2,-1.732)--(-1,0)--(-2,1.732) (2,1.732)--(1,0)--(2,-1.732) 
(-2,-1.732)node[below,black,scale=0.7]{$k$}
(-2,1.732)node[above,black,scale=0.7]{$1$}
(2,1.732)node[above,black,scale=0.7]{$1$}
(2,-1.732) node[below,black,scale=0.7]{$k$}
(-1,0)--(1,0)node[below,midway,black,scale=0.7]{$k-1$};
\end{tric}
}

\def\SkeinIHc
{\begin{tric}
\draw  [scale=0.7] 
(-1.7,2)..controls(-1,.5)and(1,.5)..(1.7,2) node[above,black,midway,scale=0.7]{$1$}
(1.7,-2)..controls(1,-.5)and(-1,-.5)..(-1.7,-2) node[below,black,midway,scale=0.7]{$k$};
\end{tric}
}

\def\GrSkeinIHc
{\begin{tric}
\draw  [darkgreen,scale=0.7] 
(-2,2)..controls(-1,1)and(1,1)..(2,2) node[above,black,midway,scale=0.7]{$1$}
(2,-2)..controls(1,-1)and(-1,-1)..(-2,-2) node[below,black,midway,scale=0.7]{$k$};
\end{tric}
}

\def\Skeinassocia
{\begin{tric}
\draw[scale=0.8] (0,0)..controls(0,0.5)and(0.2,0.7)..(0.5,1)
      (1,0)..controls(1,0.5)and(0.8,0.7)..(0.5,1)
      (0.5,1)..controls(0.5,1.5)and(0.3,1.7)..(0,2) 
              node[right,black,midway, scale=0.7]{$k+1$}
      (-1,0)..controls(-1,1)and(-0.6,1.5)..(0,2)
      (0,2)--(0,3)
      (0,0)node[below,black,scale=0.7]{$k$}
      (1,0)node[below,black,scale=0.7]{$1$}
      (-1,0)node[below,black,scale=0.7]{$1$}
      (0,3)node[above,black,scale=0.7]{$k+2$};
\end{tric}
}

\def\Skeinassoci
{\begin{tric}
\draw[scale=0.8] (0,0)..controls(0,0.5)and(-0.2,0.7)..(-0.5,1)
      (-1,0)..controls(-1,0.5)and(-0.8,0.7)..(-0.5,1)
      (-0.5,1)..controls(-0.5,1.5)and(-0.3,1.7)..(0,2) 
              node[left,black,midway, scale=0.7]{$k+1$}
      (1,0)..controls(1,1)and(0.6,1.5)..(0,2)
      (0,2)--(0,3)
      (0,0)node[below,black,scale=0.7]{$k$}
      (-1,0)node[below,black,scale=0.7]{$1$}
      (1,0)node[below,black,scale=0.7]{$1$}
      (0,3)node[above,black,scale=0.7]{$k+2$};
\end{tric}
}

\begin{notation}
Let $\K:=\mathbb{C}(q)$ and $\A:=\mathbb{C}[q]_{(q-1)\mathbb{C}[q]}\subset \K$.  Note that $\mathbb{C} \cong \A/(q-1)\A$. 
\end{notation}

\begin{defn}\label{Qinteger}
Define the quantum integer $[n]_v:= \dfrac{v^n-v^{-n}}{v-v^{-1}}$. 
Denote $ [n]_v ! :=  [1]_v [2]_v [3]_v ... [n]_v $. Denote
$ \begin{bmatrix}
n \\
k 
\end{bmatrix} _{v} := \cfrac{[n]_v!}{[k]_v![n-k]_v!} $. When $v=q$, write $[n]:= [n]_q$. 
\end{defn}

\begin{remark}
If $v$ is a power of $q$, in particular if $v=q$ or $q^2$, then quantum integers and quantum binomials lie in $\A$. Therefore we can consider their image in $\kk$ or $\K$. We do not make new notation for this, but instead leave it up to context whether a particular expression involving quantum integers is in $\K, \A$, or $\kk$. 
\end{remark}

\begin{defn}\label{DefofWeb}
Let $\R\in \lbrace \K, \A, \kk\rbrace$. Define the pivotal $\R$-linear category $\Web$ whose objects are generated by the self-dual objects $ n\in \mathbb{Z}_{\ge 0}$, and whose morphisms are generated by the following trivalent vertices:

 {\centering{\begin{align*}
     \Vertc \in {\Hom}_{\Web}(1\otimes k, k+1) \ \text{and}  \
     \Vertd \in {\Hom}_{\Web}(k \otimes 1, k+1) 
 \end{align*}}}
for $k \in \mathbb{Z}_{\ge 0}$, modulo the tensor ideal generated by $\id_k$ for $k>m$, and the following relations.

\begin{align} 
\label{defskein}
&\text{(\ref{defskein}a)} \Skeinaone = \frac{[2m-4][m]}{[m-2][2]},  \ 
\text{(\ref{defskein}b)}\Skeingg= 0  ,\  
\text{(\ref{defskein}c)}\Skeinbigon  = \frac{[2k]}{[2]}  \Skeinbigona   , \  
\text{(\ref{defskein}d)}\Skeinassoci =\Skeinassocia 
\\
& \text{(\ref{defskein}e)} \SkeinIH =  \SkeinIHa   -\frac{[2m-4k-4][m-2k]}{[m-2k-2][2m-4k]}  \SkeinIHb
  +  \frac{[2m-4k-4][m-2]}{[m-2k-2][2m-4]} \SkeinIHc  \notag 
\end{align}
\end{defn}

\def\VanTri
{\begin{tric}
\draw [scale=0.7]  (-1.74,2)--(0,1)--(1.74,2) 
(1.74,-3)--(0,0) node[right,pos=0.7,black,scale=0.7]{$1$}
(0,0)--(-1.74,-3) node[left,pos=0.3,black,scale=0.7]{$1$}
(0,0)--(0,1) node[left,midway,black,scale=0.7]{$2$}
(-1.74,-3) node[below,black,scale=0.7]{$m$}
(1.74,-3) node[below,black,scale=0.7]{$m$}
(-1.74,2) node[above,black,scale=0.7]{$1$}
(1.74,2) node[above,black,scale=0.7]{$1$}
(1.16,-2)--(-1.16,-2)node[below,midway,black,scale=0.7]{$m-1$};
\end{tric}
}

\def\VanTria
{\begin{tric}
\draw[scale=0.7] (-2,-1.732)--(-1,0)--(-2,1.732) (2,1.732)--(1,0)--(2,-1.732) 
(-2,-1.732)node[below,black,scale=0.7]{$m$}
(-2,1.732)node[above,black,scale=0.7]{$1$}
(2,1.732)node[above,black,scale=0.7]{$1$}
(2,-1.732) node[below,black,scale=0.7]{$m$}
(-1,0)--(1,0)node[below,midway,black,scale=0.7]{$m-1$};
\end{tric}
}

\def\VanTrib
{\begin{tric}
\draw  [scale=0.7] 
(-2,2)..controls(-1,1)and(1,1)..(2,2) node[above,black,midway,scale=0.7]{$1$}
(2,-2)..controls(1,-1)and(-1,-1)..(-2,-2) node[below,black,midway,scale=0.7]{$m$};
\end{tric}
}

\begin{remark}
We will use the convention that strands labelled zero can be erased and strands labelled $k<0$ are equal to zero.    
\end{remark}

\begin{remark}
The presentation of $\Web$ we give in Definition \ref{DefofWeb} is practically the same as the presentation of $\textbf{Web}(\mathfrak{sp}_{2n})$ in \cite[Definition 1.1]{bodish2021type}, but with different coefficients.
\end{remark}

\begin{remark}\label{R:coeffs-simple-q=1-1}
When $\R=\kk$, i.e. $q=1$, the coefficients in \RL ({\ref{defskein}e}) are all $\pm 1$.
\end{remark}

There are also simplifications in the representation category when $q=1$. For example, the braiding\footnote{This can be defined as the map $P\circ f\circ \Theta$ \cite[Theorem 7.8]{JantzenQgps}, where $P$ is the tensor flip map, $f$ is as in \cite[Section 7.9]{JantzenQgps}, and $\Theta$ is the quasi-$R$ matrix \cite[Section 7.2]{JantzenQgps}.} becomes symmetric, meaning it is equal to its own inverse. In the symmetric case, there is a standard definition of the exterior power of a representation. But in the braided case things become more complicated \cite{BZ-braidedalgebras}. In Section \ref{ssec:quantum-exterior-algebra} we carefully define the $q$-analogue of the exterior powers of the defining representation. Write $\Lambda_{\kk}^k$ for the usual $k$-th exterior power of the defining representation and write $\Lambda_{\K}^k$ to denote the $q$-analogue. For $\R\in \{\K, \kk\}$, the monoidal category $\Fund$ is defined to be the full monoidal subcategory of $U_\R(\om)$-mod generated by $\Lambda_\R^k$ for $k=0, \ldots, m$.

The main theorem of this article is the following.

\begin{thm}\label{T:main-thm}
Let $\R\in \{\kk, \K\}$. There is an equivalence of $\R$-linear pivotal categories
\[
\Phi_\R:\Web \rightarrow \Fund
\]
sending $k$ to $\Lambda_{\R}^k$, for $0\le k \le m$.
\end{thm}

\begin{remark}
Instead of working with $O(m)$, we use $U_{\kk}(\om)$\footnote{This is just notation for an associative algebra which acts like the enveloping algebra of a non-existent $\om$, and should not be taken literally. The Lie algebra of $O(m)$ is simply $\som$.}, which is a $\mathbb{Z}/2$ extension of the universal enveloping algebra of $\som$. Every $O(m)$ representation can be made into a module for $U_{\kk}(\om)$ and vice-versa. Moreover, a $\kk$-linear map between such representations is an $O(m)$ intertwinter if and only if it is a $U_{\kk}(\om)$ intertwiner. In particular, the category $\Fundk$ is isomorphic to the full monoidal subcategory of $\textbf{Rep}(O(m))$ generated by the exterior powers of $\kk^m$. We choose to use $U_{\kk}(\om)$, instead of $O(m)$, since it is easier to see how to relate its representations to those of $U_{\K}(\om)$. 
\end{remark}

\begin{remark}\label{R:Kar}
Let $\R\in \{\kk, \K\}$. Given an $\R$-linear monoidal category $\mathcal{C}$ we can build an additive monoidal category, denoted $\text{Add}(\mathcal{C})$, where the objects are formal direct sums of objects in the original category and morphisms are matrices of morphisms in the original category. Given an $\R$-linear additive monoidal category $\mathcal{A}$, we can build an additive monoidal category, called the Karoubi envelope of $\mathcal{A}$ and denoted $\text{Kar}(\mathcal{A})$, which is closed under taking direct summands. Objects in $\text{Kar}(\mathcal{A})$ are pairs $(X,e)$, where $X$ is an object in $\mathcal{A}$ and $e\in \End_{\mathcal{A}}(X)$ is an idempotent. Morphisms in $\text{Kar}(\mathcal{A})$ are defined by
\[
\Hom_{\text{Kar}(\mathcal{A})}((X,e), (Y,f)):=f\Hom_{\mathcal{A}}(X,Y)e.
\]

Finite dimensional representations of $U_{\R}(\om)$ are completely reducible. Moreover, every finite dimensional irreducible type-$\textbf{1}$ representation of $U_{\R}(\om)$) is a direct summand of some tensor product of $\Lambda_{\R}^k$'s. Thus, we have an equivalence of $\R$-linear additive monoidal categories
\[
\text{Kar}(\text{Add}(\Fund))\cong \textbf{Rep}(U_{\R}(\om)),
\]
where by $\textbf{Rep}(U_{\R}(\om))$ we mean the category of finite dimensional type-$\one$ representations of $U_{\R}(\om)$. Since $\text{Add}$ and $\text{Kar}$ are universal constructions, we can interpret Theorem \ref{T:main-thm} as a presentation of the monoidal category $\textbf{Rep}(U_{\R}(\om))$. 
\end{remark}

\begin{example}\label{E:m=1}
Let $m=1$. Consider the one dimensional vector space $V_{\kk}$, spanned by basis vector $v$ and equipped with symmetric form $(v,v)=1$. We have $O(1)\cong \mathbb{Z}/2$, where the generator $\sigma\in\mathbb{Z}/2$ acts as $\sigma(v) = -v$. Since $\Lambda^k(V_{\kk})=0$ for $k>1$, we see that $\textbf{Fund}(U_{\kk}(\mathfrak{o}_1))$ is the monoidal category generated by $V_{\kk}$. Write $\kk$ to denote the trivial $O(1)$-module. It is immediate that 
\[
V_{\kk}^{\otimes d}\cong \begin{cases} V_{\kk} \quad \text{if $d$ is odd} \\
\kk \quad \text{if $d$ is even}.
\end{cases},
\]
and the isomorphisms are induced by $v^{\otimes d}\mapsto v$ and $v^{\otimes d}\mapsto 1$ respectively. 

Since $m=1$, we set $\id_{k}=0$ in $\WebC$ for all $k>1$. One easily checks that the only defining relations which are not of the form $0=0$ are \RL (\ref{defskein}a), which says that the circle labelled $1$ evaluates to $1\in \kk$, and \RL (\ref{defskein}e) which says that the identity of $1\otimes 1$ is equal to the cup-cap.

The $\kk$-linear version of the $n=2$ case of \cite[Exercise 4.13(i)]{KS-sheaves-and-cats} says that if $\mathcal{C}$ is an $\kk$-linear additive monoidal category which is closed under taking direct summands, then monoidal functors $\textbf{Rep}(\kk[\mathbb{Z}/2])\rightarrow \mathcal{C}$ are in bijection with objects $X\in \mathcal{C}$, equipped with an isomorphism $\alpha: X\otimes X\rightarrow \textbf{1}_{\mathcal{C}}$, such that $\alpha \otimes \id = \id \otimes \alpha$. We leave it as an exercise to prove the $m=1$, $\R= \kk$, case of Theorem \ref{T:main-thm} by hand, and then use Remark \ref{R:Kar} to deduce the universal property described above. Hint: for the deduction step, use the pivotal structure on $\WebC$ to rewrite the identity equals cup-cap relation so it resembles $\alpha\otimes \id = \id\otimes \alpha$.
\end{example}

\begin{remark}
What about the $m=1$ case over $\K$? It is easy to verify that $\K\otimes \textbf{Web}_{\kk}(O(1))\cong \textbf{Web}_{\K}(O(1))$. Since $SO(1)\subset O(1)$ is the trivial group, $\mathfrak{so}_1$ is the trivial Lie algebra. We are led to define $U_{\K}(\mathfrak{so}_1)\footnote{Since $U_q(\mathfrak{g})$ is generated by $E_{\alpha}, F_{\alpha}$, and $K_{\alpha}^{\pm 1}$ for $\alpha\in \Delta$, and since there are no simple roots for $\mathfrak{so}_1$, we think of $U_{\K}(\mathfrak{so}_1)$ as the $\K$-algebra with no generators.} = \K$ and $U_{\K}(\mathfrak{o}_1)=\K[\mathbb{Z}/2]$. The same analysis in Example \ref{E:m=1} works here to show that there is a monoidal equivalence $\textbf{Web}_{\K}(O(1))\cong\textbf{Fund}(U_{\K}(\mathfrak{o}_1))$.
\end{remark}

\begin{example}
Let $m=2$. In this case, the Lie algebra of $SO(2)\cong S^1$ is abelian and therefore is not semisimple. In particular, its enveloping algebra does not have a Serre presentation which can be $q$-deformed as usual. We take the following approach. We \emph{define} $U_{\K}(\mathfrak{so}_2):=\K[K^{\pm 1}]$ and $V_{\K}:=\K\cdot a_1\oplus \K\cdot b_1$, with $K\cdot a_1= q^2a_1$ and $K\cdot b_1= q^{-2}b_1$. There is then an involutive algebra automorphism of $U_{\K}(\mathfrak{so}_2)$, denoted $\sigma$, which acts by $\sigma(K) = K^{-1}$. Defining $\sigma(a_1) = b_1$ and $\sigma(b_1)= a_1$ induces an action on $V_{\K}$ by the algebra $U_{\K}(\mathfrak{o}_2):=\K[K^{\pm 1}]\langle \sigma \ | \ \sigma^2=1, \quad \sigma K^{\pm 1} \sigma = \sigma(K^{\pm 1})\rangle$. The $U_{\K}(\mathfrak{o}_2)$ module $\Lambda_{\K}^2$, which is spanned by $a_1b_1$, has $K^{\pm 1}$ in its kernel and $\sigma$ acts as $-1$. The category $\textbf{Fund}(U_{\K}(\mathfrak{o}_2))$ is a $q$-analogue of the category of representations of $O(\kk^2)$, generated by $\kk^2$ and $\det$. Similar to when $m=1$, we have an equivalence of pivotal $\K$-linear categories $\textbf{Web}_{\K}(O(2))\cong \K\otimes \textbf{Web}_{\kk}(O(2))$. However, the braiding on $\textbf{Web}_{\K}(O(2))$ is non-trivial, so this is not an equivalence of braided categories.
\end{example}

\begin{remark}
The $m=1,2$ cases of our main theorem are somewhat hidden in the body of this paper. We make a few comments along the way for how things change, but for the sake of readability we mostly explain things when $m\ge 3$, so $\som$ is semisimple and therefore we have a more uniform notation. Regardless, the main results still hold for $m=1, 2$, and the careful reader will be able to see what needs to be changed.
\end{remark}

\subsection{Idea of the proof}

Citing classical results about invariant theory, Lehrer-Zhang prove \cite[Theorem 4.8]{LZbrauercat} that there is an equivalence between the Brauer category \cite[Definition 2.4]{LZbrauercat}, modulo the antisymmetrizing idempotent on $m+1$ strands, and the full monoidal subcategory of $\textbf{Rep}(O(\kk^m))$ generated by $\kk^m$. Since $\Lambda_{\kk}^k$ is a direct summand of $(\kk^m)^{\otimes k}$, for $k=0, 1, \dots, m$, one might hope to reduce the proof of our main theorem, when $\R= \kk$, to a calculation verifying that the antisymmetrizing idempotent $\frac{1}{(m+1)!}\sum_{w\in S_{m+1}}(-1)^{\ell(w)}w$ is zero in $\WebC$. We carry out this calculation in Proposition \ref{P:verification-of-classicalkernel}.


The idea of the proof of Theorem \ref{T:main-thm} is roughly as follows. First, prove the result for $\R= \kk$, using the ideas outlined above. Second, prove that $\Phi_{\K}$ is full, using that if $\R=\K$, then the braiding endomorphism for the tensor square of the vector representation generates the endomorphism rings of arbitrary tensor powers of the vector representation \cite[Theorem 8.5]{LZ-stronglymultifree}. Finally, we carefully argue that everything we defined actually makes sense over the ring $\A$. The $\A$ versions of our categories and functors can then be specialized to $\kk$ or $\K$. Since $\A$ is a local ring and a principal ideal domain, basic facts about finitely generated modules over a PID allow us to deduce our functor is faithful when $\R=\K$ from knowing it is full over $\K$ and an equivalence over $\kk$. 

Let us comment on why we do not just prove Theorem \ref{T:main-thm} directly for $\K$ the same way we do for $\kk$. There is a $q$-analogue of Lehrer-Zhang's result \cite[Theorem 8.2]{LZbrauercat}, in which the Brauer category is replaced with the BMW category. However, just as the definition of the $q$-analogue of the exterior powers is not trivial, it is not so easy to explicitly describe the $q$-analogue of the antisymmetrizer in the BMW category. Lehrer-Zhang only discuss it abstractly \cite[Theorem 8.2(iii)]{LZbrauercat}, using the theory of cellular algebras. The abstract description is sufficient to prove their result, but several years earlier Tuba-Wenzl gave a recursive formula for this idempotent by relating it to the $q$-antisymmetrizer in the Hecke algebra \cite[Equation 7.12]{TubaWenzl}. There is also work on explicitly describing the $q$-antisymmetrizer in the BMW category: when $m=3$ in \cite[Equation 7.8]{LZ-Tem-Lie-Analogue}, and for all $m\ge 1$ in \cite{HZ-idempotents, IMO-idemoptents, DHS-antisymmetrizer}. These descriptions are not very easy to compute with. An instance of this is that we have not yet found how to use the relations in $\WebK$ to show the $q$-antisymmetrizer on $m+1$ strands is zero in the web category for $O(m)$, even though this is implied by Theorem \ref{T:main-thm}.

\subsection{Future work}

\subsubsection{Type $B/D$ webs}
Given a representation of $U_{\R}(\om)$, we can restrict to $U_{\R}(\som)$, forgetting the $\mathbb{Z}/2$ action. Thus, we always have a faithful 
monoidal functor $\textbf{Res}:\Fund\rightarrow \textbf{Rep}(U_{\R}(\mathfrak{so}_m))$. The image of $\textbf{Res}$ will not contain all fundamental representations for $U_{\R}(\mathfrak{so}_m)$, since it misses\footnote{See Lemma \ref{L:samecharacter} for a precise description of the objects in the image} the spin representations. The restriction functor is not full either, since $\Lambda_{\R}^i$ and $\Lambda_{\R}^{m-i}$ become isomorphic upon restriction to $U_{\R}(\som)$.

A trivial way to remedy the failure of essential surjectivity is to define $\textbf{Fund}_{\R}(SO(m))$ to be the full monoidal subcategory of $\textbf{Rep}(U_{\R}(\som))$ generated by the exterior powers $\Lambda^i_{\R}$. Now we have a faithful and essentially surjective monoidal functor $\textbf{Res}:\textbf{Fund}(U_{\R}(\om))\rightarrow \textbf{Fund}_{\R}(SO(m))$.

When $m=2n+1$, restriction is full on the monoidal category generated by $V_{\R}$ \cite[5.1.3]{LZ-stronglymultifree}. Thus, one might try to find a presentation of $\textbf{Fund}_{\R}(SO(m))$ by modifying $\Web$ to keep track of the isomorphisms $\Lambda_{\K}^i\cong \Lambda_{\K}^{m-i}$. This is analogous to the relation between $\mathfrak{gl}_m$ webs and $\mathfrak{sl}_m$ webs, where the modification is to introduce \emph{tags} which encode the isomorphism $\Lambda_{\K}^m\rightarrow \Lambda^0_{\K}$ \cite[Section 2.1]{CKM}. 

If $m=2n$, restriction is not full on the monoidal subcategory generated by $V_{\R}$, and one needs to include more generating web morphisms, similar to how the Brauer algebra is ``enhanced" by the element $\Delta_m$ in \cite[Definition 5.1]{LZ-enhanced-brauer}. In fact, this element appears to be the composition of the multiplication map $V_{\R}^{\otimes m}\rightarrow \Lambda_{\R}^m$ with the isomorphism $\Lambda_{\R}^m\cong \Lambda_{\R}^0$. In which case, a version of $\Web$ that includes tags might also describe $\textbf{Fund}_{\R}(SO(m))$.

\subsubsection{Towards a confluent presentation}

Let $\R\in \{\kk, \K\}$. In the definition of $\Web$, we give a set of generators and relations which is informally ``small". For example, we leave out several basic relations which appear in Section \ref{SS:further-renls}, where we derive them from the defining relations. The benefit of having fewer defining relations is that our main theorem gives us a simpler ``universal mapping property" for monoidal functors out of $\Fund$. The downside is that we do not have a confluent presentation \cite[Section 3.5]{west-sikora-conflucence}. This makes graphical calculation in $\Web$ more difficult. In particular, we do not provide a basis for $\Web$ in the present work.

A good first step towards a confluent presentation is to analyze the relations, analogous to the defining relations, which are satisfied by the trivalent vertices with arbitrary labels. Note that the generating trivalent vertices of $\Web$ all have an edge labelled $1$. Then, with this new presentation in hand, one can work to describe a cellular basis for $\Web$, which is adapted to the monoidal structure, analogous to the double-ladder basis for $\mathfrak{sl}_n$ webs defined by Elias in \cite{elias2015light}. This could give a way to prove that a $\mathbb{Z}[q,q^{-1}]$ version of $\Web$ is related to tilting modules, after specializing to a field, since Elias used the double ladders for $\mathfrak{sl}_n$ webs to prove an analogous result.

\subsubsection{Howe duality}

Since many defining relations in $\mathfrak{sl}_m$ webs are reflected in the Serre presentation of the Howe dual $U_q(\mathfrak{gl}_n)$ \cite[Proposition 5.2.1]{CKM}, a parallel approach to finding more interesting relations for $\WebK$ is to try to find a version of Howe duality adapted to our situation. As we mentioned above, the orthogonal Howe duality of \cite{ST-BCDHowe} describes intertwiners for representations of the $\iota$-quantum group $U_q'(\mathfrak{o}_m)$, in terms of the Howe dual Drinfeld-jimbo quantum group $U_q(\mathfrak{so}_{2n})$. A more precise version of this duality should only describe intertwiners for a $\mathbb{Z}/2$ extension of $U_q'(\som)$, see \cite[Remark 1.2]{ST-BCDHowe}. The category $\WebK$ describes intertwiners for $U_{\K}(\om)$, so one might guess that there is a Howe dual $\iota$-quantum group $U'_{\K}(\mathfrak{so}_{2n})$, or a $\mathbb{Z}/2$ extension thereof, acting on the $U_{\K}(\om)$ module $(\Lambda_{\K}^{\bullet})^{\otimes n}$.

\subsubsection{Basis from pattern avoidance}

The full subcategory of $(U_{\K}(\mathfrak{sp}_{2n}))$-modules
monoidally generated by the the vector representation $V_{\K}$ has an explicit basis, which is described in terms of webs in \cite[Theorem 5.35]{bodish2021type}. The first ingredient in this basis is an endomorphism of $V_{\K}\otimes V_{\K}$ which is invariant under $90^{\circ}$ rotation \cite[Section 4.3]{bodish2021type}. This diagram is drawn in the web category as a quadrivalent vertex only because it is invariant under $90^{\circ}$ rotation. It does not satisfy the braid relations \cite[Equation 4.1]{bodish2021type}. The analogue of this endomorphism in $\WebK$ is the following. 
\def\QuaVertex
{\begin{tric}
\draw  (-1.4,1.4)--(1.4,-1.4) ;
\draw  (1.4,1.4)--(-1.4,-1.4) ;
\draw (-1.4,1.4)node[above,black,scale=0.7]{$1$}
      (1.4,1.4)node[above,black,scale=0.7]{$1$}
      (-1.4,-1.4)node[below,black,scale=0.7]{$1$}
      (1.4,-1.4)node[below,black,scale=0.7]{$1$};
\filldraw (0,0) circle (3pt) ; 
\end{tric}
}

\def\QuaVertexa
{\begin{tric}
\draw[scale=0.7] (-1.732,-2)--(0,-1)--(1.732,-2) (1.732,2)--(0,1)--(-1.732,2) 
(-1.732,-2)node[below,black,scale=0.7]{$1$}
(1.732,-2)node[below,black,scale=0.7]{$1$}
(1.732,2)node[above,black,scale=0.7]{$1$}
(-1.732,2) node[above,black,scale=0.7]{$1$}
(0,-1)--(0,1)node[left,midway,black,scale=0.7]{$2$};
\end{tric}
}

\def\QuaVertexb
{\begin{tric}
\draw  [scale=0.7] 
(-2,2)..controls(-1,1)and(1,1)..(2,2) node[above,black,midway,scale=0.7]{$1$}
(2,-2)..controls(1,-1)and(-1,-1)..(-2,-2) node[below,black,midway,scale=0.7]{$1$};
\end{tric}
}

\def\QuaVertexc
{\begin{tric}
\draw[scale=0.7] (-2,-1.732)--(-1,0)--(-2,1.732) (2,1.732)--(1,0)--(2,-1.732) 
(-2,-1.732)node[below,black,scale=0.7]{$1$}
(-2,1.732)node[above,black,scale=0.7]{$1$}
(2,1.732)node[above,black,scale=0.7]{$1$}
(2,-1.732) node[below,black,scale=0.7]{$1$}
(-1,0)--(1,0)node[below,midway,black,scale=0.7]{$2$};
\end{tric}
}

\def\QuaVertexd
{\begin{tric}
\draw  [scale=0.7] 
(-2,2)..controls(-1,1)and(-1,-1)..(-2,-2) node[left,black,midway,scale=0.7]{$1$}
(2,-2)..controls(1,-1)and(1,1)..(2,2) node[right,black,midway,scale=0.7]{$1$};
\end{tric}
}

\begin{equation}\label{E:quad-vertex}
\QuaVertex
  = \QuaVertexa + \frac{[2m-8][m-2]}{[m-4][2m-4]} \  \QuaVertexb
  = \QuaVertexc + \frac{[2m-8][m-2]}{[m-4][2m-4]} \  \QuaVertexd
\end{equation}

The second ingredient is a theorem of Sundaram that implies the dimension of $U_{\K}(\mathfrak{sp}_{2n})$ invariant tensors in $V_{\K}^{\otimes d}$ is equal to the number of matchings between $2d$ points satisfying a condition reminiscent of ``pattern avoidance" in the theory of Coxeter groups \cite[Proposition 5.37]{bodish2021type}. The basis is constructed by interpreting each matching as a quadrivalent graph in a disc, where the $2d$ points are where the graph meets the boundary of the disc, then viewing the graph as morphisms in the $\mathfrak{sp}_{2n}$ web category.\footnote{This recipe says that any matching can thus give rise to a morphism, but it is possible to rewrite such a diagram as a linear combination of diagrams associated to matchings satisfying the pattern avoidance condition \cite[Theorem 5.35]{bodish2021type}.} 

To prove her theorem, Sundaram notes that the dimension of the space of invariant tensors in $V_{\kk}^{\otimes d}$ is counted by length $2d$ up-down tableaux with less than $n+1$ rows. Then, Sundaram defines an analogue of the Robinson-Schensted bijection, between matchings of $2d$ points and length $2d$ up-down tableaux, and establishes that the bijection intertwines the number of rows on the tableaux side with a pattern avoidance condition on the quadrivalent graph side \cite{Sundaram}.\footnote{Actually, the bijection used for type $C$ webs, which is carefully explained in \cite{BERT-viennot}, is a slight variation of Sundaram's original bijection.}

It follows from our main theorem, and well-known results on representation theory of $O(m)$, that the dimension of $\Hom_{\WebK}(0, 1^{\otimes 2d})$ is counted by up-down tableaux, but with the restriction that the sum of the lengths of the first two columns is less than $m+1$. Applying Sundaram's bijection to such tableaux, and then interpreting these matchings as quadrivalent graphs, we can use \EQ \eqref{E:quad-vertex} to view these graphs as web diagrams. A natural conjecture is that these web diagrams are a basis.


\subsubsection{BMW antisymmetrizer}
It is well-known that the braiding of the vector representation generates all $U_{\K}(\om)$ endomorphisms of arbitrary tensor powers of $V_{\K}$. This braid group representation factors through the BMW algebra. These BMW algebras can be put together into the BMW category, denoted $\BMWK$, which is a ribbon category generated by one object such that the endomorphism algebras of tensor powers of that object are BMW algebras. This category can be defined over $\A$ and specialized to $\kk$ or $\K$. For $\R\in \{\kk, \K\}$, we can use Proposition \ref{BMWweb} to interpret the right hand side of Equation \eqref{claspVSsym} as the generator of the kernel of the functor from the category $\BMW$ to the monoidal subcategory of representations of $U_{\R}(\om)$ generated by $V_{\R}$. When $\R=\kk$, the left hand side of Equation \eqref{claspVSsym} is an explicit description of this endomorphism in $\BMWC$. As we mentioned above, there are explicit descriptions of this generator when $\R=\K$, but the answers are somewhat complicated. Our web category gives a diagrammatic method to derive explicit formulas for this element of $\BMW$. Using our web relations to rewrite the right hand side of Equation \eqref{claspVSsym} in terms of the braiding, the cup, and the cap, one might try to give a diagrammatic proof of known results about the $q$-analogue of the antisymmetrizer in $\BMW$, or maybe find new (potentially nicer) formulas.


\subsection{Structure of the paper}

In Section $2$, we use the definition of $\Web$ to derive further relations, define a braiding endomorphism for $1\otimes 1$, and prove that the web category is finitely generated. In Section $3$, we define the quantum exterior algebra as a module over the quantum analogue of the orthogonal group, then we define some morphisms between tensor products of the quantum exterior powers which we will eventually show are monoidal generators of the fundamental category. In Section $4$, we show there is a functor from the web category to the fundamental category, then argue that this functor is compatible with the functor induced by the action of the Brauer algebra on tensor powers of the defining representation of the orthogonal group. In Section $5$, we prove our main Theorem by reducing to standard subcategories, citing well-known results from classical invariant theory, and using basic facts about specialization to $\kk$ and $\K$ from $\A$.

\tableofcontents

\subsection{Acknowledgments}

E.B. was supported by the National Science Foundation’s M.S.P.R.F.-$2202897$. He thanks Ben Elias, David Rose, and Logan Tatham for teaching him about webs, Daniel Tubbenhauer for discussing quantum exterior algebra, and Victor Ostrik for sharing the exercise in Kashiwara-Shapira. H.W. was supported by the NSF grant CCF-$2009029$ and the Simons Foundation grant $994328$. He thanks Greg Kuperberg for teaching him webs and sharing insights on webs of different Lie types, Monica Vazirani for teaching him Young tableau and BMW algebra, and Eugene Gorsky for his helpful comments and feedback. 
We also thank the referee for their insightful feedback. 
\section{Web category for quantum orthogonal group}

\def\tri
{\begin{tric}
\draw[scale=0.7] (-2,-1.732)--(-1,0) (1,0)--(2,-1.732) 
(-2,-1.732)node[below,black,scale=0.7]{$m$}
(1,0)--(0,1.5) node[right,pos=0.6,black,scale=0.7]{$1$}
(0,1.5)--(-1,0) node[left,pos=0.4,black,scale=0.7]{$1$}
(0,1.5)--(0,3) node[left,midway,black,scale=0.7]{$2$}
(2,-1.732) node[below,black,scale=0.7]{$m$}
(-1,0)--(1,0)node[below,midway,black,scale=0.7]{$m-1$};
\end{tric}
}

This section concerns $\Web$, which was introduced in Definition \ref{DefofWeb}. The category $\Web$ is defined by generators and relations. We will derive some further relations in $\Web$, and then establish a connection to the work of Birman-Murakami-Wenzl \cite{MR992598, MR927059}.

\subsection{Further relations}\label{SS:further-renls}

\def\Altbigon
{\begin{tric}
\draw[scale= 0.8] (0,0.7)--(0,1.5) (0,-0.7)--(0,-1.5)
        (0,1.5)node[above,black,scale=0.7]{$k$}
        (0,-1.5)node[below,black,scale=0.7]{$k$}
      (0,0.7)..controls(-0.5,0.7)and(-0.5,-0.7)..(0,-0.7) node[left,midway,black,scale=0.7]{$1$} (0,0.7)..controls(0.5,0.7)and(0.5,-0.7)..(0,-0.7)
      node[right,midway,black,scale=0.7]{$k+1$} ;
\end{tric}
}

\def\Altbigona
{\begin{tric}
\draw [scale=0.8] (0,1.5)--(0,-1.5)  node[below,black,scale=0.7]{$k$} ;
\end{tric}
}

\begin{lemma} 
    \begin{align}   \Altbigon =   \frac{[2m-2k][2m-4k-4][m-2k]}{[2][m-2k-2][2m-4k]} \Altbigona \label{reversebigon}
     \end{align} 
\end{lemma}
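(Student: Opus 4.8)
The plan is to derive (\ref{reversebigon}) from the square relation (\ref{defskein}e) by bending a strand. Write $\triv$ for the monoidal unit. First I would bend the top $k$-strand of the reverse bigon $\Altbigon$ down to the right, reinterpreting it as a morphism $k\otimes k\to\triv$: concretely this is two trivalent vertices joined by a strand labelled $1$ and a strand labelled $k+1$, each carrying a pendant $k$-leg, and it is precisely the left-hand diagram $\SkeinIH$ of (\ref{defskein}e) with its two degree-one outputs joined by $\capa\colon 1\otimes 1\to\triv$. So the key move is to post-compose both sides of (\ref{defskein}e) with $\capa$ and simplify the right-hand side term by term.

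I expect the three terms to simplify as follows. In the $\SkeinIHa$ term, the strand labelled $2$ feeds a vertex that splits it into two strands labelled $1$ which are then capped off immediately; this sub-diagram is the lollipop of relation (\ref{defskein}b), so this term vanishes. Joining the two degree-one outputs of $\SkeinIHb$ by $\capa$ produces exactly the ordinary bigon of relation (\ref{defskein}c) with its top $k$-strand bent down, so by (\ref{defskein}c) it contributes $\tfrac{[2k]}{[2]}$ times the bent-down $\id_k$. Joining the two degree-one outputs of $\SkeinIHc$ creates a circle labelled $1$, which by relation (\ref{defskein}a) is the scalar $\tfrac{[2m-4][m]}{[m-2][2]}$, again multiplying the bent-down $\id_k$.

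Collecting coefficients, the bent-down reverse bigon should equal
\[
\left(-\frac{[2m-4k-4][m-2k][2k]}{[2][m-2k-2][2m-4k]}+\frac{[2m-4k-4][m-2][2m-4][m]}{[2][m-2k-2][2m-4][m-2]}\right)
\]
times the bent-down $\id_k$. After cancelling $[2m-4]$ and $[m-2]$ in the second summand and factoring out $\tfrac{[2m-4k-4]}{[2][m-2k-2][2m-4k]}$, the parenthesised scalar becomes $\tfrac{[2m-4k-4]}{[2][m-2k-2][2m-4k]}\bigl([m][2m-4k]-[2k][m-2k]\bigr)$, and I would finish by invoking the quantum-integer identity $[m][2m-4k]-[2k][m-2k]=[2m-2k][m-2k]$, checked by expanding both sides via $[n]_v=(v^n-v^{-n})/(v-v^{-1})$: each reduces to $v^{3m-4k}+v^{-3m+4k}-v^m-v^{-m}$ over $(v-v^{-1})^2$. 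This produces exactly the coefficient $\tfrac{[2m-2k][2m-4k-4][m-2k]}{[2][m-2k-2][2m-4k]}$ of (\ref{reversebigon}). Finally, bending the $k$-strand back up using the pivotal zig-zag relations converts the bent-down reverse bigon back into $\Altbigon$ and the bent-down $\id_k$ back into $\Altbigona$, which yields the lemma.

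The main obstacle is the topological bookkeeping: correctly matching the bent reverse bigon with the capped-off $\SkeinIH$, and verifying that the capped-off $\SkeinIHb$ is the bent ordinary bigon while the capped-off $\SkeinIHa$ contains the lollipop of (\ref{defskein}b). Once those identifications are set up, the coefficient algebra and the quantum-integer identity are routine, and no information about $\End(k)$ is needed since everything is a direct diagrammatic manipulation of the defining relations.
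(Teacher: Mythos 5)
Your proposal is correct and follows essentially the same route as the paper: postcompose \RL (\ref{defskein}e) with the cap $\capa$, observe that the first term vanishes via \RL (\ref{defskein}b), the second gives a $\frac{[2k]}{[2]}$ factor via \RL (\ref{defskein}c), the third a $\frac{[2m-4][m]}{[m-2][2]}$ factor via \RL (\ref{defskein}a), and then bend back up. The only thing you spell out that the paper leaves implicit is the final quantum-integer simplification $[m][2m-4k]-[2k][m-2k]=[2m-2k][m-2k]$ needed to collect the two surviving terms into the stated coefficient, and that identity checks out.
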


\def\BRbigonA
{\begin{tric}
\draw[scale=0.7] (-2,-1.732)--(-1,0) (1,0)--(2,-1.732)
(-1,0)..controls(-1,2)and(1,2)..(1,0) node[above,midway,black,scale=0.7]{$1$}
(-2,-1.732)node[below,black,scale=0.7]{$k$}
(2,-1.732) node[below,black,scale=0.7]{$k$}
(-1,0)--(1,0)node[below,midway,black,scale=0.7]{$k+1$};
\end{tric}
}

\def\BRbigonB
{\begin{tric}
\draw [scale=0.7]  (0,1)..controls(1,1)and(1,2)..(0,2)..controls(-1,2)and(-1,1)..(0,1)
(0.7,1.5) node[right,black,scale=0.7]{$1$}
(1.74,-3)--(0,0) node[right,pos=0.7,black,scale=0.7]{$1$}
(0,0)--(-1.74,-3) node[left,pos=0.3,black,scale=0.7]{$1$}
(0,0)--(0,1) node[left,midway,black,scale=0.7]{$2$}
(-1.74,-3) node[below,black,scale=0.7]{$k$}
(1.74,-3) node[below,black,scale=0.7]{$k$}
(1.16,-2)--(-1.16,-2)node[below,midway,black,scale=0.7]{$k-1$};
\end{tric}
}

\def\BRbigonC
{\begin{tric}
\draw[scale=0.7] (-2,-1.732)--(-1,0)  (1,0)--(2,-1.732) 
(-1,0)..controls(-1,2)and(1,2)..(1,0) node[above,midway,black,scale=0.7]{$1$}
(-2,-1.732)node[below,black,scale=0.7]{$k$}
(2,-1.732) node[below,black,scale=0.7]{$k$}
(-1,0)--(1,0)node[below,midway,black,scale=0.7]{$k-1$};
\end{tric}
}

\def\BRbigonD
{\begin{tric}
\draw  [scale=0.7] 
(0,0.5) circle (1) 
(0,1.5) node[above,black,scale=0.7]{$1$}
(2,-2)..controls(1,-1)and(-1,-1)..(-2,-2) node[below,black,midway,scale=0.7]{$k$};
\end{tric}
}

\def\BRbigonE
{\begin{tric}
\draw[scale=0.7] 
(0,-1.732) .. controls(0,1)and(3,1) .. (3,-1.732)
 node[above,black,scale=0.7,midway]{$k$} ;
\end{tric}
}

\begin{proof}
\begin{align*}
\BRbigonA 
&\stackrel{(\ref{defskein}e)}{=}  
\BRbigonB 
-\frac{[2m-4k-4][m-2k]}{[m-2k-2][2m-4k]} \BRbigonC 
+ \frac{[2m-4k-4][m-2]}{[m-2k-2][2m-4]} \BRbigonD \\
&\stackrel{\substack{
(\ref{defskein}b) \\ (\ref{defskein}c) \\ (\ref{defskein}a) }}{=}   -\frac{[2m-4k-4][m-2k]}{[m-2k-2][2m-4k]} \frac{[2k]}{[2]}
   \BRbigonE
   +\frac{[2m-4k-4][m-2]}{[m-2k-2][2m-4]} 
                \frac{[2m-4][m]}{[m-2][2]}    \BRbigonE
\end{align*}
\end{proof}

\begin{remark}\label{R:coeffs-simple-q=1-2}
When $\R = \kk$, the coefficient in \EQ \eqref{reversebigon} becomes $(m-k)$. 
\end{remark}

\def\WebQuanDim
{\begin{tric}
\draw (0.7,0) circle (0.7);
\draw (0.7,0.7)node[above,black,scale=0.7]{$k+1$}
      (0.7,-0.7)node[below,black,scale=0.7]{\phantom{x}}; 
\end{tric}
}

\def\WebQuanDima
{\begin{tric}
\draw[scale= 0.8] 
        (0,0.7)..controls(0,1.5)and(-2,1.5)..(-2,0) 
        (0,-0.7)..controls(0,-1.5)and(-2,-1.5)..(-2,0)
        (-2,0)node[left,black,scale=0.7]{$k+1$}
      (0,0.7)..controls(-0.5,0.7)and(-0.5,-0.7)..(0,-0.7) node[left,midway,black,scale=0.7]{$1$} (0,0.7)..controls(0.5,0.7)and(0.5,-0.7)..(0,-0.7)
      node[right,midway,black,scale=0.7]{$k$} ;
\end{tric}
}

\def\WebQuanDimb
{\begin{tric}
\draw (0.7,0) circle (0.7);
\draw (0.7,0.7)node[above,black,scale=0.7]{$k$}
      (0.7,-0.7)node[below,black,scale=0.7]{\phantom{x}}; 
\end{tric}
}

Using the previous Lemma, it is not hard to derive the following relation generalizing \EQ ({\ref{defskein}a})
\begin{lemma}
\begin{equation}\label{E:thick-web-circle}
\Skeinak = \frac{[2m-4k][m]}{[m-2k][2m]}
\begin{bmatrix}
m \\
k 
\end{bmatrix} _{q^2}
\end{equation}
\end{lemma}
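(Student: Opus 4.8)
The plan is to prove Equation \eqref{E:thick-web-circle} by induction on $k$, peeling off one strand at a time using the reverse bigon relation \eqref{reversebigon} from the previous Lemma. The base case $k=1$ is exactly Relation (\ref{defskein}a): the circle labelled $1$ evaluates to $\frac{[2m-4][m]}{[m-2][2]}$, and one checks this agrees with the right-hand side of \eqref{E:thick-web-circle} at $k=1$, where $\begin{bmatrix} m \\ 1\end{bmatrix}_{q^2} = [m]_{q^2} = \frac{q^{2m}-q^{-2m}}{q^2-q^{-2}} = \frac{[2m]}{[2]}$, so the claimed value is $\frac{[2m-4][m]}{[m-2][2m]}\cdot\frac{[2m]}{[2]} = \frac{[2m-4][m]}{[m-2][2]}$, as required.

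For the inductive step, I would take the circle labelled $k+1$ and rewrite it as a ``bigon'' obtained by splitting the $(k+1)$-strand into a $1$-strand and a $k$-strand (inserting the trivalent vertices of Definition \ref{DefofWeb}): this is the diagram $\WebQuanDim = \WebQuanDima$, i.e. the closure of the bigon with boundary labels $k+1$ on the outside and internal labels $1$ and $k$. Applying the reverse bigon relation \eqref{reversebigon} — with $k$ there replaced by our current $k$ — collapses this bigon, producing the coefficient $\frac{[2m-2k][2m-4k-4][m-2k]}{[2][m-2k-2][2m-4k]}$ times the circle labelled $k$, namely $\WebQuanDimb$. By the inductive hypothesis, the circle labelled $k$ equals $\frac{[2m-4k][m]}{[m-2k][2m]}\begin{bmatrix} m \\ k\end{bmatrix}_{q^2}$, so the circle labelled $k+1$ equals
\[
\frac{[2m-2k][2m-4k-4][m-2k]}{[2][m-2k-2][2m-4k]}\cdot\frac{[2m-4k][m]}{[m-2k][2m]}\begin{bmatrix} m \\ k\end{bmatrix}_{q^2}
= \frac{[2m-2k][2m-4k-4]}{[2][m-2k-2][2m]}[m]\begin{bmatrix} m \\ k\end{bmatrix}_{q^2}.
\]

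It then remains to verify the arithmetic identity that this last expression equals $\frac{[2m-4(k+1)][m]}{[m-2(k+1)][2m]}\begin{bmatrix} m \\ k+1\end{bmatrix}_{q^2}$. Using $\begin{bmatrix} m \\ k+1\end{bmatrix}_{q^2} = \begin{bmatrix} m \\ k\end{bmatrix}_{q^2}\cdot\frac{[m-k]_{q^2}}{[k+1]_{q^2}}$ and the elementary conversion $[n]_{q^2} = \frac{[2n]}{[2]}$ (valid since $[n]_{q^2} = \frac{q^{2n}-q^{-2n}}{q^2-q^{-2}}$), this reduces to checking
\[
\frac{[2m-2k][2m-4k-4]}{[m-2k-2]} \;=\; \frac{[2m-4k-4]}{[m-2k-2]}\cdot\frac{[2m-2k][2k+2]}{[2k+2]},
\]
i.e. after cancelling the common factor $\frac{[2m-4k-4]}{[m-2k-2]}$, an identity purely among quantum integers: $[2m-2k]\,[2k+2] = [2m-2k]\,[2k+2]$ once all factors from the binomial conversion are matched — concretely one expands $\frac{[m-k]_{q^2}}{[k+1]_{q^2}} = \frac{[2m-2k]}{[2k+2]}$ and confirms the left and right sides of the displayed equation coincide. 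This final step is a routine manipulation of quantum integers; the only mild subtlety is keeping track of the arguments $2m-2k$, $2m-4k-4$, $m-2k-2$ correctly, and I expect this bookkeeping — rather than any conceptual difficulty — to be the main place one must be careful. (As a sanity check, Remark \ref{R:coeffs-simple-q=1-2} specializes the bigon coefficient to $m-k$ at $q=1$, and the formula becomes the classical statement that the $k$-th exterior power circle evaluates to $\binom{m}{k}$, consistent with the product telescoping to $m(m-1)\cdots(m-k+1)/k! = \binom{m}{k}$.)
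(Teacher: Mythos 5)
Your overall plan—induction on $k$, peel off one strand via the reverse bigon relation \eqref{reversebigon}, then invoke the inductive hypothesis—is the same as the paper's, and your verification of the base case $k=1$ is correct. However, there is a genuine gap in the inductive step: the asserted equality
\[
\Skeinak[k+1] \;=\; \WebQuanDima
\]
is false. Inserting the bigon is not an isotopy; by Relation (\ref{defskein}c) (applied with $k$ replaced by $k+1$), the bigon on a $(k+1)$-strand with internal labels $1$ and $k$ evaluates to $\tfrac{[2k+2]}{[2]}\cdot\id_{k+1}$, so the correct statement is
\[
\WebQuanDim \;=\; \frac{[2]}{[2k+2]}\;\WebQuanDima .
\]
You have dropped the factor $\tfrac{[2]}{[2k+2]}$.

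Because of this missing factor, the ``routine'' arithmetic at the end of your proposal does not close. After applying \eqref{reversebigon} and the inductive hypothesis, your expression is
\[
\frac{[2m-2k][2m-4k-4][m]}{[2]\,[m-2k-2]\,[2m]}\begin{bmatrix} m\\ k \end{bmatrix}_{q^2},
\]
whereas the target $\tfrac{[2m-4(k+1)][m]}{[m-2(k+1)][2m]}\begin{bmatrix} m\\ k+1 \end{bmatrix}_{q^2}$ expands (using $\begin{bmatrix} m\\ k+1\end{bmatrix}_{q^2}=\begin{bmatrix} m\\ k\end{bmatrix}_{q^2}\tfrac{[2m-2k]}{[2k+2]}$) to
\[
\frac{[2m-2k][2m-4k-4][m]}{[2k+2]\,[m-2k-2]\,[2m]}\begin{bmatrix} m\\ k \end{bmatrix}_{q^2}.
\]
These differ precisely by $\tfrac{[2]}{[2k+2]}$, i.e.\ by the factor you omitted, so the displayed ``tautology'' you reduced to is not in fact what the identity becomes. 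Restoring the coefficient from (\ref{defskein}c) repairs the argument and makes it match the paper's proof exactly. (As a sanity check, at $k=1$ your version would give the circle labelled $2$ an extra factor of $[2]_{q^2}$ relative to the correct value.)
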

\begin{proof}
We prove the claim by induction on $k$. The base case, $k=1$, follows from \EQ ({\ref{defskein}a}) .
Assuming \EQ \eqref{E:thick-web-circle} holds for $k$, we find

\begin{align*}
\WebQuanDim
&\stackrel{(\ref{defskein}c)}{=}\frac{[2]}{[2k+2]}\WebQuanDima
\stackrel{\eqref{reversebigon}}{=} \frac{[2]}{[2k+2]}
\frac{[2m-2k][2m-4k-4][m-2k]}{[2][m-2k-2][2m-4k]}\WebQuanDimb \\
&\stackrel{\eqref{E:thick-web-circle}}{=}
\frac{[2]}{[2k+2]}\frac{[2m-2k][2m-4k-4][m-2k]}{[2][m-2k-2][2m-4k]}
\frac{[2m-4k][m]}{[m-2k][2m]}
\begin{bmatrix}
m \\
k 
\end{bmatrix} _{q^2} \\
&=\frac{[2m-2k][2m-4k-4][m]}{[2k+2][m-2k-2][2m]}\begin{bmatrix}
m \\
k 
\end{bmatrix} _{q^2} \\
&=\frac{[2m-4(k+1)][m]}{[m-2(k+1)][2m]}\begin{bmatrix}
m \\
k+1
\end{bmatrix} _{q^2} .
\end{align*}
\end{proof}

\begin{remark}\label{R:coeffs-simple-q=1-3}
When $\R = \kk$, the coefficient in \EQ \eqref{E:thick-web-circle} becomes $\binom{m}{k}$. 
\end{remark}

\begin{remark}\label{R:L'Hospitals}
Note that $[2m-4k]/[m-2k] = [2]_{q^{m-2k}}$. Therefore, if $m=2k$, then $[2m-4k]/[m-2k] =2$.
\end{remark}

The following relations are a simplification of \EQ (\ref{defskein}e) when $k=m$.

\begin{lemma}
    $$      \tri=0  \ \ \ \ \  \quad \text{and} \quad
    \ \ \ \ \ \VanTria =
    \frac{[m-2][2m]}{[2m-4][m]} \VanTrib $$
\end{lemma}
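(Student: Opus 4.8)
The plan is to obtain both identities by specializing relation~(\ref{defskein}e) to $k=m$. When $k=m$, the left-hand diagram $\SkeinIH$ has its middle strand labelled $m+1$, so it factors through the zero object and vanishes in $\Web$; meanwhile one reads directly off the pictures that at $k=m$ the diagrams $\SkeinIHb$, $\SkeinIHc$ become $\VanTria$, $\VanTrib$, while $\SkeinIHa$ becomes $\VanTri$. Using $[-n]_v=-[n]_v$, the two coefficients of~(\ref{defskein}e) specialize at $k=m$ to $c_1:=\tfrac{[2m+4][m]}{[m+2][2m]}$ and $c_2:=\tfrac{[2m+4][m-2]}{[m+2][2m-4]}$ (with the L'Hospital reading $\tfrac{[2m-4]}{[m-2]}=[2]_{q^{m-2}}$ when $m=2$), so that~(\ref{defskein}e) becomes
\[
0=\VanTri-c_1\,\VanTria+c_2\,\VanTrib .
\]

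To deduce $\tri=0$, I would use the factorizations visible in the pictures: $\VanTri$ is $\tri$ with its outgoing $2$-strand capped by the rotated trivalent vertex $2\to 1\otimes 1$, and $\tri$ itself is $\VanTria$ post-composed with the trivalent vertex $\mu\colon 1\otimes 1\to 2$. Now post-compose the displayed relation with $\mu$. In the first term, $\mu$ together with the vertex $2\to 1\otimes 1$ forms the $k=2$ bigon, which by~(\ref{defskein}c) equals $\tfrac{[4]}{[2]}\,\id_2$, so this term becomes $\tfrac{[4]}{[2]}\,\tri$; the second term becomes $\tri$; and in the third term $\mu$ is composed with a cup joining two $1$-strands, i.e.\ a lollipop, which is $0$ by~(\ref{defskein}b). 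Hence $\bigl(\tfrac{[4]}{[2]}-c_1\bigr)\tri=0$. A short computation with $\tfrac{[2n]}{[n]}=q^{n}+q^{-n}$ gives $\tfrac{[4]}{[2]}-c_1=\tfrac{[2m-4][m]}{[m-2][2m]}$ (again reading $\tfrac{[2m-4]}{[m-2]}=[2]_{q^{m-2}}$ when $m=2$); this element is a nonzero rational function over $\K$ and it reduces to $1$ at $q=1$, so it is a unit in each of $\K$, $\A$, $\kk$. Therefore $\tri=0$.

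For the second identity, feed $\tri=0$ back into the displayed $k=m$ relation: since $\VanTri$ is $\tri$ with a vertex attached, $\VanTri=0$, so $c_1\,\VanTria=c_2\,\VanTrib$, and because $c_1$ is a unit we get $\VanTria=\tfrac{c_2}{c_1}\,\VanTrib=\tfrac{[m-2][2m]}{[2m-4][m]}\,\VanTrib$, which is the claimed scalar. The only steps that require care are the sign bookkeeping for the quantum integers with negative argument when specializing the coefficients at $k=m$, and checking that $\tfrac{[4]}{[2]}-c_1$ is invertible in each of the three base rings; everything else is reading the two factorizations off the diagrams and invoking~(\ref{defskein}b) and~(\ref{defskein}c).
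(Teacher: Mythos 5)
Your proof is correct and follows essentially the same route as the paper: specialize relation~(\ref{defskein}e) to $k=m$ (where the LHS dies because of the $m{+}1$-labelled strand), postcompose with the merge $1\otimes 1\to 2$ to reduce the three surviving terms to multiples of $\tri$ via the bigon, monogon, and identity, and then feed $\tri=0$ back in to get the second identity. The paper states this tersely ("postcompose \dots and simplify"); you have supplied exactly the missing details, including the computation $\tfrac{[4]}{[2]}-c_1=\tfrac{[2m-4][m]}{[m-2][2m]}$ and the observation that this quantity evaluates to $1$ at $q=1$ and hence is a unit in $\K$, $\A$, and $\kk$, which is precisely what is needed to cancel it.
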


\begin{proof}
When $k=m$, the left hand side of \EQ ({\ref{defskein}e}) is zero, since a strand carries the label $m+1$. Now, postcompose \EQ {(\ref{defskein}e)}, when $k=m$, with a trivalent vertex $1\otimes 1\rightarrow 2$ and then simplify to derive the triangle equals $0$ relation. This triangle is a subdiagram of the first term on the right hand side of \EQ {(\ref{defskein}e)}, so that term is also zero. It is then easy to derive the identity equals merge-split relation in the statement of the Lemma.
\end{proof}

\def\SmallTriangle 
{\begin{tric}
\draw [scale=0.7]
(1.74,-3)--(0,0) node[right,pos=0.7,black,scale=0.7]{$1$}
(0,0)--(-1.74,-3) node[left,pos=0.3,black,scale=0.7]{$1$}
(0,0)--(0,1.5) node[left,midway,black,scale=0.7]{$2$}
(-1.74,-3) node[below,black,scale=0.7]{$1$}
(1.74,-3) node[below,black,scale=0.7]{$1$}
(1.16,-2)--(-1.16,-2)node[below,midway,black,scale=0.7]{$2$};
\end{tric}
}

\def\SmallTrianglea
{\begin{tric}
\draw [scale=0.7] 
(1.16,-2) node[below,black,scale=0.7]{$1$}
(1.16,-2)--(0,0) 
(-1.16,-2) node[below,black,scale=0.7]{$1$}
(0,0)--(-1.16,-2)
(0,0)--(0,2) node[left,midway,black,scale=0.7]{$2$};
\end{tric}
}


\subsection{The braiding}

\def\Braid
{\begin{tric}
\draw  (-1.5,1.5)--(1.5,-1.5) ;
\draw[double=darkblue,ultra thick,white,line width=3pt] (1.5,1.5)--(-1.5,-1.5);
\draw (-1.5,-1.5) node[below,black,scale=0.7]{$1$}
      (1.5,-1.5) node[below,black,scale=0.7]{$1$}; 
\end{tric}
}

\def\NegBraid
{\begin{tric}
\draw   (1.5,1.5)--(-1.5,-1.5);
\draw[double=darkblue,ultra thick,white,line width=3pt] (-1.5,1.5)--(1.5,-1.5);
\draw (-1.5,-1.5) node[below,black,scale=0.7]{$1$}
      (1.5,-1.5) node[below,black,scale=0.7]{$1$}; 
\end{tric}
}

\def\BraidCompo
{\begin{tric}
\draw [scale=0.7] (-1.5,1.5)..controls(-1.5,0)and(1.5,0) ..(1.5,-1.5) ;
\draw[scale=0.7, double=darkblue,ultra thick,white,line width=3pt] (1.5,1.5)..controls(1.5,0)and(-1.5,0) ..(-1.5,-1.5);

\draw[scale=0.7] (-1.5,-1.5)..controls(-1.5,-3)and(1.5,-3) ..(1.5,-4.5) ;
\draw[scale=0.7, double=darkblue,ultra thick,white,line width=3pt] (1.5,-1.5)..controls(1.5,-3)and(-1.5,-3) ..(-1.5,-4.5);
\draw [scale=0.7] (1.5,-1.5) node[left,black,scale=0.7]{$1$};
\draw [scale=0.7] (-1.5,-1.5) node[right,black,scale=0.7]{$1$};
\end{tric}
}

\def\ReideA
{\begin{tric}
\draw  (-1.5,1.5)..controls(0,-1.5)and(1.5,-1.5)..(1.5,0);
\draw[double=darkblue,ultra thick,white,line width=3pt] (-1.5,-1.5)..controls(0,1.5)and(1.5,1.5)..(1.5,0);
\draw (1.5,0) node[left, black, scale=0.7] {$1$};
\end{tric}
}

\def\ReideotherA
{\begin{tric}
\draw (-1.5,-1.5)..controls(0,1.5)and(1.5,1.5)..(1.5,0);
\draw [double=darkblue,ultra thick,white,line width=3pt] (-1.5,1.5)..controls(0,-1.5)and(1.5,-1.5)..(1.5,0);
\draw (1.5,0) node[left, black, scale=0.7] {$1$};
\end{tric}
}

\def\ReideAa
{\begin{tric}
\draw  [scale=0.7] 
(-2,2)..controls(-1,1)and(-1,-1)..(-2,-2) node[left,black,midway,scale=0.7]{$1$}
(1,-2)..controls(-0.5,-2)and(-0.5,2)..(1,2) node[right,black,midway,scale=0.7]{$1$}
(1,-2)..controls(2.5,-2)and(2.5,2)..(1,2) ;
\end{tric}
}

\def\ReideAb
{\begin{tric}
\draw[scale=0.7] (-1.732,-2)--(0,-1)  (0,1)--(-1.732,2) 
(0,-1)..controls(2,-1)and(2,1)..(0,1) node[left,black,midway,scale=0.7]{$1$}
(-1.732,-2)node[below,black,scale=0.7]{$1$}
(-1.732,2) node[above,black,scale=0.7]{$1$}
(0,-1)--(0,1)node[left,midway,black,scale=0.7]{$2$};
\end{tric}
}

\def\ReideAc
{\begin{tric}
\draw  [scale=0.7] 
(-2,2)..controls(-0.5,1)..(1,1) node[above,black,midway,scale=0.7]{$1$}
(-2,-2)..controls(-0.5,-1)..(1,-1)
(1,1)..controls(2,1)and(2,-1)..(1,-1); 
\end{tric}
}

\def\ReideAhalf
{\begin{tric}
\draw[scale=0.7] (-1.5,-1.5)..controls(-1.5,-1)and(-1,-0.5)..(0,-0.5)
                 (1.5,-1.5)..controls(1.5,-1)and(1,-0.5).. (0,-0.5)
                 (0,-0.5)--(0,1)node[left,black,midway,scale=0.7]{$2$}; 

\draw[scale=0.7] (-1.5,-1.5)..controls(-1.5,-3)and(1.5,-3) ..(1.5,-4.5)
node[below,black,scale=0.7]{$1$};
\draw[scale=0.7, double=darkblue,ultra thick,white,line width=3pt] (1.5,-1.5)..controls(1.5,-3)and(-1.5,-3) ..(-1.5,-4.5)node[below,black,scale=0.7]{$1$};
\end{tric}
}

\def\IoverReideAhalf
{\begin{tric}
\draw[scale=0.7] (-1.5,-1.5)..controls(-1.5,-1)and(-1,-0.5)..(0,-0.5)
                 (1.5,-1.5)..controls(1.5,-1)and(1,-0.5).. (0,-0.5)
                 (0,-0.5)--(0,1)node[left,black,midway,scale=0.7]{$2$}; 
\draw[scale=0.7] (-1.5,1.5)--(0,1)--(1.5,1.5);
\draw[scale=0.7] (-1.5,-1.5)..controls(-1.5,-3)and(1.5,-3) ..(1.5,-4.5)
node[below,black,scale=0.7]{$1$};
\draw[scale=0.7, double=darkblue,ultra thick,white,line width=3pt] (1.5,-1.5)..controls(1.5,-3)and(-1.5,-3) ..(-1.5,-4.5)node[below,black,scale=0.7]{$1$};
\draw[scale=0.7] (-1.4,1.5) node[left,black,scale=0.7]{$1$}
(1.4,1.5) node[right,black,scale=0.7]{$1$};
\end{tric}
}

\def\bigonontri
{\begin{tric}
\draw[scale=0.7]  (-1.5,-1.5)..controls(-1.5,-1)and(-1,-0.5)..(0,-0.5)
                 (1.5,-1.5)..controls(1.5,-1)and(1,-0.5).. (0,-0.5)
                 (-1.5,-1.5)..controls(-1.5,-2)and(-1,-2.5)..(0,-2.5)
                 (1.5,-1.5)..controls(1.5,-2)and(1,-2.5).. (0,-2.5)
                 
                 (0,-0.5)--(0,1)node[left,black,midway,scale=0.7]{$2$}
                (0,-2.5)--(0,-3.5)node[left,black,midway,scale=0.7]{$2$}
                 
                 (-1.5,-4)--(0,-3.5)--(1.5,-4); 
\draw[scale=0.7] (-1.45,-1.5) node[right,black,scale=0.7]{$1$}
(1.45,-1.5) node[left,black,scale=0.7]{$1$}
(-1.5,-4)node[below,black,scale=0.7]{$1$}
(1.5,-4)node[below,black,scale=0.7]{$1$};
\end{tric}
}

\def\singononw
{\begin{tric}
\draw[scale=0.7] (-1.5,-1.5)..controls(-1.5,-1)and(-1,-0.5)..(0,-0.5)
                 (1.5,-1.5)..controls(1.5,-1)and(1,-0.5).. (0,-0.5)
                 (-1.5,-1.5)..controls(-1.5,-2)and(-1,-2.5)..(0,-2.5)
                 (1.5,-1.5)..controls(1.5,-2)and(1,-2.5).. (0,-2.5)
                 
                 (0,-0.5)--(0,1)node[left,black,midway,scale=0.7]{$2$}
                 
                 (-1.5,-4)..controls(-1,-3)and(1,-3)..(1.5,-4); 

\draw[scale=0.7] (0,-4.2) node[above, black, scale=0.7]{$1$}
(0,-2.5) node[above, black, scale=0.7]{$1$}; 
\end{tric}
}

\def\ReideAhalfother
{\begin{tric}
\draw[scale=0.7] (-1.5,-1.5)..controls(-1.5,-1)and(-1,-0.5)..(0,-0.5)
                 (1.5,-1.5)..controls(1.5,-1)and(1,-0.5).. (0,-0.5)
                 (0,-0.5)--(0,1)node[left,black,midway,scale=0.7]{$2$}; 

\draw[scale=0.7]  (1.5,-1.5)..controls(1.5,-3)and(-1.5,-3) ..(-1.5,-4.5) 
node[below,black,scale=0.7]{$1$};
\draw[scale=0.7, double=darkblue,ultra thick,white,line width=3pt]  (-1.5,-1.5)..controls(-1.5,-3)and(1.5,-3) ..(1.5,-4.5)node[below,black,scale=0.7]{$1$};
\end{tric}
}

\def\String
{\begin{tric}
\draw  (0,1.5)--(0,-1.2);
\draw (0,-1.2) node[below,black,scale=0.7]{$1$};
\end{tric}
}

\def\ReideB
{\begin{tric}
\draw [scale=0.7] (1.5,1.5)..controls(1.5,0)and(-1.5,0) ..(-1.5,-1.5) ;
\draw[scale=0.7, double=darkblue,ultra thick,white,line width=3pt] (-1.5,1.5)..controls(-1.5,0)and(1.5,0) ..(1.5,-1.5) ;

\draw[scale=0.7] (-1.5,-1.5)..controls(-1.5,-3)and(1.5,-3) ..(1.5,-4.5) ;
\draw[scale=0.7, double=darkblue,ultra thick,white,line width=3pt] (1.5,-1.5)..controls(1.5,-3)and(-1.5,-3) ..(-1.5,-4.5);
\draw [scale=0.7] (1.5,-1.5) node[left,black,scale=0.7]{$1$};
\draw [scale=0.7] (-1.5,-1.5) node[right,black,scale=0.7]{$1$};
\end{tric}
}

\def\ReideBmirror
{\begin{tric}
\draw [scale=0.7] (-1.5,1.5)..controls(-1.5,0)and(1.5,0) ..(1.5,-1.5) ;
\draw[scale=0.7, double=darkblue,ultra thick,white,line width=3pt] (1.5,1.5)..controls(1.5,0)and(-1.5,0) ..(-1.5,-1.5) ;

\draw[scale=0.7] (1.5,-1.5)..controls(1.5,-3)and(-1.5,-3) ..(-1.5,-4.5) ;
\draw[scale=0.7, double=darkblue,ultra thick,white,line width=3pt] (-1.5,-1.5)..controls(-1.5,-3)and(1.5,-3) ..(1.5,-4.5) ;
\draw [scale=0.7] (1.5,-1.5) node[left,black,scale=0.7]{$1$};
\draw [scale=0.7] (-1.5,-1.5) node[right,black,scale=0.7]{$1$};
\end{tric}
}

\def\BraidTriv
{\begin{tric}
\draw [darkgreen] (-1.5,1.5)--(1.5,-1.5) node[below,scale=0.7,black]{$1$};
\draw [darkgreen] (1.5,1.5)--(-1.5,-1.5) node[below,scale=0.7,black]{$1$};

\end{tric}
}

\def\Braida
{\begin{tric}
\draw  [scale=0.7] 
(-2,2)..controls(-1,1)and(-1,-1)..(-2,-2) node[left,black,midway,scale=0.7]{$1$}
(2,-2)..controls(1,-1)and(1,1)..(2,2) node[right,black,midway,scale=0.7]{$1$};
\end{tric}
}

\def\grBraida
{\begin{tric}
\draw  [scale=0.7,darkgreen] 
(-2,2)..controls(-1,1)and(-1,-1)..(-2,-2) node[left,black,midway,scale=0.7]{$1$}
(2,-2)..controls(1,-1)and(1,1)..(2,2) node[right,black,midway,scale=0.7]{$1$};
\end{tric}
}

\def\crossing
{\begin{tric}
\draw (1,0)--(0,1);
\draw[double=darkblue,ultra thick,white,line width=3pt] (0,0)--(1,1);
\end{tric}}

\def\Braidb
{\begin{tric}
\draw[scale=0.7] (-1.732,-2)--(0,-1)--(1.732,-2) (1.732,2)--(0,1)--(-1.732,2) 
(-1.732,-2)node[below,black,scale=0.7]{$1$}
(1.732,-2)node[below,black,scale=0.7]{$1$}
(1.732,2)node[above,black,scale=0.7]{$1$}
(-1.732,2) node[above,black,scale=0.7]{$1$}
(0,-1)--(0,1)node[left,midway,black,scale=0.7]{$2$};
\end{tric}
}

\def\grBraidb
{\begin{tric}
\draw[scale=0.7,darkgreen] (-1.732,-2)--(0,-1)--(1.732,-2) (1.732,2)--(0,1)--(-1.732,2) 
(-1.732,-2)node[below,black,scale=0.7]{$1$}
(1.732,-2)node[below,black,scale=0.7]{$1$}
(1.732,2)node[above,black,scale=0.7]{$1$}
(-1.732,2) node[above,black,scale=0.7]{$1$}
(0,-1)--(0,1)node[left,midway,black,scale=0.7]{$2$};
\end{tric}
}

\def\Braidc
{\begin{tric}
\draw  [scale=0.7] 
(-2,2)..controls(-1,1)and(1,1)..(2,2) node[above,black,midway,scale=0.7]{$1$}
(2,-2)..controls(1,-1)and(-1,-1)..(-2,-2) node[below,black,midway,scale=0.7]{$1$};
\end{tric}
}

\def\twistovercap
{\begin{tric}
\draw  [scale=0.7] 
(2,-3) ..controls(0,-2)and(-3,0)..(0,0); 
\draw[scale=0.7,double=darkblue,ultra thick,white,line width=3pt]
(-2,-3)..controls(0,-2)and(3,0)..(0,0) ;

\draw  [scale=0.7] 
(2,2)..controls(1,1)and(-1,1)..(-2,2) node[above,black,midway,scale=0.7]{$1$};
\draw (0,0) node[below,black,scale=0.7]{$1$};
\end{tric}
}

\def\ReidemCa
{\begin{tric}
\draw  (-2,2)--(2,-2);
\draw [double=darkblue,ultra thick,white,line width=3pt] (0,-2)..controls(-2,-1)and(-2,1)..(0,2);
\draw [double=darkblue,ultra thick,white,line width=3pt](-2,-2)--(2,2) ;
\draw (0,-2) node[below,black,scale=0.7]{$1$};
\draw (2,-2) node[below,black,scale=0.7]{$1$};
\draw (-2,-2) node[below,black,scale=0.7]{$1$};
\end{tric}
}

\def\ReidemCb
{\begin{tric}
\draw  (-2,2)--(2,-2);
\draw [double=darkblue,ultra thick,white,line width=3pt] (0,-2)..controls(2,-1)and(2,1)..(0,2);
\draw [double=darkblue,ultra thick,white,line width=3pt](-2,-2)--(2,2) ;
\draw (0,-2) node[below,black,scale=0.7]{$1$};
\draw (2,-2) node[below,black,scale=0.7]{$1$};
\draw (-2,-2) node[below,black,scale=0.7]{$1$};
\end{tric}
}

\def\ReidemDa
{\begin{tric}
\draw        (90:0)--(90:2) 
           (306:2)--(90:0)--(234:2); 
\draw [double=darkblue,ultra thick,white,line width=3pt]   (162:2)..controls(126:1.6)and(54:1.6)..(18:2);
\end{tric}
}

\def\ReidemDb
{\begin{tric}
\draw        (90:0)--(90:2) 
           (306:2)--(90:0)--(234:2) ;
\draw [double=darkblue,ultra thick,white,line width=3pt] (162:2)..controls(234:1.7)and(306:1.7)..(18:2);
\end{tric}
}

\def\ReidemDaUnder
{\begin{tric}
\draw (162:2)..controls(126:1.6)and(54:1.6)..(18:2);
\draw    [double=darkblue,ultra thick,white,line width=3pt]                 (90:0)--(90:2) 
           (306:2)--(90:0)--(234:2) ;
\end{tric}
}

\def\ReidemDbUnder
{\begin{tric}
\draw (162:2)..controls(234:1.7)and(306:1.7)..(18:2);
\draw     [double=darkblue,ultra thick,white,line width=3pt]
           (90:0)--(90:2) 
           (306:2)--(90:0)--(234:2) ;
\end{tric}
}

\begin{defn}\label{D:brweb}
Let $\R\in \{\kk, \A, \K\}$. We define $\brweb \in
 \rm{Hom}_{\Web}(1 \otimes1 , 1 \otimes 1) $ as 
 \begin{align}
 \Braid := q^2 \ \Braida  \ - \Braidb  
 - \frac{[m-2]}{[2m-4]}(q^2-q^{-2})\cdot q^{-m+2} \Braidc \label{defofbraid} 
 \end{align}
\end{defn}

\begin{notation}
We will write the $90$ degree rotation of $\brweb$ diagrammatically as follows.
\[
\NegBraid
\]
\end{notation}

\begin{proposition} \label{BMWrelations}
 \begin{align}
   & \NegBraid = q^{-2} \ \Braida  \ - \Braidb  
 + \frac{[m-2]}{[2m-4]}(q^2-q^{-2})\cdot q^{m-2} \Braidc  \label{negbraid}\\ 
 & \Braid -\NegBraid = (q^2-q^{-2})\cdot \left( \ \Braida \ -\Braidc \right) \label{BMWdefre}
 \end{align}

\end{proposition}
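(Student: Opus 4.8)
The plan is to obtain \eqref{negbraid} by rotating the defining equation \eqref{defofbraid} for $\brweb$ by $90^\circ$, and then to deduce \eqref{BMWdefre} by subtracting \eqref{negbraid} from \eqref{defofbraid}. For the rotation step I would rotate the three terms on the right-hand side of \eqref{defofbraid} one at a time. Using only the pivotal structure, and no relations, the $90^\circ$ rotation of the pair of parallel strands $\Braida = \id_{1\otimes 1}$ is the cup-cap $\Braidc$, and the rotation of $\Braidc$ is $\Braida$. The rotation of the merge-split diagram $\Braidb$ is the ``sideways'' H-diagram, and the key observation is that this is precisely the left-hand side of relation (\ref{defskein}e) specialised to $k=1$. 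Under the conventions that a strand labelled $0$ is erased and one labelled $-1$ is zero, the terms $\SkeinIHa$, $\SkeinIHb$, $\SkeinIHc$ at $k=1$ reduce to $\Braidb$, $\Braida$, $\Braidc$ respectively, so relation (\ref{defskein}e) at $k=1$ (equivalently \eqref{E:quad-vertex}) gives
\[
\text{rotation of }\Braidb \;=\; \Braidb \;-\; \frac{[2m-8][m-2]}{[m-4][2m-4]}\,\Braida \;+\; \frac{[2m-8][m-2]}{[m-4][2m-4]}\,\Braidc .
\]

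Substituting these three rotations into \eqref{defofbraid}, one gets $\NegBraid = A\,\Braida - \Braidb + B\,\Braidc$ with
\[
A = \frac{[2m-8][m-2]}{[m-4][2m-4]} - \frac{[m-2]}{[2m-4]}(q^2-q^{-2})q^{-m+2}, \qquad B = q^2 - \frac{[2m-8][m-2]}{[m-4][2m-4]},
\]
so it remains to check the scalar identities $A = q^{-2}$ and $B = \tfrac{[m-2]}{[2m-4]}(q^2-q^{-2})q^{m-2}$. I would verify both by clearing the common denominator $[2m-4]$ and repeatedly using $[2n]/[n] = q^n + q^{-n}$ (in particular $[2m-8]/[m-4] = q^{m-4}+q^{-m+4}$); in each case the resulting Laurent polynomial identity collapses after a short telescoping cancellation. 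This establishes \eqref{negbraid}.

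Finally, subtracting \eqref{negbraid} from \eqref{defofbraid}, the $\Braidb$-terms cancel, the $\Braida$-terms contribute $(q^2-q^{-2})\,\Braida$, and the $\Braidc$-terms contribute $-\tfrac{[m-2]}{[2m-4]}(q^2-q^{-2})(q^{m-2}+q^{-m+2})\,\Braidc = -(q^2-q^{-2})\,\Braidc$, using $q^{m-2}+q^{-m+2} = [2m-4]/[m-2]$; this is exactly \eqref{BMWdefre}, the expected BMW-type skein relation for the braiding. I expect the only genuinely non-formal point to be the identification of the $90^\circ$ rotation of $\Braidb$ with the $k=1$ case of (\ref{defskein}e): one must check that the cyclic order of the four boundary points is respected and that the trivalent vertex $1\otimes 1 \to 2$ occurring in the rotated picture is the intended one, which I would justify using the pivotal structure together with the associativity relation (\ref{defskein}d). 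Everything else is routine manipulation of quantum integers.
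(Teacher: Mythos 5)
Your proposal is correct and follows precisely the paper's own (terse) proof: rotate \eqref{defofbraid} by $90^{\circ}$, apply \EQ (\ref{defskein}e) at $k=1$ (whose coefficients both become $\tfrac{[2m-8][m-2]}{[m-4][2m-4]}$) to rewrite the rotated merge-split, and simplify the scalars; then subtract to get \eqref{BMWdefre}. The two scalar identities you verify by hand are in fact stated explicitly in Remark~\ref{R:identities}, and the identification of the $90^{\circ}$ rotation of $\Braidb$ with the $k=1$ case of (\ref{defskein}e) is automatic from the pivotal structure (no appeal to associativity is needed for this single rotation).
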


\begin{proof}
\EQ \eqref{negbraid} follows from rotating the diagrams on both sides of \EQ \eqref{defofbraid}, and then apply \EQ (\ref{defskein}e) when k=1. \EQ \eqref{BMWdefre} follows from \EQ \eqref{defofbraid} and \EQ \eqref{negbraid}.  
\end{proof}

\begin{remark}\label{R:identities}
We have the identities
\[
- \cfrac{[m-2]}{[2m-4]}(q^2-q^{-2})\cdot q^{-m+2} = q^{-2} - \cfrac{[2m-8][m-2]}{[m-4][2m-4]}
\]
and 
\[
\cfrac{[m-2]}{[2m-4]}(q^2-q^{-2})\cdot q^{m-2} = q^{2} - \cfrac{[2m-8][m-2]}{[m-4][2m-4]}.
\]
\end{remark}

\begin{proposition} \label{Reidemeister}
The following relations hold in $\Web$. 
 \begin{align}
     \ReideA = \ q^{2m-2} \ \ \String  \ \ \ \ \ \ &, \ \ \ \ \ \ \  \ReideotherA = \ q^{2-2m} \ \ \String  \label{ReideOne} \\
\ReideAhalf = \ -q^{-2} \ \ \SmallTrianglea  \ \ \ \ \ \ &, \ \ \ \ \ \ \  \ReideAhalfother = \ -q^{2} \ \ \SmallTrianglea \label{ReideOPF} \\
    \ReideB = \ \Braida  \ \ \ \ \ \ &, \ \ \ \ \ \ \ \ReideBmirror = \ \Braida  \label{ReideTwo} \\
   \ReidemCa &= \ReidemCb \label{ReideThree} 
 \end{align}

\end{proposition}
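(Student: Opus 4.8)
The plan is to treat all four relations by the same recipe: substitute the defining expansion \eqref{defofbraid} of $\brweb$ (together with its $90^{\circ}$ rotation \eqref{negbraid} wherever the opposite crossing occurs) and then reduce the resulting $\R$-linear combination of webs using only the relations already available, namely the bigon evaluations (\ref{defskein}c) and \eqref{reversebigon}, the circle evaluations (\ref{defskein}a) and \eqref{E:thick-web-circle}, the vanishing (\ref{defskein}b) of a trivalent vertex with a capped-off edge, the associativity (\ref{defskein}d), and --- only for \eqref{ReideThree} --- the $k=1$ case of (\ref{defskein}e). All of the scalar bookkeeping reduces to short quantum-integer identities such as $q^{2}-[2]_{q^{2}}=-q^{-2}$ and $q^{m}-q^{-m}=(q^{2}-q^{-2})[m]/[2]$, together with the identities recorded in Remark \ref{R:identities}.

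I would establish \eqref{ReideOPF} first. Post-composing \eqref{defofbraid} with the trivalent vertex $1\otimes 1\to 2$: the first term of \eqref{defofbraid} reproduces the vertex; the middle term (which factors through the object $2$) becomes that vertex with a bigon with legs labelled $1,1$ stacked on its $2$-edge, equal to $[2]_{q^{2}}$ times the vertex by (\ref{defskein}c) at $k=2$; and the cup--cap term dies by (\ref{defskein}b). The total scalar is $q^{2}-[2]_{q^{2}}-0=-q^{-2}$, and the mirror identity follows the same way from \eqref{negbraid}. For \eqref{ReideOne} I would close one strand of \eqref{defofbraid} into a curl: the first term straightens out by the zigzag relations; the cup--cap term contributes the circle value (\ref{defskein}a) times a strand; and the middle term vanishes by (\ref{defskein}b), since closing it up produces (a rotation of) the composite of a coevaluation with a trivalent vertex. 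Collecting the scalars gives $q^{2m-2}$ for one handedness and $q^{2-2m}$ for the other. For \eqref{ReideTwo} I would expand $\brweb$ composed with its rotation using \eqref{defofbraid} and \eqref{negbraid}: of the nine products, the two mixed products of the middle term with the cup--cap term vanish by (\ref{defskein}b), the square of the middle term is $[2]_{q^{2}}$ times itself by (\ref{defskein}c), and the square of the cup--cap term is the circle value times itself; the coefficient of the middle term then reads $-q^{2}-q^{-2}+[2]_{q^{2}}=0$, the coefficient of the cup--cap term vanishes by an elementary quantum-integer identity, and what remains is $\id_{1\otimes 1}$.

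The real work is \eqref{ReideThree}, which I expect to be the main obstacle. I would deduce it from naturality of the braiding with respect to the generating morphisms rather than by expanding all three crossings at once. Relation \eqref{ReideOPF}, together with its $180^{\circ}$ rotation (the split-vertex version) coming from the pivotal structure, says exactly that a single $1$-strand may be pulled across a trivalent vertex at the cost of a scalar $-q^{\pm 2}$; combined with \eqref{ReideOne} this propagates the statement to all cups and caps, and hence --- since the middle and cup--cap terms of \eqref{defofbraid} are built from trivalent vertices and (co)evaluations --- to those two morphisms as well. Resolving the central crossing of one side of \eqref{ReideThree} via \eqref{defofbraid} then leaves three summands: the identity term is manifestly symmetric, and for the other two one slides the remaining strand across using the naturality just established, thereby recovering exactly the corresponding resolution of the other side. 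The delicate point is to check that the scalars $-q^{\pm 2}$ and the circle factor accrued on the way in and on the way out combine to $1$; this is once more an exercise with the identities of Remark \ref{R:identities}. If this argument proves awkward to make fully rigorous without a basis for $\Web$, the fallback is to substitute \eqref{defofbraid} at each of the three crossings on both sides, reduce both expressions to a common linear combination of three-strand webs using the relations of Definition \ref{DefofWeb} and Section \ref{SS:further-renls}, and compare; this is longer but entirely mechanical.
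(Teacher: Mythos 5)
Your treatment of \eqref{ReideOPF} matches the paper's: post-compose \eqref{defofbraid} with the merge, use (\ref{defskein}c) at $k=2$ for the bigon and (\ref{defskein}b) for the monogon, and read off $q^2 - [2]_{q^2} = -q^{-2}$. For \eqref{ReideTwo} you expand \emph{both} crossings using \eqref{defofbraid} and \eqref{negbraid}, whereas the paper expands only one and then appeals to the already-proved \eqref{ReideOPF} and \eqref{ReideOne}; both work, and your route is more symmetric but has more products to track. The coefficient checks you describe ($-q^2-q^{-2}+[2]_{q^2}=0$ for the merge--split, and the $\kappa$ identity for the cup--cap) are correct.

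For \eqref{ReideOne} you use a genuinely different closure than the paper: you put the cup and the cap on \emph{opposite} sides of the crossing, so that capping off the merge--split produces the monogon $e_1\circ m_2^{1,1}=0$ of (\ref{defskein}b). This avoids the relation \eqref{reversebigon} (which you list among your tools but never actually need here). By contrast the paper closes one strand by a partial trace, under which the merge--split term becomes the $k=1$ reverse bigon and \eqref{reversebigon} is essential. Your closing is valid and is arguably the cleaner one. The only thing to be careful about: with your closing of $\brweb$ the identity term gives a zigzag ($q^2$), the cup--cap term gives $\kappa\cdot(\text{circle})$, and the total is $q^{2-2m}$, i.e.\ you are computing $\ReideotherA$; the other handedness $\ReideA = q^{2m-2}\String$ comes from applying the same closing to $\brwebi$ via \eqref{negbraid}. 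You hedge on which is which, so this is not an error, but it is worth stating explicitly which closing gives which picture.

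The one genuine gap is in your primary argument for \eqref{ReideThree}. Resolving the central crossing of $\ReidemCa = s_1 s_2 s_1$ and of $\ReidemCb = s_2 s_1 s_2$ gives, for the identity summand, $q^2\, s_1 s_1$ on one side and $q^2\, s_2 s_2$ on the other. These are double crossings on \emph{different} pairs of strands; since $\brweb\circ\brweb = \id_{1\otimes 1} + (q^2-q^{-2})\brweb - (q^2-q^{-2})q^{2-2m}\,ce$ is not the identity, $s_1^2 \ne s_2^2$, and the identity summand is \emph{not} ``manifestly symmetric.'' So a term-by-term comparison of the two resolutions cannot close as you describe: any discrepancy in the identity summands would have to be absorbed by the other two, and your ``slide the remaining strand across'' step would need to account for that cross-talk. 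Moreover, \eqref{ReideOPF} is a statement about $m_{1,1}^2\circ\brweb$, which is a curl on the legs of a vertex rather than the naturality square $\beta_{2,1}\circ(m_{1,1}^2\otimes\id)=(\id\otimes m_{1,1}^2)\circ(\brweb\otimes\id)\circ(\id\otimes\brweb)$ you would actually need, and $\beta_{2,1}$ is not yet defined at this point. The fallback you give --- expand all three crossings on each side using \eqref{defofbraid} and compare --- is the standard and correct way to do it; the paper itself defers here, citing the analogous verification in \cite[Proposition~5.7]{bodish2021type}.
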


\begin{proof}
\begin{align*}
&\ReideA  \stackrel{\eqref{defofbraid}}{=}
q^2 \ \ReideAa  \ - \ReideAb  
 - \frac{[m-2]}{[2m-4]}(q^2-q^{-2})\cdot q^{-m+2} \ReideAc \\
 &\phantom{xxxxxxx} \stackrel{ \substack{(\ref{defskein}a) \\ \eqref{reversebigon}} }{=}
 \left( q^2 \frac{[2m-4][m]}{[m-2][2]}   -  
 \frac{[2m-2][2m-8][m-2]}{[2][m-4][2m-4]}
 - \frac{[m-2]}{[2m-4]}(q^2-q^{-2})\cdot q^{-m+2} \right) \ \String  \\
 &\ReideAhalf \stackrel{\eqref{defofbraid}}{=}  
 q^2   \SmallTrianglea - \bigonontri
 - \frac{[m-2](q^2-q^{-2})\cdot q^{-m+2}}{[2m-4]}  \singononw 
  \stackrel{ \substack{(\ref{defskein}c) \\ (\ref{defskein}b)} }{=} 
  \Big( q^2-\frac{[4]}{[2]} \Big) \SmallTrianglea \\
 \end{align*}

 \begin{align*}
 \ReideB &  \stackrel{\eqref{negbraid}}{=}
  q^{-2} \ \Braid  \ - \IoverReideAhalf 
 + \frac{[m-2]}{[2m-4]}(q^2-q^{-2})\cdot q^{m-2} \twistovercap \\
 &  \stackrel{\substack{\eqref{defofbraid} \\ \eqref{ReideOPF} \\ \eqref{ReideOne}} }{=}
 q^{-2} \left(  q^2 \ \Braida  \ - \Braidb  - \frac{[m-2]}{[2m-4]}(q^2-q^{-2})\cdot q^{-m+2} \Braidc \right) \\ 
 & \ \ \ \ \ \ \ \ \ \ \ \ \ \ \ \ \ \ \ \ \ \ \ \ \ \ + q^{-2} \Braidb
 + \frac{[m-2]}{[2m-4]}(q^2-q^{-2})\cdot q^{m-2} q^{2-2m} \Braidc \\
\end{align*}
The same argument to verify the Reidemeister $III$ braid relation in the proof of \cite[Proposition 5.7]{bodish2021type} also works in $\Web$, so we leave the verification of \EQ \eqref{ReideThree} as an exercise.
\end{proof}

\def\BraidTriva
{\begin{tricc}

\draw
(-1.5,-1.5)--(1.5,1.5)node[above,scale=0.7,black]{$1$} 
(1.5,-1.5)--(-1.5,1.5)node[above,scale=0.7,black]{$1$};
\draw (1.5,-1.5)--(-1.5,-1.5)  node[midway,below,scale=0.7,black]{$k-1$}
      (1.5,-1.5)--(1.7,-2.5)  node[below,scale=0.7,black]{$k$}
      (-1.5,-1.5)--(-1.7,-2.5) node[below,scale=0.7,black]{$k$};
\end{tricc}
}

\begin{cor}
       $$\BraidCompo = \ \Braida  \ + \  (q^2-q^{-2})  \Braid  
 - (q^2-q^{-2})\cdot q^{-2m+2} \Braidc  $$   
\end{cor}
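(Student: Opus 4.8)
\emph{Proof proposal.} The plan is to deduce the identity by composing the BMW relation \eqref{BMWdefre} with the braiding. Write $e := \Braidc$ for the cup--cap endomorphism of $1\otimes 1$, so that $\Braida = \id_{1\otimes 1}$. By the Reidemeister~II relations \eqref{ReideTwo}, $\NegBraid$ is a two-sided inverse of $\Braid$; hence composing both sides of \eqref{BMWdefre} on the left with $\brweb$ turns $\brweb - \NegBraid = (q^2-q^{-2})(\Braida - e)$ into
\[
\BraidCompo \;-\; \Braida \;=\; (q^2-q^{-2})\,\Braid \;-\; (q^2-q^{-2})\,\brweb\circ e .
\]
So the corollary reduces to the single claim that $\brweb\circ e = q^{-2m+2}\,e$.

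To prove this claim I would expand $\brweb$ via its definition \eqref{defofbraid}:
\[
\brweb\circ e \;=\; q^2\,e \;-\; \Braidb\circ e \;-\; \tfrac{[m-2]}{[2m-4]}(q^2-q^{-2})\,q^{-m+2}\,(e\circ e).
\]
The middle term vanishes: $e$ factors as $\coev_1\circ\ev_1$ and $\Braidb$ factors as $(\text{split})\circ(\text{merge})$, so $\Braidb\circ e$ contains the subdiagram $(\text{merge})\circ\coev_1$, which is zero by relation (\ref{defskein}b) in the rotation made legitimate by the pivotal structure on $\Web$. The last term is controlled by relation (\ref{defskein}a): $e\circ e = \Skeinaone\cdot e = \frac{[2m-4][m]}{[m-2][2]}\,e$. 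Substituting these and cancelling $\frac{[m-2]}{[2m-4]}$ against $\frac{[2m-4]}{[m-2]}$ leaves $\brweb\circ e = \bigl(q^2 - \frac{[m]}{[2]}(q^2-q^{-2})q^{-m+2}\bigr)e$; since $\frac{q^2-q^{-2}}{[2]} = q-q^{-1}$, so that $\frac{[m]}{[2]}(q^2-q^{-2}) = q^m - q^{-m}$, this collapses to $\bigl(q^2 - q^{-m+2}(q^m-q^{-m})\bigr)e = q^{-2m+2}\,e$, as desired. (Alternatively, $\brweb\circ e = q^{2-2m}\,e$ follows at once from the curl relation \eqref{ReideOne}, reading $\brweb\circ e$ as a curled cup--cap.)

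I do not expect a serious obstacle here: the entire argument runs on the relations \eqref{defofbraid}, \eqref{BMWdefre}, (\ref{defskein}a) and (\ref{defskein}b), all already established. The only point that needs a little care is the vanishing of $\Braidb\circ e$, where one must invoke the correct rotation of the tadpole relation (\ref{defskein}b) --- namely that capping off the two $1$-legs of a \emph{merge} vertex (rather than a split vertex) also gives zero --- which is exactly what pivotality provides.
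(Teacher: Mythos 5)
Your proof is correct and essentially the same as the paper's: both compose the BMW skein relation \eqref{BMWdefre} on the left with $\brweb$ and then simplify the resulting compositions $\brweb\circ\brweb^{-1}$ and $\brweb\circ e$ using \eqref{ReideTwo} and the value of the curl. The only cosmetic difference is that you re-derive $\brweb\circ e = q^{2-2m}\,e$ from scratch via \eqref{defofbraid}, (\ref{defskein}a), and the rotated (\ref{defskein}b), whereas the paper simply reads it off from the curl relation \eqref{ReideOne} --- a shortcut you yourself note in the closing parenthetical.
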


\begin{proof}
Compose \EQ \eqref{BMWdefre} with the braiding $\brweb$, then apply \EQS \eqref{ReideOne} and \eqref{ReideTwo}. 
\end{proof} 

\begin{notation}
As noted in Remark \ref{R:coeffs-simple-q=1-1}, Remark \ref{R:coeffs-simple-q=1-2}, and Remark \ref{R:coeffs-simple-q=1-3}, upon specialization to $\kk$ there is a drastic simplification in the coefficients of the defining relations. In order to make clear to the reader which calculations hold for any $\R \in \{\K, \A, \kk\}$ and which are special to $\WebC$, we will color the diagrams in $\WebC$ green. Thus, a blue diagram is interpreted in $\Web$ for some $\R \in \{\K, \A, \kk\}$, depending on context, while a green diagram is always interpreted in $\WebC$.
\end{notation}

\begin{notation}
When $\R =\kk$, Equation \eqref{BMWdefre} implies ${^{\R}\beta}_{1,1} = {^{\R}\beta}^{-1}_{1,1}$, so in our green diagrammatic calculus for $\WebC$ we do not distinguish between the over-crossing and the under-crossing. Thus, the formula for the braiding becomes
\begin{equation}\label{E:q=1-braid}
\BraidTriv :=  \ \grBraida  \ - \grBraidb.
\end{equation}
\end{notation}

\begin{lemma} \label{classicalcrossing}
When $\R =\kk$, we have 

    \begin{equation}\label{EQ:classicalcrossing}
    \GrSkeinIH \ \ = \ \ \GrSkeinIHc - \BraidTriva \ .
\end{equation}

\end{lemma}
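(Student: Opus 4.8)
The plan is to read \eqref{EQ:classicalcrossing} off from relation~(\ref{defskein}e) directly, once that relation is specialized to $q=1$ and two of its three terms are re-packaged as the expansion of a braiding.

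\emph{Step 1: the $q=1$ form of (\ref{defskein}e).} By Remark~\ref{R:coeffs-simple-q=1-1} every coefficient in relation~(\ref{defskein}e) becomes $\pm 1$ when $q=1$; explicitly, $[2m-4k-4]/[m-2k-2]\to 2$ and $[2m-4k]/[m-2k]\to 2$ as $q\to 1$, so $\tfrac{[2m-4k-4][m-2k]}{[m-2k-2][2m-4k]}\to 1$ and likewise $\tfrac{[2m-4k-4][m-2]}{[m-2k-2][2m-4]}\to 1$. Hence in $\WebC$ the relation reads
\[
\GrSkeinIH \;=\; \GrSkeinIHa \;-\; \GrSkeinIHb \;+\; \GrSkeinIHc ,
\]
and it remains only to prove $\BraidTriva=\GrSkeinIHb-\GrSkeinIHa$.

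\emph{Step 2: identifying $\BraidTriva$.} The diagram $\BraidTriva$ is, by construction, the morphism $k\otimes k\to 1\otimes 1$ obtained by first splitting each incoming $k$ at a trivalent vertex (into $1\otimes(k-1)$ on the left and $(k-1)\otimes 1$ on the right), joining the two resulting $(k-1)$-strands, and then applying the green braiding $\brwebC$ to the outgoing pair of $1$'s. I would expand $\brwebC$ via the $q=1$ formula \eqref{E:q=1-braid}, namely $\BraidTriv=\grBraida-\grBraidb$. Composing the split-and-join morphism with the identity $\grBraida$ returns exactly $\GrSkeinIHb$ (the two $1$-strands run straight to the top corners), while composing it with the $2$-labelled bigon $\grBraidb$ returns exactly $\GrSkeinIHa$ (the two $1$'s fuse along the internal $2$-edge and split again, with the joined $(k-1)$-strands untouched). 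Therefore $\BraidTriva=\GrSkeinIHb-\GrSkeinIHa$, and substituting this into the display of Step~1 yields $\GrSkeinIH=\GrSkeinIHc-\BraidTriva$, which is \eqref{EQ:classicalcrossing}.

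There is no real computation here, so I expect the only point needing care to be the diagrammatic bookkeeping in Step~2: one must check that when the crossing in $\BraidTriva$ is replaced by parallel strands the two $1$-legs land on the correct boundary points, so that the result is literally $\GrSkeinIHb$ rather than a relabelling of it, and similarly for the $\grBraidb$-term. It is also worth recording the degenerate case $k=m$, where the left-hand side of \eqref{EQ:classicalcrossing} vanishes because a strand carries the label $m+1$; the identity still holds, since relation~(\ref{defskein}e) then collapses to $\GrSkeinIHc=\GrSkeinIHb-\GrSkeinIHa=\BraidTriva$.
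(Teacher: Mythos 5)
Your proof is correct and follows essentially the same route as the paper's: expand $\BraidTriva$ using the $q=1$ braiding formula \eqref{E:q=1-braid} to get $\GrSkeinIHb-\GrSkeinIHa$, then substitute into the $q=1$ specialization of relation~(\ref{defskein}e). The paper states this as a one-line chain of equalities in the opposite order, but the content is identical; your remark about the degenerate case $k=m$ is a harmless addition.
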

\begin{proof}
This follows from
\[
\BraidTriva \ \ \stackrel{\eqref{E:q=1-braid}}{=} \ \ \GrSkeinIHb\ \ -\ \ \GrSkeinIHa \ \ \stackrel{(\ref{defskein}e)}{=} \ \ \GrSkeinIHc \ \ -\ \ \GrSkeinIH \ .
\]
\end{proof}

\def\QuaVertex
{\begin{tric}
\draw  (-1.5,1.5)--(1.5,-1.5) ;
\draw  (1.5,1.5)--(-1.5,-1.5) ;
\filldraw (0,0) circle (3pt) ; 
\end{tric}
}

\def\QuaVertexa
{\begin{tric}
\draw[scale=0.7] (-1.732,-2)--(0,-1)--(1.732,-2) (1.732,2)--(0,1)--(-1.732,2) 
(-1.732,-2)node[below,black,scale=0.7]{$1$}
(1.732,-2)node[below,black,scale=0.7]{$1$}
(1.732,2)node[above,black,scale=0.7]{$1$}
(-1.732,2) node[above,black,scale=0.7]{$1$}
(0,-1)--(0,1)node[left,midway,black,scale=0.7]{$2$};
\end{tric}
}

\def\QuaVertexb
{\begin{tric}
\draw  [scale=0.7] 
(-2,2)..controls(-1,1)and(1,1)..(2,2) node[above,black,midway,scale=0.7]{$1$}
(2,-2)..controls(1,-1)and(-1,-1)..(-2,-2) node[below,black,midway,scale=0.7]{$1$};
\end{tric}
}

\def\QuaVertexc
{\begin{tric}
\draw[scale=0.7] (-2,-1.732)--(-1,0)--(-2,1.732) (2,1.732)--(1,0)--(2,-1.732) 
(-2,-1.732)node[below,black,scale=0.7]{$1$}
(-2,1.732)node[above,black,scale=0.7]{$1$}
(2,1.732)node[above,black,scale=0.7]{$1$}
(2,-1.732) node[below,black,scale=0.7]{$1$}
(-1,0)--(1,0)node[below,midway,black,scale=0.7]{$2$};
\end{tric}
}

\def\QuaVertexd
{\begin{tric}
\draw  [scale=0.7] 
(-2,2)..controls(-1,1)and(-1,-1)..(-2,-2) node[left,black,midway,scale=0.7]{$1$}
(2,-2)..controls(1,-1)and(1,1)..(2,2) node[right,black,midway,scale=0.7]{$1$};
\end{tric}
}

\def\BraidBMW
{\begin{tric}
\draw[black]  (-1.5,1.5)--(1.5,-1.5) ;
\draw[double=black,ultra thick,white,line width=3pt] (1.5,1.5)--(-1.5,-1.5);
\end{tric}
}

\def\NegBraidBMW
{\begin{tric}
\draw[black]   (1.5,1.5)--(-1.5,-1.5);
\draw[double=black,ultra thick,white,line width=3pt] (-1.5,1.5)--(1.5,-1.5);
\end{tric}
}
\def\BraidaBMW
{\begin{tric}
\draw  [black,scale=0.7] 
(-2,2)..controls(-1,1)and(-1,-1)..(-2,-2) node[left,black,midway,scale=0.7]{$1$}
(2,-2)..controls(1,-1)and(1,1)..(2,2) node[right,black,midway,scale=0.7]{$1$};
\end{tric}
}
\def\BraidcBMW
{\begin{tric}
\draw  [black,scale=0.7] 
(-2,2)..controls(-1,1)and(1,1)..(2,2) node[above,black,midway,scale=0.7]{$1$}
(2,-2)..controls(1,-1)and(-1,-1)..(-2,-2) node[below,black,midway,scale=0.7]{$1$};
\end{tric}
}

\def\BraidaBMWunlabel
{\begin{tric}
\draw  [black,scale=0.7] 
(-2,2)..controls(-1,1)and(-1,-1)..(-2,-2) node[left,black,midway,scale=0.7]{}
(2,-2)..controls(1,-1)and(1,1)..(2,2) node[right,black,midway,scale=0.7]{};
\end{tric}
}
\def\BraidcBMWunlabel
{\begin{tric}
\draw  [black,scale=0.7] 
(-2,2)..controls(-1,1)and(1,1)..(2,2) node[above,black,midway,scale=0.7]{}
(2,-2)..controls(1,-1)and(-1,-1)..(-2,-2) node[below,black,midway,scale=0.7]{};
\end{tric}
}

\def\ReideABMW
{\begin{tric}
\draw[black]  (-1.5,1.5)..controls(0,-1.5)and(1.5,-1.5)..(1.5,0);
\draw[double=black,ultra thick,white,line width=3pt] (-1.5,-1.5)..controls(0,1.5)and(1.5,1.5)..(1.5,0);
\end{tric}
}

\def\StringBMW
{\begin{tric}
\draw[black]  (0,1.5)--(0,-1.5);
\end{tric}
}

\subsection{Finite generation}

In order to make certain arguments relating the $O(m)$ web category over $\K$ and over $\kk$, we will need to know that the homomorphism spaces in $\WebA$ are finitely generated. We first show that the webs with all boundary labels $1$ can be rewritten in terms of the braiding along with cups and caps.

\begin{defn}
Define the standard web category $\StdWeb$ as the full monoidal subcategory of $\Web$ generated by the object 1. 
\end{defn}

\begin{defn}
Define the braiding standard web category $\WebB$ as the pivotal subcategory of $\Web$, where the objects in $\WebB$ are tensor products of the self-dual object 1, and morphisms in $\WebB$ are generated by the braiding $\brweb$.
\end{defn}

\begin{rmk}\label{R:WebB-is-braided}
Let $a,b\in \mathbb{Z}_{\ge 0}$, and write $w_0$ for the longest element in the symmetric group $S_{a+b}$. Define $\beta_{1^{\otimes a},1^{\otimes b}}$ to be the diagram in $\WebB$ which is the positive braid lift of the minimal length element in the coset $w_0\cdot (S_{a}\times S_{b}) \in S_{a+b}/(S_{a}\times S_{b})$. Using that $\WebB$ is pivotal, \EQ \eqref{ReideTwo}, and \EQ \eqref{ReideThree}, a standard argument shows that this family of maps satisfy naturality and the hexagon axioms, see e.g. \cite[Example 2.1]{JS-braided}, and thus make the category $\WebB$ a braided category.
\end{rmk}

\def\bigonwrHIH
{\begin{tric}
\draw[scale=0.7] (-2,-1.732)--(-1,0) (-1,0)--(-2,1.732) (1,0)--(2,-1.732) 
(1.8,1.3)--(1,0)node[right,pos=0.6,black,scale=0.7]{$l-1$}
(-2,-1.732)node[below,black,scale=0.7]{$l-1$}
(-2,1.732)node[above,black,scale=0.7]{$1$}
(2,-1.732) node[below,black,scale=0.7]{$1$}
(-1,0)--(1,0)node[below,midway,black,scale=0.7]{$l$};
\draw[scale=0.7]
(1.8,1.3)..controls(1.5,2)and(1.7,2.3)..(2.5,2.5)
node[left,midway,black,scale=0.7]{$1$}
(1.8,1.3)..controls(3,1.6)and(2.7,2.3)..(2.5,2.5)
node[right,midway,black,scale=0.7]{$l-2$}
(2.5,2.5)--(3.5,4)node[right,pos=0.6,black,scale=0.7]{$l-1$};
\end{tric}
}

\def\triwrHIH
{\begin{tric}
\draw [scale=0.7]  (-3,-1.74)--(-2,0)--(-3,1.74) 
(4,2.5)--(0,0) node[above,pos=0.65,black,scale=0.7]{$1$}
(0,0)--(4,-2.5) 
(2.5,-1.8)node[left,black,scale=0.7]{$l-1$}
(0,0)--(-2,0) node[below,midway,black,scale=0.7]{$l$}
(4,-2.5) node[below,black,scale=0.7]{$1$}
(4.2,2.5) node[above,black,scale=0.7]{$l-1$}
(-3,-1.74) node[below,black,scale=0.7]{$l-1$}
(-3,1.74) node[above,black,scale=0.7]{$1$}
(3,1.875)--(3,-1.875)node[right,midway,black,scale=0.7]{$l-2$};
\end{tric}
}

\def\wrHIH
{\begin{tric}
\draw[scale=0.7] (-2,-1.732)--(-1,0)--(-2,1.732) (2,1.732)--(1,0)--(2,-1.732) 
(-2,-1.732)node[below,black,scale=0.7]{$l-1$}
(-2,1.732)node[above,black,scale=0.7]{$1$}
(2,1.732)node[above,black,scale=0.7]{$l-1$}
(2,-1.732) node[below,black,scale=0.7]{$1$}
(-1,0)--(1,0)node[below,midway,black,scale=0.7]{$l$};
\end{tric}
}

\begin{proposition} \label{BMWweb}
$\StdWeb=\WebB$
\end{proposition}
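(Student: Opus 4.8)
The plan is to prove the two inclusions separately. The inclusion $\WebB \subseteq \StdWeb$ is immediate, since $\brweb$ is a morphism $1\otimes 1\to 1\otimes 1$ in $\StdWeb$ and $\StdWeb$ is pivotal, hence contains the pivotal subcategory generated by $\brweb$. For the reverse inclusion, the first step is to observe that Definition \ref{D:brweb} can be rearranged, using $\Braida=\id_{1\otimes 1}$ and writing $\Braidc$ for the cup--cap on $1$, to give
\[
\Braidb \;=\; q^{2}\,\id_{1\otimes 1}\;-\;\brweb\;-\;\frac{[m-2]}{[2m-4]}(q^{2}-q^{-2})\,q^{-m+2}\,\Braidc ,
\]
so that the merge--split morphism $\Braidb$ lies in $\WebB$; conversely $\brweb$ lies in the pivotal subcategory generated by $\Braidb$. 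Thus $\WebB$ is exactly the pivotal monoidal subcategory of $\Web$ generated by $\Braidb$, and since the $\Hom$-spaces of $\StdWeb$ are spanned by webs whose boundary edges are all labelled $1$, it suffices to prove that every such web $W$ lies in $\WebB$.

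I would prove this by induction on $N(W):=\sum_{e}(\ell(e)-1)$, the sum over the edges $e$ of $W$ of $\ell(e)-1$, where $\ell(e)$ is the label of $e$. If $N(W)=0$ then all edges are labelled $1$; since every trivalent vertex of $\Web$ carries labels $k,1,k+1$ and so cannot have all three incident edges labelled $1$, the web $W$ has no vertices and is therefore a disjoint union of arcs and circles, i.e. a scalar times a crossingless matching of its boundary points, which lies in $\WebB$. Suppose now $N(W)\geq 1$ and let $L=\max_{e}\ell(e)\geq 2$. A closed loop of label $L$ is removed, up to a scalar, by the circle relation \eqref{E:thick-web-circle}, strictly decreasing $N$. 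Otherwise, if $L\geq 3$, choose an internal edge $e$ with $\ell(e)=L$; by maximality of $L$ (no edge of label $L+1$ occurs), each of its two endpoints is a trivalent vertex whose incident labels are $L-1,1,L$ with $e$ carrying the label $L$. In the generic situation the two endpoints are distinct and share no edge other than $e$, so a neighbourhood of $e$ is precisely the left-hand side of relation (\ref{defskein}e) with $k=L-1$; applying that relation rewrites $W$ as a linear combination of webs obtained by a purely local substitution, each still with all boundary edges labelled $1$ and with strictly smaller $N$ (the local internal label content drops from $L-1$ to at most $L-2$). The degenerate local configurations around $e$ — where $e$ bounds a bigon together with an adjacent $1$-edge (resolved by \eqref{reversebigon}), or together with an adjacent $(L-1)$-edge (resolved by applying (\ref{defskein}e) after identifying the two $(L-1)$-legs and then evaluating the resulting closed loop via \eqref{E:thick-web-circle}), or the two endpoints of $e$ are joined by a further edge as well (a closed theta-subgraph, handled by the same relations) — all reduce to linear combinations of webs of strictly smaller $N$. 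Finally, if $L=2$, every edge of $W$ is labelled $1$ or $2$ and every vertex is of type $(1,1,2)$; the $2$-labelled edges then form a perfect matching on the set of vertices, so, reading each matched pair of vertices together with its joining $2$-edge as a rotation of $\Braidb$ and the $1$-labelled edges as a crossingless matching, one sees that $W$ is a planar composite of copies of $\Braidb$, cups, caps and identities, hence lies in $\WebB$. This closes the induction and gives $\StdWeb=\WebB$.

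The main obstacle I anticipate is the final case $L=2$: the IH relation (\ref{defskein}e) is of no use there, because for $k=1$ it degenerates into an identity among webs that are already as simple as possible and does not express $\Braidb$ in terms of anything smaller. This is precisely why the argument must first reduce, via Definition \ref{D:brweb}, to the statement that $\Braidb$ (equivalently $\brweb$) generates $\StdWeb$ as a pivotal category, rather than attempting to rewrite everything in terms of cups and caps alone. The other point demanding care is a complete and honest enumeration of the possible local pictures around a maximal-label edge (generic, bigon with a $1$-edge, bigon with an $(L-1)$-edge, theta-subgraph, closed loop), together with the verification that each resolution strictly decreases $N(W)$; the enumeration is finite but should be carried out in full.
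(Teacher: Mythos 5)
Your reduction---the merge-split and $\brweb$ span the same space modulo cup-cap and identity, so it suffices to rewrite every $\StdWeb$ diagram as a linear combination of diagrams with labels in $\{1,2\}$---matches the paper's, and your base cases ($N=0$ and $L=2$) are fine. The gap is in the inductive step for $L\geq 3$. You assert that in the generic situation the neighbourhood of a maximal-label edge $e$ is precisely the left-hand side of relation (\ref{defskein}e). That is not so: even when the two endpoints of $e$ are distinct vertices whose four external legs are pairwise distinct, there are two planar-isotopy types of neighbourhood, distinguished by the relative handedness (cyclic order of labels) of the two trivalent vertices. If the endpoints have opposite handedness, the four boundary germs come in cyclic order $(1,1,L-1,L-1)$ and (\ref{defskein}e) applies; if they have the same handedness, the cyclic order alternates as $(1,L-1,1,L-1)$ and (\ref{defskein}e) does not apply directly. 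This alternating configuration is exactly what the paper's proof treats as a separate case; your list of degenerate cases concerns only identifications of legs and does not cover it.

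The paper resolves the alternating configuration by inserting a bigon on an $(L-1)$-leg via (\ref{defskein}c), sliding it across with associativity (\ref{defskein}d), and only then applying (\ref{defskein}e). That sequence is incompatible with your measure $N(W)=\sum_e(\ell(e)-1)$: the bigon insertion raises $N$ by $2L-5$, while the subsequent application of (\ref{defskein}e) lowers it by only $1$ on the worst term of the resulting sum, so the net change in $N$ need not be negative---it can be zero already for $L=3$ and is strictly positive for $L\geq 5$. To repair this, replace $N$ by the lexicographic pair (maximal label $L$, number of $L$-labelled strands); the bigon/associativity/(\ref{defskein}e) sequence fixes the first coordinate and strictly decreases the second, which is exactly the double induction the paper runs.
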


\begin{proof}
We need to show that any morphism in  $\StdWeb$ is also a morphism in $\WebB$. We will prove this for web diagrams in $\StdWeb$. Then the desired result is immediate for linear combinations of web diagrams. By \EQ \eqref{defofbraid}, we have 
\[
\textbf{Span} \left \{  \QuaVertexa \ , \  \QuaVertexb \ , \  \QuaVertexd \right \}
   = \textbf{Span} \left \{  \Braid  \ , \  \QuaVertexb \ , \  \QuaVertexd \right \},
\]
so it suffices to show that an arbitrary web diagram in $\StdWeb$ can be rewritten as a linear combination of diagrams with strands only labelled $1$ and $2$.

Fix a diagram $f$ in $\StdWeb$. Suppose that the largest label on a strand in $f$ is $l$. If $l \geq m+1$, then $f=0$, and if $l \le 2$, then we are done. Assume that $3 \leq l \leq m$. Fix a point in this $l$ labelled strand, then choose a direction and traverse the strand away from this point in that direction. Since $f$ is in $\StdWeb$, the $l$ labelled strand cannot extend to the boundary. So either we return to this point, or we meet a trivalent vertex. Since $l$ is the largest label of a strand in $f$, and the trivalent vertex must be a generator from Definition \ref{DefofWeb}, this trivalent vertex has labels $1, l-1$, and $l$.

If we meet a trivalent vertex, then traversing the $l$ labelled strand in the other direction we also meet a trivalent vertex with labels $1, l-1$, and $l$. On the other hand, if the strand is closed, then we can use \EQ (\ref{defskein}c), with $k=l$, to introduce two trivalent vertices with labels $1, l-1$, and $l$, up to an invertible scalar in $\A$. In either case, the $l$ labelled strand is a segment between two trivalent vertices with labels $1, l-1$, and $l$.

We can either apply \EQ (\ref{defskein}e), with $k+1=l$, or apply the following relation
\[
\wrHIH \stackrel{(\ref{defskein}c)}{=} \frac{[2]}{[2l-2]} \bigonwrHIH \stackrel{(\ref{defskein}d)}{=} \frac{[2]}{[2l-2]} \triwrHIH
\]
and then apply \EQ (\ref{defskein}e), with $k+1=l$. Thus, we can write $f$ as a linear combination of web diagrams, each of which has one fewer strand with label $l$. By induction, we can remove all strands with label $l$, so $f$ is a linear combination of diagrams with largest strand label less than or equal to $l-1$. Using induction again, we find that $f$ is a linear combination of web diagrams with only $1$ and $2$ labelled strands.
\end{proof}

\begin{definition}\label{def:BMW}
Define the \emph{$BMW$ category}, $\BMW$, to be the free $\R $-linear braided pivotal category\footnote{This means we can draw positive crossings to represent the braiding of $\bullet$ with itself, as well as draw cups and caps coming from pivotal structure.} with generating object $\bullet$ which is self-dual of dimension $\frac{[2m-4][m]}{[m-2][2]}$ such that 
\[
\BraidBMW -\NegBraidBMW = (q^2-q^{-2})\cdot \left( \ \BraidaBMWunlabel \ -\BraidcBMWunlabel \right)
\]
and
\[
\ReideABMW = \ q^{2m-2} \ \ \StringBMW \quad .
\]
\end{definition}

\begin{proposition}\label{P:functor-bmw-to-web}
The assignments $\bullet \mapsto 1$ and 
\[
\BraidBMW \mapsto \Braid
\]
determine a full pivotal braided monoidal functor
\[
\eta_\R :\BMW\longrightarrow \StdWeb.
\]
\end{proposition}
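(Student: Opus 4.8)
The plan is to obtain $\eta_\R$ from the universal property of $\BMW$ as a \emph{free} braided pivotal category, and then to deduce fullness from the identification $\StdWeb = \WebB$ established in Proposition~\ref{BMWweb}.

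First I would check that the pair $(1,\brweb)$ supplies precisely the data that classifies braided pivotal functors out of $\BMW$. By Remark~\ref{R:WebB-is-braided} the category $\WebB$ is $\R$-linear braided pivotal, and by Proposition~\ref{BMWweb} it coincides with $\StdWeb$; its generating object $1$ is self-dual by construction of $\Web$, and its categorical dimension is $\frac{[2m-4][m]}{[m-2][2]}$ by relation~(\ref{defskein}a). Under the braiding of Remark~\ref{R:WebB-is-braided}, the self-braiding of $1$ is $\brweb$, and $\brweb$ satisfies the BMW skein relation by Equation~\eqref{BMWdefre} and the positive framing (Reidemeister~I) relation by Equation~\eqref{ReideOne}. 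These are exactly the relations imposed in the definition of $\BMW$, so the universal property yields a unique $\R$-linear braided pivotal monoidal functor $\eta_\R\colon \BMW \to \WebB = \StdWeb$ with $\bullet \mapsto 1$ and the positive crossing sent to $\brweb$. (The Reidemeister~II and III relations \eqref{ReideTwo} and \eqref{ReideThree} enter only indirectly: they are what make $\WebB$ a braided category in the first place, as recorded in Remark~\ref{R:WebB-is-braided}, and hence what make the target of $\eta_\R$ a legitimate braided pivotal category.)

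For fullness, I would argue that every morphism of $\StdWeb$ lies in the image of $\eta_\R$. By definition $\WebB$ is the pivotal subcategory of $\Web$ on tensor powers of $1$ generated by $\brweb$; hence by Proposition~\ref{BMWweb} every morphism of $\StdWeb$ is built from $\brweb$ together with the identities, cups, and caps on $1$ using composition and tensor product — the inverse crossing, being the $180^{\circ}$ rotation of $\brweb$ by Equation~\eqref{ReideTwo}, is already obtained from $\brweb$ by a pivotal operation. Since $\eta_\R$ is a pivotal braided functor with $\eta_\R(\bullet) = 1$ sending the crossing to $\brweb$, it hits all of these generators; therefore $\eta_\R$ is full.

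I expect no calculational obstacle here, since the genuine work is already done: Proposition~\ref{BMWweb} supplies the generation of $\StdWeb$ by the braiding, and Propositions~\ref{BMWrelations} and \ref{Reidemeister} supply the BMW and Reidemeister identities for $\brweb$. The only point demanding care in the write-up is bookkeeping: making the invoked universal property explicit — that a strict $\R$-linear braided pivotal functor out of $\BMW$ is the same thing as a self-dual object of dimension $\frac{[2m-4][m]}{[m-2][2]}$ whose self-braiding satisfies \eqref{BMWdefre} and \eqref{ReideOne} — and verifying that this list of relations matches, one-for-one, what has been checked for $(1,\brweb)$ inside $\WebB$.
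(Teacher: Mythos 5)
Your proposal is correct and follows essentially the same route as the paper: verify that $1$ is self-dual with the prescribed dimension, use Remark~\ref{R:WebB-is-braided} and Proposition~\ref{BMWweb} to make $\StdWeb$ a braided pivotal target, invoke the universal property of $\BMW$ together with Equations~\eqref{BMWdefre} and \eqref{ReideOne}, and read off fullness from $\StdWeb=\WebB$. (One very minor slip: the inverse crossing is the $90^{\circ}$ rotation of $\brweb$, not $180^{\circ}$, but this does not affect the argument.)
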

\begin{proof}
It is clear that $1$ is self dual with dimension $\frac{[2m-4][m]}{[m-2][2]}$. Combining Remark \ref{R:WebB-is-braided} with Lemma \ref{BMWweb} we also see that $\StdWeb$ is braided. Thanks to \EQ \eqref{BMWdefre} and \EQ \eqref{ReideOne}, the claim follows from the universal mapping property of $\BMW$. The image of $\eta_\R $ is $\WebB = \StdWeb$, so $\eta_\R $ is full.
\end{proof}

\begin{lemma}\label{L:BMWfingen}
Homomorphism spaces in $\BMWA$ are finitely generated $\A$-modules.
\end{lemma}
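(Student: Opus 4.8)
The strategy is to reduce an arbitrary homomorphism space in $\BMWA$ to an endomorphism algebra $\End_{\BMWA}(\bullet^{\otimes k})$ of a tensor power of $\bullet$, and then to invoke the classical basis theorem for Birman--Murakami--Wenzl algebras.

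First I would use the pivotal structure. Every object of $\BMWA$ is a tensor power $\bullet^{\otimes n}$, and since $\bullet$ is self-dual, bending strands yields $\A$-linear isomorphisms $\Hom_{\BMWA}(\bullet^{\otimes a},\bullet^{\otimes b})\cong \Hom_{\BMWA}(\mathbf{1},\bullet^{\otimes(a+b)})$. When $a+b$ is odd this space is zero (there is no tangle diagram from the empty object to an odd number of points, and the quotient $\BMWA$ cannot create one), hence trivially finitely generated; when $a+b=2k$, bending the first $k$ of the $2k$ strands downward gives a further isomorphism with $\End_{\BMWA}(\bullet^{\otimes k})$. So it suffices to prove that $\End_{\BMWA}(\bullet^{\otimes k})$ is a finitely generated $\A$-module for every $k\ge 0$.

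Since $\BMWA$ is generated as a braided pivotal category by the crossing $\BraidBMW$, the inverse crossing $\NegBraidBMW$, and the cups and caps of the pivotal structure, every element of $\End_{\BMWA}(\bullet^{\otimes k})$ is an $\A$-linear combination of tangle diagrams on $k$ strands. I would then run the standard diagrammatic reduction: the defining skein relation rewrites any crossing as an $\A$-linear combination of two diagrams with one fewer crossing (the scalars involved are $q^{\pm 2}$ and $q^2-q^{-2}$, which lie in $\A$ because $q$ is a unit in the local ring $\A$); the curl relation $\ReideABMW = q^{2m-2}\,\StringBMW$, whose scalar is invertible in $\A$, deletes Reidemeister-I kinks; and a closed component is replaced by the loop value $\tfrac{[2m-4][m]}{[m-2][2]}\in\A$. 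This exhibits $\End_{\BMWA}(\bullet^{\otimes k})$ as the Kauffman tangle algebra over $\A$ with parameters $z=q^2-q^{-2}$, $\ell=q^{2m-2}$ and the above loop value (which satisfy the usual BMW consistency identity, as they must since the loop value is $\dim\bullet$). By the Morton--Wassermann basis theorem, this algebra is spanned over $\A$ by the $(2k-1)!!$ crossingless Brauer diagrams on $2k$ points; in particular it is a finitely generated $\A$-module. Combined with the previous paragraph, every homomorphism space in $\BMWA$ is finitely generated.

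The part I expect to require the most care is bookkeeping rather than mathematics: matching the presentation of $\BMW$ given in the paper with a standard presentation of the Birman--Murakami--Wenzl algebra (or of the Kauffman tangle algebra) so that the Morton--Wassermann reduction applies, and checking that this reduction of tangle diagrams to crossingless diagrams indeed terminates over the local ring $\A$ --- i.e.\ that the skein relation together with the curl relation are exactly the data needed, and that every scalar that must be inverted is in fact a unit of $\A$.
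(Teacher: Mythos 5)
Your argument is correct. The paper dispatches this lemma with a one-line citation to a standard reference on finite generation of BMW/Kauffman tangle algebras; your sketch is essentially the proof underlying that citation — reduce arbitrary hom spaces to $\End_{\BMWA}(\bullet^{\otimes k})$ via the pivotal structure (noting the odd-parity spaces vanish), then use the skein relation, the curl relation, and the loop value to exhibit a spanning set of crossingless Brauer diagrams. Two small observations. First, for the spanning argument you do not need the Morton--Wassermann \emph{basis} theorem (which asserts freeness and requires more care over rings where $z=q^2-q^{-2}$ is not invertible, as is the case for $\A$); you only need that the standard rewriting procedure terminates and lands in the set of crossingless diagrams, which is the elementary part of that theorem and only uses that $q^{\pm 2}$, $q^{2m-2}$, $q^2-q^{-2}$, and the loop value lie in $\A$, with the curl scalar $q^{2m-2}$ a unit. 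Second, you correctly flag that the coefficients here satisfy the usual compatibility $\delta = 1 + (\ell-\ell^{-1})/z$, which is required to make the Kauffman tangle relations consistent — one can check directly that $\frac{[2m-4][m]}{[m-2][2]} = 1 + \frac{q^{2m-2}-q^{2-2m}}{q^2-q^{-2}}$, so the paper's $\BMW$ matches a specialization of the generic Kauffman tangle category and the standard reduction applies.
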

\begin{proof}
This is standard, for example see \cite[Theorem 3]{BMWfinitegen}.
\end{proof}

\begin{proposition}\label{P:StdWeb-finitely-generated}
Homomorphism spaces in $\StdWebA$ are finitely generated $\A$-modules.
\end{proposition}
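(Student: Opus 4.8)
The plan is to transport finite generation from $\BMWA$ to $\StdWebA$ along the full functor $\eta_\A$ of Proposition \ref{P:functor-bmw-to-web}, using nothing more than the elementary fact that a surjective image of a finitely generated module over any ring is again finitely generated.

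First I would record that every object of $\StdWebA$ has the form $1^{\otimes n}$ for some $n\in\mathbb{Z}_{\ge 0}$ (with $n=0$ giving the monoidal unit), since by definition $\StdWeb$ is the monoidal subcategory of $\Web$ generated by the single object $1$. Under $\eta_\A$ the object $\bullet^{\otimes n}$ of $\BMWA$ is sent to $1^{\otimes n}$, so $\eta_\A$ is essentially surjective onto $\StdWebA$; in particular it suffices to treat $\Hom$ spaces of the form $\Hom_{\StdWebA}(1^{\otimes a},1^{\otimes b})$.

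Then, fixing $a,b\in\mathbb{Z}_{\ge 0}$, fullness of $\eta_\A$ (Proposition \ref{P:functor-bmw-to-web}, which rests in turn on Proposition \ref{BMWweb} identifying $\StdWeb$ with $\WebB$) says that the $\A$-linear map
\[
\eta_\A\colon \Hom_{\BMWA}(\bullet^{\otimes a},\bullet^{\otimes b})\longrightarrow \Hom_{\StdWebA}(1^{\otimes a},1^{\otimes b})
\]
is surjective. By Lemma \ref{L:BMWfingen} its source is a finitely generated $\A$-module, so the images of a finite generating set generate the target, and hence $\Hom_{\StdWebA}(1^{\otimes a},1^{\otimes b})$ is finitely generated over $\A$, as claimed.

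There is no serious obstacle here. The only points needing care are that $\eta_\A$ is genuinely $\A$-linear and full as stated, and that the argument needs only the observation that quotients of finitely generated modules are finitely generated — one does not even invoke Noetherianity of $\A$ at this stage. Equivalently, one can summarize the whole argument as: $\StdWebA$ is the image of an $\A$-linear functor out of a category all of whose $\Hom$ spaces are finitely generated $\A$-modules, so all of its $\Hom$ spaces are finitely generated $\A$-modules.
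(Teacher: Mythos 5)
Your proof is correct and is essentially the paper's own argument, just spelled out in more detail: the paper also deduces the claim directly from the fullness of $\eta_{\A}$ (Proposition \ref{P:functor-bmw-to-web}) together with Lemma \ref{L:BMWfingen}. No differences worth noting.
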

\begin{proof}
Since $\eta_{\A}$ is full, this follows from Lemma \ref{L:BMWfingen}.
\end{proof}
\section{Representation theory of the quantum orthogonal group}

In this section we carefully introduce the ingredients needed to define the pivotal category $\Fund$ from Theorem \ref{T:main-thm}. That is we recall the (quantum analogue's of):
\begin{itemize}
    \item The finite dimensional representations of the orthogonal group, Section \ref{ssec:qorthalg}.
    \item The defining vector representation of the orthogonal group, and the exterior algebra of this representation, Section \ref{ssec:quantum-vector-rep} and Section \ref{ssec:quantum-exterior-algebra}.
    \item Highest weight theory for the $O(m)$ representations, particularly for exterior powers of the defining vector representation, Section \ref{ssec:fund-O}.
    \item The multiplication and self-duality structures of the exterior powers of the defining vector representation, as well as the braiding for the defining vector representation, Section \ref{ssec:gen-intertwiners}.
\end{itemize}

Along the way, we are careful to work with $\R =\A$ whenever possible, as this allows us to base change to the quantum analogue, i.e. $\R = \K$, and to specialize to $q=1$, i.e. $\R =\kk$. Having this flexibility is essential later in the paper when we prove Theorem \ref{T:main-thm}.

\subsection {Quantum orthogonal algebra}\label{ssec:qorthalg}

The orthogonal group $O(m)$ has connected component $SO(m)$. In particular, the Lie algebra of $O(m)$ is the same as the Lie algebra $\mathfrak{so}_m= \mathrm{Lie}(SO(m))$. The Drinfeld-Jimbo quantum group is defined to be a $q$-analogue of the universal enveloping algebra of a simple Lie algebra, thus there is a Drinfeld-Jimbo quantum group $U_q(\mathfrak{so}_m)$. The group $O(m)$ has component group $O(m)/SO(m) \cong \mathbb{Z}/2$, so to $q$-deform the representation theory of $O(m)$, we have to encode the action of component group $O(m)/SO(m)$ on representations of quantum $U_q(\mathfrak{so}_m)$. This leads to the definition of the quantum orthogonal algebra $U_q(\mathfrak{o}_m)$, see Definition \ref{D:Uo}.

\begin{defn}\label{defn:quantumgp}
Let $\mathfrak{g}$ be a semisimple Lie algebra, over $\mathbb{C}$, with associated root system $\Phi$, viewed as a subset of the weight lattice $X$. Fix a choice of simple roots $\Delta\subset \Phi$. The Weyl group $W$ acts on $\mathbb{Z}\Phi$. Write $(-,-)$ to denote the unique $W$ invariant symmetric bilinear form on $\mathbb{Z}\Phi$, normalized such that $(\alpha, \alpha)=2$ whenever $\alpha$ is a short root. Write
\[
\alpha^{\vee}:=\frac{2\alpha}{(\alpha, \alpha)}\in X \quad \text{and} \quad q_{\alpha}:=q^{(\alpha, \alpha)/2} \in \mathbb{C}(q) \quad \text{for all $\alpha \in \Delta$}.
\]

Define $U_q(\mathfrak{g})$, following \cite[Section 4.3]{JantzenQgps}, as the associative $\K$-algebra generated by 
\[
E_{\alpha}, F_{\alpha}, K_{\alpha}^{\pm 1}, \ \alpha\in \Delta
\]
subject to the relations:
\begin{gather*}
K_{\alpha} K_{\alpha}^{-1} = 1= K_{\alpha}^{-1} K_{\alpha}, \quad K_{\alpha} K_{\beta} = K_{\beta} K_{\alpha}, 
\quad  K_{\alpha} E_{\beta}= q^{(\alpha, \beta)} E_{\beta} K_{\alpha}, \quad  K_{\alpha} F_{\beta} = q^{-(\alpha, \beta)} F_{\beta} K_{\alpha} \, , \\
E_{\alpha} F_{\beta} - F_{\beta}E_{\alpha} = \delta_{\alpha, \beta}\dfrac{K_{\alpha}- K_{\alpha}^{-1}}{q_{\alpha} - q_{\alpha}^{-1}} \, , \\
\sum_{s= 0}^{1- a_{\alpha, \beta}} (-1)^s{1- a_{\alpha, \beta}\brack s}_{q_{\alpha}} E_{\alpha}^{1- a_{\alpha, \beta}- s} E_{\beta} E_{\alpha}^s = 0, 
\quad \sum_{s= 0}^{1- a_{\alpha, \beta}} (-1)^s{1- a_{\alpha, \beta}\brack s}_{q_{\alpha}} F_{\alpha}^{1- a_{\alpha, \beta}- s} F_{\beta} F_{\alpha}^s = 0 \, ,
	\end{gather*}
where $a_{\alpha, \beta}:= (\alpha^{\vee} ,\beta)$.

The algebra $U_q(\mathfrak{g})$ is a Hopf algebra with comultiplication $\Delta$, antipode $S$, and counit $\epsilon$ defined on generators as follows:
\begin{equation}\label{E:comult}
\Delta(E_{\alpha}) = E_{\alpha}\otimes 1 + K_{\alpha}\otimes E_{\alpha}, \quad \Delta(F_{\alpha})= 1\otimes F_{\alpha} + F_{\alpha}\otimes K_{\alpha}^{-1}, \quad \Delta(K_{\alpha})= K_{\alpha}\otimes K_{\alpha},
\end{equation}
\begin{equation}\label{E:antipode}
S(E_{\alpha}) = - K_{\alpha}^{-1}E_{\alpha}, \quad S(F_{\alpha})= -F_{\alpha}K_{\alpha}, \quad S(K_{\alpha})= K_{\alpha}^{-1},
\end{equation}
\begin{equation}\label{E:counit}
\epsilon(E_{\alpha})=0,\quad \epsilon(F_{\alpha})= 0, \quad \text{and} \quad \epsilon(K_{\alpha})= 1.
\end{equation}

We also define Lusztig's divided powers algebra, denoted $U_{\A}(\mathfrak{g})$, as the $\A$-subalgebra in $U_q(\mathfrak{g})$ generated by $K_{\alpha}^{\pm 1}$, $E_{\alpha}^{(n)}:=E_{\alpha}^n/[n]_{q_{\alpha}}!$, and $F_{\alpha}^{(n)}:=F_{\alpha}^n/[n]_{q_{\alpha}}!$, for all $\alpha\in \Delta$ and $n\in \mathbb{Z}_{\ge 0}$.
\end{defn}

Write $X(\som)\subset \oplus_{i=1}^n \mathbb{Z}\frac{\epsilon_i}{2}$ for the weight lattice of $\som$, where $m=2n$ if $m$ is even, and $m=2n+1$ if $m$ is odd. We enumerate the simple roots for $\mathfrak{so}_{2n}$ (i.e. type $D_n$) as
\[
\Delta= \{\alpha_1 = \epsilon_1- \epsilon_2, \ldots, \alpha_{n-1}= \epsilon_{n-1}- \epsilon_n, \alpha_n = \epsilon_{n-1} + \epsilon_n\},
\]
and for $\mathfrak{so}_{2n+1}$ (i.e. type $B_n$) as
\[
\Delta = \{\alpha_1= \epsilon_1- \epsilon_2, \ldots, \alpha_{n-1} = \epsilon_{n-1}-\epsilon_n, \alpha_n = \epsilon_n\}.
\]
The pairing $(-,-)$ for $\mathfrak{so}_{2n}$ is defined as $(\epsilon_i, \epsilon_j)= \delta_{i,j}$ and for $\mathfrak{so}_{2n+1}$ is defined as $(\epsilon_i, \epsilon_j) = 2\delta_{i,j}$. The fundamental weights for $\som$ are:
\[
\varpi_1= \epsilon_1, \varpi_2= \epsilon_1+ \epsilon_2, \dots, \varpi_{n-2}= \epsilon_1 + \dots + \epsilon_{n-2},
\]
\[
\varpi_{n-1} = \frac{\epsilon_1 + \dots + \epsilon_{n-1}-\epsilon_n}{2}, \quad \text{and} \quad \varpi_n = \frac{\epsilon_1 + \dots + \epsilon_{n-1}+\epsilon_n}{2},
\]
if $m=2n$, and
\[
\varpi_1=\epsilon_1, \varpi_2= \epsilon_1 + \epsilon_2, \dots, \varpi_{n-1}= \epsilon_1 + \dots + \epsilon_{n-1}, 
\]
and
\[
\varpi_n = \frac{\epsilon_1 + \dots + \epsilon_{n-1} + \epsilon_n}{2},
\]
if $m=2n+1$. The dominant weights, $X_+(\som)$, are the $\mathbb{Z}_{\ge 0}$ span of the fundamental weights.

\begin{notation}
In order to make the statements of our results uniform, we need to compensate for the different conventions for $(-,-)$ if $m$ is even or odd. To this end, we will write 
\[
U_{\K}(\som):=\begin{cases}
U_q(\mathfrak{so}_{m}), \quad \text{if $m$ is odd}, \quad \text{and}\\
U_{q^2}(\mathfrak{so}_m), \quad \text{if $m$ is even}.
\end{cases}
\]
\end{notation}

\begin{definition}
For $\wta\in X(\som)$ and $V\in U_{\K}(\som)$-mod,
we set
\[
V[\wta]:=\{v\in V \ | \ K_{\alpha}v= q_{\alpha}^{(\alpha^{\vee}, \wta)}v, \  \text{for all}  \ \alpha\in \Delta\}.
\]
If $v\in V[\wta]$, then we say that $v$ is a \emph{weight vector} of \emph{weight $\wta$}.
\end{definition}

\begin{definition}\label{D:K-weight-space}
Suppose that $V$ is a finite dimensional $U_{\K}(\som)$-module such that 
\[
V= \oplus_{\wta\in X(\som)}V[\wta], 
\]
then we say that $V$ is a \emph{type-$\textbf{1}$} $U_{\K}(\som)$-module.
\end{definition}

For each $\wta\in X_+(\som)$, there is an irreducible type-$\textbf{1}$ $U_{\K}(\som)$-module with highest weight $\wta$, and highest weight vector $v_{\wta}^+$, which we will denote by $L_{\K}(\wta)$. Moreover, each finite dimensional irreducible type-$\textbf{1}$ $U_{\K}(\som)$-module is isomorphic to $L_{\K}(\wta)$ for some $\wta\in X_+(\som)$ \cite[Theorem 5.10]{JantzenQgps}.

\begin{definition}
Suppose that $V$ is a free finitely generated $\A$-module with an action of $U_{\A}(\som)$ such that the $K_{\alpha}$ action on $V$ is diagonalizable over $\A$ with all eigenvalues positive powers of $q$, for all $\alpha\in \Delta$. Then we say that $V$ is a type-$\textbf{1}$ $U_{\A}(\som)$-module.
\end{definition}

\begin{notation}\label{N:C-weight-space}
Let $U_{\kk}(\som)$ denote the usual enveloping algebra of $\mathfrak{so}_m$. Upon specialization to $\kk$, we have instead to consider the elements $h_{\alpha}$ in the Cartan subalgebra. For $V \in U_{\kk}(\som)$-mod and $\wta\in X(\som)$ we have
\[
V[\wta]:= \{v\in V \ | \ h_{\alpha}v= \wta(h_{\alpha})v, \ \text{for all} \ \alpha\in \Delta\}.
\]
This is the classical notion of weight vector. For convenience, we will refer to finite dimensional $U_{\kk}(\som)$-modules as type-$\textbf{1}$ modules. 
\end{notation}

\begin{notation}
Let $\R\in \{\kk, \A, \K\}$. Suppose that $V$ is a type-$\textbf{1}$ $U_{\R}(\som)$-module. If $V[\wta] \ne 0$, then we say that $\wta$ is a \emph{weight} of $V$.
\end{notation}

\begin{definition}
Let $\R\in \{\kk, \A, \K\}$. Write $\chi(\som)$ for the free $\mathbb{Z}$-module with basis $\{e^{\wta}\}_{\wta\in X(\som)}$. The \emph{formal character} of a type-$\one$ representation $V$ is the expression 
\[
\mathrm{ch}(V):=\sum_{\wta\in X(\som)} \dim_{\R}V[\wta]\cdot e^{\wta}\in \chi(\som).
\]
\end{definition}

For each $\wta\in X^+(\som)$ there is an irreducible $U_{\kk}({\som})$-module with highest weight $\wta$, and highest weight vector $v_{\wta}^+$, which we denote by $L_{\kk}(\wta)$. Each finite dimensional irreducible representation of $U_{\kk}(\som)$ is isomorphic to $L_{\kk}(\wta)$ for some $\wta\in X_+(\som)$. 

\begin{lemma}\label{L:so-linearindependence}
Let $\R \in \{\kk, \K\}$. The characters $\{\mathrm{ch}(L_{\R}(\wta))\}_{\wta \in X_+(\som)}$ are a basis for the span of formal characters of all type-$\one$ representations.
\end{lemma}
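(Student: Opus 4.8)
The plan is to verify separately that the characters $\mathrm{ch}(L_{\R}(\wta))$, $\wta\in X_+(\som)$, span the $\mathbb{Z}$-module of formal characters of type-$\one$ representations and that they are linearly independent; together these give that they form a basis. The two ingredients are complete reducibility and the triangularity of highest weight characters with respect to the dominance order $\preceq$ on $X(\som)$ (where $\mu\preceq\wta$ means $\wta-\mu$ is a $\mathbb{Z}_{\ge 0}$-combination of simple roots).

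For the spanning statement: finite dimensional type-$\one$ $U_{\R}(\som)$-modules are completely reducible (Weyl's theorem when $\R=\kk$; the analogous statement for quantum groups at generic $q$ when $\R=\K$, cf.\ \cite{JantzenQgps}). Hence any type-$\one$ representation $V$ is isomorphic to a finite direct sum $\bigoplus_{\wta} L_{\R}(\wta)^{\oplus m_{\wta}}$ with $m_{\wta}\in\mathbb{Z}_{\ge 0}$, and additivity of the formal character gives $\mathrm{ch}(V)=\sum_{\wta} m_{\wta}\,\mathrm{ch}(L_{\R}(\wta))$. So the $\mathrm{ch}(L_{\R}(\wta))$ span the span of formal characters of all type-$\one$ representations.

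For linear independence: recall that every weight $\mu$ of $L_{\R}(\wta)$ satisfies $\mu\preceq\wta$, and that $\dim_{\R}L_{\R}(\wta)[\wta]=1$ since $v_{\wta}^{+}$ spans the highest weight space. Thus
\[
\mathrm{ch}(L_{\R}(\wta)) = e^{\wta} + \sum_{\mu\prec\wta}\bigl(\dim_{\R}L_{\R}(\wta)[\mu]\bigr)\,e^{\mu}.
\]
Suppose $\sum_{\wta\in S}a_{\wta}\,\mathrm{ch}(L_{\R}(\wta))=0$ for a finite nonempty $S\subseteq X_+(\som)$ with not all $a_{\wta}$ zero, and pick $\wta_0\in S$ that is maximal with respect to $\preceq$ among the indices with $a_{\wta_0}\ne 0$. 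If $\wta\in S$, $\wta\ne\wta_0$, $a_{\wta}\ne 0$, and $e^{\wta_0}$ occurred in $\mathrm{ch}(L_{\R}(\wta))$, then $\wta_0\prec\wta$, contradicting the maximality of $\wta_0$. Hence the coefficient of $e^{\wta_0}$ in the sum is exactly $a_{\wta_0}\ne 0$, contradicting that the sum vanishes. Therefore the family $\{\mathrm{ch}(L_{\R}(\wta))\}_{\wta\in X_+(\som)}$ is linearly independent, and combined with the spanning statement it is a basis.

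There is no serious obstacle here: the only nontrivial inputs are complete reducibility of finite dimensional type-$\one$ modules and the dominance-triangularity of highest weight module characters, both of which are classical and available uniformly for $\R\in\{\kk,\K\}$; the argument is just the assembly of these standard facts.
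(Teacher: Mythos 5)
Your proof is correct and takes essentially the same approach as the paper: the paper's one-line proof ("Use that if $L_{\R}(\wta)[\wtb]\ne 0$, then $(\wta-\wtb)\in\mathbb{Z}_{\ge 0}\Phi_+$") is precisely the dominance-triangularity you use for linear independence, with spanning following from complete reducibility (Lemma \ref{L:so-completely-reducible}). You have simply spelled out what the paper leaves implicit.
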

\begin{proof}
Use that if $L_{\R}(\wta)[\wtb]\ne 0$, then $(\wta - \wtb) \in \mathbb{Z}_{\ge 0}\Phi_+$.
\end{proof}

\begin{lemma}\label{L:so-completely-reducible}
Let $\R\in \{\kk, \K\}$. Every type-$\one$ $U_{\R}(\som)$-module is completely reducible. 
\end{lemma}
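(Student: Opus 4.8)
The plan is to reduce the statement to the vanishing of first extension groups between simple type-$\one$ modules and then invoke classical results. Since every finite dimensional type-$\one$ module has a composition series whose subquotients are of the form $L_{\R}(\wta)$ for $\wta\in X_+(\som)$ (the existence and classification of these simples was recalled just before Lemma \ref{L:so-linearindependence}), it suffices to show that $\mathrm{Ext}^1_{U_{\R}(\som)}(L_{\R}(\wta), L_{\R}(\wtb)) = 0$ in the category of type-$\one$ modules for all $\wta, \wtb\in X_+(\som)$; complete reducibility then follows by the usual induction on composition length.

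For the splitting of a short exact sequence $0\rightarrow L_{\R}(\wtb)\rightarrow M\rightarrow L_{\R}(\wta)\rightarrow 0$ with $\wta\neq \wtb$, one uses a central element of $U_{\R}(\som)$ — the (quantum) Casimir, or a suitable element of the center built from it — which acts on $L_{\R}(\wta)$ and $L_{\R}(\wtb)$ by distinct scalars; decomposing $M$ into generalized eigenspaces for this element yields the splitting. When $\wta = \wtb$, one runs the standard reduction of self-extensions to the previous case by tensoring with a dual module and using that $\mathrm{Hom}$ of a simple with itself is one dimensional (the analogue of Whitehead's lemma). Rather than reproduce these arguments, the cleanest route is to cite them: for $\R = \kk$ this is Weyl's theorem on complete reducibility for finite dimensional modules over the semisimple Lie algebra $\som$, and for $\R = \K$ it is the corresponding statement for finite dimensional type-$\one$ modules over $U_q(\mathfrak{g})$ with $q$ not a root of unity, proved in \cite{JantzenQgps}. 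The mild base-change point — that $U_{\K}(\som)$ is $U_{q^2}(\mathfrak{so}_m)$ when $m$ is even, i.e. a scalar extension of $U_v(\mathfrak{so}_m)$ over $\mathbb{C}(v)$ along $v\mapsto q^2$ — is harmless, since complete reducibility of finite dimensional modules over an algebra over a field of characteristic zero is preserved under field extension.

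The degenerate small cases deserve a separate word. When $m = 1$, $U_{\R}(\mathfrak{so}_1)$ has no generators and the claim is vacuous; when $m = 2$, $U_{\K}(\mathfrak{so}_2) = \K[K^{\pm 1}]$ and, by Definition \ref{D:K-weight-space}, a type-$\one$ module is by fiat one on which $K$ acts diagonalizably, hence is automatically a direct sum of one dimensional (necessarily simple) weight spaces, with the analogous statement over $\kk$. So the only substantive content is $m\geq 3$, where $\som$ is semisimple. The main obstacle here is not mathematical — the results are standard — but bookkeeping: ensuring the notion of type-$\one$ module used here (Definition \ref{D:K-weight-space}, Notation \ref{N:C-weight-space}) matches the hypotheses under which complete reducibility is stated in the cited sources, and correctly tracking the $U_{q^2}$ versus $U_q$ normalization when $m$ is even.
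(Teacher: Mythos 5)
Your proposal ultimately lands on exactly the same citations as the paper: Weyl's complete reducibility theorem for $\R=\kk$ and \cite[Theorem 5.17]{JantzenQgps} for $\R=\K$. The $\mathrm{Ext}^1$ sketch is the internal content of those cited results, and your added care about the $U_{q^2}$-versus-$U_q$ normalization and the degenerate $m=1,2$ cases is sound but does not change the route — it is the same proof the paper gives, with extra bookkeeping.
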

\begin{proof}
This is Weyl's theorem on complete reducibility, when $\R= \kk$, and \cite[Theorem 5.17]{JantzenQgps}, when $\R= \K$
\end{proof}

\begin{lemma}\label{L:formalcharimpliesdecomp}
If $V$ is a type-$\textbf{1}$ $U_{\A}(\som)$-module, then
\[
\kk\otimes V\cong \bigoplus_{\wta\in X_+(\som)}L_{\kk}(\wta)^{\oplus m_{\wta}},\quad \K\otimes V\cong \bigoplus_{\wta\in X_+(\som)}L_{\K}(\wta)^{\oplus n_{\wta}}, \quad \text{and $m_{\wta} = n_{\wta}$}.
\]
\end{lemma}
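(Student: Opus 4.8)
The plan is to prove that $\kk\otimes V$ and $\K\otimes V$ have the same formal character, and then to read off the equality of multiplicities using complete reducibility together with linear independence of irreducible characters.

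First I would use freeness over $\A$ to control weight spaces under base change. Since the commuting operators $K_\alpha$, $\alpha\in\Delta$, act diagonalizably on $V$, there is an internal $\A$-module decomposition $V=\bigoplus_{\wta}V[\wta]$ into simultaneous eigenspaces, where $V[\wta]$ is the submodule on which each $K_\alpha$ acts by $q_\alpha^{(\alpha^\vee,\wta)}$. Each $V[\wta]$ is a direct summand of the free $\A$-module $V$, hence finitely generated and torsion-free, hence free of some rank $d_\wta$, with $\sum_\wta d_\wta=\rk_\A V$. Base changing to $\K$, the decomposition $\K\otimes V=\bigoplus_\wta\K\otimes V[\wta]$ is exactly the $U_\K(\som)$-weight decomposition, because distinct weights in $X(\som)$ produce distinct tuples of eigenvalues in $\K=\kk(q)$; thus $\dim_\K(\K\otimes V)[\wta]=d_\wta$. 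Base changing to $\kk$ requires translating from $K_\alpha$-eigenvalues to $h_\alpha$-weights: there is an element of $U_\A(\som)$, built from the $K_\alpha^{\pm1}$, that specializes at $q=1$ to $h_\alpha$ and acts on $V[\wta]$ by a quantum integer specializing to $(\alpha^\vee,\wta)=\wta(h_\alpha)$; hence $\kk\otimes V[\wta]$ is the $h_\alpha$-weight space of weight $\wta$ in $\kk\otimes V$, and $\dim_\kk(\kk\otimes V)[\wta]=d_\wta$ as well. Therefore $\mathrm{ch}(\kk\otimes V)=\sum_\wta d_\wta e^{\wta}=\mathrm{ch}(\K\otimes V)$.

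Next, Lemma~\ref{L:so-completely-reducible} gives decompositions $\kk\otimes V\cong\bigoplus_\wta L_\kk(\wta)^{\oplus m_\wta}$ and $\K\otimes V\cong\bigoplus_\wta L_\K(\wta)^{\oplus n_\wta}$, so taking characters and using the previous paragraph yields $\sum_\wta m_\wta\,\mathrm{ch}(L_\kk(\wta))=\sum_\wta n_\wta\,\mathrm{ch}(L_\K(\wta))$. I would then invoke the quantum analogue of Weyl's character formula, i.e. the well-known fact that $\mathrm{ch}(L_\K(\wta))=\mathrm{ch}(L_\kk(\wta))$ for every $\wta\in X_+(\som)$ (for instance because a Weyl module over $\A$ is free with ring-independent character and specializes to the simple module over both $\kk$ and $\K$; see \cite[Chapter 5]{JantzenQgps}). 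Combining these gives $\sum_\wta(m_\wta-n_\wta)\,\mathrm{ch}(L_\kk(\wta))=0$, and Lemma~\ref{L:so-linearindependence} forces $m_\wta=n_\wta$ for all $\wta$.

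The $\A$-module arguments are routine; the substantive inputs are Lemmas~\ref{L:so-completely-reducible} and~\ref{L:so-linearindependence} together with the identity $\mathrm{ch}(L_\kk(\wta))=\mathrm{ch}(L_\K(\wta))$, which is the only step that is not purely formal and thus the main obstacle. The one technical subtlety to be careful about is the passage over $\kk$ from $K_\alpha$-eigenvalue data to $h_\alpha$-weight data, which is exactly what the Cartan divided-power element computation above handles; in the degenerate cases $m=1,2$, where $\som$ is not semisimple, this identification is checked directly against the definitions of $U_\kk(\mathfrak o_m)$ from the introduction, and the rest of the argument goes through unchanged.
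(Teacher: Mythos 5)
Your proof is correct and follows essentially the same route as the paper's: both arguments combine complete reducibility (Lemma~\ref{L:so-completely-reducible}), linear independence of irreducible characters (Lemma~\ref{L:so-linearindependence}), and the equality $\mathrm{ch}(L_{\kk}(\wta))=\mathrm{ch}(L_{\K}(\wta))$ from \cite[Theorem 5.15]{JantzenQgps}. You merely make explicit the preliminary step that $\kk\otimes V$ and $\K\otimes V$ share a formal character, including the $K_\alpha$-eigenvalue to $h_\alpha$-weight translation at $q=1$, which the paper leaves implicit.
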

\begin{proof}
Follows from Lemma \ref{L:so-completely-reducible}, Lemma \ref{L:so-linearindependence}, and that $\dim_{\kk} L_{\kk}(\wta)[\wtb]= \dim_{\K} L_{\K}(\wta)[\wtb]$ for all $\wta\in X_+(\som)$ and $\wtb\in X(\som)$ \cite[Theorem 5.15]{JantzenQgps}.
\end{proof}

\begin{lemma}\label{L:som-Schurslemma}
Let $\R\in \{\kk, \K\}$. If $\wta, \wtb\in X_+(\som)$, then $\Hom_{U_\R(\som)}(L_\R(\wta), L_\R(\wtb)) = 0$ if $\wta\ne \wtb$, and $\End_{U_{\R}(\som)}(L_\R(\wta))= \R\cdot \id_{L_\R(\wta)}$.
\end{lemma}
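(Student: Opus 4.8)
The plan is to run the standard Schur's lemma argument, being careful that $\K=\mathbb{C}(q)$ is not algebraically closed: rather than invoking the ``algebraically closed division ring'' form of Schur's lemma, I would argue throughout via the one-dimensionality of the highest weight space, which works uniformly for $\R\in\{\kk,\K\}$.

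First I would recall the structural facts about $L_\R(\wta)$ valid for both $\R=\kk$ and $\R=\K$: it is generated as a $U_\R(\som)$-module by a highest weight vector $v_\wta^+$; the weight space $L_\R(\wta)[\wta]$ is one-dimensional, spanned by $v_\wta^+$; and any nonzero vector annihilated by every $E_\alpha$ (a \emph{primitive} vector) is a scalar multiple of $v_\wta^+$. The last point follows because such a vector generates a nonzero submodule, hence all of $L_\R(\wta)$ by irreducibility, so its weight $\wtk$ satisfies both $\wtk\ge\wta$ and $\wta\ge\wtk$ (each being the maximal weight from the respective highest weight generator), forcing $\wtk=\wta$ and landing it in the one-dimensional space $\R\cdot v_\wta^+$. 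For $\R=\K$ these are part of the basic theory of type-$\one$ modules \cite[Ch.~5]{JantzenQgps}, and for $\R=\kk$ they are classical. I would also note that any $U_\R(\som)$-module map $\phi$ intertwines the $K_\alpha$- (resp.\ $h_\alpha$-)action, hence restricts to maps of weight spaces $L_\R(\wta)[\wtk]\to L_\R(\wtb)[\wtk]$ for all weights $\wtk$.

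Given $\phi\in\Hom_{U_\R(\som)}(L_\R(\wta),L_\R(\wtb))$, I would then examine $\phi(v_\wta^+)$: it lies in $L_\R(\wtb)[\wta]$ and is killed by every $E_\alpha$, since $v_\wta^+$ is. If $\phi(v_\wta^+)\ne 0$ it is therefore a primitive vector of $L_\R(\wtb)$, so by the third fact above it is a scalar multiple of $v_\wtb^+$ and its weight $\wta$ equals $\wtb$. Contrapositively, if $\wta\ne\wtb$ then $\phi(v_\wta^+)=0$, and since $v_\wta^+$ generates $L_\R(\wta)$ this gives $\phi=0$, which is the first assertion. For the second assertion take $\wta=\wtb$: then $\phi(v_\wta^+)\in L_\R(\wta)[\wta]=\R\cdot v_\wta^+$, so $\phi(v_\wta^+)=c\,v_\wta^+$ for a unique $c\in\R$; the endomorphism $\phi-c\cdot\id$ annihilates the generator $v_\wta^+$ and is hence identically zero, so $\phi=c\cdot\id$. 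Since distinct scalars visibly give distinct endomorphisms, $\End_{U_\R(\som)}(L_\R(\wta))=\R\cdot\id_{L_\R(\wta)}$.

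I do not anticipate a genuine obstacle here; the only point that requires any care is the one flagged at the start, namely that over $\K$ one must not appeal to algebraic closedness, which is precisely why the argument is routed through the one-dimensionality of the top weight space rather than through the abstract endomorphism-ring version of Schur's lemma.
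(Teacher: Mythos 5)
Your proof is correct and takes essentially the approach the paper alludes to in its one-line proof (``Follows from Schur's lemma and standard theory about highest weight vectors''); you have simply spelled out the highest-weight-vector argument in full, and your care in routing around the algebraically-closed form of Schur's lemma for $\K=\mathbb{C}(q)$ is exactly the right instinct.
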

\begin{proof}
Follows from Schur's lemma and standard theory about highest weight vectors.
\end{proof}

Now, we explain how to encode the action of the component group $O(m)/SO(m)$ on type-$\one$ representations of $U_{\R}(\mathfrak{so}_m)$. This combination of the group $O(m)/SO(m)$ and the algebra $U_{\R}(\mathfrak{so}_m)$ is neatly packaged together as the quantum orthogonal algebra, which we define in Definition \ref{D:Uo} and Definition \ref{D:Uo-C}.

When $m=2n$ there is an order $2$ automorphism $\sigma$ of the Dynkin diagram, swapping the simple roots $\alpha_{n-1} = \epsilon_{n-1}- \epsilon_n$ and $\alpha_n= \epsilon_{n-1}+\epsilon_n$. This induces an automorphism of $U_{\K}(\som)$ such that
\[
\sigma(E_{\alpha_{n-1}})= E_{\alpha_n}, \ \sigma(F_{\alpha_{n-1}})= F_{\alpha_n}, \ \sigma(K_{\alpha_{n-1}})= K_{\alpha_n},
\]
\[
\sigma(E_{\alpha_n})= E_{\alpha_{n-1}}, \ 
\sigma(F_{\alpha_n})= F_{\alpha_{n-1}}, \ 
\sigma(K_{\alpha_n})= K_{\alpha_{n-1}},
\]
and $\sigma$ fixes all the other generators for $U_{\K}(\som)$. 

If $m=2n+1$, there are no Dynkin diagram automorphisms. In this case we write $\sigma$ to denote the identity automorphism of $U_{\K}(\som)$.

\begin{defn}{\cite[Section 8.1.2]{LZ-stronglymultifree}}\label{D:Uo}
Let $U_{\K}(\om)$ be the associative algebra generated by $U_{\K}(\som)$ and $\sigma$, such that $\sigma^2=1$ and $\sigma X \sigma^{-1} = \sigma(X)$, for $X\in U_{\K}(\som)$.


The algebra $U_{\K}(\om)$ is a Hopf algebra with $\Delta, S, \epsilon$ defined on elements of $U_{\K}(\som)$ as in Definition \ref{defn:quantumgp}, along with 
\[
\Delta(\sigma) = \sigma\otimes \sigma, \quad S(\sigma) = \sigma^{-1}, \ \text{and} \quad \epsilon(\sigma) = 1. 
\]

The automorphism $\sigma$ preserves $U_{\A}(\som)\subset U_{\K}(\som)$, so we define $U_{\A}(\om)$ as the algebra generated by $U_{\A}(\som)$ and $\sigma$, such that $\sigma X\sigma^{-1} = \sigma(X)$,
for $X\in U_{\A}(\som)$. Note that $U_{\A}(\om)$ is the unital $\A$ subalgebra of $U_K(\om)$ generated by $\sigma$, $K_{\alpha}^{\pm 1}$, $E_{\alpha}^{(n)}$, and $F_{\alpha}^{(n)}$, for all $n\in \mathbb{Z}_{\ge 0}$, $\alpha\in \Delta$.
\end{defn}

\begin{defn}\label{D:Uo-C}
Define $U_{\kk}(\om)$ as the universal enveloping algebra of $\mathfrak{so}_m(\kk)$, denoted $U(\som(\kk))$, augmented by the algebra automorphism, which we will denote by $\sigma$, determined by the non-trivial Dynkin diagram automorphism when $m$ is even, and the identity automorphism when $m$ is odd.
\end{defn}

For any $U_{\R}(\om)$-module which restricts to a type-$\one$ $U_{\R}(\som)$-module, we can use the same notion of weight spaces as in Definition \ref{D:K-weight-space} and Notation \ref{N:C-weight-space}, and such a module will be a direct sum of its weight spaces. Note that the equation $\sigma K_{\alpha} \sigma^{-1} = \sigma(K_{\alpha})$ implies that $\sigma$ acts on weight spaces. The induced action on weights is such that $\sigma$ acts on $X(\som)$ trivially if $m=2n+1$, and $\sigma$ swaps $\varpi_{n-1}$ and $\varpi_{n}$ if $m=2n$.

\begin{remark}
Any finite dimensional representation of $O(\kk^{m})$ is a finite dimensional representation of $U_{\kk}(\om)$ such that the weights are contained in $\oplus_{i=1}^n\mathbb{Z}\epsilon_i$, and vice-versa. Moreover, a linear map between such representations commutes with the actions of $O(\kk^{m})$ if and only if the map commutes with $U_{\kk}(\om)$. Such representations are exactly the $O(\kk^{m})$ modules which occur as submodules of $(\kk^{m})^{\otimes d}$ for some $d\ge 0$.
\end{remark}

\begin{definition}\label{D:orthogonal-typeone}
Let $\R\in \{\kk, \A, \K\}$. A $U_\R(\om)$-module such that its restriction to $U_\R(\som)$ is type-$\textbf{1}$ with weights contained in $\oplus_{i=1}^n\mathbb{Z}\epsilon_i$, will be referred to as a \emph{type-$\textbf{1}$} $U_\R(\om)$-module\footnote{An example of a $U_{\kk}(\om)$-module which is not of this form would be the induction, from $U_{\kk}(\mathfrak{so}_{2n+1})$, of the irreducible spinor module $V(\varpi_k)$, i.e. $U_{\kk}(\mathfrak{o}_{2n+1})\otimes_{U_{\kk}(\mathfrak{so}_{2n+1})}V(\varpi_n)$.}.
\end{definition}

\begin{definition}
Let $\R[\mathbb{Z}/2]$ denote the group algebra of $\mathbb{Z}/2$ over $\R$. There is an algebra homomorphism
\[
U_\R(\om)\rightarrow \R[\mathbb{Z}/2]
\]
with $U_\R(\som)$ in the kernel and such that $\sigma\mapsto -1\in \mathbb{Z}/2$. Composing this homomorphism with the sign representation of $\mathbb{Z}/2$, we obtain a one dimensional $U_\R(\om)$-module, denoted $\det_\R$.
\end{definition}

\begin{remark}
The module $\detR$ restricts to the trivial $U_{\R}(\som)$ module, and therefore is a type-$\one$ $U_{\R}(\om)$-module.
\end{remark}

There is a classification of finite dimensional irreducible type-$\textbf{1}$ $U_{\R}(\om)$-modules. If $\wta\in \oplus_{i=1}^n\mathbb{Z}\epsilon_i\cap X_+(\som)$, then $\wta = \sum_{i=1}^n \wta_i\epsilon_i$ such that $\wta_1\ge \dots \ge \wta_{n-1}\ge |\wta_n|$, and $\wta_n= |\wta_n|$ if $m=2n+1$, while $\wta_n$ is any integer if $m=2n$. For such an $\wta$, we obtain a representation $L_{\R}(\wta)$ of $U_{\R}(\som)$, which we can then induce to $U_{\R}(\om)$. The induced module $U_{\R}(\om)\otimes_{U_{\R}(\som)} L_{\R}(\wta)$ is isomorphic to $L_{\R}(\wta)\oplus L_{\R}(\sigma(\wta))$ as $U_{\R}(\som)$-modules, by the map $1\otimes \ell \mapsto (\ell, 0)$ and $\sigma\otimes \ell \mapsto (0,\ell)$. The action of $U_{\R}(\om)$ is determined by
\[
\sigma\cdot (\ell, \ell') = (\ell', \ell) \quad \text{and} \quad X\cdot (\ell, \ell') = (X\cdot \ell, \sigma(X)\cdot \ell'). 
\]
If $m=2n+1$, or $m=2n$ and $\wta_n=0$, then the induced module decomposes into a direct sum of two irreducible $U_{\R}(\om)$-modules corresponding to the $+1$ and $-1$ eigenspaces of $\sigma$. We write $L_{\R}(\wta, +1)$ and $L_{\R}(\wta, -1)$ for these representations. If $m=2n$ and $\wta_n\ne 0$, then the induced module is irreducible and is isomorphic to $U_{\R}(\om)\otimes_{U_{\R}(\som)}L(\wta_1, \dots,\wta_{n-1}, -\wta_n)$.

\begin{proposition}\label{P:o-list-of-simples}
Let $\R\in \{\kk, \K\}$. The following is a complete and irredundant list of irreducible type-$\one$ representations of $U_{\R}(\om)$. For $m=2n+1$:
\[
L_{\R}(\wta, +1) \quad \text{and} \quad L_{\R}(\wta, -1), \quad \text{such that} \quad \wta_1\ge \dots \ge \wta_n\ge 0,
\]
and for $m=2n$:
\begin{align*}
U_{\R}(\om)\otimes_{U_{\R}(\som)} L(\wta)\quad \text{such that} \quad \wta_1\ge \dots \ge \wta_n >0, \\
L_{\R}(\wta, +1) \quad \text{and} \quad L_{\R}(\wta, -1), \quad \text{such that} \quad \wta_1\ge \dots \ge \wta_n= 0.
\end{align*}
\end{proposition}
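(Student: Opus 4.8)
The plan is to run the standard Clifford-theory argument for the index-two inclusion $U_\R(\som)\subset U_\R(\om)$, using as input the analysis of the induced modules $U_\R(\om)\otimes_{U_\R(\som)}L_\R(\wta)$ carried out in the paragraph preceding the statement. For exhaustiveness, let $M$ be a finite dimensional irreducible type-$\one$ $U_\R(\om)$-module. By Definition~\ref{D:orthogonal-typeone}, $\mathrm{Res}\,M$ is a type-$\one$ $U_\R(\som)$-module with weights in $\bigoplus_{i=1}^n\mathbb{Z}\epsilon_i$, hence is completely reducible (Lemma~\ref{L:so-completely-reducible}) with every simple constituent of the form $L_\R(\wta)$ for some $\wta\in\bigoplus_{i=1}^n\mathbb{Z}\epsilon_i\cap X_+(\som)$. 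Fixing such an $L_\R(\wta)$, the inclusion $L_\R(\wta)\hookrightarrow\mathrm{Res}\,M$ corresponds, via Frobenius reciprocity, to a nonzero $U_\R(\om)$-module map $U_\R(\om)\otimes_{U_\R(\som)}L_\R(\wta)\to M$, which is surjective since $M$ is irreducible. By the preceding paragraph, $U_\R(\om)\otimes_{U_\R(\som)}L_\R(\wta)$ is either irreducible (when $m=2n$ and $\wta_n\neq 0$, in which case it also equals $U_\R(\om)\otimes_{U_\R(\som)}L_\R(\sigma\wta)$) or the direct sum $L_\R(\wta,+1)\oplus L_\R(\wta,-1)$ (otherwise); in either case $M$ is isomorphic to one of the modules on the list.

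For irredundancy, note first that a $U_\R(\om)$-isomorphism $L_\R(\wta,+1)\cong L_\R(\wta,-1)$ would restrict to a $U_\R(\som)$-automorphism of $L_\R(\wta)$, hence to a scalar by Lemma~\ref{L:som-Schurslemma}; but these two modules are the two distinct extensions of the $U_\R(\som)$-action on $L_\R(\wta)$ to $U_\R(\om)$, which differ by tensoring with $\detR$, and no scalar intertwines them. Since $U_\R(\om)$-modules with non-isomorphic restrictions to $U_\R(\som)$ are themselves non-isomorphic, the remaining comparisons follow from facts recalled earlier: $L_\R(\wta)\not\cong L_\R(\wtb)$ for $\wta\neq\wtb$ (Lemma~\ref{L:som-Schurslemma}); the induced modules, whose restrictions are not simple, are never isomorphic to any $L_\R(\wta,\pm1)$, whose restriction is simple; and for $m=2n$ an isomorphism $U_\R(\om)\otimes_{U_\R(\som)}L_\R(\wta)\cong U_\R(\om)\otimes_{U_\R(\som)}L_\R(\wtb)$ forces $\{\wta,\sigma\wta\}=\{\wtb,\sigma\wtb\}$ by comparing restrictions, hence $\wta=\wtb$ under the normalization $\wta_n,\wtb_n>0$.

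The only genuinely delicate point is the one I am importing from the preceding paragraph: when $\sigma\wta=\wta$ (that is, when $m$ is odd, or $m=2n$ and $\wta_n=0$) one needs to know that $L_\R(\wta)$ genuinely extends to a $U_\R(\om)$-module, so that $U_\R(\om)\otimes_{U_\R(\som)}L_\R(\wta)$ is reducible. Over $\R=\kk$ this is automatic, but over $\R=\K=\mathbb{C}(q)$ one must exclude an obstruction living in $\K^\times/(\K^\times)^2$; that exclusion is exactly what the explicit construction preceding the statement supplies. With it granted, the argument above is bookkeeping, and is essentially the form in which I would write the proof.
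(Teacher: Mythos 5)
Your proof is correct and runs the same Clifford-theory argument for the index-two inclusion $U_{\R}(\som)\subset U_{\R}(\om)$ that the paper invokes; the paper merely hints at it (centrality of $\sigma$ for $m$ odd, the $\sigma$-action on highest weight vectors for $m$ even) and defers to \cite[Section 5.5.5]{GoodmanWallach-invariants}, while you have written out the restriction, Frobenius reciprocity, and irredundancy bookkeeping explicitly. Your observation about the potential obstruction over $\K$ when $\sigma\wta=\wta$ is a good one, and your resolution (the explicit construction preceding the statement exhibits the extension) is exactly right.
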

\begin{proof}
Use that $\sigma$ is central when $m=2n+1$. For $m=2n$, observe that $\sigma$ preserves the space of vectors annihilated by $E_{\alpha}$'s and acts on weights by $(\wta_1, \dots, \wta_{n-1}, \wta_n)\mapsto (\wta_1, \dots, \wta_{n-1}, -\wta_n)$. For more details, see \cite[Section 5.5.5]{GoodmanWallach-invariants}.
\end{proof}

\begin{lemma}\label{L:o-Schurslemma}
Let $\R\in \{\kk, \K\}$. If $S$ and $T$ are two irreducible type-$\one$ $U_{\R}(\om)$-modules from the list of irreducibles in Proposition \ref{P:o-list-of-simples}, then
\[
\Hom_{U_{\R}(\om)}(S,T) = \begin{cases}
    0 \quad \text{if $S\ne T$}, \quad \text{and} \\
    \R\cdot \id_{S} \quad \text{if $S=T$}.
\end{cases}
\]
\end{lemma}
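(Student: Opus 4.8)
The plan is to bootstrap from Schur's lemma for $\som$, Lemma \ref{L:som-Schurslemma}. Since the inclusion $U_\R(\som)\subset U_\R(\om)$ makes every $U_\R(\om)$-module homomorphism into a $U_\R(\som)$-module homomorphism, there is an inclusion $\Hom_{U_\R(\om)}(S,T)\subseteq \Hom_{U_\R(\som)}(S,T)$ of finite dimensional $\R$-vector spaces. First I would record how $S$ and $T$ look as $U_\R(\som)$-modules, using the description of the induced modules given just before Proposition \ref{P:o-list-of-simples}: a module $L_\R(\wta,\pm 1)$ restricts to $L_\R(\wta)$ and occurs only when $\sigma(\wta)=\wta$, while $U_\R(\om)\otimes_{U_\R(\som)}L(\wta)$ restricts to $L_\R(\wta)\oplus L_\R(\sigma(\wta))$ with $\wta\ne\sigma(\wta)$. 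Together with Lemma \ref{L:som-Schurslemma}, this reduces the claim to a short check of when the restrictions of two entries of the list share a $U_\R(\som)$-constituent, plus an analysis of the $\sigma$-action in the single case where they do.

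For $S\ne T$: the set of $U_\R(\som)$-constituents attached to a list-entry is $\{\wta\}$ (with $\wta_n=0$) or $\{\wta,\sigma\wta\}$ (with $\wta_n>0$), and $\sigma\wta=(\wta_1,\dots,\wta_{n-1},-\wta_n)$ fails to be dominant when $\wta_n>0$. Hence two distinct entries of the second type have disjoint constituent sets, an entry of each type cannot overlap (as $\wta_n>0=\wtb_n$), and by irredundancy of the list (Proposition \ref{P:o-list-of-simples}) the only distinct pair with a common constituent is $\{S,T\}=\{L_\R(\wta,+1),\,L_\R(\wta,-1)\}$. In every non-overlapping case $\Hom_{U_\R(\som)}(S,T)=0$ by Lemma \ref{L:som-Schurslemma}, hence $\Hom_{U_\R(\om)}(S,T)=0$. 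For the remaining pair, $\Hom_{U_\R(\som)}(S,T)=\R$ is spanned by the canonical identification of both restrictions with $L_\R(\wta)$, so any $\phi\in\Hom_{U_\R(\om)}(S,T)$ is a scalar $c$ times that map and in particular sends the highest weight vector of $S$ to $c$ times the highest weight vector of $T$; but by construction $\sigma$ acts by $+1$ on the highest weight vector of $S=L_\R(\wta,+1)$ and by $-1$ on that of $T=L_\R(\wta,-1)$, so $\sigma$-equivariance of $\phi$ forces $c=-c$, whence $\phi=0$.

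For $S=T$: if $S=L_\R(\wta,\pm 1)$ restricts irreducibly, then already $\End_{U_\R(\som)}(S)=\R\cdot\id$ by Lemma \ref{L:som-Schurslemma}, hence a fortiori $\End_{U_\R(\om)}(S)=\R\cdot\id$. Otherwise $S=U_\R(\om)\otimes_{U_\R(\som)}L(\wta)$ with $\wta\ne\sigma\wta$, and since $L_\R(\wta)\not\cong L_\R(\sigma\wta)$, Lemma \ref{L:som-Schurslemma} gives $\End_{U_\R(\som)}(S)=\R e_1\oplus\R e_2$, the two-dimensional commutative algebra spanned by the projections $e_1,e_2$ onto the two summands. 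Applying $\sigma$ to the submodule $L_\R(\wta)$ produces a $U_\R(\som)$-submodule isomorphic to the $\sigma$-twist of $L_\R(\wta)$, i.e. to $L_\R(\sigma\wta)$, and by multiplicity-freeness it equals the summand $L_\R(\sigma\wta)$; thus conjugation by $\sigma$ interchanges $e_1$ and $e_2$, and an endomorphism $ae_1+be_2$ commuting with $\sigma$ must have $a=b$, i.e. lie in $\R\cdot\id$. This exhausts all cases.

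I expect the only genuine work to be the elementary bookkeeping in the middle paragraph for $m=2n$: checking that distinct entries of the list in Proposition \ref{P:o-list-of-simples} have disjoint $U_\R(\som)$-constituent sets apart from the sign-pair. The remainder is a direct application of Lemma \ref{L:som-Schurslemma} together with the $\sigma$-equivariance constraint, and the argument is uniform in $\R\in\{\kk,\K\}$ because it invokes only Lemma \ref{L:som-Schurslemma}, which holds for both.
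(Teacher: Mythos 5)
Your proof is correct and it is precisely the case-by-case analysis that the paper's one-line proof (``look at $\Hom_{U_{\R}(\som)}(\textbf{Res}(S),\textbf{Res}(T))$, then analyze which maps commute with $\sigma$'') leaves to the reader: restrict, invoke Lemma \ref{L:som-Schurslemma}, then impose $\sigma$-equivariance. One small imprecision in the $m=2n$ bookkeeping: $\sigma\wta=(\wta_1,\dots,\wta_{n-1},-\wta_n)$ with $\wta_n>0$ is in fact still a dominant weight for $\mathfrak{so}_{2n}$ (the paper allows $\wta_n$ to be any integer with $|\wta_n|\le\wta_{n-1}$); what you actually need, and what your argument implicitly uses, is that it does not lie in the index set of Proposition \ref{P:o-list-of-simples}, which only lists $\wta$ with $\wta_n\ge 0$. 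This does not affect the conclusion that distinct type-1 entries have disjoint constituent sets.
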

\begin{proof}
This follows by looking first at $\Hom_{U_{\R}(\som)}(\textbf{Res}(S), \textbf{Res}(T))$, then analyzing which of these maps commute with $\sigma$. We leave it to the reader to complete the case-by-case analysis.
\end{proof}

If $W$ is a type-$\one$ $U_{\R}(\om)$-module, then $W^*:=\Hom_{\R}(W, \R)$ is an $U_{\R}(\om)$-module via the antipode, denoted $S$ in Definition \ref{D:Uo}. We say $W$ is \emph{self-dual} if $W\cong W^*$ as $U_{\R}(\om)$-modules.

\begin{lemma}\label{L:selfduality-of-O-reps}
Let $\R\in \{\kk, \K\}$. The type-$\one$ irreducible representations of $U_{\R}(\om)$ are self-dual.
\end{lemma}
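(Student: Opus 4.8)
The plan is to reduce to the (already available) classification of irreducible type-$\textbf{1}$ $U_{\R}(\om)$-modules in Proposition~\ref{P:o-list-of-simples} and verify self-duality case by case, using the highest-weight combinatorics together with the behaviour of $\sigma$ on weights. First I would record the general principle: for a finite dimensional type-$\one$ module $W$, the dual $W^{*}$ is again type-$\one$ (the antipode sends $K_{\alpha}\mapsto K_{\alpha}^{-1}$ and $\sigma\mapsto\sigma^{-1}=\sigma$, so weight spaces are preserved up to the map $\wta\mapsto -\wta$ on $X(\som)$, and $\mathrm{ch}(W^{*})$ is the image of $\mathrm{ch}(W)$ under $e^{\wta}\mapsto e^{-\wta}$). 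Hence for an irreducible $S$, the module $S^{*}$ is irreducible and its highest weight is $-w_0(\lambda)$, where $\lambda$ is the highest weight of $\textbf{Res}(S)$ and $w_{0}$ is the longest element of the Weyl group $W$ of $\som$. So the entire question comes down to: (i) $-w_{0}$ acts as the identity on $X(\som)\cap\bigoplus_{i=1}^{n}\mathbb{Z}\epsilon_{i}$, and (ii) the $\sigma$-eigenvalue (resp.\ the sign of $\wta_{n}$) is preserved.

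For step (i), I would invoke the standard description of $w_{0}$ for types $B$ and $D$: in type $B_{n}$ ($m=2n+1$), $w_{0}=-\id$ on the weight lattice, so $-w_{0}=\id$ and every simple module of $\som$ is self-dual; in type $D_{n}$ ($m=2n$), $w_{0}=-\id$ if $n$ is even, while if $n$ is odd $w_{0}=-\sigma$ (it acts by $-1$ on $\epsilon_{1},\dots,\epsilon_{n-1}$ and by $+1$ on $\epsilon_{n}$, equivalently $-\id$ composed with the diagram automorphism). In either parity, $-w_{0}$ maps $(\wta_{1},\dots,\wta_{n})$ to $(\wta_{1},\dots,\wta_{n})$ or to $(\wta_{1},\dots,\wta_{n-1},-\wta_{n})$; in particular $-w_{0}$ preserves the sublattice $\bigoplus\mathbb{Z}\epsilon_{i}$ and fixes every dominant weight with $\wta_{n}=0$. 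This already handles the two ``spin-eigenvalue'' families $L_{\R}(\wta,\pm1)$ (with $\wta_{n}=0$): here $\textbf{Res}(S)=L_{\R}(\wta)$ is self-dual, and one checks the duality pairing intertwines the $\sigma$-action, so it must preserve the $\pm1$-eigenspace decomposition, whence $L_{\R}(\wta,\varepsilon)^{*}\cong L_{\R}(\wta,\varepsilon)$. For the remaining type-$D$ family $U_{\R}(\om)\otimes_{U_{\R}(\som)}L(\wta)$ with $\wta_{n}>0$, its restriction to $U_{\R}(\som)$ is $L_{\R}(\wta)\oplus L_{\R}(\sigma(\wta))$, and taking duals gives $L_{\R}(-w_{0}\wta)\oplus L_{\R}(-w_{0}\sigma(\wta))=L_{\R}(\sigma(\wta))\oplus L_{\R}(\wta)$ (using $-w_{0}=-\sigma$ when $n$ is odd, and $-w_0 = \id$ when $n$ is even combined with the fact that the induced module for $\wta$ is isomorphic to the one for $(\wta_1,\dots,\wta_{n-1},-\wta_n)$ as recorded just before Proposition~\ref{P:o-list-of-simples}); so the dual has the same restriction and hence the same highest weight, and by the irredundancy in Proposition~\ref{P:o-list-of-simples} together with Lemma~\ref{L:o-Schurslemma} it is isomorphic to the original module.

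I expect the main obstacle to be purely bookkeeping: getting the type-$D$, $n$ odd case exactly right, because there $w_{0}\ne-\id$ and one must track the interaction of the diagram automorphism $\sigma$ both as the obstruction in $-w_{0}$ and as the extra generator defining $U_{\R}(\om)$; conflating the two is the easy mistake. Everything else is standard highest-weight theory plus Lemma~\ref{L:o-Schurslemma} (to conclude ``same highest weight $\Rightarrow$ isomorphic''), so the write-up can be short: state the $-w_{0}$ dichotomy, run the three bullet cases above, and cite \cite[Section 5.5.5]{GoodmanWallach-invariants} for the classical input. (Alternatively, and perhaps more cleanly, one can observe that $V_{\R}=\Lambda^{1}_{\R}$ is manifestly self-dual via its nondegenerate symmetric bilinear form, that $\det_{\R}$ is self-dual since $\det_{\R}\otimes\det_{\R}\cong\triv$, and that every irreducible type-$\one$ $U_{\R}(\om)$-module occurs in some tensor product of these self-dual objects; since duality is a contravariant monoidal involution and the category is semisimple, every summand of a self-dual object is self-dual. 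I would include this remark as a second, softer proof.)
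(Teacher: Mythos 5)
Your proof is correct in outline but takes a genuinely different route from the paper. The paper argues uniformly: irreducible $U_{\R}(\som)$-modules are self-dual (at the relevant integral weights), induction from $U_{\R}(\som)$ to $U_{\R}(\om)$ preserves self-duality, a direct summand of a self-dual module is self-dual (using Lemma~\ref{L:o-completely-reducible}), and by Proposition~\ref{P:o-list-of-simples} every irreducible is a summand of an induced module. You instead run a case-by-case check against the classification, computing the effect of $-w_0$ on weights for types $B_n$ and $D_n$ and tracking the $\sigma$-label by hand. Your route is more concrete and makes the role of $-w_0$ explicit; the paper's is shorter and postpones the classical input to the single claim about induction. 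Your parenthetical ``softer'' argument, using that tensor powers of $V_\R$ and $\det_\R$ are self-dual and that duality is a contravariant monoidal involution on a semisimple category, is actually closer in spirit to the paper's proof --- the only difference is which self-dual object plays the role of ``ambient'' module (a tensor power versus an induced module) --- though you would still need to justify that every type-$\one$ irreducible appears in such a tensor power before that argument is complete.

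A few small imprecisions worth fixing in your write-up. First, the opening of your ``step (i)'' asserts that $-w_0$ is the identity on $\bigoplus \mathbb{Z}\epsilon_i$, which you then immediately (and correctly) contradict for type $D_n$ with $n$ odd; rephrase so the dichotomy is stated once. Second, your ``spin-eigenvalue'' case is stated only for $\wta_n=0$, which in the $m=2n+1$ setting excludes $\wta_n>0$; the same argument works there because $-w_0=\id$ in type $B_n$ regardless of $\wta_n$, so just broaden the hypothesis. Third, the step ``the duality pairing intertwines the $\sigma$-action, so it must preserve the $\pm1$-eigenspace decomposition'' deserves one more sentence: once you know $\textbf{Res}(S)^*\cong\textbf{Res}(S)$, Schur only tells you $S^*$ is one of $L_\R(\wta,+1)$ or $L_\R(\wta,-1)$, and you must rule out the swap. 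One clean way is to compare $\sigma$ on the highest weight line of $S$ with $\sigma$ on the lowest weight line of $S^*$ (which is canonically the dual of the highest weight line of $S$); in type $B$ this is immediate since $\sigma$ is central, and for the type-$D$, $\wta_n=0$ case the weight $\wta$ is $\sigma$-fixed so the highest weight space is one-dimensional and $\sigma$-stable. None of this affects the correctness of the plan, but it is exactly the bookkeeping you flagged as the main risk, so it is worth writing out.
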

\begin{proof}
The irreducible representations of $U_{\R}(\som)$ are self-dual. We leave it as an exercise to the reader to verify that inducing a self-dual module from $U_{\R}(\som)$ to $U_{\R}(\om)$ results in a self-dual module. Since a direct summand of a self-dual module is self-dual, the claim follows from Proposition \ref{P:o-list-of-simples}.
\end{proof}

\begin{lemma}\label{L:o-completely-reducible}
Let $\R\in \{\kk, \K\}$. Every type-$\textbf{1}$ $U_{\R}(\om)$-module is completely reducible. 
\end{lemma}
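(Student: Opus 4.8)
The plan is to deduce complete reducibility over $U_\R(\om)$ from complete reducibility over $U_\R(\som)$ by an averaging argument over the order-two group generated by $\sigma$, using that $2$ is invertible in $\R$. Since a type-$\textbf{1}$ $U_\R(\om)$-module $V$ is finite dimensional by Definition \ref{D:orthogonal-typeone}, it has finite length, so it suffices to show that every $U_\R(\om)$-submodule $W\subseteq V$ admits a $U_\R(\om)$-stable complement.

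First I would apply Lemma \ref{L:so-completely-reducible} to the restriction $\textbf{Res}(V)$: it is a completely reducible $U_\R(\som)$-module, so there is a $U_\R(\som)$-linear idempotent $p\colon V\to V$ with image $W$ and $p|_W = \id_W$. Then I would set
\[
\tilde{p} := \tfrac{1}{2}\bigl(p + \sigma\circ p\circ \sigma^{-1}\bigr)\in \End_{\R}(V),
\]
and verify three points. (i) $\sigma\circ p\circ\sigma^{-1}$ is again $U_\R(\som)$-linear: this is a short computation using that $\sigma$ preserves $U_\R(\som)$ together with the defining relation $\sigma X\sigma^{-1} = \sigma(X)$, equivalently $\sigma^{-1}X = \sigma(X)\sigma^{-1}$. (ii) Because $W$ is a $U_\R(\om)$-submodule we have $\sigma W = W$, so $\sigma\circ p\circ\sigma^{-1}$ maps $V$ into $W$ and is the identity on $W$; hence $\tilde p(V)\subseteq W$ and $\tilde p|_W = \id_W$, which in particular forces $\tilde p$ to be idempotent. (iii) $\tilde p$ commutes with $\sigma$, directly from its definition and $\sigma^2 = 1$. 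Since $U_\R(\om)$ is generated by $U_\R(\som)$ and $\sigma$, points (i) and (iii) together show $\tilde p$ is $U_\R(\om)$-linear.

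With $\tilde p$ in hand, $V = W\oplus\ker\tilde p$ is a decomposition of $U_\R(\om)$-modules. As $W$ was arbitrary and $V$ has finite length, the standard argument then yields that $V$ is a direct sum of simple $U_\R(\om)$-modules, proving the lemma. The only real obstacle is the bookkeeping in (i)--(iii), namely tracking how $\sigma$ conjugates elements of $U_\R(\som)$ and observing that $2\in\R^\times$ (true since every $\R\in\{\kk,\K\}$ has characteristic zero); everything else is formal.
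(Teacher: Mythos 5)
Your proposal is correct and follows the same averaging argument as the paper: restrict to $U_\R(\som)$, invoke Lemma \ref{L:so-completely-reducible} to obtain an idempotent projection onto the submodule, then average over $\langle\sigma\rangle$ via $\tfrac{1}{2}(p+\sigma p\sigma^{-1})$ (identical to the paper's $\tfrac{1}{2}(e_S+\sigma e_S\sigma)$ since $\sigma^2=1$) to obtain a $U_\R(\om)$-equivariant idempotent with the same image. The paper attributes this to the standard argument in Alperin's book but otherwise your proof matches it step for step.
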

\begin{proof}
Let $W$ be a type-$\one$ $U_{\R}(\om)$-module and let $S\subset W$ be a $U_{\R}(\om)$-submodule, it suffices to show that $S$ is a direct summand of $W$. We adapt the argument in \cite[Theorems 3.1, 9.2]{Alperin-book}. By Lemma \ref{L:so-completely-reducible} there is an idempotent $e_S\in \End_{U_{\R}(\som)}(W)$ with image $S$. Note that $\sigma$ induces a linear endomorphism of $W$ which preserves $S$. One can check that $e_S':=\frac{1}{2}(e_S + \sigma \circ e_S\circ \sigma)$ is an endomorphism of $W$ which commutes with $U_{\R}(\om)$, has image contained in $S$, and acts as the identity on $S$. Thus, $e_S'\in \End_{U_{\R}(\om)}(W)$ is an idempotent with image $S$.
\end{proof}

\begin{remark}
It now follows that every type-$\one$ $U_{\R}(\om)$-module is self-dual, assuming that $\R\in \{\kk, \K\}$.
\end{remark}

Let $\R\in \{\kk, \A, \K\}$. For a type-$\one$ $U_{\R}(\om)$-module $W$, we can restrict to obtain a type-$\one$ $U_{\R}(\som)$-module. In particular, for $\wta\in X(\mathfrak{so}_m)$, we have $W[\wta]$. For $\epsilon\in \{\pm 1\}$ and $\wta\in X(\som)$ such that $\sigma(\wta) = \wta$, define
\[
W[\wta, \epsilon]:=\{w\in W[\wta] \ | \ \sigma(w) = \epsilon\cdot w\}.
\]
Note that if $\sigma(\wta) \ne \wta$, then $\sigma$ acts on $W[\wta]\oplus W[\sigma(\wta)]$ as $\dim W[\wta]$ copies of the regular representation of $\langle \sigma\rangle \cong \mathbb{Z}/2$.

\begin{definition}
Let $\chi(O(m))$ denote the free $\mathbb{Z}$-module with basis $\{e^{(\wta, \epsilon)}\}_{\substack{\wta\in X(\om), \epsilon = \pm 1 \\ \sigma(\wta) = \wta}}\cup \{e^{\wta}\}_{\substack{\wta\in X(\om) \\ \sigma(\wta) \ne \wta}}$. We define the \emph{formal character} of $W$ to be the expressions
\[
\mathrm{ch}(W):=\sum_{\substack{\wta\in X(\som) \\ \sigma(\wta) = \wta}}\dim W[\wta, \epsilon] e^{(\wta, \epsilon)} + \sum_{\substack{\wta\in X(\som)\\ \sigma(\wta) \ne \wta}} \dim W[\wta] e^{\wta}\in \chi(O(m)).
\]
\end{definition}

\begin{lemma}\label{L:o-linearindependence}
Let $\R\in \{\kk, \K\}$. The formal characters of the irreducible representations in Proposition \ref{P:o-list-of-simples} form a basis for the span of formal characters of all type-$\one$ representations.
\end{lemma}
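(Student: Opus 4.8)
The plan is to mimic the argument used for Lemma \ref{L:so-linearindependence}, but keeping track of the extra $\mathbb{Z}/2$-grading coming from $\sigma$. First I would set up a partial order on the index set for $\chi(O(m))$ that refines the dominance order: declare $(\wta,\epsilon) \succeq (\wtb, \epsilon')$ (and similarly for the pairs where $\sigma$ does not fix the weight) whenever $\wta - \wtb \in \mathbb{Z}_{\ge 0}\Phi_+$. The key input is that for each irreducible $L$ appearing in Proposition \ref{P:o-list-of-simples}, the ``highest'' index contributing to $\mathrm{ch}(L)$ — namely the one recording the highest weight $\wta$ of $L$ together with the $\sigma$-eigenvalue (or just $\wta$ itself when $\sigma(\wta)\ne\wta$) — occurs with coefficient exactly $1$, and every other index $\mu$ with nonzero coefficient has underlying weight $\preceq \wta$. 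This is immediate from the explicit description of $L_\R(\wta, \pm 1)$ and $U_\R(\om)\otimes_{U_\R(\som)} L(\wta)$ given just before Proposition \ref{P:o-list-of-simples}: restricting to $U_\R(\som)$, the highest weight space is one-dimensional, $\sigma$ acts on it by the prescribed sign (or permutes the two highest weight spaces in the $m=2n$, $\wta_n\ne 0$ case), and all other weights are strictly lower in dominance order by Lemma \ref{L:so-linearindependence}.

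Next I would observe that distinct irreducibles in the list of Proposition \ref{P:o-list-of-simples} have distinct ``top'' indices: this is exactly the content of the irredundancy statement in that Proposition. Hence the transition matrix expressing $\{\mathrm{ch}(L)\}$ in terms of the basis $\{e^{(\wta,\epsilon)}\}\cup\{e^{\wta}\}$ of $\chi(O(m))$ is, after ordering both index sets compatibly with $\succeq$, unitriangular: it has $1$'s on the diagonal and nonzero entries only in positions below the diagonal. A unitriangular (possibly infinite, but locally finite with respect to $\succeq$) matrix is invertible over $\mathbb{Z}$, so the $\mathrm{ch}(L)$ are linearly independent and span the same $\mathbb{Z}$-submodule of $\chi(O(m))$ as the characters of \emph{all} type-$\one$ representations. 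For the spanning half, I would use Lemma \ref{L:o-completely-reducible}: every type-$\one$ $U_\R(\om)$-module decomposes as a direct sum of the irreducibles in Proposition \ref{P:o-list-of-simples}, so its formal character is a $\mathbb{Z}_{\ge 0}$-combination of the $\mathrm{ch}(L)$'s, and in particular lies in their span.

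The only mildly delicate point — and the step I expect to require the most care — is the bookkeeping in the $m=2n$ case, where one must correctly match the two flavours of basis elements of $\chi(O(m))$ (the $\sigma$-fixed weights carrying an eigenvalue label, versus the $\sigma$-swapped pairs carrying no label) against the three families of irreducibles, and verify that the ``top index'' assignment is genuinely injective across all three families. Concretely, one checks that $U_\R(\om)\otimes_{U_\R(\som)}L(\wta)$ with $\wta_n>0$ has top index the label-free symbol $e^{\wta}$ (since $\sigma(\wta)\ne\wta$), while the $L_\R(\wta,\pm 1)$ with $\wta_n=0$ have top indices $e^{(\wta,+1)}$ and $e^{(\wta,-1)}$ respectively; these are manifestly distinct, and lower-order terms only involve weights strictly below $\wta$. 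Granting this, the unitriangularity argument goes through verbatim and the lemma follows.
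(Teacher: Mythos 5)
Your proof is correct and is exactly what the paper's terse proof ("Use Lemma \ref{L:so-linearindependence} and keep track of $\pm 1$ eigenspaces of $\sigma$") intends: a unitriangularity argument with respect to dominance order, refined by the $\sigma$-eigenvalue data. You have simply spelled out the details that the paper leaves implicit.
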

\begin{proof}
Use Lemma \ref{L:so-linearindependence} and keep track of $\pm 1$ eigenspaces of $\sigma$.
\end{proof}

\begin{proposition}\label{P:o-character-determines-module}
Let $\R\in \{\kk, \K\}$. The character of a type-$\one$ $U_{\R}(\om)$-module determines the isomorphism class of the representation.
\end{proposition}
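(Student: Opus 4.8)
The plan is to deduce Proposition \ref{P:o-character-determines-module} from complete reducibility together with the fact that characters distinguish the irreducibles. First I would invoke Lemma \ref{L:o-completely-reducible} to write any type-$\one$ $U_{\R}(\om)$-module $W$ as a direct sum $W \cong \bigoplus_{T} T^{\oplus m_T}$, where $T$ ranges over the list of irreducibles in Proposition \ref{P:o-list-of-simples} and the $m_T$ are non-negative integers. Since the formal character is additive on direct sums, this gives $\mathrm{ch}(W) = \sum_T m_T \cdot \mathrm{ch}(T)$.

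Next I would use Lemma \ref{L:o-linearindependence}, which says the characters $\{\mathrm{ch}(T)\}$ of the irreducibles are linearly independent in $\chi(O(m))$. Hence the coefficients $m_T$ are uniquely determined by $\mathrm{ch}(W)$: if two type-$\one$ modules $W$ and $W'$ have $\mathrm{ch}(W) = \mathrm{ch}(W')$, then expanding both in the basis of irreducible characters forces the multiplicities $m_T$ and $m'_T$ to agree for every $T$, and therefore $W \cong W'$. This is the entire argument; the proposition is essentially a formal corollary of the two preceding lemmas.

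There is no real obstacle here — the only thing to be slightly careful about is that complete reducibility is only stated for $\R \in \{\kk, \K\}$ (not $\A$), which is exactly the hypothesis of the proposition, so nothing extra is needed. I would write the proof in one or two sentences, citing Lemma \ref{L:o-completely-reducible} for the decomposition into irreducibles and Lemma \ref{L:o-linearindependence} for the fact that this decomposition is read off from the character.

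\begin{proof}
Let $\R \in \{\kk, \K\}$ and let $W$ be a type-$\one$ $U_{\R}(\om)$-module. By Lemma \ref{L:o-completely-reducible}, $W$ is completely reducible, so we may write $W \cong \bigoplus_T T^{\oplus m_T}$, where $T$ runs over the irreducible type-$\one$ representations listed in Proposition \ref{P:o-list-of-simples} and $m_T \in \mathbb{Z}_{\ge 0}$. Since the formal character is additive on direct sums, $\mathrm{ch}(W) = \sum_T m_T \cdot \mathrm{ch}(T)$. By Lemma \ref{L:o-linearindependence}, the characters $\{\mathrm{ch}(T)\}_T$ are linearly independent in $\chi(O(m))$, so the multiplicities $m_T$ are uniquely determined by $\mathrm{ch}(W)$. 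Hence if $W'$ is another type-$\one$ $U_{\R}(\om)$-module with $\mathrm{ch}(W') = \mathrm{ch}(W)$, then $W'$ has the same multiplicities, and therefore $W' \cong W$.
\end{proof}
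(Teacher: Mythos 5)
Your proof is correct and follows exactly the same route as the paper: the paper's proof is the one-line instruction to combine Proposition \ref{P:o-list-of-simples}, Lemma \ref{L:o-linearindependence}, and Lemma \ref{L:o-completely-reducible}, and you have simply spelled out that argument in full.
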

\begin{proof}
Use Proposition \ref{P:o-list-of-simples}, Lemma \ref{L:o-linearindependence}, and Lemma \ref{L:o-completely-reducible}.
\end{proof}

\begin{lemma}\label{L:dim1}
Suppose that $V,W$ are type-$\textbf{1}$ $U_{\A}(\om)$-modules. Then
\[
\dim_{\kk} \Hom_{U_{\kk}(\om)}(\kk\otimes V, \kk\otimes W) = \dim_{\K} \Hom_{U_{\K}(\om)}(\K\otimes V, \K\otimes W).
\]
\end{lemma}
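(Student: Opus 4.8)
The plan is to reduce the statement to a dimension count at the level of $\A$-modules by specialization. First I would invoke the complete reducibility results (Lemma~\ref{L:o-completely-reducible}, and behind it Lemma~\ref{L:so-completely-reducible}) together with the character-theoretic results (Proposition~\ref{P:o-character-determines-module}, Lemma~\ref{L:o-linearindependence}) to decompose $\kk\otimes V$, $\kk\otimes W$, $\K\otimes V$, and $\K\otimes W$ into irreducibles. The key point is that, just as in Lemma~\ref{L:formalcharimpliesdecomp}, the multiplicities of the irreducibles over $\kk$ and over $\K$ agree: this is because the formal character of a $U_\A(\om)$-module (defined via the weight-space decomposition over $\A$, which is preserved under both specializations since $\A$ is local with residue field $\kk$ and fraction field $\K$) is literally the same integer combination of $e^{(\wta,\epsilon)}$ and $e^{\wta}$ after either specialization, and by Lemma~\ref{L:o-linearindependence} the characters of the simples are linearly independent. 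So if $\kk\otimes V\cong\bigoplus_S S^{\oplus a_S}$ and $\K\otimes V\cong\bigoplus_S S^{\oplus a_S'}$ with $S$ ranging over the simples in Proposition~\ref{P:o-list-of-simples} (matched up across the two fields by their highest weights and $\sigma$-eigenvalue data), then $a_S = a_S'$, and similarly for $W$ with multiplicities $b_S = b_S'$.

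Next I would apply Lemma~\ref{L:o-Schurslemma}: since the simples are pairwise non-isomorphic and have one-dimensional endomorphism algebras, over either field
\[
\dim \Hom_{U_{\R}(\om)}(\R\otimes V,\ \R\otimes W) = \sum_{S} a_S\, b_S,
\]
where the sum is over the (common) indexing set of simples. The right-hand side is the same integer over $\kk$ and over $\K$ because all the $a_S$, $b_S$ are, so the two Hom-space dimensions coincide. That finishes the proof.

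The one point that needs a little care — and which I expect to be the main obstacle — is making rigorous the claim that the decomposition multiplicities over $\kk$ and $\K$ genuinely match. The cleanest route is the character argument just sketched: one checks that for a type-$\one$ $U_\A(\om)$-module $V$, the $\A$-rank of each weight space $V[\wta]$ (and, when $\sigma(\wta)=\wta$, the ranks of the two $\sigma$-eigenspaces $V[\wta,\pm1]$, using that $\tfrac12 \in \A$ to split off the eigenspaces) is unchanged upon tensoring with $\kk$ or $\K$, so $\mathrm{ch}(\kk\otimes V) = \mathrm{ch}(\K\otimes V)$ as elements of $\chi(O(m))$; combined with the linear independence of simple characters over each field (Lemma~\ref{L:o-linearindependence}) and complete reducibility (Lemma~\ref{L:o-completely-reducible}), this forces the multiplicities to agree. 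An alternative, essentially equivalent, route is to mimic Lemma~\ref{L:formalcharimpliesdecomp} directly at the level of $O(m)$ rather than $\som$. Either way the underlying input is that $\A$ is a discrete valuation ring, so free $\A$-modules behave well under base change, and the weight and $\sigma$-eigenspace decompositions are defined over $\A$.
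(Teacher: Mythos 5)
Your proof is correct and follows the same route as the paper's: you establish that weight-space and $\sigma$-eigenspace ranks of the $\A$-form are preserved under base change to $\kk$ or $\K$, conclude via Proposition~\ref{P:o-character-determines-module} (which packages the character/linear-independence/complete-reducibility argument you spell out) that the decomposition multiplicities agree, and then apply Lemma~\ref{L:o-Schurslemma} to compute the Hom dimension as $\sum_S a_S b_S$. Your version is more explicit about the intermediate steps — including the useful observation that $\tfrac12\in\A$ allows splitting off $\sigma$-eigenspaces over $\A$ — but it is the same argument.
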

\begin{proof}
If $U$ is a type-$\one$ $U_{\A}(\om)$-module and $\wta\in X(\som)$ such that $\sigma(\wta) = \wta$, then 
\[
\dim_{\kk} \kk\otimes U[\wta, \epsilon] = \rk_{\A}U[\wta, \epsilon] = \dim_{\K}\K\otimes U[\wta, \epsilon].
\]
The result then follows from Proposition \ref{P:o-character-determines-module} and Lemma \ref{L:o-Schurslemma}
\end{proof}

\begin{lemma}\label{L:k-tensor-U-over-A}
\[
\kk\otimes U_{\A}(\om)/(K_{\alpha}-1, \alpha\in \Delta)\cong U_{\kk}(\om) \ \ \ \ \ \text{and} \ \ \ \ \ \K\otimes U_{\A}(\om)\cong U_{\K}(\om). 
\]
\end{lemma}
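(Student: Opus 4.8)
The plan is to deduce both isomorphisms from the corresponding statements for $U_{\bullet}(\som)$ in place of $U_{\bullet}(\om)$, and then bootstrap across the rank‑two extension by $\sigma$. For the $\K$‑statement the $\som$‑version is elementary: $U_{\A}(\som)$ is by definition a subalgebra of the $\K$‑vector space $U_{\K}(\som)$, hence torsion‑free over $\A$, so the canonical map $\K\otimes_{\A}U_{\A}(\som)\to U_{\K}(\som)$ is injective; it is surjective since $E_{\alpha}=E_{\alpha}^{(1)}$, $F_{\alpha}=F_{\alpha}^{(1)}$ and $K_{\alpha}^{\pm1}$ all lie in $U_{\A}(\som)$ and generate $U_{\K}(\som)$ over $\K$. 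The $\kk$‑version, namely $\kk\otimes_{\A}U_{\A}(\som)/(K_{\alpha}-1:\alpha\in\Delta)\cong U_{\kk}(\som)=U(\som)$, is the well‑known theorem of Lusztig on the $q\to 1$ specialization of the divided‑power form, under which the divided‑power analogues of the Cartan elements go to the $h_{\alpha}$; I would cite this (e.g.\ \cite{JantzenQgps}) rather than reprove it.

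Next I would record the module‑theoretic shape of the $\sigma$‑extension. Unwinding Definition~\ref{D:Uo}, the relations $\sigma^{2}=1$ and $\sigma X\sigma^{-1}=\sigma(X)$ let one move every $\sigma$ to the right, so $U_{\K}(\om)=U_{\K}(\som)\oplus U_{\K}(\som)\sigma$ and $U_{\A}(\om)=U_{\A}(\som)\oplus U_{\A}(\som)\sigma$ are free of rank two over $U_{\K}(\som)$ and $U_{\A}(\som)$ respectively --- each is the skew group algebra of $\langle\sigma\rangle\cong\mathbb{Z}/2$ acting by the automorphism $\sigma$ --- and likewise $U_{\kk}(\om)=U(\som)\oplus U(\som)\sigma$. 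Since $\sigma$ permutes the set $\{K_{\alpha}-1:\alpha\in\Delta\}$, the two‑sided ideal it generates inside $\kk\otimes_{\A}U_{\A}(\om)$ equals $I\oplus I\sigma$, where $I$ is the two‑sided ideal $(K_{\alpha}-1:\alpha\in\Delta)$ of $\kk\otimes_{\A}U_{\A}(\som)$; hence $\bigl(\kk\otimes_{\A}U_{\A}(\om)\bigr)/(K_{\alpha}-1)$ is the skew group algebra of $\langle\sigma\rangle$ over $\bigl(\kk\otimes_{\A}U_{\A}(\som)\bigr)/I$.

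Finally I would assemble the two statements. Applying $\K\otimes_{\A}-$ to $U_{\A}(\om)=U_{\A}(\som)\oplus U_{\A}(\som)\sigma$ and invoking the $\K$‑version for $\som$ gives a $\K$‑linear isomorphism $\K\otimes_{\A}U_{\A}(\om)\xrightarrow{\sim}U_{\K}(\som)\oplus U_{\K}(\som)\sigma=U_{\K}(\om)$, and it respects multiplication because $\sigma$ acts on $U_{\K}(\som)$ by the same automorphism before and after base change; this proves the second isomorphism. Combining the quotient computation of the previous paragraph with the $\kk$‑version for $\som$ identifies $\kk\otimes_{\A}U_{\A}(\om)/(K_{\alpha}-1)$ with the skew group algebra of $\langle\sigma\rangle$ over $U(\som)$, which is exactly $U_{\kk}(\om)$; this proves the first isomorphism. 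The only non‑routine ingredient is Lusztig's specialization theorem for $U_{\A}(\som)$; everything after that is bookkeeping with the rank‑two decomposition over $\langle\sigma\rangle$, and the one point requiring a little care is checking that the defining relations of the skew group algebra ($\sigma^{2}=1$ and $\sigma X\sigma^{-1}=\sigma(X)$) survive base change and passage to the quotient by $(K_{\alpha}-1)$ --- which they do precisely because $\sigma$ stabilizes that ideal.
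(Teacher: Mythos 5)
Your proof is correct and takes essentially the same approach as the paper: reduce to the known specialization results for $U_{\A}(\som)$ and then extend across the rank-two decomposition $U_{\R}(\om)=U_{\R}(\som)\oplus U_{\R}(\som)\sigma$. The paper is simply terser, citing \cite{CP-guide-to-quantum-groups} for the $\som$-specialization (where you suggest \cite{JantzenQgps}) and leaving the skew-group-algebra bookkeeping implicit.
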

\begin{proof}
The first isomorphism follows from \cite[Proposition 9.2.3]{CP-guide-to-quantum-groups}. The second isomorphism is clear.
\end{proof}

\begin{lemma}\label{L:hom-sub-UA-vsHom-sub-AtimesU}
Let $\R\in \{\kk, \K\}$. If $V$ and $W$ are type-$\textbf{1}$ representations, then 
\[
\Hom_{\R\otimes U_{\A}(\om)}(\R\otimes V, \R\otimes W) = \Hom_{U_{\R}(\om)}(\R\otimes V, \R\otimes W). 
\]
\end{lemma}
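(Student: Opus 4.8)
The plan is to treat $\R=\K$ and $\R=\kk$ separately, in each case using the relevant isomorphism from Lemma \ref{L:k-tensor-U-over-A}.

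When $\R=\K$, the statement is immediate. The second isomorphism of Lemma \ref{L:k-tensor-U-over-A} is an isomorphism of $\K$-algebras $\K\otimes U_{\A}(\om)\xrightarrow{\sim} U_{\K}(\om)$, and by construction the $U_{\K}(\om)$-module structure on $\K\otimes V$ (respectively $\K\otimes W$) is obtained by transporting the $\K\otimes U_{\A}(\om)$-module structure along this isomorphism. Hence a $\K$-linear map $\K\otimes V\to \K\otimes W$ is a $\K\otimes U_{\A}(\om)$-module homomorphism if and only if it is a $U_{\K}(\om)$-module homomorphism, which gives the desired equality of $\Hom$-spaces.

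When $\R=\kk$, write $\pi\colon \kk\otimes U_{\A}(\om)\twoheadrightarrow \kk\otimes U_{\A}(\om)/(K_{\alpha}-1,\ \alpha\in\Delta)\cong U_{\kk}(\om)$ for the quotient map furnished by the first isomorphism of Lemma \ref{L:k-tensor-U-over-A}. The key observation is that for any type-$\textbf{1}$ $U_{\A}(\om)$-module $U$, each $K_{\alpha}$ acts on $U$ diagonalizably with eigenvalues that are powers of $q$; since every such power specializes to $1$ modulo $(q-1)$, the element $K_{\alpha}-1$ annihilates $\kk\otimes U = U/(q-1)U$. Therefore the algebra map $\kk\otimes U_{\A}(\om)\to \End_{\kk}(\kk\otimes U)$ factors through $\pi$, so $\kk\otimes U$ carries a canonical $U_{\kk}(\om)$-module structure from which its $\kk\otimes U_{\A}(\om)$-module structure is recovered by restriction along $\pi$. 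Applying this to $U=V$ and $U=W$ and using that $\pi$ is surjective, one checks directly that a $\kk$-linear map $f\colon\kk\otimes V\to\kk\otimes W$ is a $\kk\otimes U_{\A}(\om)$-module homomorphism if and only if it is a $U_{\kk}(\om)$-module homomorphism: $U_{\kk}(\om)$-linearity implies $\kk\otimes U_{\A}(\om)$-linearity by pulling back along $\pi$, and conversely, if $f$ is $\kk\otimes U_{\A}(\om)$-linear then for any $\bar u\in U_{\kk}(\om)$, choosing a preimage $u$ with $\pi(u)=\bar u$ gives $f(\bar u\cdot v)=f(u\cdot v)=u\cdot f(v)=\bar u\cdot f(v)$, so $f$ is $U_{\kk}(\om)$-linear.

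There is no genuine obstacle; the only point requiring care is the verification that the $\kk\otimes U_{\A}(\om)$-action on $\kk\otimes V$ and $\kk\otimes W$ really does factor through $\pi$, and this is exactly the content of the type-$\textbf{1}$ hypothesis (all the $K_{\alpha}$ specialize to the identity at $q=1$). The role of Lemma \ref{L:k-tensor-U-over-A} is then simply to identify the two relevant quotients with $U_{\kk}(\om)$ and $U_{\K}(\om)$ on the nose.
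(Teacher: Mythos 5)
Your proof is correct and follows essentially the same route as the paper's (very terse) argument: for $\R=\K$ the identification is direct from the second isomorphism in Lemma~\ref{L:k-tensor-U-over-A}, and for $\R=\kk$ the point is that the $\kk\otimes U_{\A}(\om)$-action on $\kk\otimes V$ and $\kk\otimes W$ factors through the quotient by $(K_\alpha - 1)$, which is exactly the first isomorphism in that lemma. You supply more detail than the paper — in particular the explicit verification that diagonalizability of $K_\alpha$ with $q$-power eigenvalues forces $K_\alpha - 1$ to annihilate $\kk\otimes U$, and the elementary check that a quotient by a surjection identifies the two $\Hom$-spaces — but the underlying idea is identical to the paper's.
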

\begin{proof}
The action of $\kk\otimes U_{\A}(\om)$ on $V$ and $W$ factors through $U_{\kk}(\om)\cong \kk\otimes U_{\A}(\om)/(K_{\alpha}-1, \alpha\in \Delta)$. The claim then follows from Lemma \ref{L:k-tensor-U-over-A}
\end{proof}

\begin{lemma}\label{L:rephombasechange}
Suppose that $V, W$ are type-$\textbf{1}$ $U_{\A}(\om)$-modules. Let $\R\in \lbrace \kk, \K\rbrace$. Then there is an $\R$-linear map

\begin{align*}
\smbase_{\R}:\R\otimes \Hom_{U_{\A}(\om)}(V, W) &\rightarrow \Hom_{U_{\R}(\om)}(\R\otimes_{\A} V, \R\otimes_{\A} W), \\
 1\otimes f &\mapsto \big(1\otimes v \mapsto 1\otimes f(v) \big).
\end{align*}

\end{lemma}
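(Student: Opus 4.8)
The plan is to recognize $\smbase_\R$ as nothing more than the extension-of-scalars functor $\R\otimes_\A(-)$ applied to morphisms, packaged via the tensor--hom adjunction into an $\R$-linear map out of $\R\otimes_\A\Hom_{U_\A(\om)}(V,W)$. So the proof will consist of three routine steps: (i) for a single $f$, produce a $U_\R(\om)$-equivariant map $\id_\R\otimes_\A f$; (ii) observe that $f\mapsto \id_\R\otimes_\A f$ is $\A$-linear; (iii) invoke the universal property of base change to get the desired $\R$-linear map, and check it is given by the stated formula.

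First I would fix $f\in\Hom_{U_\A(\om)}(V,W)$ and regard it as an $\A$-linear map $f:V\to W$. Applying extension of scalars along the structure map $\A\to\R$ yields a well-defined $\R$-linear map $\id_\R\otimes_\A f:\R\otimes_\A V\to\R\otimes_\A W$, determined by $1\otimes v\mapsto 1\otimes f(v)$. Since $f$ intertwines the $U_\A(\om)$-actions, $\id_\R\otimes_\A f$ intertwines the induced actions of $\R\otimes_\A U_\A(\om)$ on $\R\otimes_\A V$ and $\R\otimes_\A W$; that is, $\id_\R\otimes_\A f\in\Hom_{\R\otimes_\A U_\A(\om)}(\R\otimes_\A V,\R\otimes_\A W)$. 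By Lemma \ref{L:hom-sub-UA-vsHom-sub-AtimesU} this space equals $\Hom_{U_\R(\om)}(\R\otimes_\A V,\R\otimes_\A W)$ (which, when $\R=\kk$, uses that the module structures factor through the quotient $U_\kk(\om)\cong \kk\otimes_\A U_\A(\om)/(K_\alpha-1:\alpha\in\Delta)$ of Lemma \ref{L:k-tensor-U-over-A}, and when $\R=\K$ uses the identification $U_\K(\om)\cong\K\otimes_\A U_\A(\om)$). Hence $\id_\R\otimes_\A f$ is a genuine morphism of $U_\R(\om)$-modules.

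Next, the assignment $f\mapsto \id_\R\otimes_\A f$ is additive and $\A$-linear, where the target is viewed as an $\A$-module via $\A\to\R$: extension of scalars is an additive functor, and $(\lambda f)$ is sent to $\lambda\cdot(\id_\R\otimes_\A f)$ for $\lambda\in\A$. Therefore, by the universal property of $\R\otimes_\A(-)$ (equivalently, the adjunction $\Hom_\A(M,N)\cong\Hom_\R(\R\otimes_\A M,N)$ for $N$ an $\R$-module), this $\A$-linear map extends uniquely to an $\R$-linear map
\[
\smbase_\R:\R\otimes_\A\Hom_{U_\A(\om)}(V,W)\longrightarrow \Hom_{U_\R(\om)}(\R\otimes_\A V,\R\otimes_\A W)
\]
with $1\otimes f\mapsto \bigl(1\otimes v\mapsto 1\otimes f(v)\bigr)$, exactly as asserted. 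There is no real obstacle in this lemma; the only point deserving attention is the equivariance verification in step (i), where one must use that the $U_\R(\om)$-module structure on a base-changed module is by construction the extension of the $U_\A(\om)$-structure, so that $U_\A(\om)$-equivariance of $f$ is precisely what is needed — a fact already recorded in Lemmas \ref{L:k-tensor-U-over-A} and \ref{L:hom-sub-UA-vsHom-sub-AtimesU}.
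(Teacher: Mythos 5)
Your proof is correct, and the skeleton matches the paper's: produce $\id_\R\otimes_\A f$, check it is $U_\R(\om)$-equivariant by reducing to $\R\otimes_\A U_\A(\om)$-equivariance, and invoke Lemma \ref{L:hom-sub-UA-vsHom-sub-AtimesU}. The one genuine difference is how the map out of the tensor product $\R\otimes\Hom_{U_\A(\om)}(V,W)$ is assembled. The paper composes the inclusion $\R\otimes\Hom_{U_\A(\om)}(V,W)\hookrightarrow\R\otimes\Hom_\A(V,W)$ with the natural isomorphism $\R\otimes\Hom_\A(V,W)\cong\Hom_\R(\R\otimes V,\R\otimes W)$, and it is to justify that this natural map is an isomorphism that the paper invokes the hypothesis that $V$ and $W$ are finitely generated free $\A$-modules. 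You instead note that $f\mapsto\id_\R\otimes_\A f$ is an $\A$-linear map into the $\R$-module $\Hom_{U_\R(\om)}(\R\otimes V,\R\otimes W)$ and appeal to the universal property of $\R\otimes_\A(-)$. This is a touch leaner: it produces the map directly, without needing the finite-free hypothesis (which of course holds here), and without the intermediate $\Hom_\A$ step. Either route is fine for the lemma as stated, since only the existence of the map is claimed; the paper's extra identification would become relevant if one wanted $\smbase_\R$ to be an isomorphism, which is not asserted.
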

\begin{proof}
Since $V$ and $W$ are finitely generated free $\A$-modules, so is $\Hom_{\A}(V, W)$. Therefore, 
\[
\R\otimes \Hom_{\A}(V, W)\cong \Hom_{\R\otimes \A}(\R\otimes V, \R\otimes W).
\]
We obtain a map 
\[
\base_{\R}: \R\otimes \Hom_{U_{\A}(\om)}(V, W)\rightarrow \R\otimes\Hom_{\A}(V, W)\xrightarrow{\cong} \Hom_{\R\otimes \A}(\R\otimes V, \R\otimes W),
\]
and it is routine to verify that the image is contained in $\Hom_{\R\otimes U_{\A}(\om)}(\R\otimes V, \R\otimes W)$. The claim then follows from Lemma \ref{L:hom-sub-UA-vsHom-sub-AtimesU}.
\end{proof}

\begin{remark}\label{R:K-is-flat-A-mod}
In general, if $f:\A^m\rightarrow \A^n$ is injective, then $\K\otimes f: \K\otimes \A^m\rightarrow \K\otimes \A^n$ is also injective. However, this may fail for $\kk\otimes_{\A} (-)$. For example the endomorphism of $\A$ given by multiplication by $q-1$ is injective, but becomes zero after applying the functor $\kk\otimes_{\A}(-)$.
\end{remark}

\subsection{Quantum vector representation}\label{ssec:quantum-vector-rep}

The defining vector representation of the orthogonal group has a type-$\textbf{1}$ $q$-analogue, which we now recall. 

\begin{notation}
Fix $m\in \mathbb{Z}_{\ge 1}$. Let $n$ be such that $m=2n$, if $m$ is even, and $m=2n+1$, if $m$ is odd.
\end{notation}

\begin{definition}\label{D:action-on-V}
Let $V_{\K}$ be the $U_{\K}(\mathfrak{so}_{m})$-module with basis:
\[
\begin{cases}
a_1, a_2, \ldots, a_n, u, b_n, \ldots, b_2, b_1 \quad \text{if $m=2n+1$} \\
a_1, a_2, \ldots, a_n, b_n, \ldots, b_2, b_1 \quad \text{if $m=2n$},
\end{cases}
\]
such that for $i=1, \dots, n-1$
\[
F_i\cdot a_i = a_{i+1}, \quad F_i\cdot b_{i+1}= b_i, 
\]
\[
E_i\cdot a_{i+1} = a_i, \quad E_i\cdot b_i= b_{i+1},
\] 
\[
\begin{cases}
F_n\cdot a_n = u, \quad  F_n\cdot u = (q+q^{-1})b_n, \quad E_n\cdot u= (q+q^{-1})a_n, \quad  E_n\cdot b_n = u, \qquad \text{if $m=2n+1$},\\
F_n\cdot a_n = b_n, \quad E_n\cdot b_n = a_n, \qquad \text{if $m=2n$},
\end{cases}
\]
and
\[
K_{\alpha}v= (q^2)^{(\alpha, \wt v)}, \quad \text{where} \ \wt(a_i) = \epsilon_i,\  \wt(u) = 0, \ \text{and} \ \wt(b_i) = -\epsilon_i. 
\]
\end{definition}

\begin{notation}
Write $V_{\A}$ to denote the $\A$ span of the given basis for $V_{\K}$. This is a free $\A$-module of rank $m$.
\end{notation}

\begin{lemma}
The algebra $U_{\A}(\som)$ preserves the $\A$-module $V_{\A}$.
\end{lemma}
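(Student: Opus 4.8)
The plan is to verify directly that each of the generators $K_\alpha^{\pm 1}$, $E_\alpha^{(k)}$, $F_\alpha^{(k)}$ of $U_\A(\som)$ maps the $\A$-lattice $V_\A = \bigoplus \A a_i \oplus \A u \oplus \bigoplus \A b_i$ (with $u$ present only when $m = 2n+1$) into itself. Since $U_\A(\som)$ is generated as an $\A$-algebra by these divided powers together with the $K_\alpha^{\pm 1}$, and $V_\A$ is spanned over $\A$ by the listed basis, it suffices to check the action on each basis vector.

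First I would dispose of the easy cases. Each $K_\alpha$ acts diagonally on the given basis with eigenvalue $(q^2)^{(\alpha, \wt v)}$, which lies in $\A$ (indeed in $\mathbb{C}[q^{\pm 1}]$), and likewise $K_\alpha^{-1}$; so $K_\alpha^{\pm 1} V_\A \subseteq V_\A$. For the divided powers, note from Definition \ref{D:action-on-V} that for $i = 1, \dots, n-1$ the operators $E_i, F_i$ act on the basis by sending a basis vector either to $0$ or to another basis vector (e.g. $F_i a_i = a_{i+1}$, $E_i b_i = b_{i+1}$, and $E_i, F_i$ kill every basis vector not explicitly listed). Hence $E_i^2$ and $F_i^2$ annihilate every basis vector, so $E_i^{(k)} = E_i^k/[k]_{q_i}!$ and $F_i^{(k)} = F_i^k/[k]_{q_i}!$ act as $0$ for $k \ge 2$ and as $E_i, F_i$ for $k = 1$; in all cases the image of a basis vector is $0$ or a basis vector, hence lies in $V_\A$.

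The only point requiring care is the node $\alpha_n$. When $m = 2n$, $E_n$ and $F_n$ again permute-or-kill the basis ($F_n a_n = b_n$, $E_n b_n = a_n$, everything else killed), so the same squaring argument applies and all divided powers preserve $V_\A$. When $m = 2n+1$, we have $F_n a_n = u$, $F_n u = (q + q^{-1}) b_n$, $E_n b_n = u$, $E_n u = (q + q^{-1}) a_n$, and $E_n, F_n$ kill all other basis vectors; here $q_{\alpha_n}$ corresponds to the short root, so $q_{\alpha_n} = q$ and $[2]_{q_{\alpha_n}} = q + q^{-1}$. Then $F_n^2 a_n = (q+q^{-1}) b_n$, so $F_n^{(2)} a_n = F_n^2 a_n / [2]_q! = b_n \in V_\A$, while $F_n^2 u = 0$ and $F_n^{(2)}$ kills everything else; and $F_n^{(k)} = 0$ for $k \ge 3$ since $F_n^3 = 0$ on the basis. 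The computation for $E_n^{(k)}$ is identical with $a_n$ and $b_n$ swapped. Thus every generator of $U_\A(\som)$ preserves $V_\A$, which proves the lemma. I do not expect any genuine obstacle here; the only subtlety is remembering the short-root normalization $q_{\alpha_n} = q$ (versus the $q^2$ appearing in the $K_\alpha$-eigenvalues), which is exactly what makes the divided power $F_n^{(2)} a_n$ come out to the integral element $b_n$ rather than $(q+q^{-1})^{-1} \cdot (q+q^{-1}) b_n$ with a spurious denominator.
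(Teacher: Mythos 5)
Your proof is correct and matches the paper's own argument: the paper's proof simply notes that all higher divided powers $E_k^{(d)}, F_k^{(d)}$ with $d \ge 2$ annihilate $V_{\K}$ except for $F_n^{(2)}a_n = b_n$ and $E_n^{(2)}b_n = a_n$ when $m = 2n+1$. You have spelled this out in full, correctly identifying that the short-root normalization $q_{\alpha_n} = q$, hence $[2]_{q_{\alpha_n}}! = q + q^{-1}$, is precisely what makes $F_n^{(2)} a_n$ come out integrally.
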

\begin{proof}
All the higher divided power operators: $E_{k}^{(d)}$ and $F_{k}^{(d)}$, for $k=1, \dots, n$, and $d\ge 2$, act as zero on $V_{\K}$, except if $m=2n+1$, when $F_n^{(2)}a_n= b_n$ and $E_n^{(2)}b_n =a_n$.
\end{proof}

\begin{remark}
The algebra $U_{\kk}(\som) \cong  \kk\otimes U_{\A}(\som)/(K_{\alpha}-1)$ acts on  $V_{\kk}:=\kk\otimes V_{\A}$. 
\end{remark}

\begin{remark}
In the notation of Section \ref{defn:quantumgp}, we have $V_{\R}\cong L_{\R}(\varpi_1)$, for $\R\in \{\kk, \K\}$.
\end{remark}

\begin{lemma}\label{lem:sigmaaction}
Setting $\sigma\cdot a_1= (-1)^ma_1$, induces an action of $U_{\R}(\om)$ on $V_{\R}$, for $\R\in \{\kk, \A, \K\}$.
\end{lemma}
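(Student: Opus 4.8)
The plan is to exhibit an explicit formula for the operator $\sigma$ on $V_{\R}$ and then verify directly that the two defining relations of $U_{\R}(\om)$, namely $\sigma^2 = 1$ and $\sigma X \sigma^{-1} = \sigma(X)$ for $X \in U_{\R}(\som)$, are satisfied. The natural candidate is forced once we decide $\sigma \cdot a_1 = (-1)^m a_1$: since $\sigma$ must intertwine the $U_{\R}(\som)$-action with its twist by the Dynkin diagram automorphism, and since the diagram automorphism fixes $\alpha_1, \dots, \alpha_{n-2}$, we are compelled to set $\sigma \cdot a_i = (-1)^m a_i$ and $\sigma \cdot b_i = (-1)^m b_i$ for $i = 1, \dots, n-1$ (and $\sigma\cdot u = (-1)^m u$ when $m = 2n+1$), while on the last pair, when $m = 2n$, the swap $\alpha_{n-1} \leftrightarrow \alpha_n$ forces $\sigma \cdot a_n = (-1)^m b_n$ and $\sigma \cdot b_n = (-1)^m a_n$. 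When $m$ is odd, $\sigma$ is the identity automorphism, so $\sigma$ acts as the scalar $-1$ on all of $V_{\R}$, and the relations are trivially satisfied; thus the substance is the even case.

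\textbf{Key steps.} First I would write down the map $\sigma: V_{\R} \to V_{\R}$ as above and note $\sigma^2 = \id$ is immediate from the formulas (the swap $a_n \leftrightarrow b_n$, composed with itself, together with the square of the scalar $(-1)^m$, gives the identity). Second, I would check $\sigma K_{\alpha} \sigma^{-1} = \sigma(K_{\alpha})$ on each basis vector: this reduces to the observation that $\sigma$ sends a weight vector of weight $\wta$ to a weight vector of weight $\sigma(\wta)$, which is visible from $\wt(a_n) = \epsilon_n$, $\wt(b_n) = -\epsilon_n$, and the fact that the diagram automorphism acts on weights by $\epsilon_n \mapsto -\epsilon_n$. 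Third, I would verify $\sigma E_{\alpha} \sigma^{-1} = \sigma(E_{\alpha})$ and $\sigma F_{\alpha}\sigma^{-1} = \sigma(F_{\alpha})$; for $\alpha = \alpha_1, \dots, \alpha_{n-2}$ this is automatic because $\sigma$ acts as a scalar on the relevant weight strings and fixes these roots, and the only computations with content are for $\alpha_{n-1}$ and $\alpha_n$, where one checks e.g. that $\sigma F_{n-1} \sigma^{-1}$ agrees with $F_n$ on $a_{n-1}$ and on $b_n$, using $F_{n-1} \cdot a_{n-1} = a_n$, $F_n \cdot a_{n-1} = 0$ (type $D$), etc. Finally, since all of these identities hold over $\A$ with the integral basis and the divided powers act compatibly (as recorded in the preceding lemma), the same formulas define the action over $\kk$ and $\K$ by base change, giving all three cases $\R \in \{\kk, \A, \K\}$ at once.

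\textbf{Main obstacle.} There is no deep obstacle here; the proof is a finite, bookkeeping-heavy verification. The one place requiring care is getting the conventions exactly right on the last simple root: in type $D_n$ one must remember that $F_n$ acts along the root $\epsilon_{n-1} + \epsilon_n$, so $F_n \cdot a_n = b_n$ rather than involving $a_{n-1}$, and correspondingly $\sigma$ must intertwine the $n-1$ and $n$ actions only after the swap $a_n \leftrightarrow b_n$ on the last coordinate; checking that the signs $(-1)^m$ propagate consistently through all the defining module relations in Definition \ref{D:action-on-V} is the crux. I would present this as a short case analysis (odd $m$ trivial; even $m$ a handful of explicit checks on basis vectors) rather than grinding through every generator-vector pair.
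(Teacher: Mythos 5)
Your proposal is correct in spirit but takes a genuinely different route from the paper. The paper's argument never writes out the action of $\sigma$ on a basis: it observes that $a_1$ generates $V_\R$ over $U_\R(\som)$, so every $v\in V_\R$ has the form $v = X_v\cdot a_1$, and then \emph{defines} $\sigma\cdot v := \sigma(X_v)\cdot(\sigma\cdot a_1)$. The required identities $\sigma\cdot(\sigma\cdot v)=v$ and $\sigma\cdot(X\cdot(\sigma\cdot v))=\sigma(X)\cdot v$ are then checked by a short algebraic manipulation inside $U_\R(\som)$, using only that $\sigma$ is an involutive algebra automorphism and that $\sigma\cdot a_1 = (-1)^m a_1$. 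The explicit formulas you start from (fixing $a_i,b_i$ for $i<n$, swapping $a_n\leftrightarrow b_n$ in the even case, acting by $-1$ throughout in the odd case) fall out of this as a corollary and are recorded afterwards in Remark~\ref{rem:sigmaact}. Each approach buys something: yours is concrete and checkable vector by vector; the paper's sidesteps all matrix-coefficient bookkeeping, at the implicit cost of needing $\mathrm{Ann}_{U_\R(\som)}(a_1)$ to be $\sigma$-stable so that $\sigma\cdot v$ does not depend on the choice of $X_v$ (which holds because the highest weight $\varpi_1$ is $\sigma$-invariant).

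One concrete slip in your sketch is the claim ``$F_n\cdot a_{n-1}=0$ (type $D$).'' With $\alpha_n=\epsilon_{n-1}+\epsilon_n$, the operator $F_n$ lowers weight by $\alpha_n$, and $\wt(a_{n-1})-\alpha_n = -\epsilon_n = \wt(b_n)$, so $F_n\cdot a_{n-1}$ must be a nonzero multiple of $b_n$ (similarly $F_n\cdot a_n$ lands on $b_{n-1}$, not $b_n$). Indeed your own intertwining check forces this: $(\sigma F_{n-1}\sigma^{-1})\cdot a_{n-1}=\sigma\cdot a_n = b_n$ must equal $F_n\cdot a_{n-1}$, which fails if $F_n$ annihilates $a_{n-1}$. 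So the case analysis you propose is the right plan, but the verification at the last two simple roots requires the correct weight bookkeeping — exactly the pitfall you flagged as the crux, and exactly the kind of detail the paper's cyclic-vector argument is built to avoid.
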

\begin{proof}
Let $v\in V_{\R}$. Then there is $X_v\in U_{\R}(\som)$ such that $v = X_v\cdot a_1$. Define $\sigma\cdot v = \sigma(X_v)\cdot (\sigma\cdot a_1)$. This determines a well-defined $U_{\R}(\om)$ action if $\sigma\cdot(\sigma \cdot v) = v$ and $\sigma\cdot(X \cdot (\sigma \cdot v)) = \sigma(X)\cdot v$, for all $X\in U_{\R}(\som)$ and $v\in V_{\R}$. We check the first equality:
\[
\sigma\cdot(\sigma \cdot v) = \sigma \cdot (\sigma(X_v)\cdot (\sigma\cdot a_1)) = (-1)^m \sigma\cdot (\sigma(X_v)\cdot a_1) = (-1)^m \sigma(\sigma(X_v))\cdot (\sigma \cdot a_1) = X_v\cdot a_1 = v,
\]
and the second equality:
\begin{align*}
\sigma\cdot(X\cdot (\sigma\cdot v)) &= \sigma \cdot (X\cdot (\sigma(X_v)\cdot (\sigma\cdot a_1))) = (-1)^m\sigma \cdot (X\cdot(\sigma(X_v)\cdot a_1)) \\
&=(-1)^m\sigma \cdot (\sigma^2(X)\cdot(\sigma(X_v)\cdot a_1)) =(-1)^m\sigma \cdot ((\sigma^2(X)\sigma(X_v))\cdot a_1) \\
&= (-1)^m\sigma \cdot (\sigma\big(\sigma(X)X_v\big)\cdot a_1) = (-1)^m((\sigma(X)X_v)\cdot (\sigma \cdot a_1)) \\
&=(\sigma(X)X_v)\cdot a_1 = \sigma(X)\cdot (X_v\cdot a_1) = \sigma(X)\cdot v.
\end{align*}
\end{proof}

\begin{remark}\label{rem:sigmaact}
We have the following explicit description of $\sigma$'s action on $V_\R$. 
\[
\begin{cases}
\sigma\cdot a_i =-a_i, \ \text{for $i\le n$}, \quad \sigma\cdot u =-u, \quad \text{and}\quad \sigma \cdot b_i = -b_i, \ \text{for $i\le n$}, \qquad \text{if $m=2n+1$}, \\
\sigma \cdot a_i = a_i, \ \text{for $i< n$}, \quad \sigma\cdot a_n = b_n, \ \sigma\cdot b_n=a_n, \quad \text{and} \quad \sigma \cdot b_i = b_i, \ \text{for $i< n$}, \qquad \text{if $m=2n$}.
\end{cases}
\]
\end{remark}

\begin{remark}
In the notation of Proposition \ref{P:o-list-of-simples}, the representation $V_{\R}$ is isomorphic to $L_{\R}(\varpi_1, -1)$, if $m$ is odd, and $L_{\R}(\varpi_1, +1)$, if $m$ is even. The reason for the choice of sign becomes apparent in the next section. It is to ensure that $U_{\R}(\om)$ acts on the exterior algebra by algebra automorphisms, making the algebra structure maps $U_{\R}(\om)$-module homomorphisms, and that $U_{\R}(\om)$ acts on the top exterior power as $\det_{\R}$.
\end{remark}

\subsection{Quantum exterior algebra}\label{ssec:quantum-exterior-algebra}

The usual exterior algebra $\Lambda^{\bullet}(V_{\kk})$ is defined as the quotient of the tensor algebra of $V_{\kk}$ by the two sided ideal generated by the symmetric tensors $S^2(V_{\kk})$. As a module over the special orthogonal group $SO(V_{\kk})$, we find that $S^2(V_{\kk})$ contains a copy of the trivial module, corresponding to the symmetric form preserved by $SO(V_{\kk})$. The complement of the trivial module in $S^2(V_{\kk})$ is the irreducible module $L_{\kk}(2\varpi_1)$.

From this perspective, we see that to define a $q$-analogue of the exterior algebra, we first need to find the $q$-analogue of the symmetric square. Moreover, this can be done by decomposing $V_{\K}^{\otimes 2}$ into irreducible submodules and defining the symmetric square to be the submodule of $V_{\K}^{\otimes 2}$ which is isomorphic to $L_{\K}(2\varpi_1)\oplus L_{\K}(0)$.

\begin{lemma}
The $\K$-span of the vectors
\[
a_i\otimes a_i, \ \ \ \ \ b_i\otimes b_i,
\]
\[
a_i\otimes a_j + q^{-2}a_j\otimes a_i \ \ \ \ \ i<j,
\]
\[
b_j\otimes b_i +q^{-2}b_i\otimes b_j \ \ \ \ \ i< j,
\]
\[
a_i\otimes b_j +q^{-2}b_j\otimes a_i \ \ \ \ \ i\ne j,
\]
\[
a_i\otimes u + q^{-2} u\otimes a_i, \ \ \ \ \ u\otimes b_i + q^{-2}b_i\otimes u,
\]
\[
a_i\otimes b_i + q^{-2}b_{i+1}\otimes a_{i+1} + q^{-2}a_{i+1}\otimes b_{i+1} + q^{-4}b_i\otimes a_i  \ \ \ \ \  i< n,
\]
\[
a_n\otimes b_n + q^{-4}b_n\otimes a_n + q^{-1}u\otimes u \ \ \ \ \ \text{if $m$ is odd},
\]
is the $U_{\K}(\om)$ submodule $L_{\K}(2\varpi_1)\subset V_{\K}^{\otimes 2}$ generated by $a_1\otimes a_1$.

The $\K$-span of the vector
\[
\frac{(-q^2)^{n}}{q+q^{-1}}u\otimes u + \sum_{i=1}^n\left((-q^2)^{i-1}a_i\otimes b_i-(-q^2)^{2n-i}b_i\otimes a_i\right) \ \ \ \ \ \text{if $m$ is odd}, 
\]
or the vector
\[
\sum_{i=1}^n\left((-q^2)^{i-1}a_i\otimes b_i+(-q^2)^{2n-i-1}b_i\otimes a_i\right)  \ \ \ \ \ \text{if $m$ is even}.
\]
is the unique copy of the trivial submodule of $V_{\K}^{\otimes 2}$. 

Thus, the $\K$-span of all the vectors listed in this Lemma (keeping track of whether $m$ is odd or even) is the submodule $L_{\K}(2\varpi_1)\oplus L_{\K}(0)\subset V_{\K}^{\otimes 2}$.
\end{lemma}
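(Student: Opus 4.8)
The plan is to verify the three assertions---that the first span is the submodule generated by $a_1\otimes a_1$, that the second is the unique trivial submodule, and that together they give $L_{\K}(2\varpi_1)\oplus L_{\K}(0)$---by direct computation with the explicit action in Definition \ref{D:action-on-V}, using weight considerations to keep the bookkeeping manageable. First I would check that the listed vectors for the first span are each annihilated by all $E_i$ or transform correctly, and more efficiently that the span is closed under the action of every generator $E_i$, $F_i$, $K_\alpha^{\pm1}$, and $\sigma$; since there are finitely many generators and the module $V_{\K}^{\otimes 2}$ is $m^2$-dimensional with well-understood weight spaces, this is a finite (if tedious) check. The highest weight vector of the span is $a_1\otimes a_1$, of weight $2\varpi_1 = 2\epsilon_1$; one checks $E_i\cdot(a_1\otimes a_1)=0$ for all $i$, so the cyclic submodule it generates is a quotient of the Weyl module of highest weight $2\varpi_1$, hence (by complete reducibility, Lemma \ref{L:so-completely-reducible}) contains $L_{\K}(2\varpi_1)$. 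Counting the listed vectors shows the span has dimension $\binom{m+1}{2}$, which matches $\dim L_{\kk}(2\varpi_1) + 1 = \dim S^2(\kk^m)$, and by Lemma \ref{L:formalcharimpliesdecomp} the $q$-analogue has the same dimension; so once closure is established the span must be exactly $L_{\K}(2\varpi_1)\oplus L_{\K}(0)$ provided it contains a trivial summand.

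Next I would treat the trivial submodule. Here the key point is that $V_{\K}\cong V_{\K}^*$ via the invariant bilinear form, and the copy of the trivial module inside $V_{\K}\otimes V_{\K}\cong V_{\K}\otimes V_{\K}^* \cong \End_{\K}(V_{\K})$ corresponds to the (co)evaluation, i.e. to the element $\sum_i v_i\otimes v_i^*$ for dual bases. Concretely I would write down the matrix of the form preserved by $U_{\K}(\som)$ in the basis $\{a_i, u, b_i\}$---this is essentially forced by requiring $E_i$, $F_i$ to be skew-adjoint up to the appropriate $q$-shift---and then the invariant tensor is $\sum (v, v')^{-1}_{\text{pairing}}\, v\otimes v'$; expanding this yields exactly the two displayed formulas (odd and even cases). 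Alternatively, and perhaps more cleanly, I would just verify directly that the displayed vector is annihilated by every $E_i$ and $F_i$ and fixed by every $K_\alpha$ and by $\sigma$: the $K_\alpha$-invariance is immediate since each term has weight zero, $\sigma$-invariance follows from Remark \ref{rem:sigmaact} (the signs $(-q^2)^{\bullet}$ are arranged precisely so that swapping $a_n\leftrightarrow b_n$, resp. negating everything, fixes the vector), and the $E_i$, $F_i$ annihilation is a short telescoping computation exploiting the ladder relations $F_i\cdot a_i = a_{i+1}$, $F_i\cdot b_{i+1}=b_i$, etc. Uniqueness of the trivial submodule follows because $\dim(V_{\K}^{\otimes 2})[0]$ contains exactly one invariant line---this can be deduced from the classical statement (the dimension of $\Hom_{O(m)}(\triv, (\kk^m)^{\otimes 2})$ is $1$) together with Lemma \ref{L:formalcharimpliesdecomp}, or by noting that $L_{\K}(2\varpi_1)$, $L_{\K}(\varpi_2)$ (the exterior square), and $L_{\K}(0)$ are the only composition factors of $V_{\K}^{\otimes 2}$ and only $L_{\K}(0)$ is trivial.

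Finally, to assemble the last sentence: the first span is a submodule containing $L_{\K}(2\varpi_1)$ of dimension $\binom{m}{2}+1$ (reindexing: $\dim S^2 = \binom{m+1}{2}$, of which $\binom{m+1}{2}-1 = \dim L_{\K}(2\varpi_1)$ and $1 = \dim L_{\K}(0)$); it contains the trivial vector from the second part (one checks the trivial vector lies in the first span, e.g. because it is $E_i$-highest of weight $0 < 2\varpi_1$ and the span is a submodule closed under all generators, or simply by exhibiting it as an explicit $U_{\K}$-combination applied to $a_1\otimes a_1$); hence the first span decomposes as $L_{\K}(2\varpi_1)\oplus L_{\K}(0)$ by complete reducibility and a dimension count, which is the claim. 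I expect the main obstacle to be the $E_n$/$F_n$ computations in the odd case $m=2n+1$, where the factors $(q+q^{-1})$ and the middle vector $u$ make the telescoping in both the $S^2$-generation check and the trivial-vector invariance check genuinely delicate; getting the powers of $-q^2$ and the coefficient $\frac{(-q^2)^n}{q+q^{-1}}$ of $u\otimes u$ to line up is where I would spend the most care, and I would double-check it against the $q=1$ specialization where the invariant tensor must reduce to the classical $\sum_i (a_i\otimes b_i \pm b_i\otimes a_i) + (\text{possibly})\, u\otimes u$.
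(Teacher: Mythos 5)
Your overall strategy --- unwinding Definition~\ref{D:action-on-V} and the coproduct formula~\eqref{E:comult}, checking closure under the generators, and using weight considerations --- is exactly what the paper intends (the paper's proof is literally ``we leave it to the reader to use Definition~\ref{D:action-on-V} and Equation~\eqref{E:comult}''). The first two paragraphs of your outline are sound. But there is a genuine conceptual error in your last paragraph that must be fixed.

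You assert that the trivial vector lies in the first span, ``e.g.\ because it is $E_i$-highest of weight $0 < 2\varpi_1$ and the span is a submodule closed under all generators.'' This is false, and the supporting reasoning is a non sequitur: being a highest weight vector of \emph{lower} weight does not put a vector inside a cyclic submodule generated by a highest weight vector of higher weight. In fact, the cyclic $U_{\K}(\om)$-submodule generated by $a_1\otimes a_1$ is \emph{irreducible}, equal to $L_{\K}(2\varpi_1)$: since $a_1\otimes a_1$ is annihilated by every $E_\alpha$ and $V_{\K}^{\otimes 2}$ is completely reducible (Lemma~\ref{L:so-completely-reducible}, adapted to $U_{\K}(\om)$), the submodule it generates is a quotient of the Weyl module $\Delta(2\varpi_1)$ and is therefore irreducible. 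It does \emph{not} contain a trivial subrepresentation. The trivial vector of the second display is a separate, complementary summand. This is the whole content of the lemma: the $q$-symmetric square decomposes as $L_{\K}(2\varpi_1)\oplus L_{\K}(0)$ with the first span equal to the first factor and the second span equal to the second factor.

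Correspondingly, your dimension count is off by one. Counting the listed vectors in the first display gives $\binom{m+1}{2}-1$, not $\binom{m+1}{2}$ (for $m=2n$ the count is $2n+2\binom{n}{2}+n(n-1)+(n-1)=2n^2+n-1$, and similarly in the odd case), which is $\dim L_{\K}(2\varpi_1)$, not $\dim S^2(\kk^m)$. The correct assembly of the last sentence is then immediate: the first span has dimension $\binom{m+1}{2}-1$ and equals $L_{\K}(2\varpi_1)$ (irreducible cyclic module); the second span is a $1$-dimensional trivial submodule disjoint from it; the two together have dimension $\binom{m+1}{2}$ and are the internal direct sum $L_{\K}(2\varpi_1)\oplus L_{\K}(0)$. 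No claim that the trivial vector is a $U_{\K}$-combination applied to $a_1\otimes a_1$ should be made, since it is false.
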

\begin{proof}
We leave it to the reader to use Definition \ref{D:action-on-V} and \EQ \eqref{E:comult} to check this claim. 
\end{proof}

\begin{remark}
The braiding endomorphism of $V_{\K}^{\otimes 2}$ acts on $L_{\K}(2\varpi_1)$ as $q^2$, on $L_{\K}(0)$ as $q^{2-2m}$, and on $L_{\K}(\varpi_2)$ as $-q^{-2}$ \cite[Equation 6.12]{LZ-stronglymultifree}. It follows that $L_{\K}(2)\oplus L_{\K}(0)$ is equal to the subspace of ``positive" eigenvectors for the braiding. This subspace is also referred to as $S_q^{2}$, in \cite{BZ-braidedalgebras}.
\end{remark}

\begin{definition}\label{D:exterior-algebra}
Define $\Lambda_{\A}^{\bullet}$ to be the associative $\A$-algebra generated by the elements
\[
a_1, \ldots, a_n, u, b_n, \ldots, b_1
\]
subject to the following relations:
\[
a_i^2=0 \ \ \ \ \ b_i^2=0,
\]
\[
a_ja_i= -q^2a_ia_j \ \ \ \ \ i<j,
\]
\[
b_ib_j= -q^2b_jb_i \ \ \ \ \ i< j,
\]
\[
b_ja_i= -q^2a_ib_j \ \ \ \ \ i\ne j,
\]
\[
ua_i= -q^2a_iu, \ \ \ \ \ b_iu= -q^2ub_i,
\]
\[
b_{i+1}a_{i+1} = -a_{i+1}b_{i+1} - \left(q^{-2}b_ia_i + q^2a_ib_i\right) \ \ \ \ \  i< n,
\]
\[
b_na_n=-q^4a_nb_n - q^3u^2 \ \ \ \ \ \text{if $m$ is odd},
\]
\[
u=0 \ \ \ \ \ \text{if $m$ is even},
\]
\[
\frac{(-q^2)^{n}}{q+q^{-1}}u^2 + \sum_{i=1}^n\left((-q^2)^{i-1}a_ib_i-(-q^2)^{2n-i}b_ia_i\right) = 0 \ \ \ \ \ \text{if $m$ is odd},
\]
and
\[
\sum_{i=1}^n\left((-q^2)^{i-1}a_ib_i+(-q^2)^{2n-i-1}b_ia_i\right) = 0 \ \ \ \ \ \text{if $m$ is even}.
\]
Let $\Lambda_{\A}^k$ be the $\A$ submodule spanned by monomials of degree $k$. 
\end{definition}

\begin{lemma}\label{L:bavsabreln}
If $m=2n$ is even, then the relations
\[
b_ia_i= -a_ib_i - \sum_{k=1}^{i-1}(-q^2)^{-k+1}(q^2-q^{-2})a_{i-k}b_{i-k} \quad \text{i=1, \dots, n},
\]
are equivalent to the relations
\[
b_{i+1}a_{i+1} = -a_{i+1}b_{i+1} - \left(q^{-2}b_ia_i + q^2a_ib_i\right) \quad  i=1, \dots, n-1
\]
and
\[
\sum_{i=1}^n\left((-q^2)^{i-1}a_ib_i+(-q^2)^{2n-i-1}b_ia_i\right) = 0.
\]
If $m=2n+1$ is odd, then the relations
\[
b_ia_i= -a_ib_i - \sum_{k=1}^{i-1}(-q^2)^{-k+1}(q^2-q^{-2})a_{i-k}b_{i-k} \quad \text{i=1, \dots, n} 
\]
and
\[
u^2 = q\sum_{k=1}^{n}(-q^2)^{-k}(q^2-q^{-2})a_{n+1-k}b_{n+1-k},
\]
are equivalent to the relations
\[
\frac{(-q^2)^{n}}{q+q^{-1}}u^2 + \sum_{i=1}^n\left((-q^2)^{i-1}a_ib_i-(-q^2)^{2n-i}b_ia_i\right) = 0,
\]
\[
b_{i+1}a_{i+1} = -a_{i+1}b_{i+1} - \left(q^{-2}b_ia_i + q^2a_ib_i\right) \quad  i=1, \dots, n-1,
\]
and
\[
b_na_n=-q^4a_nb_n - q^3u^2.
\]
\end{lemma}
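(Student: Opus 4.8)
The plan is to treat this as an elementary algebraic manipulation inside $\Lambda^\bullet_\A$, transforming one list of relations into the other by pure bookkeeping, separately in the even and odd cases. The key observation is that the ``physical'' relations (the ones with $b_{i+1}a_{i+1}$ on the left) are \emph{recursions}: each expresses $b_{i+1}a_{i+1}$ in terms of $a_{i+1}b_{i+1}$ together with the lower-index data $b_ia_i$ and $a_ib_i$. The ``global'' relation (the vanishing sum, and in the odd case also the relation $b_na_n = -q^4 a_nb_n - q^3u^2$) then pins down the remaining ambiguity. So the strategy in both directions is: \emph{(i)} use the recursions to solve for each $b_ia_i$ as a linear combination of $a_1b_1,\dots,a_ib_i$ (and $u^2$ in the odd case), obtaining a closed-form expression; \emph{(ii)} check that substituting these closed forms into the global relation produces the claimed closed-form formula for $b_ia_i$ in the statement, and conversely that the closed forms are consistent with the recursions.

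For the even case, first I would prove by induction on $i$ that the recursions $b_{i+1}a_{i+1} = -a_{i+1}b_{i+1} - (q^{-2}b_ia_i + q^2 a_ib_i)$ imply
\[
b_ia_i = -a_ib_i - \sum_{k=1}^{i-1}(-q^2)^{-k+1}(q^2-q^{-2})a_{i-k}b_{i-k} + (-q^2)^{-i+1}(b_1a_1 + a_1b_1).
\]
Then the single global relation $\sum_{i=1}^n\big((-q^2)^{i-1}a_ib_i + (-q^2)^{2n-i-1}b_ia_i\big) = 0$, after substituting this expression, should collapse (the cross terms telescoping) to force $b_1a_1 + a_1b_1 = 0$, which is exactly the $i=1$ case of the target relation and, fed back in, gives all the others. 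The converse direction is easier: given the closed forms $b_ia_i = -a_ib_i - \sum_{k=1}^{i-1}(-q^2)^{-k+1}(q^2-q^{-2})a_{i-k}b_{i-k}$, one computes $q^{-2}b_ia_i + q^2a_ib_i$ directly and checks it equals $-b_{i+1}a_{i+1} - a_{i+1}b_{i+1}$ using the closed form at index $i+1$ (a finite sum comparison), and one checks the global sum vanishes by substituting and telescoping.

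The odd case is the same in spirit but with one extra variable $u^2$ carried along; the recursions for $b_ia_i$, $i \le n-1$, are identical, the relation $b_na_n = -q^4 a_nb_n - q^3u^2$ supplies the link to $u^2$, and the global relation with the $\tfrac{(-q^2)^n}{q+q^{-1}}u^2$ term does the final pinning. I would derive the closed form for $b_na_n$ from the recursions up through $n-1$ and the $b_na_n = -q^4a_nb_n - q^3u^2$ relation, then substitute everything into the global relation to solve for $u^2$, matching the claimed formula $u^2 = q\sum_{k=1}^n(-q^2)^{-k}(q^2-q^{-2})a_{n+1-k}b_{n+1-k}$; the converse is again a direct substitution-and-telescope check.

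The main obstacle will be purely computational: getting the signs and powers of $q$ exactly right in the telescoping sums, particularly reconciling the factor $(-q^2)^{2n-i-1}$ (even) or $(-q^2)^{2n-i}$ (odd) attached to $b_ia_i$ in the global relation with the $(-q^2)^{-k+1}$ weights coming out of iterating the recursion. I expect no conceptual difficulty — the equivalence is really just Gaussian elimination on a bidiagonal-plus-one-row linear system over the ring $\A$ — so the proof will likely be short, perhaps just stating ``iterate the recursion and substitute into the global relation; we leave the straightforward bookkeeping to the reader,'' possibly after recording the key intermediate closed-form identity for $b_ia_i$ displayed above.
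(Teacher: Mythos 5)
Your plan is correct and in the same algebraic spirit as the paper's, but it is genuinely more complete: the paper's proof only carries out the verification for the smallest nontrivial instances ($m = 2\cdot 2$ in the even case, $m = 2\cdot 1 + 1$ in the odd case) and then states ``generalizing these to the general case is left to the reader.'' What you propose is exactly the missing generalization. In particular your key intermediate identity
\[
b_ia_i = -a_ib_i - \sum_{k=1}^{i-1}(-q^2)^{-k+1}(q^2-q^{-2})\,a_{i-k}b_{i-k} \;+\; (-q^2)^{-i+1}\bigl(b_1a_1 + a_1b_1\bigr)
\]
is correct and can be checked by the induction you describe: the $i=1$ case is a tautology, and for the inductive step one uses $(-q^2)^{-k'} = -q^{-2}(-q^2)^{-k'+1}$ to absorb the factor of $q^{-2}$ from the recursion into the shifted sum index. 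Substituting this into the global relation and telescoping does indeed force $b_1a_1 + a_1b_1 = 0$ (the invertibility argument the paper uses for $n=2$, via $q^2 + q^{-2} \in \A^\times$, is the $n=2$ shadow of your telescoping). The paper's example-plus-``left to the reader'' style buys brevity; your route buys an actual proof of the lemma as stated. Either is acceptable given the elementary nature of the computation, but if you write yours up, recording the displayed identity and the resulting telescope would strengthen the exposition over what appears in the paper.
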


\begin{proof}
We provide an example calculation in each case. Generalizing these to the general case is left to the reader. 

Suppose $m=2\cdot 2$. We are tasked with showing that 
\begin{equation}\label{E:even-reln-one}
b_1a_1=-a_1b_1 \quad \text{and} \quad b_2a_2=-a_2b_2-(q^2-q^{-2})a_1b_1
\end{equation}
is equivalent to 
\begin{equation}\label{E:even-reln-two}
b_2a_2=-a_2b_2-(q^{-2}b_1a_1+q^2a_1b_1) \quad \text{and} \quad a_1b_1 +q^4b_1a_1 -q^2a_2b_2 -q^2b_2a_2 = 0.
\end{equation}
Assume \EQ \eqref{E:even-reln-one}, then
\[
b_2a_2+a_2b_2+q^{-2}b_1a_1+q^2a_1b_1 = -a_2b_2-(q^2-q^{-2})a_1b_1 + a_2b_2 -q^{-2}a_1b_1 +q^2a_1b_1 = 0
\]
and
\[
a_1b_1 +q^4b_1a_1 -q^2a_2b_2 -q^2b_2a_2 = a_1b_1 - q^4a_1b_1 -q^2a_2b_2 -q^2\left(-a_2b_2-(q^2-q^{-2})a_1b_1\right).
\]
Assume \EQ \eqref{E:even-reln-two}. First, we rewrite the second relation as
\[
b_1a_1= -q^{-4}a_1b_1 + q^{-2}a_2b_2 + q^{-2}b_2a_2,
\]
so the first relation becomes
\[
b_2a_2=-a_2b_2-(q^{-2}b_1a_1+q^2a_1b_1) = -a_2b_2-q^{-2}\left(-q^{-4}a_1b_1 + q^{-2}a_2b_2 + q^{-2}b_2a_2\right) - q^2a_1b_1
\]
which we rewrite as
\[
q^{-2}(q^2+q^{-2})b_2a_2 = -q^{-2}(q^2+q^{-2})a_2b_2 -q^{-2}(q^{4}-q^{-4})a_1b_1. 
\]
Since $q^2+q^{-2}\in \A^{\times}$ and $q^2+q^{-2} = (q^{4}-q^{-4})/(q^2-q^{-2})$, this implies
\[
b_2a_2 = -a_2b_2 - (q^2-q^{-2})a_1b_1.
\]
Now, rewriting the second relation in \EQ \eqref{E:even-reln-two} again we find
\[
b_1a_1 =-q^{-4}a_1b_1 + q^{-2}a_2b_2 + q^{-2}\left(-a_2b_2 - (q^2-q^{-2})a_1b_1\right) = -a_1b_1.
\]

Suppose $m=2\cdot 1 + 1$. We need to show that
\begin{equation}\label{E:odd-relation-one}
b_1a_1 = -a_1b_1 \quad \text{and} \quad u^2 = -q^{-1}(q^2-q^{-2})a_1b_1
\end{equation}
is equivalent to 
\begin{equation}\label{E:odd-relation-two}
\frac{-q^2}{q+q^{-1}}u^2 + a_1b_1 + q^{2}b_1a_1 = 0 \quad \text{and} \quad b_1a_1 = -q^4a_1b_1 - q^3u^2.
\end{equation}
Assume \EQ \eqref{E:odd-relation-one}. Then
\begin{align*}
\frac{-q^2}{q+q^{-1}}u^2 + a_1b_1 + q^{2}b_1a_1 &= \frac{-q^2}{q+q^{-1}}\left(-q^{-1}(q^2-q^{-2})a_1b_1\right) + a_1b_1 - q^{2}a_1b_1 \\
&= q(q-q^{-1})a_1b_1 + a_1b_1 -q^2a_1b_1 = 0
\end{align*}
and
\[
b_1a_1 + q^4a_1b_1 + q^3u^2 = -a_1b_1 +q^4a_1b_1 +q^3\left(-q^{-1}(q^2-q^{-2})a_1b_1\right) = 0.
\]
Assume \EQ \eqref{E:odd-relation-two}. Then we can rewrite the first relation as
\[
u^2 = q^{-2}(q+q^{-1})a_1b_1 + (q+q^{-1})b_1a_1
\]
so the second relation becomes
\[
b_1a_1 = -q^4a_1b_1 - q^3u^2 = -q^4a_1b_1 - q^3\left(q^{-2}(q+q^{-1})a_1b_1 + (q+q^{-1})b_1a_1\right)
\]
which we can rewrite as
\[
(q^4+q^2+1)b_1a_1 = (-q^4-q^2-1)a_1b_1. 
\]
Since $q^4+q^2+1\in \A^{\times}$, it follows that $b_1a_1 = -a_1b_1$. Thus, we can rewrite the first relation in \EQ \eqref{E:odd-relation-two} again as
\[
u^2 = q^{-2}(q+q^{-1})a_1b_1 - (q+q^{-1})a_1b_1 =-q^{-1}(q^2-q^{-2})a_1b_1.
\]
\end{proof}

\begin{corollary}\label{C:exterior-algebra-confluentpresentation}
The algebra $\Lambda_{\A}^{\bullet}$ is the associative $\A$-algebra generated by the elements
\[
a_1, \ldots, a_n, u, b_n, \ldots, b_1
\]
subject to the following relations:
\[
a_i^2=0 \ \ \ \ \ b_i^2=0,
\]
\[
a_ja_i= -q^2a_ia_j \ \ \ \ \ i<j,
\]
\[
b_ib_j= -q^2b_jb_i \ \ \ \ \ i< j,
\]
\[
b_ja_i= -q^2a_ib_j \ \ \ \ \ i\ne j,
\]
\[
ua_i= -q^2a_iu, \ \ \ \ \ b_iu= -q^2ub_i,
\]
\[
b_ia_i= -a_ib_i - \sum_{k=1}^{i-1}(-q^2)^{-k+1}(q^2-q^{-2})a_{i-k}b_{i-k},
\]
\[
u=0 \ \ \ \ \ \text{if $m$ is even},
\]
\[
u^2 = q\sum_{k=1}^{n}(-q^2)^{-k}(q^2-q^{-2})a_{n+1-k}b_{n+1-k} \ \ \ \ \ \text{if $m$ is odd}.
\]
\end{corollary}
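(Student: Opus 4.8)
The plan is to read this off directly from Lemma \ref{L:bavsabreln}. Both the presentation in Definition \ref{D:exterior-algebra} and the presentation asserted here have the \emph{same} generating set $a_1, \dots, a_n, u, b_n, \dots, b_1$, and they share a common sub-list of defining relations: the relations $a_i^2 = 0 = b_i^2$; the $q$-commutation relations $a_j a_i = -q^2 a_i a_j$ and $b_i b_j = -q^2 b_j b_i$ for $i<j$, $b_j a_i = -q^2 a_i b_j$ for $i \ne j$, and $u a_i = -q^2 a_i u$, $b_i u = -q^2 u b_i$; and the relation $u = 0$ when $m$ is even. Let $F$ be the free associative $\A$-algebra on these generators and $I_0 \subseteq F$ the two-sided ideal generated by this common list. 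Then $\Lambda_{\A}^{\bullet}$ is by definition $F/(I_0 + J_1)$, where $J_1$ is the two-sided ideal generated by the remaining relations of Definition \ref{D:exterior-algebra} (the relations $b_{i+1}a_{i+1} = -a_{i+1}b_{i+1} - (q^{-2}b_i a_i + q^2 a_i b_i)$ for $i<n$; the relation $b_n a_n = -q^4 a_n b_n - q^3 u^2$ and the first ``quadratic form'' relation when $m$ is odd; the second ``quadratic form'' relation when $m$ is even), while the algebra in the statement is $F/(I_0 + J_2)$, where $J_2$ is generated by the relations $b_i a_i = -a_i b_i - \sum_{k=1}^{i-1}(-q^2)^{-k+1}(q^2-q^{-2})a_{i-k}b_{i-k}$ for $i = 1, \dots, n$ together with, when $m$ is odd, the relation $u^2 = q\sum_{k=1}^n (-q^2)^{-k}(q^2-q^{-2})a_{n+1-k}b_{n+1-k}$. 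So it suffices to show $I_0 + J_1 = I_0 + J_2$, and in fact Lemma \ref{L:bavsabreln} already gives the stronger equality $J_1 = J_2$: the word ``equivalent'' there means precisely that these two sets of relations generate the same two-sided ideal of $F$, since the manipulations in its proof are formal rewritings valid in $F$, using only that scalars such as $q^2 + q^{-2}$ and $q^4 + q^2 + 1$ are units in $\A$. Granting this, $F/(I_0+J_1) = F/(I_0+J_2)$ as $\A$-algebras (identifying the generators with themselves), which is the asserted presentation; and since every relation involved is homogeneous for the monomial grading, this identification respects the grading, so the degree-$k$ pieces $\Lambda_{\A}^k$ of the two presentations agree.

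The only thing that needs care — and the ``main obstacle,'' such as it is — is the bookkeeping: one must check that the non-common relations of Definition \ref{D:exterior-algebra} are exactly the generators of the ideal appearing on one side of Lemma \ref{L:bavsabreln}, and the new relations listed here are exactly the generators appearing on the other side, with matching index ranges and scalar coefficients (in particular the odd-case relation $u^2 = q\sum_{k=1}^n (-q^2)^{-k}(q^2-q^{-2})a_{n+1-k}b_{n+1-k}$, which is the one displayed in the $m=3$ part of the proof of Lemma \ref{L:bavsabreln}). Once this matching is confirmed there is nothing further to prove; the corollary is simply a repackaging of Lemma \ref{L:bavsabreln} into a single ``confluent'' list of defining relations.
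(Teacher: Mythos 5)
Your proposal is correct and follows essentially the same route as the paper, whose proof is the one-liner ``This follows immediately from Definition~\ref{D:exterior-algebra} and Lemma~\ref{L:bavsabreln}.'' You have simply unpacked that sentence: splitting the relations into a common ideal $I_0$ and two replaceable ideals $J_1, J_2$, and observing that Lemma~\ref{L:bavsabreln} shows $J_1 = J_2$ because the manipulations in its proof are $\A$-linear rewritings in the free algebra, using only that $q^2+q^{-2}$ and $q^4+q^2+1$ lie in $\A^{\times}$.
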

\begin{proof}
This follows immediately from Definition \ref{D:exterior-algebra} and Lemma \ref{L:bavsabreln}.
\end{proof}

\begin{definition}
It will be convenient to write the vectors in $V_{\A}$ as follows:
\[
v_i:=a_i, \quad \text{for} \quad  i=1, \ldots, n,
\]
\[
v_{m-i+1}:=b_i, \quad \text{for} \quad i=1, \ldots, n,
\]
and if $m=2n+1$, then 
\[
v_{n+1}:=u.
\]
There is an involution of $\lbrace1, \ldots, m\rbrace$ defined by $i\mapsto i':= m-i+1$. We will write $v_{i'}:= v_{m-i+1}$. Let $S\subset \lbrace 1, \ldots, m\rbrace$. If $S= \lbrace s_1, \ldots, s_k\rbrace$ such that $s_1< \ldots< s_k$, set $v_S:=v_{s_1}\cdots v_{s_{k}}$. We extend the involution $i\mapsto i'$ to the set of subsets of $\lbrace 1, \ldots, m\rbrace$ by $S\mapsto S':=\lbrace s_1', \ldots, s_k'\rbrace$. Note that $v_{S'}:= v_{s_{k}'}\cdots v_{s_{1}'}$. 
\end{definition}

\begin{thm}\label{T:basis-for-LambdaA}
The set $\lbrace v_S \rbrace_{S\subset \lbrace 1, \ldots, m\rbrace}$ forms a basis of $\Lambda_{\A}^{\bullet}$. In particular, $\lbrace v_S\rbrace_{\substack{S\subset \lbrace 1, \ldots, m\rbrace \\ |S|=k}}$ forms a basis of $\Lambda_{\A}^k$. 
\end{thm}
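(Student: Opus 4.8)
The plan is to establish that $\{v_S\}_{S \subset \{1, \ldots, m\}}$ spans $\Lambda_{\A}^\bullet$ and is linearly independent, handling the two halves separately. For spanning, I would work with the confluent presentation from Corollary \ref{C:exterior-algebra-confluentpresentation}, and argue that the listed relations form a confluent rewriting system (in the sense of Bergman's diamond lemma): orienting each relation from left to right, the monomials $v_{s_1} v_{s_2} \cdots$ with $s_1 < s_2 < \cdots$ are precisely the irreducible words, and every word can be reduced to an $\A$-linear combination of such $v_S$ by repeatedly applying the relations. The key point is that the straightening relations $v_j v_i \rightsquigarrow -q^2 v_i v_j$ (for the ``generic'' index pairs), $v_i^2 \rightsquigarrow 0$, $b_i a_i \rightsquigarrow -a_i b_i - (\text{lower terms})$, and the relation expressing $u^2$, taken together, let us sort any monomial into increasing order while killing repeats; so $\{v_S\}$ spans.

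For linear independence, the cleanest route is a dimension count via specialization together with the functor $\eval$ or a direct comparison with the classical exterior algebra. The point is that $\Lambda_{\A}^\bullet$ is a free $\A$-module: one checks that the spanning set $\{v_S\}$ has the right size by verifying that after applying $\kk \otimes_{\A} -$ (i.e. setting $q = 1$), the algebra $\kk \otimes \Lambda_{\A}^\bullet$ surjects onto the classical exterior algebra $\Lambda^\bullet(V_{\kk})$, which has dimension $2^m$ with basis the classical $v_S$'s; since $\{v_S\}$ spans a module of rank at most $2^m$ over $\A$ and its image in the $2^m$-dimensional space $\kk \otimes \Lambda^\bullet(V_{\kk})$ is a spanning set, the $v_S$ must be $\A$-linearly independent (a rank-$\leq 2^m$ module whose generators remain independent after base change to a field is free of rank exactly $2^m$ on those generators). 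Alternatively, and perhaps more in the spirit of the diamond lemma approach above, linear independence follows \emph{directly} from confluence: once one checks that all ambiguities (overlapping left-hand sides) resolve — the overlaps among $v_i^2$, the $v_j v_i$ sorting relations, the $b_i a_i$ relations, and the $u^2$ relation — Bergman's diamond lemma gives that the irreducible monomials $\{v_S\}$ form an $\A$-basis on the nose.

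The main obstacle I expect is verifying confluence of the overlap ambiguities, particularly those involving the $b_i a_i$ relation and (in the odd case) the $u^2 = q\sum_k (-q^2)^{-k}(q^2 - q^{-2}) a_{n+1-k} b_{n+1-k}$ relation. For instance, the triple $b_j b_i a_i$ with $i < j$ can be reduced either by first straightening $b_j b_i$ or by first rewriting $b_i a_i$, and one must check both paths land on the same normal form; similarly overlaps like $u \cdot u \cdot a_i$, $b_i a_i a_i$ (using $a_i^2 = 0$), and $a_i \cdot a_i b_i$ need checking. These are finitely many explicit computations with quantum-integer coefficients, and the identities needed (e.g. $q^2 + q^{-2} = (q^4 - q^{-4})/(q^2 - q^{-2})$ and the telescoping in Lemma \ref{L:bavsabreln}) are exactly the sort already exploited in the proof of Lemma \ref{L:bavsabreln}; so while tedious, they are routine. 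The remaining steps — that increasing monomials are the irreducible words, and that the reduction terminates (use a well-founded order: degree, then number of inversions, with $u^2$-reduction handled by noting it strictly decreases the number of $u$'s) — are straightforward bookkeeping.
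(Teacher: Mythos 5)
Your primary approach — Bergman's diamond lemma applied to the presentation of Corollary \ref{C:exterior-algebra-confluentpresentation}, with a compatible term order making the $v_S$ the irreducible monomials, and then checking that all overlap ambiguities resolve — is exactly the paper's proof. The paper uses lexicographic order with $v_1 < \dots < v_m$, lists the full set of overlap ambiguities (which is longer than the handful you name, though of the same flavor), and carries out the $uua_i$ resolution as a sample, leaving the rest as exercises; your sketch of termination via degree then inversions is an acceptable substitute.

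One caution about your proposed ``cleanest'' alternative for linear independence. The parenthetical claim that a module generated by $2^m$ elements whose images in $\kk\otimes(-)$ are linearly independent must be free on those generators is false over $\A$: the module $\A\oplus\A/(q-1)\A$ is generated by two elements whose images form a basis of the two-dimensional $\kk$-specialization, yet it is not free. To make a specialization argument work you would need an extra input beyond the surjection to $\Lambda^\bullet(V_{\kk})$ — for instance, that $\Lambda_{\A}^{\bullet}$ is torsion-free over $\A$, or that $\dim_{\K}(\K\otimes\Lambda_{\A}^{\bullet}) = 2^m$ as well — and establishing either of those independently is not obviously easier than checking confluence (indeed the natural route to the $\K$-dimension bound would itself invoke the diamond lemma over $\K$). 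So the diamond lemma route, which you also give and which the paper uses, is the one to commit to. Also note that spanning already follows from termination of the rewriting system alone; confluence is only needed for linear independence, so keep the two uses separate when writing this up.
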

\begin{proof}
This is a standard application of Bergman's diamond lemma \cite[Theorem 1.2]{Ber-diamond}. Note that if $1< \dots < m$, then the lexicographic order on monomials in $v_i$ satisfies the hypotheses of the diamond lemma, and the irreducible monomials are the elements of $\{v_S\}_{S\subset \{1, \dots, m\}}$. Therefore, it suffices to show that all the ambiguities in the defining relations are resolvable.

The following are all the overlap ambiguities in the defining relations of $\Lambda_{\A}^{\bullet}$:
\[
a_xa_xa_x, \quad a_ja_ja_i, \quad b_xb_xb_x, \quad b_ib_ib_j, \quad b_xb_xa_y, \quad b_xb_xu, \quad a_ja_ia_i, \quad a_ja_ia_k,
\]
\[
b_ib_jb_j, \quad b_ib_ja_i, \quad b_ib_ju, \quad b_ib_ja_j, \quad b_xa_ya_y, \quad b_xa_ya_z, 
\]
\[
ua_xa_x, \quad ua_ja_i, \quad b_xua_y, \quad b_xua_x, \quad b_xuu, \quad b_xa_xa_x, \quad b_ja_ja_i, \quad uua_x, \quad \text{and} \quad uuu,
\]
where $1\le x,y,z,i,j,k\le n$, $x\ne y$, $z<y$, and $k< i< j$.

We provide an example calculation to verify the resolution of an ambiguity, and leave the rest as an exercise. To simplify notation, write $\xi:=(q^2-q^{-2})$. On the one hand, we have
\begin{align*}
(uu)a_i &= q\sum_{k=1}^{n}(-q^2)^{-(n-k+1)}\xi a_{k}b_{k}a_i \\
&=q\sum_{1\le k< i}(-q^2)^{-n+k}\xi a_{k}a_ib_{k} - q\sum_{1\le k < i}(-q^2)^{-n+k+1}\xi^2a_{k}a_{i}b_{k} +q\sum_{i< k\le n}(-q^2)^{-n+k+1}\xi a_ia_{k}b_{k} \\
&=q\sum_{1\le k< i} (-q^2)^{-n+k}\xi\left(1 - (-q^2)\xi\right) a_{k}a_{i}b_{k} +q\sum_{i< k\le n}(-q^2)^{-n+k+1}\xi a_ia_{k}b_{k} \\
&=q\sum_{1\le k< i} (-q^2)^{-n+k+2}\xi a_{k}a_{i}b_{k} +q\sum_{i< k\le n}(-q^2)^{-n+k+1}\xi a_ia_{k}b_{k},
\end{align*}
and on the other 
\begin{align*}
u(ua_i) &= (-q^2)ua_iu (-q^2)^2a_iu^2= q\sum_{k=1}^{n}(-q^2)^{-n+k+1}\xi a_ia_{k}b_{k} \\
&= q\sum_{1\le k < i}(-q^2)^{-n+k+2}\xi a_{k}a_ib_{k} + q\sum_{i< k\le n}(-q^2)^{-n+k+1}\xi a_ia_{k}b_{k}.
\end{align*}
\end{proof}

\begin{remark}\label{R:Lambda-over-R}
Write $\Lambda_{\R}^{\bullet}$ to denote the associative $\R$-algebra with generators and relations as in Definition \ref{D:exterior-algebra}. The basis $\{v_S\}_{S\subset \{1, \dots, m\}}$ of $\Lambda^{\bullet}_{\A}$ is also a basis for $\Lambda_{\R}^{\bullet}$, when $\R\in \{\kk, \K\}$. Define isomorphism $\iota_{k}:\Lambda_{\R}^k\rightarrow \R\otimes \Lambda_{\A}^k$ by $v_S\mapsto 1\otimes v_S$. 
\end{remark}

\subsection{Fundamental category for orthogonal groups}\label{ssec:fund-O}

For simple Lie algebras, the fundamental category is the monoidal category generated by irreducible representations with highest weight a fundamental weight. We define an analogue of this category for $U_{\R}(\om)$, when $\R\in \{\K, \A, \kk\}$. The first step is to show that for $k=0, 1, \dots, m$, $\Lambda_{\R}^k$ are non-isomorphic, irreducible, self-dual $U_{\R}(\om)$-modules.  

\begin{lemma}
The action of $U_{\K}(\som)$ on the tensor algebra of $V_{\K}$ descends to an action of $U_{\K}(\som)$ on $\Lambda_{\K}^{\bullet}$. Moreover, the multiplication for $\Lambda_{\K}^{\bullet}$ is $U_{\K}(\som)$ equivariant.
\end{lemma}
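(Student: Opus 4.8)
The plan is to verify that the two-sided ideal $I$ defining $\Lambda^{\bullet}_{\K}$ as a quotient of the tensor algebra $T(V_{\K})$ is a $U_{\K}(\som)$-submodule. Since $U_{\K}(\som)$ acts on $T(V_{\K})$ by the (iterated) coproduct, and the coproduct makes $T(V_{\K})$ into a $U_{\K}(\som)$-module algebra (the multiplication $T(V_{\K})\otimes T(V_{\K})\to T(V_{\K})$ is a morphism of modules, because $U_{\K}(\som)$ is a Hopf algebra and concatenation is the tensor-algebra structure map), it suffices to show that the generating subspace of $I$, namely the symmetric square $S_q^2(V_{\K})\subset V_{\K}^{\otimes 2}$, is a $U_{\K}(\som)$-submodule of $V_{\K}^{\otimes 2}$. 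Then the two-sided ideal it generates is automatically a submodule, being a sum of subspaces of the form $T(V_{\K})\cdot S_q^2(V_{\K})\cdot T(V_{\K})$, each of which is the image of a submodule under a module map.

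First I would recall from the lemma preceding Definition \ref{D:exterior-algebra} that the span of the listed quadratic vectors is exactly the $U_{\K}(\om)$-submodule $L_{\K}(2\varpi_1)\oplus L_{\K}(0)$ of $V_{\K}^{\otimes 2}$ generated by $a_1\otimes a_1$; in particular it is a $U_{\K}(\som)$-submodule. Second, I would check that the defining relations in Definition \ref{D:exterior-algebra} are precisely the statement that these quadratic vectors map to $0$ in $\Lambda^2_{\K}$ — i.e. that $I$ is the two-sided ideal generated by this copy of $L_{\K}(2\varpi_1)\oplus L_{\K}(0)$. (This is a bookkeeping matching of the explicit vectors against the relations $a_i^2=0$, $a_ja_i=-q^2a_ia_j$, the $b_ia_i$ relations, and so on.) Third, since $I\subset T(V_{\K})$ is a $U_{\K}(\som)$-submodule, the quotient $\Lambda^{\bullet}_{\K}=T(V_{\K})/I$ inherits a $U_{\K}(\som)$-module structure, and the induced multiplication map $\Lambda^{\bullet}_{\K}\otimes\Lambda^{\bullet}_{\K}\to\Lambda^{\bullet}_{\K}$ is $U_{\K}(\som)$-equivariant because it is the map induced on quotients by the equivariant multiplication on $T(V_{\K})$.

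The only genuine content is the second step: confirming that the two-sided ideal of relations in Definition \ref{D:exterior-algebra} coincides with the ideal generated by the submodule $L_{\K}(2\varpi_1)\oplus L_{\K}(0)$. I expect this to be the main obstacle only in the sense of being slightly tedious — one must see that the quadratic relations, together with the relation $u=0$ when $m$ is even and the single trivial-module relation, span the same subspace of $V_{\K}^{\otimes 2}$ as the explicit vectors in the preceding lemma. Everything else is the formal statement that a Hopf algebra acting on a tensor algebra by the coproduct descends to any quotient by an ideal generated by a submodule, with multiplication remaining equivariant. I would phrase the write-up by invoking the preceding lemma for the identification of $S_q^2(V_{\K})$, noting it is a $U_{\K}(\som)$-submodule, observing $I = T(V_{\K})\,S_q^2(V_{\K})\,T(V_{\K})$ is therefore a submodule, and concluding equivariance of the quotient multiplication formally.
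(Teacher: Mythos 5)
Your proposal is correct and is essentially the paper's argument, just written out in full. The paper's proof is a one-liner (``This follows from observing that $U_{\K}(\som)$ preserves the defining relations,'' with a pointer to the symmetric analogue in LZZ), and the content you supply — that the tensor algebra is a module algebra, that the preceding lemma identifies the span of the quadratic relations as the submodule $L_{\K}(2\varpi_1)\oplus L_{\K}(0)$ generated by $a_1\otimes a_1$ plus the trivial summand, and hence that the two-sided ideal is a submodule and the quotient multiplication is equivariant — is exactly what that observation encodes.
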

\begin{proof}
This follows from observing that $U_{\K}(\som)$ preserves the defining relations. See the discussion in \cite[Sections 3.2, 4.1]{LZZ-invarianthteory} for the symmetric analogue. 
\end{proof}

\begin{definition}
Let $\wt v_S:= \sum_{i\in S}\wt v_i$, where $\wt v_i$ is as defined in Definition \ref{D:action-on-V}. Then $K_{\alpha}\cdot v_S = (q^2)^{(\alpha, \wt v_S)}$.
\end{definition}

\begin{remark}\label{R:Lambda-is-typeone}
The modules $\Lambda_{\K}^k$ are finite dimensional type-$\textbf{1}$ representations of $U_{\K}(\som)$, in particular we have
\[
\Lambda_{\K}^k= \bigoplus_{\wta\in X(\som)}\Lambda_{\K}^k[\wta].
\]
In fact, this remains true over $\A$, since $\Lambda_{\A}^{\bullet}$ is spanned by $\{v_S\}_{S\subset \lbrace 1, \ldots, m\rbrace}$ and each $v_S$ is a weight vector. 
\end{remark}

\begin{lemma}\label{L:equality-of-type1-chars}
We have the following equality of formal characters:
\[
\sum_{\wta\in X(\som)} \dim\Lambda_{\K}^k[\wta]e^{\wta} = \sum_{\wta\in X(\som)} \dim\Lambda_{\kk}^k[\wta]e^{\wta}.
\]
\end{lemma}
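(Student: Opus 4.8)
The statement asserts an equality of formal characters of $\Lambda_\K^k$ and $\Lambda_\kk^k$, viewed as type-$\one$ modules over $U_\K(\som)$ and $U_\kk(\som)$ respectively. The natural plan is to deduce this from the fact, already recorded in Remark~\ref{R:Lambda-over-R}, that both $\Lambda_\K^\bullet$ and $\Lambda_\kk^\bullet$ arise by base change from the $\A$-algebra $\Lambda_\A^\bullet$, together with the basis theorem Theorem~\ref{T:basis-for-LambdaA}. Concretely, Theorem~\ref{T:basis-for-LambdaA} says $\{v_S\}_{|S|=k}$ is an $\A$-basis of $\Lambda_\A^k$, and by Remark~\ref{R:Lambda-over-R} the images $\{1\otimes v_S\}_{|S|=k}$ form an $\R$-basis of $\Lambda_\R^k$ for $\R\in\{\kk,\K\}$. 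Each $v_S$ is a weight vector (Remark~\ref{R:Lambda-is-typeone}), and its weight $\wt v_S = \sum_{i\in S}\wt v_i$ is visibly the \emph{same} element of $X(\som)$ whether we work over $\A$, $\kk$, or $\K$, since the weights $\wt v_i = \pm\epsilon_i$ or $0$ are specified integrally in Definition~\ref{D:action-on-V} and do not depend on $q$.

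First I would observe that for each fixed weight $\wta\in X(\som)$, the weight space $\Lambda_\R^k[\wta]$ has as a basis exactly the set $\{1\otimes v_S : |S|=k,\ \wt v_S = \wta\}$, because a type-$\one$ module decomposes as the direct sum of its weight spaces and the $v_S$ are weight vectors spanning $\Lambda_\R^k$. Hence $\dim_\R \Lambda_\R^k[\wta] = \#\{S\subset\{1,\dots,m\} : |S|=k,\ \wt v_S = \wta\}$, a quantity that is manifestly independent of $\R$. Summing over $\wta$ gives the claimed equality of formal characters. This is the entire argument; no hard estimate or representation-theoretic input beyond what is already in the excerpt is needed.

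The only point requiring a little care — and the step I would flag as the ``main obstacle,'' though it is minor — is justifying that $\Lambda_\R^k[\wta]$ really is spanned by those particular $v_S$, i.e. that passing from $\A$ to $\R$ does not merge or split weight spaces. This follows because base change is exact here (the $v_S$ form a free $\A$-basis, so $\Lambda_\R^k = \R\otimes_\A\Lambda_\A^k$ with $\{1\otimes v_S\}$ a basis), and because the $K_\alpha$-eigenvalue (resp. $h_\alpha$-eigenvalue) on $1\otimes v_S$ is dictated by the integer $(\alpha^\vee,\wt v_S)$, the same integer in both specializations — so two basis vectors lie in the same weight space over $\R$ iff they have equal $\wt v_S$, a condition insensitive to $\R$. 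One should also note Remark~\ref{R:Lambda-is-typeone}, which already asserts $\Lambda_\R^k$ is type-$\one$ and equal to the direct sum of its weight spaces, so no separate verification of the weight-space decomposition is required. Assembling these observations yields the lemma immediately.
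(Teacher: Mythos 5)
Your proposal is correct and follows essentially the same approach as the paper's (one-line) proof: observe that the basis vectors $v_S$ are weight vectors of weight $\wt v_S$ in $\Lambda_\R^{|S|}$ for both $\R=\kk$ and $\R=\K$, so each weight multiplicity is the $\R$-independent count $\#\{S : |S|=k,\ \wt v_S = \wta\}$.
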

\begin{proof}
Follows from observing that $v_S\in \Lambda^{|S|}_{\R}[\wt v_S]$ for $\R\in \{\kk, \K\}$. 
\end{proof}

\begin{lemma}\label{L:samecharacter}
Let $\R\in \{\kk, \K\}$. We have the following isomorphisms of $U_{\R}(\som)$-modules. 

If $m=2n+1$, then
\[
\Lambda_\R^1\cong L_\R(\varpi_1), \quad  \Lambda_\R^2\cong L_\R(\varpi_2), \quad \dots, \quad \Lambda_\R^{n-1}\cong L_\R(\varpi_{n-1}),
\]
\[
\text{and} \quad 
\Lambda_\R^{n}\cong L_\R(2\varpi_n).
\]

If $m=2n$, then
\[
\Lambda_\R^1\cong L_\R(\varpi_1), \quad \Lambda_\R^2\cong L_\R(\varpi_2), \quad \dots, \quad \Lambda_\R^{n-2}\cong L_\R(\varpi_{n-2}),
\]
\[
\Lambda_\R^{n-1}\cong L_\R(\varpi_{n-1}+ \varpi_n), \quad \text{and} \quad \Lambda_\R^n\cong L_\R(2\varpi_{n-1})\oplus L_\R(2\varpi_n).
\]

In either case $\Lambda_\R^m\cong L_\R(0)$
\end{lemma}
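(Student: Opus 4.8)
The plan is to reduce the statement to the classical case $\R=\kk$ and then quote the well-known decomposition of the exterior powers of the vector representation of $\mathfrak{so}_m$.

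\emph{Reduction to $\kk$.} By Lemma~\ref{L:equality-of-type1-chars} the modules $\Lambda_\kk^k$ and $\Lambda_\K^k$ have the same formal character. Type-$\one$ modules are completely reducible (Lemma~\ref{L:so-completely-reducible}), the characters $\mathrm{ch}(L_\R(\wta))$ are linearly independent (Lemma~\ref{L:so-linearindependence}), and $\dim_\kk L_\kk(\wta)[\wtb]=\dim_\K L_\K(\wta)[\wtb]$ for all $\wta,\wtb$ (cf.\ the proof of Lemma~\ref{L:formalcharimpliesdecomp}); hence the isotypic decomposition of $\Lambda_\R^k$ does not depend on whether $\R=\kk$ or $\R=\K$. (Equivalently: $\Lambda_\R^k\cong \R\otimes_\A \Lambda_\A^k$ by Remark~\ref{R:Lambda-over-R} and $\Lambda_\A^k$ is a type-$\one$ $U_\A(\som)$-module by Remark~\ref{R:Lambda-is-typeone}, so the reduction is immediate from Lemma~\ref{L:formalcharimpliesdecomp}.) It therefore suffices to treat $\R=\kk$. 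The case $k=m$ can be dispatched at once, over any $\R$: by Theorem~\ref{T:basis-for-LambdaA}, $\Lambda_\R^m$ is one-dimensional, spanned by $v_{\{1,\ldots,m\}}$, whose weight is $0$; so $U_\R(\som)$ acts trivially and $\Lambda_\R^m\cong L_\R(0)$.

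\emph{The case $\R=\kk$.} Specializing $q\to 1$ in Definition~\ref{D:exterior-algebra} (using the equivalent presentation of Lemma~\ref{L:bavsabreln}), every relation degenerates to $v^2=0$ and $vw=-wv$; thus $\Lambda_\kk^\bullet$ is the ordinary exterior algebra of the vector representation $V_\kk$ of $\mathfrak{so}_m$, with the $\mathfrak{so}_m$-action obtained by specializing the quantum action. For $1\le k\le n$, the monomial $a_1a_2\cdots a_k$ has weight $\epsilon_1+\cdots+\epsilon_k$, and a short computation—using that over $\kk$ the $E_\alpha$ act by derivations, together with Definition~\ref{D:action-on-V}—shows it is annihilated by every $E_\alpha$, hence is a highest weight vector; when $m=2n$ and $k=n$ one likewise produces a highest weight vector of weight $\epsilon_1+\cdots+\epsilon_{n-1}-\epsilon_n$. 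Re-expressing in fundamental weights: $\epsilon_1+\cdots+\epsilon_k=\varpi_k$ for $k\le n-1$ in type $B_n$ and for $k\le n-2$ in type $D_n$; $\epsilon_1+\cdots+\epsilon_n=2\varpi_n$ in type $B_n$; $\epsilon_1+\cdots+\epsilon_{n-1}=\varpi_{n-1}+\varpi_n$ and $\epsilon_1+\cdots+\epsilon_{n-1}-\epsilon_n=2\varpi_{n-1}$ in type $D_n$. That these irreducibles exhaust $\Lambda_\kk^k$—i.e.\ that $\Lambda^k$ of the vector representation is irreducible for $k\ne n$ and equals $L_\kk(2\varpi_{n-1})\oplus L_\kk(2\varpi_n)$ when $m=2n$, $k=n$—is the classical decomposition of exterior powers of the defining representation of $\mathfrak{so}_m$; see e.g.\ \cite[Section~5.5.5]{GoodmanWallach-invariants}. (Alternatively one checks it by comparing $\dim\Lambda_\kk^k=\binom{m}{k}$ with the Weyl dimension formula.)

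The argument is essentially bookkeeping, and the only genuinely external input is the classical irreducibility/decomposition statement for exterior powers of the vector representation, which I would cite. The delicate points are the reduction's reliance on the equality of weight multiplicities over $\kk$ and $\K$, and the small-rank/type distinctions in passing from $\epsilon_1+\cdots+\epsilon_k$ to the fundamental weights—recognizing when this weight is a fundamental weight versus twice one, and, in type $D_n$, separating the two highest weights occurring in $\Lambda^n$.
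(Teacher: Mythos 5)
Your proposal is correct and takes essentially the same approach as the paper: reduce to $\R=\kk$ via complete reducibility (Lemma~\ref{L:so-completely-reducible}) and the character comparison (Lemma~\ref{L:equality-of-type1-chars}), then quote the classical decomposition of the exterior powers of the vector representation for $\som$ (the paper cites Fulton--Harris, Sections~19.2, 19.4, where you cite Goodman--Wallach — same content). You fill in more detail (the explicit highest weight vectors and the $\epsilon$-to-$\varpi$ bookkeeping) than the paper's terse proof, but these are exactly the steps being outsourced to the classical reference.
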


\begin{proof}
Thanks to Lemma \ref{L:so-completely-reducible}, it suffices to compute characters, and the result follows for $\R= \kk$ from \cite[Sections 19.2, 19.4]{Fultonharris}, and then for $\R= \K$, from Lemma \ref{L:equality-of-type1-chars}. 
\end{proof}

\begin{lemma}
The algebra $U_{\A}(\som)$ preserves the lattice $\Lambda_{\A}^{\bullet}\subset \Lambda_{\K}^{\bullet}$. 
\end{lemma}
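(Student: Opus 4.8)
The plan is to exploit that $\Lambda_{\A}^{\bullet}$ is generated as an $\A$-algebra by $V_{\A}$ (Definition \ref{D:exterior-algebra} and Remark \ref{R:Lambda-over-R}), together with the $U_{\K}(\som)$-equivariance of the multiplication on $\Lambda_{\K}^{\bullet}$ established in the previous lemma, and the fact that $V_{\A}$ is $U_{\A}(\som)$-stable. Concretely, $\Lambda_{\A}^{\bullet}$ is the $\A$-span of all monomials $v_{i_1}\cdots v_{i_k}$ with each $v_{i_j}$ a basis vector of $V_{\A}$, so by $\A$-linearity it suffices to show that $x\cdot(v_{i_1}\cdots v_{i_k})\in \Lambda_{\A}^{\bullet}$ for every generator $x$ of $U_{\A}(\som)$, i.e. for $x=K_{\alpha}^{\pm 1}$, $x=E_{\alpha}^{(d)}$, or $x=F_{\alpha}^{(d)}$.

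First I would record that the $k$-fold multiplication map $V_{\K}^{\otimes k}\to \Lambda_{\K}^{\bullet}$ is $U_{\K}(\som)$-equivariant: this follows by induction from the equivariance of $\mu\colon \Lambda_{\K}^{\bullet}\otimes \Lambda_{\K}^{\bullet}\to \Lambda_{\K}^{\bullet}$ proved in the previous lemma together with coassociativity of the coproduct. Hence for $x\in U_{\K}(\som)$ with iterated coproduct $\Delta^{(k-1)}(x)=\sum_j x_{1,j}\otimes\cdots\otimes x_{k,j}$ we have
\[
x\cdot(v_{i_1}\cdots v_{i_k})=\sum_j (x_{1,j}\cdot v_{i_1})\cdots (x_{k,j}\cdot v_{i_k}).
\]
Now I would invoke the standard fact that Lusztig's divided-power form $U_{\A}(\som)$ is a Hopf $\A$-subalgebra of $U_{\K}(\som)$, so that $\Delta\big(U_{\A}(\som)\big)\subseteq U_{\A}(\som)\otimes U_{\A}(\som)$ and therefore each $x_{i,j}\in U_{\A}(\som)$ (one can cite \cite[Section 9.3]{CP-guide-to-quantum-groups}, or verify directly using $\Delta(E_{\alpha}^{(d)})=\sum_{c+c'=d}q_{\alpha}^{c(d-c)}E_{\alpha}^{(c)}K_{\alpha}^{c'}\otimes E_{\alpha}^{(c')}$ and the analogous formula for $F_{\alpha}^{(d)}$). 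Since $V_{\A}$ is preserved by $U_{\A}(\som)$, each factor $x_{i,j}\cdot v_{i_j}$ lies in $V_{\A}$, and hence the product lies in the $\A$-subalgebra $\Lambda_{\A}^{\bullet}$.

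Combining, $x\cdot(v_{i_1}\cdots v_{i_k})\in \Lambda_{\A}^{\bullet}$ for all $x\in U_{\A}(\som)$, and since these monomials $\A$-span $\Lambda_{\A}^{\bullet}$ (Theorem \ref{T:basis-for-LambdaA}), the lattice $\Lambda_{\A}^{\bullet}$ is $U_{\A}(\som)$-stable. The only non-formal input is the Hopf-subalgebra property of $U_{\A}(\som)$, which I expect to be the main thing to pin down carefully (or simply cite); everything else is bookkeeping with the coproduct and the equivariance already proved. A remark worth adding: the same argument applies verbatim with $\A$ replaced by $\kk$, giving that $U_{\kk}(\som)$ preserves $\Lambda_{\kk}^{\bullet}$, consistent with Remark \ref{R:Lambda-over-R}.
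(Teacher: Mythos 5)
Your proof is correct, and it takes a genuinely different (and more conceptual) route than the paper's. The paper shows that the $\A$-span of the defining relations in Definition~\ref{D:exterior-algebra} is $U_{\A}(\som)$-stable, reducing via the coproduct to an explicit verification for $E_{\alpha}^{(2)}$ and $F_{\alpha}^{(2)}$ (the higher divided powers being irrelevant for the quadratic relations); that gives a $U_{\A}(\som)$-action on the quotient $T(V_{\A})$ by the relation ideal and hence on the lattice. You instead observe that the lattice $\Lambda_{\A}^{\bullet}$ is exactly the $\A$-subalgebra of $\Lambda_{\K}^{\bullet}$ generated by $V_{\A}$ --- i.e.\ the image of $T(V_{\A})$ under the $U_{\K}(\som)$-equivariant quotient map --- and that the Hopf-subalgebra property of Lusztig's form, combined with $U_{\A}(\som)$-stability of $V_{\A}$, forces the image of a stable lattice under an equivariant map to be stable. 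Your route never touches the relations; what it saves in computation it spends on the one citation (Hopf subalgebra), but the paper also leans on that same input implicitly when asserting that $U_{\A}(\som)$ preserves $V_{\A}^{\otimes d}$ for all $d$. Both arguments are sound; the paper's is more hands-on and self-contained, yours is shorter and makes clearer why no relation-by-relation check is needed, since the identification of the lattice with the image of $T(V_{\A})$ (Theorem~\ref{T:basis-for-LambdaA} plus the $\A$-coefficient form of the relations in Corollary~\ref{C:exterior-algebra-confluentpresentation}) already does the work.
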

\begin{proof}
Since $U_{\A}(\som)$ preserves $V_{\A}^{\otimes d}\subset V_{\K}^{\otimes d}$ for all $d\ge 0$, it suffices to show that $U_{\A}(\som)$ preserves the $\A$-span of the defining relations for $\Lambda_{\A}^{\bullet}$. This then immediately reduces to verifying that $E_{\alpha}^{(2)}$ and $F_{\alpha}^{(2)}$ preserve the $\A$-span of the defining relations. For example, using that $\Delta(F)= 1\otimes F + F\otimes K^{-1}$, we find:
\[
F_1^{(2)}\cdot a_1\otimes a_1 = \frac{1}{q^2+q^{-2}}F_1\cdot (a_1\otimes a_2 + q^{-2}a_2\otimes a_1) = \frac{1}{q^2+q^{-2}}(q^2a_2\otimes a_2  + q^{-2}a_2\otimes a_2) = a_2\otimes a_2.
\]
The remaining calculations we leave to the reader.
\end{proof}

\begin{lemma}
Let $\R\in \{\kk, \A, \K\}$. The operator $\sigma\in U_{\R}(\om)$ acts via the coproduct on $V_{\R}^{\otimes d}$, for all $d\ge 0$, and preserves the defining relations of $\Lambda_{\R}^{\bullet}$. Thus, there is an action of $U_{\R}(\om)$ on $\Lambda_{\R}^{\bullet}$, and the algebra structure maps are $U_{\R}(\om)$ equivariant.
\end{lemma}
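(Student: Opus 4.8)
The plan is to reduce the statement to a single claim: that the operator $\sigma$ preserves the degree-two subspace $S_q^2(V_\R)\subseteq V_\R^{\otimes 2}$ spanned by the elements appearing in the defining relations of $\Lambda_\R^\bullet$. First I would note that, because $\sigma$ is group-like in the Hopf algebra $U_\R(\om)$ (Definition \ref{D:Uo}), its action on $V_\R^{\otimes d}$ obtained from the $d$-fold coproduct is simply the diagonal operator $\sigma^{\otimes d}$; in particular $\sigma$ acts on the tensor algebra $T(V_\R)=\bigoplus_{d\ge 0}V_\R^{\otimes d}$ by an algebra automorphism, and $(\sigma^{\otimes d})^2=\id$ by Lemma \ref{lem:sigmaaction}. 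The algebra $\Lambda_\R^\bullet$ is, by Definition \ref{D:exterior-algebra}, the quotient of $T(V_\R)$ by the two-sided ideal $I$ generated by the degree-two relation elements, whose $\K$-span (for $\R=\K$) is identified in the Lemma preceding Definition \ref{D:exterior-algebra} with $S_q^2(V_\K)=L_\K(2\varpi_1)\oplus L_\K(0)$. Since $\sigma$ is an algebra automorphism of $T(V_\R)$, we have $\sigma(I)\subseteq I$ as soon as $\sigma\big(S_q^2(V_\R)\big)= S_q^2(V_\R)$, and then $\sigma$ descends to a degree-preserving algebra automorphism of $\Lambda_\R^\bullet$. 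This argument is uniform over the three rings, since the relation vectors are given by the same formulas over $\A$ as over $\kk$ and $\K$, cf.\ Remark \ref{R:Lambda-over-R}.

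Next I would verify $\sigma\big(S_q^2(V_\R)\big)= S_q^2(V_\R)$ directly from the explicit formula for $\sigma$ in Remark \ref{rem:sigmaact}. When $m=2n+1$ this is immediate: $\sigma$ acts on $V_\R$ as $-\id$, hence as $\id$ on $V_\R^{\otimes 2}$. When $m=2n$, $\sigma^{\otimes 2}$ fixes every basis tensor not involving $a_n$ or $b_n$ and interchanges $a_n\leftrightarrow b_n$ otherwise; one then checks, relation by relation, that the spanning vectors listed in the Lemma before Definition \ref{D:exterior-algebra} are permuted among themselves: $a_n\otimes a_n\leftrightarrow b_n\otimes b_n$, the vectors $a_i\otimes a_n+q^{-2}a_n\otimes a_i$ (for $i<n$) map to vectors of the form $a_i\otimes b_n+q^{-2}b_n\otimes a_i$ appearing in the $i\neq j$ family, the ``diagonal'' vector indexed by $i=n-1$ is fixed because its $a_n$--$b_n$ part is symmetric in $a_n\otimes b_n$ and $b_n\otimes a_n$, and the trivial-module generator is fixed for the same reason. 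This case analysis is elementary, and it is essentially the only computational content of the lemma.

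Finally, having the algebra automorphism $\sigma$ of $\Lambda_\R^\bullet$ together with the $U_\R(\som)$-action established in the preceding lemma (which likewise comes from the $U_\R(\som)$-action on $T(V_\R)$ descending modulo $I$), one obtains an action of the whole algebra $U_\R(\om)$ on $\Lambda_\R^\bullet$: the algebra $U_\R(\om)$ is generated by $U_\R(\som)$ and $\sigma$, and the defining relation $\sigma X\sigma^{-1}=\sigma(X)$ is respected because it already holds on $V_\R^{\otimes d}$. The structure maps — the unit $\R\to\Lambda_\R^0$ and the multiplication $\Lambda_\R^a\otimes\Lambda_\R^b\to\Lambda_\R^{a+b}$ — are induced by concatenation in $T(V_\R)$, which is $U_\R(\om)$-equivariant by the Hopf-algebra axioms (Definition \ref{D:Uo}), so the induced maps on $\Lambda_\R^\bullet$ are $U_\R(\om)$-equivariant. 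The one step requiring genuine care is the $m=2n$ verification above, but as indicated it reduces to tracking a single transposition of basis vectors and poses no real obstacle.
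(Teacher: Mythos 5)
Your proof is correct and follows the same approach as the paper, which simply instructs the reader to use the explicit formula for $\sigma$ in Remark \ref{rem:sigmaact} to check preservation of the defining relations; you have filled in the case analysis (trivial for $m$ odd since $\sigma=-\id$, a short bookkeeping check on the spanning relation vectors for $m$ even) that the paper leaves to the reader. The only additional observation you make explicit is that $\sigma$ being group-like means it acts as the algebra automorphism $\sigma^{\otimes d}$ on $T(V_\R)$, so it suffices to check preservation of the degree-two ideal generators, which is implicit in the paper's one-line proof.
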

\begin{proof}
Use the description of $\sigma$'s action on $V_\R$ in Remark \ref{rem:sigmaact} to verify that $\sigma$ preserves the relations in Definition \ref{D:exterior-algebra} of $\Lambda_\R^{\bullet}$. 
\end{proof}

\begin{proposition}\label{L:ext-powers-irredicible}
Let $\R\in \{\kk, \K\}$. The $U_\R(\om)$-module $\Lambda_\R^k$ is self-dual and irreducible for $k=0, \dots, m$, if $\Lambda_\R^i\cong \Lambda_\R^j$, then $i=j$, we have the following isomorphisms of $U_{\R}(\om)$-modules\footnote{In the notation of Proposition \ref{P:o-list-of-simples}. Writing $\varpi_0=0$.}. 

If $m=2n+1$, then
\[
\Lambda_{\R}^i\cong L_{\R}(\varpi_i, (-1)^{i}), \quad \Lambda_{\R}^{m-i}\cong L_{\R}(\varpi_i, -(-1)^i), \quad \text{for $i=0, 1, \dots, n-1$},
\]
\[ 
\Lambda_\R^{n}\cong L_\R(2\varpi_n, (-1)^n), \quad \text{and} \quad  \Lambda_{\R}^{n+1}\cong L_{\R}(2\varpi_{n}, -(-1)^{n}).
\]

If $m=2n$, then
\[
\Lambda_{\R}^i\cong L_{\R}(\varpi_i, +1), \quad \Lambda_{\R}^{m-i} \cong L_{\R}(\varpi_i, -1), \quad \text{for $i=0,1, \dots, n-2$}, 
\]
\[
\Lambda_{\R}^{n-1}\cong L_{\R}(\varpi_{n-1}+ \varpi_n, +1), \quad \Lambda_{\R}^{n+1}\cong L_{\R}(\varpi_{n-1}+ \varpi_n, -1), \quad \text{and}
\]
\[
\Lambda_\R^n\cong U_{\R}(\om)\otimes_{U_{\R}(\som)}L_\R(2\varpi_{n-1}).
\]
\end{proposition}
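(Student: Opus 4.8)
The plan is to reduce everything to statements about $U_{\R}(\som)$-modules together with the action of $\sigma$, using the structure theory of type-$\one$ $U_{\R}(\om)$-modules developed earlier in this section. First I would observe that self-duality is immediate: by Lemma~\ref{L:samecharacter} each $\Lambda_{\R}^k$ restricts to a direct sum of irreducible $U_{\R}(\som)$-modules, each of which is self-dual, and an induced module from $U_{\R}(\som)$ to $U_{\R}(\om)$ of a self-dual module is self-dual (Lemma~\ref{L:selfduality-of-O-reps}), so $\Lambda_{\R}^k$ is self-dual. For irreducibility, I would compute the $\sigma$-action on a highest weight vector. Concretely, $\Lambda_{\R}^k$ is spanned by the $v_S$ with $|S|=k$, and a highest weight vector for the $U_{\R}(\som)$-constituent $L_{\R}(\varpi_k)$ (resp.\ $L_{\R}(\varpi_{n-1}+\varpi_n)$, etc.) is $v_1\cdots v_k = a_1\cdots a_k$; using Remark~\ref{rem:sigmaact}, $\sigma$ acts on $a_1\cdots a_k$ by $(-1)^{k}$ when $m$ is odd and by $+1$ when $m$ is even (for $k\le n-2$), since $\sigma a_i = -a_i$ in the odd case and $\sigma a_i = a_i$ in the even case for $i<n$. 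This pins down the sign label in Proposition~\ref{P:o-list-of-simples}.

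The key case-splitting follows the statement of Lemma~\ref{L:samecharacter}. When $\Lambda_{\R}^k$ restricts to a single irreducible $L_{\R}(\wta)$ with $\sigma(\wta)=\wta$ (i.e.\ $m=2n+1$ for all $k$; $m=2n$ and $k\le n-2$ or $k\ge n+2$; and $m=2n$, $k=n$ only partly), the induced module $U_{\R}(\om)\otimes_{U_{\R}(\som)}L_{\R}(\wta)$ splits as $L_{\R}(\wta,+1)\oplus L_{\R}(\wta,-1)$, and $\Lambda_{\R}^k$, being a summand realizing exactly one $\sigma$-eigenvalue on the highest weight vector, must be one of these two summands; hence it is irreducible, and the $\sigma$-eigenvalue computed above identifies which one. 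The case $m=2n$, $k=n$ is different: here $\Lambda_{\R}^n$ restricts to $L_{\R}(2\varpi_{n-1})\oplus L_{\R}(2\varpi_n)$, which $\sigma$ swaps (since $\sigma$ exchanges $\varpi_{n-1}$ and $\varpi_n$), so $\Lambda_{\R}^n\cong U_{\R}(\om)\otimes_{U_{\R}(\som)}L_{\R}(2\varpi_{n-1})$, which is irreducible by the analysis preceding Proposition~\ref{P:o-list-of-simples}. Similarly $\Lambda_{\R}^m\cong L_{\R}(0)$ restricts to the trivial $U_{\R}(\som)$-module with $\sigma$ acting as $\det_{\R}$, i.e.\ by $(-1)^m$ on the top generator $v_1\cdots v_m$, consistent with the prescribed labels.

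To see that the $\Lambda_{\R}^k$ are pairwise non-isomorphic for $k=0,\dots,m$, I would invoke Proposition~\ref{P:o-character-determines-module}: two of them are isomorphic iff they have the same formal character in $\chi(O(m))$, and by Lemma~\ref{L:o-linearindependence} this reduces to comparing $(\wta,\epsilon)$-labels. For $k\ne m-k$ the underlying $U_{\R}(\som)$-characters already differ (different fundamental weights, by Lemma~\ref{L:samecharacter}); for the pair $k$ and $m-k$ the $U_{\R}(\som)$-characters agree but the $\sigma$-eigenvalue on the highest weight vector differs (the two differ by the sign $-1$, exactly because $\Lambda_{\R}^{m-i}\cong \Lambda_{\R}^i\otimes\det_{\R}$, which I would verify via the multiplication map $\Lambda_{\R}^i\otimes\Lambda_{\R}^{m-i}\to\Lambda_{\R}^m\cong\det_{\R}$ being a perfect pairing), so the $O(m)$-characters differ. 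The main obstacle I anticipate is the bookkeeping in the $m=2n$, $k=n-1,n,n+1$ range, where the weight $\varpi_{n-1}+\varpi_n$ is $\sigma$-fixed but $2\varpi_{n-1}$ is not; one must carefully match which exterior power maps to which of $L_{\R}(\varpi_{n-1}+\varpi_n,\pm1)$ versus the irreducible induced module, and confirm the sign conventions are consistent with $\Lambda_{\R}^{m-i}\cong\Lambda_{\R}^i\otimes\det_{\R}$ and with $V_{\R}\cong L_{\R}(\varpi_1,(-1)^{m+1})$ as fixed in the previous subsection. All of this is a matter of computing $\sigma$ on explicit highest weight vectors via Remark~\ref{rem:sigmaact} and comparing characters; no new ideas are needed beyond what is already assembled.
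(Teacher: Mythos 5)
Your approach matches the paper's: cite Lemma~\ref{L:selfduality-of-O-reps} for self-duality, reduce the rest to computing the $\sigma$-eigenvalue on the highest weight vector $v_1\cdots v_k$, and then invoke Proposition~\ref{P:o-character-determines-module} together with Lemma~\ref{L:samecharacter} and Lemma~\ref{L:o-linearindependence} to identify the module and rule out coincidences. The supporting observations (the $\det_\R$-twist, the split of $\Lambda_\R^n$ as an induced module when $m=2n$, the sign $(-1)^k$ on $a_1\cdots a_k$ when $m$ is odd) are all correct and are exactly what the paper uses.

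There is, however, a sign error in your treatment of the top wedge. You assert that $\sigma$ acts on $v_1\cdots v_m$ by $(-1)^m$, which you read off as ``each factor contributes $-1$.'' That is only right for $m$ odd. For $m=2n$ even, Remark~\ref{rem:sigmaact} says $\sigma$ fixes $a_i,b_i$ for $i<n$ and swaps $a_n\leftrightarrow b_n$, so naively $\sigma(v_1\cdots v_m) = a_1\cdots a_{n-1}\,b_n\,a_n\,b_{n-1}\cdots b_1$; the $-1$ arises only after reordering $b_na_n$ to $-a_nb_n$ using Lemma~\ref{L:bavsabreln} (the lower-order correction terms vanish because they introduce repeated indices). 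The correct eigenvalue is $-1$ for all $m$, as required by $\Lambda_\R^m\cong L_\R(0,-1)\cong \det_\R$; your formula $(-1)^m$ gives $+1$ when $m$ is even, contradicting the label you yourself cite. This is precisely the non-obvious calculation the paper singles out in the proof, so you should redo the even case along these lines rather than treat it as a product of signs on basis vectors.
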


\begin{proof}
Self duality is from Lemma \ref{L:selfduality-of-O-reps}. Thanks to Proposition \ref{P:o-character-determines-module}, and Lemma \ref{L:samecharacter}, the remaining claims follow once we show that $\sigma$ acts on $v_1v_1\dots v_i$ by the prescribed eigenvalue in the statement of the Proposition. We will argue this for $v_1v_2\dots v_m$, where $\sigma$ always acts by $-1$, leaving the other cases to the reader. Using the coproduct from Definition \ref{D:Uo}, we find $\sigma(v_1\dots v_m) = \sigma(v_1)\dots \sigma(v_m)$. From Remark \ref{rem:sigmaact} we see that for $m=2n+1$, 
\[
\sigma(v_1v_2\dots v_m) = (-v_1)(-v_2)\dots (-v_m) = -v_1v_2\dots v_m.
\]
For $m=2n$,
\[
\sigma(v_1v_2\dots v_m) = v_1\dots v_{n-1}v_{n+1}v_{n}v_{n+2}\dots v_{2n} =-v_1v_2\dots v_m,
\]
where the last equality follows from Lemma \ref{L:bavsabreln} and the defining relations of $\Lambda_\R^{\bullet}$.
\end{proof}

\begin{remark}
Let $\R\in \{\kk, \K\}$. There is an isomorphism of $U_\R(\om)$-modules $\Lambda_\R^m\cong \det_\R$, and isomorphisms of $U_\R(\om)$-modules $\Lambda_\R^i\cong \det_\R\otimes\Lambda_\R^{m-i}$. 
\end{remark}

\begin{remark}
Since $\A$ is not a field, it does not make sense to ask for $\Lambda_{\A}^i$ to be irreducible. However, we will show, in Lemma \ref{L:psi-an-iso}, that $\Lambda_{\A}^i$ is a self-dual $U_{\A}(\om)$-module, essentially by proving that there is an isomorphism $\Lambda_{\K}^i\rightarrow (\Lambda_{\K}^i)^*$ which preserves $\Lambda_{\A}^i$ and $(\Lambda_{\A}^i)^*$.
\end{remark}

\begin{lemma}\label{L:pieri}
Let $\R\in \{\kk, \K\}$, then we have the following tensor product decompositions.
\[
\Lambda_{\R}^k\otimes V_{\R} \cong \begin{cases}
L_\R(\varpi_1 + \varpi_k, +1)\oplus \Lambda_\R^{k+1}\oplus \Lambda_\R^{k-1}, \quad \text{if} \quad k\le m-1, \quad \text{and}\\
\Lambda_\R^{m-1}, \quad \text{if} \quad k=m.
\end{cases}
\]
\end{lemma}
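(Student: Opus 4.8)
The plan is to prove the tensor product decomposition $\Lambda_{\R}^k\otimes V_{\R}\cong L_\R(\varpi_1+\varpi_k,+1)\oplus \Lambda_\R^{k+1}\oplus\Lambda_\R^{k-1}$ by the usual reduction to characters, using the classification results and the complete reducibility already established. First I would observe that since $\Lambda_\R^k\cong L_\R(\varpi_k,\epsilon_k)$ for an explicit sign $\epsilon_k$ (Proposition~\ref{L:ext-powers-irredicible}) and $V_\R\cong L_\R(\varpi_1,\epsilon_1)$ (the discussion after Lemma~\ref{lem:sigmaaction}), the tensor product is a type-$\textbf{1}$ $U_\R(\om)$-module, so by Lemma~\ref{L:o-completely-reducible} it is completely reducible, and by Proposition~\ref{P:o-character-determines-module} it is determined by its $O(m)$-character. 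Hence it suffices to compute $\mathrm{ch}(\Lambda_\R^k\otimes V_\R) = \mathrm{ch}(\Lambda_\R^k)\cdot \mathrm{ch}(V_\R)$, keeping track of the $\sigma$-eigenvalue grading.

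Next I would reduce the underlying $U_\R(\som)$-character computation to the classical case $\R=\kk$. By Lemma~\ref{L:equality-of-type1-chars} (or rather its proof, which shows $\Lambda^k_\R$ has a weight basis $\{v_S\}_{|S|=k}$ independent of $\R$), the $\som$-character of $\Lambda^k_\R\otimes V_\R$ is the same for $\R=\kk$ and $\R=\K$, so I may appeal to the classical Pieri-type rule: for $SO(m)$, $\Lambda^k\otimes V$ decomposes as $L(\varpi_1+\varpi_k)\oplus \Lambda^{k+1}\oplus\Lambda^{k-1}$ (for $k\le m-1$), as recorded in \cite[Sections 19.2, 19.4]{Fultonharris}; when $k=m$ only $\Lambda^{m-1}$ survives because $\Lambda^{m+1}=0$ and $L(\varpi_1+\varpi_m)=0$. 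Here one uses the convention from the remark that strands (equivalently, exterior powers) labelled $k<0$ or $k>m$ are zero, and that $\varpi_0=0$ so $\Lambda^0_\R\cong L_\R(0)$.

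It then remains to pin down the $\sigma$-action, i.e. to upgrade the $SO(m)$ decomposition to an $O(m)$ decomposition with the correct signs. The summands $\Lambda_\R^{k+1}$ and $\Lambda_\R^{k-1}$ already carry a well-defined $U_\R(\om)$-structure, so the only genuine point is to identify the third summand as $L_\R(\varpi_1+\varpi_k,+1)$ rather than $L_\R(\varpi_1+\varpi_k,-1)$, and to check that $\Lambda^{k\pm1}_\R$ occur with their standard $\sigma$-signs (consistent with Proposition~\ref{L:ext-powers-irredicible}). I would do this by exhibiting an explicit highest weight vector: in $\Lambda^k_\R\otimes V_\R$ the vector $v_{\{1,\dots,k\}}\otimes v_{k+1}$ (suitably interpreted via the coproduct) is a weight vector of weight $\varpi_1+\varpi_k$ annihilated by all $E_\alpha$, and I would compute $\sigma$ acting on it using the explicit formulas in Remark~\ref{rem:sigmaact} and the coproduct $\Delta(\sigma)=\sigma\otimes\sigma$ from Definition~\ref{D:Uo}. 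Since $\sigma$ acts on $v_{\{1,\dots,k+1\}}$ by the sign $\epsilon_{k+1}$ determined in Proposition~\ref{L:ext-powers-irredicible} and multiplicatively, one reads off that the $L_\R(\varpi_1+\varpi_k)$-isotypic highest weight line has $\sigma$-eigenvalue $+1$; I expect this sign bookkeeping — making sure the conventions in Remark~\ref{rem:sigmaact}, Proposition~\ref{L:ext-powers-irredicible}, and the coproduct all line up — to be the only real obstacle, the rest being the standard character argument plus citation of the classical Pieri rule. Finally, the $\R=\A$ statement, though not asserted here, would follow from Lemma~\ref{L:formalcharimpliesdecomp}-type reasoning; for the stated $\R\in\{\kk,\K\}$ case the proof is complete once the signs are verified.
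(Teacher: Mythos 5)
Your overall route — complete reducibility plus character comparison over $\A$, reduce to the classical $SO(m)$ Pieri rule, then pin down the $\sigma$-eigenvalues — is a reasonable reconstruction; the paper itself simply cites \cite{KoikeTerada} and \cite[Equation 6.1]{TubaWenzl} with the remark ``standard result (up to tensoring with $\det_{\R}$),'' so you are doing genuinely more work than the paper does, and the skeleton of your argument is sound.

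However, your explicit highest weight vector is wrong, and the sign check you sketch would not actually return $+1$. The vector $v_{\{1,\dots,k\}}\otimes v_{k+1}$ has weight $\epsilon_1+\dots+\epsilon_{k+1}$, which is the highest weight of the $\Lambda^{k+1}_\R$ summand, \emph{not} $\varpi_1+\varpi_k=2\epsilon_1+\epsilon_2+\dots+\epsilon_k$. The highest weight vector for the third summand is $v_{\{1,\dots,k\}}\otimes v_1$. Now apply $\Delta(\sigma)=\sigma\otimes\sigma$ together with Lemma~\ref{lem:sigmaaction} and Proposition~\ref{L:ext-powers-irredicible}: for $m=2n+1$, $\sigma$ acts on $v_{\{1,\dots,k\}}$ by $(-1)^k$ and on $v_1$ by $-1$, so $\sigma$ acts on $v_{\{1,\dots,k\}}\otimes v_1$ by $(-1)^{k+1}$ — a parity-dependent sign, not a uniform $+1$. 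So the naive calculation does not confirm the $+1$ as stated; either the paper's ``$+1$'' is shorthand it does not rely on (indeed in the only place Lemma~\ref{L:pieri} is used, namely the proof of Proposition~\ref{P:gray-IH}, only the multiplicity pattern ``three pairwise non-isomorphic irreducibles'' matters, not the particular eigenvalue), or a more careful normalization of $L_\R(\wta,\pm 1)$ is needed, and you should track down what the cited references actually assert before concluding ``one reads off $+1$.'' One smaller point: for $k=m$ the reason only $\Lambda_\R^{m-1}$ appears is not that ``$L(\varpi_1+\varpi_m)=0$'' (note $\varpi_1+\varpi_m$ interpreted as the highest weight of $\Lambda^m$ plus $\varpi_1$ is just $\varpi_1$, and $L(\varpi_1)\neq 0$); rather, $\Lambda^m_\R\cong\det_\R$ is one-dimensional, so $\Lambda^m_\R\otimes V_\R\cong\det_\R\otimes V_\R$ is irreducible of dimension $m$.
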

\begin{proof}
Standard result (up to tensoring with $\det_{\R}$). See \cite{KoikeTerada} and \cite[Equation 6.1]{TubaWenzl}.
\end{proof}

\begin{definition}\label{D:def-of-fund}
Let $\R\in \{\K, \A, \kk\}$. Define the monoidal category $\Fund$ to be the full monoidal subcategory of $U_\R(\om)$-modules generated by $\Lambda_\R^k$ for $k=0, \ldots, m$. Define $\StdRep$ as the full monoidal subcategory of $U_\R(\om)$-modules generated by $\Lambda_\R^1=V_\R$. 
\end{definition}

Let $\wtk = (\wtk_1, \ldots, \wtk_s)$, such that $0\le \wtk_i\le m$, for $i= 1, \ldots, s$. We write $\Lambda^{\wtk}_\R:=\Lambda^{\wtk_1}_\R\otimes \cdots \otimes\Lambda_\R^{\wtk_s}$. Objects in $\Fund$ are all of the form $\Lambda_\R^{\wtk}$ for some $\wtk$.

\begin{proposition}\label{P:dimhomrep}
Let $\wtk=(\wtk_1, \dots, \wtk_s)$ and $\wtl = (\wtl_1, \dots, \wtl_t)$, such that $0\le \wtk_i \le m$, for $i=1, \dots, s$, and $0\le \wtl_j\le m$, for $j=1, \dots, t$. Then 
\[
\dim_{\kk} \Hom_{U_{\kk}(\om)}(\Lambda_{\kk}^{\wtk}, \Lambda_{\kk}^{\wtl}) = \dim_{\K} \Hom_{U_{\K}(\om)}(\Lambda_{\K}^{\wtk}, \Lambda_{\K}^{\wtl}).
\]
\end{proposition}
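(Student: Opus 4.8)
The plan is to deduce this immediately from Lemma \ref{L:dim1}, by realizing the tensor products $\Lambda_{\R}^{\wtk}$ and $\Lambda_{\R}^{\wtl}$ as base changes to $\R$ of type-$\textbf{1}$ $U_{\A}(\om)$-modules.

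First I would set $V:=\Lambda_{\A}^{\wtk}=\Lambda_{\A}^{\wtk_1}\otimes_{\A}\cdots\otimes_{\A}\Lambda_{\A}^{\wtk_s}$ and $W:=\Lambda_{\A}^{\wtl}$, each given the $U_{\A}(\om)$-module structure obtained by iterating the coproduct of $U_{\A}(\om)$. I need to check these meet the hypotheses of Definition \ref{D:orthogonal-typeone}. By Theorem \ref{T:basis-for-LambdaA} each $\Lambda_{\A}^k$ is a finitely generated free $\A$-module with weight basis $\{v_S\}_{|S|=k}$ (Remark \ref{R:Lambda-is-typeone}), so $V$ and $W$ are again finitely generated free $\A$-modules, and the tensor products of the $v_S$'s form a weight basis; in particular $K_{\alpha}$ acts diagonalizably over $\A$ with eigenvalues positive powers of $q$, and all weights lie in $\oplus_{i=1}^n\mathbb{Z}\epsilon_i$. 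Hence $V$ and $W$ are type-$\textbf{1}$ $U_{\A}(\om)$-modules.

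Next I would record the base-change identifications. For $\R\in\{\kk,\K\}$, Remark \ref{R:Lambda-over-R} gives an isomorphism of $U_{\R}(\om)$-modules $\R\otimes_{\A}\Lambda_{\A}^k\cong\Lambda_{\R}^k$. Since $\R\otimes_{\A}U_{\A}(\om)\cong U_{\R}(\om)$ (Lemma \ref{L:k-tensor-U-over-A}) compatibly with coproducts, base change along $\A\to\R$ is monoidal, i.e. $\R\otimes_{\A}(M\otimes_{\A}N)\cong(\R\otimes_{\A}M)\otimes_{\R}(\R\otimes_{\A}N)$ as $U_{\R}(\om)$-modules. Applying this repeatedly yields $\R\otimes_{\A}V\cong\Lambda_{\R}^{\wtk}$ and $\R\otimes_{\A}W\cong\Lambda_{\R}^{\wtl}$.

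Finally I would invoke Lemma \ref{L:dim1} with these $V,W$, which gives
\[
\dim_{\kk}\Hom_{U_{\kk}(\om)}(\Lambda_{\kk}^{\wtk},\Lambda_{\kk}^{\wtl})
=\dim_{\kk}\Hom_{U_{\kk}(\om)}(\kk\otimes V,\kk\otimes W)
=\dim_{\K}\Hom_{U_{\K}(\om)}(\K\otimes V,\K\otimes W)
=\dim_{\K}\Hom_{U_{\K}(\om)}(\Lambda_{\K}^{\wtk},\Lambda_{\K}^{\wtl}).
\]
I do not expect a genuine obstacle here; the only points needing a little care are verifying that the tensor product $\Lambda_{\A}^{\wtk}$ is type-$\textbf{1}$ over $U_{\A}(\om)$ and that base change commutes with tensor products of $U_{\A}(\om)$-modules, both of which are routine given the explicit weight basis and Lemma \ref{L:k-tensor-U-over-A}.
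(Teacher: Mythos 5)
Your proof is correct and follows essentially the same route as the paper: both observe that $\Lambda_{\A}^{\wtk}$ and $\Lambda_{\A}^{\wtl}$ are type-$\textbf{1}$ $U_{\A}(\om)$-modules (using Remark \ref{R:Lambda-is-typeone} and closure of type-$\textbf{1}$ modules under tensor product) and then apply Lemma \ref{L:dim1}. You spell out the base-change compatibility $\R\otimes_{\A}\Lambda_{\A}^{\wtk}\cong\Lambda_{\R}^{\wtk}$ a bit more carefully than the paper does, but this is the same argument.
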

\begin{proof}
The $U_{\R}(\om)$-module $\Lambda_{\A}^k$ is type-$\one$, for $k=0, \dots, m$, see Definition \ref{D:orthogonal-typeone} and Remark \ref{R:Lambda-is-typeone}. Tensor products of type-$\one$ modules are type $\one$, so $\Lambda_{\A}^{\wtk}$ is a type-$\textbf{1}$ $U_{\A}(\om)$-module for all $\wtk$. The result then follows from Lemma \ref{L:dim1}. 
\end{proof}

\begin{lemma}\label{L:FundA-homs-free-fin-gen}
Homomorphism spaces in $\FundA$ are free and finitely generated $\A$-modules. 
\end{lemma}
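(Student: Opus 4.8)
The plan is to reduce the statement about $\FundA$ to the already-established finite generation result for $\StdWebA$ (Proposition \ref{P:StdWeb-finitely-generated}), together with the classical fact that every $\Lambda^k_\A$ is a direct summand of a tensor power of $V_\A$. First I would recall that, by Lemma \ref{L:pieri} and induction, each $\Lambda^k_\R$ appears as a direct summand of $V_\R^{\otimes k}$ for $\R\in\{\kk,\K\}$; the same decomposition should be promoted to the $\A$-level using the base-change map $\smbase_\A$ and the fact that the relevant idempotents can be chosen over $\A$ (this is where I would cite Lemma \ref{L:formalcharimpliesdecomp}-type arguments, or simply exhibit the splitting maps explicitly from the multiplication $V_\A^{\otimes k}\to\Lambda^k_\A$ and a chosen section). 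Consequently, for any multi-indices $\wtk,\wtl$, the space $\Hom_{U_\A(\om)}(\Lambda^{\wtk}_\A,\Lambda^{\wtl}_\A)$ is a direct summand, as an $\A$-module, of $\Hom_{U_\A(\om)}(V_\A^{\otimes a},V_\A^{\otimes b})=\Hom_{\StdRepA}(V_\A^{\otimes a},V_\A^{\otimes b})$ for suitable $a,b$.

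Next I would argue that this latter Hom space is finitely generated over $\A$. Here the route is: the BMW category $\BMWA$ has finitely generated Hom spaces by Lemma \ref{L:BMWfingen}; the functor $\eta_\A:\BMWA\to\StdWebA$ is full by Proposition \ref{P:functor-bmw-to-web}; hence Hom spaces in $\StdWebA$ are finitely generated (Proposition \ref{P:StdWeb-finitely-generated}); and finally the functor $\Phi_\A$ (or its restriction to standard objects, to be constructed in Section 4) surjects $\StdWebA$ onto $\StdRepA$, so $\Hom_{\StdRepA}(V_\A^{\otimes a},V_\A^{\otimes b})$ is a quotient of a finitely generated $\A$-module, hence finitely generated. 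Since $\A$ is Noetherian, a direct summand (indeed any submodule) of a finitely generated $\A$-module is finitely generated, which gives finite generation of $\Hom_{\FundA}(\Lambda^{\wtk}_\A,\Lambda^{\wtl}_\A)$.

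For the \emph{freeness} assertion I would use that $\A$ is a PID (in fact a DVR): a finitely generated module over a PID is free if and only if it is torsion-free. Torsion-freeness follows because $\Hom_{U_\A(\om)}(\Lambda^{\wtk}_\A,\Lambda^{\wtl}_\A)$ sits inside $\Hom_\A(\Lambda^{\wtk}_\A,\Lambda^{\wtl}_\A)$, which is free (both $\Lambda$'s being free $\A$-modules by Theorem \ref{T:basis-for-LambdaA}), and a submodule of a free module over a domain is torsion-free. Alternatively, realizing it as a direct summand of the free $\A$-module $\Hom_\A(V_\A^{\otimes a},V_\A^{\otimes b})$ gives freeness directly, bypassing the PID structure theorem. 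The main obstacle I anticipate is the promotion of the summand decomposition $\Lambda^k_\R\mid V_\R^{\otimes k}$ from the fields $\kk,\K$ to the ring $\A$: one must produce an honest $U_\A(\om)$-equivariant idempotent on $V_\A^{\otimes k}$ projecting onto $\Lambda^k_\A$, or equivalently explicit $\A$-linear equivariant maps $V_\A^{\otimes k}\rightleftarrows\Lambda^k_\A$ composing to the identity on $\Lambda^k_\A$ — the multiplication map is the obvious candidate for one direction, and a normalized antisymmetrization (with coefficients that must be checked to lie in $\A^\times$) for the other. Everything else is a routine assembly of results already in the excerpt.
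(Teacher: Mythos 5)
Your proposal is far more elaborate than the paper's proof, and it contains a genuine logical problem. The paper's entire argument is this: by Theorem~\ref{T:basis-for-LambdaA}, $\Lambda_{\A}^{\wtk}$ and $\Lambda_{\A}^{\wtl}$ are free and finitely generated, so $\Hom_{\A}(\Lambda_{\A}^{\wtk}, \Lambda_{\A}^{\wtl})$ is free and finitely generated; since $\A$ is a PID (hence Noetherian), the $\A$-submodule $\Hom_{U_{\A}(\om)}(\Lambda_{\A}^{\wtk}, \Lambda_{\A}^{\wtl})\subset \Hom_{\A}(\Lambda_{\A}^{\wtk}, \Lambda_{\A}^{\wtl})$ is itself free and finitely generated. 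That is the whole proof. You actually write this inclusion down, but only to deduce torsion-freeness; you do not notice that the same inclusion already gives finite generation, because submodules of finitely generated modules over a Noetherian ring are finitely generated.

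The separate finite-generation argument you propose instead is circular in the logical structure of the paper. You want to cite that $\Phi_{\A}$ (restricted to standard objects) is full, i.e.\ surjects $\StdWebA$ onto $\StdRepA$, so that $\Hom_{\StdRepA}$ is a quotient of the finitely generated $\Hom_{\StdWebA}$. But fullness of $\Phi_{\R}|_{\StdWeb}$ is only proven for $\R\in\{\kk,\K\}$ (Theorem~\ref{T:full}), never for $\A$; and the proof of faithfulness of $\Phi_{\K}$ (Theorem~\ref{T:PhiK-std-is-faithful}) already cites Lemma~\ref{L:FundA-homs-free-fin-gen} as an input to the base-change argument. So invoking properties of $\Phi_{\A}$ or $\Phi_{\K}$ here would make the argument circular. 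Your first paragraph (lifting the direct-summand decomposition $\Lambda^k_\A \mid V_\A^{\otimes k}$ to $\A$ via idempotents or explicit splittings) is likewise unnecessary and introduces real technical overhead that the paper deliberately avoids. The takeaway: once you observe the inclusion into $\Hom_{\A}(\Lambda_{\A}^{\wtk}, \Lambda_{\A}^{\wtl})$, the PID structure theory gives both claims immediately, and none of the machinery from Sections 4--5 is needed or available at this point in the paper.
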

\begin{proof}
Since $\Lambda_{\A}^{\wtk}$ and $\Lambda_{\A}^{\wtl}$ are both free and finitely generated $\A$-modules, see Theorem \ref{T:basis-for-LambdaA}, the $\A$-module $\Hom_{\A}(\Lambda_{\A}^{\wtk}, \Lambda_{\A}^{\wtl})$ is free and finitely generated over $\A$. Since $\A$ is a PID, the claim follows from observing that $\Hom_{U_{\A}(\om)}(\Lambda_{\A}^{\wtk}, \Lambda_{\A}^{\wtl})\subset \Hom_{\A}(\Lambda_{\A}^{\wtk}, \Lambda_{\A}^{\wtl})$.
\end{proof}

We also define an auxiliary $\R$-linear monoidal category $\R\otimes \FundA$, which has the same objects as $\FundA$, but with morphisms
\[
\Hom_{\R\otimes \FundA}(\Lambda_{\A}^{\wtk}, \Lambda_{\A}^{\wtl}):=\R\otimes \Hom_{\FundA}(\Lambda_{\A}^{\wtk}, \Lambda_{\A}^{\wtl}).
\]

We have identifications $\iota_{\wtk}:=\iota_{\wtk_1}\otimes \dots \otimes \iota_{\wtk_s}:\Lambda_{\R}^{\wtk} \rightarrow \R\otimes \Lambda_{\A}^{\wtk}$, see Remark \ref{R:Lambda-over-R}. Using $\smbase_{\R}$ from Lemma \ref{L:rephombasechange}, we define a monoidal functor
\[
\base_{\R}:\R\otimes \FundA\rightarrow \Fund,
\]
on objects as $\Lambda_{\A}^{\wtk}\mapsto \Lambda_{\R}^{\wtk}$, and on morphisms by sending $r\otimes f\in \Hom_{\R\otimes \FundA}(\Lambda_{\A}^{\wtk}, \Lambda_{\A}^{\wtl})$ to 
\[
\iota_{\wtl}^{-1}\circ \smbase_{\R}(r\otimes f)\circ \iota_{\wtk} \in \Hom_{\Fund}(\Lambda_{\R}^{\wtk}, \Lambda_{\R}^{\wtl}).
\]

\begin{remark}
The notation makes $\base_{\R}$ appear more complicated than it is. Let $\R\in \{\kk, \A, \K\}$. The $\{v_S\}$ basis for $\Lambda_{\R}^{\bullet}$ gives rise to a basis for $\Lambda_{\R}^{\wtk}$, for all $\wtk$, and therefore a basis for $\Hom_{\R}(\Lambda_{\R}^{\wtk}, \Lambda_{\R}^{\wtl})$ for all $\wtk, \wtl$. For $f\in \Hom_{\FundA}(\Lambda_{\A}^{\wtk}, \Lambda_{\A}^{\wtl})$, we can use this basis to view $f$ as a matrix with entries in $\A$. Then for $\R\in \{\kk, \K\}$, $\base_{\R}(1\otimes f)$ is the same matrix, but with the entries interpreted as elements of $\R$. 
\end{remark}

One of our main goals is to derive various relations among morphisms in $\FundA$. However, it will be easier to work in $\FundK$, so the following lemma is useful. 

\begin{lemma}\label{L:sufficestocheckoverK}
The functor $\base_{\K}$ is faithful.
\end{lemma}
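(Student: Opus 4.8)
The goal is to show that $\base_{\K}\colon \K\otimes \FundA\to \FundK$ is faithful, i.e. that for type-$\one$ $U_{\A}(\om)$-modules $V,W$ the $\K$-linear map
\[
\smbase_{\K}\colon \K\otimes\Hom_{U_{\A}(\om)}(V,W)\longrightarrow \Hom_{U_{\K}(\om)}(\K\otimes V,\K\otimes W)
\]
is injective. The plan is to exploit that $\A$ is a principal ideal domain with fraction field $\K$, so that $\K$ is a flat $\A$-module (Remark \ref{R:K-is-flat-A-mod}).

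First I would set $H_{\A}:=\Hom_{U_{\A}(\om)}(V,W)$ and note that, since $V$ and $W$ are finitely generated free $\A$-modules, $\Hom_{\A}(V,W)$ is finitely generated free over $\A$; as $H_{\A}$ is an $\A$-submodule of a finitely generated module over the PID $\A$, it is itself finitely generated and free, and the inclusion $H_{\A}\hookrightarrow \Hom_{\A}(V,W)$ is a map of free $\A$-modules which is split over $\K$ (indeed it is split over $\A$, being a pure submodule, but we only need the weaker statement). Tensoring the inclusion $H_{\A}\hookrightarrow\Hom_{\A}(V,W)$ with the flat module $\K$ keeps it injective, so $\K\otimes H_{\A}\hookrightarrow \K\otimes\Hom_{\A}(V,W)$. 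Next, because $V$ and $W$ are finitely generated free, there is a canonical isomorphism $\K\otimes\Hom_{\A}(V,W)\xrightarrow{\ \cong\ }\Hom_{\K}(\K\otimes V,\K\otimes W)$, and under this isomorphism the composite $\K\otimes H_{\A}\to \Hom_{\K}(\K\otimes V,\K\otimes W)$ is precisely the $\K$-linear extension map $f\mapsto \id_{\K}\otimes f$.

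It then remains to identify the image of this composite inside $\Hom_{U_{\K}(\om)}(\K\otimes V,\K\otimes W)$. By Lemma \ref{L:rephombasechange} the extension map indeed lands in $\Hom_{U_{\K}(\om)}(\K\otimes V,\K\otimes W)\subseteq \Hom_{\K}(\K\otimes V,\K\otimes W)$, and $\smbase_{\K}$ is by definition this corestricted map. Since we have already shown the map to $\Hom_{\K}(\K\otimes V,\K\otimes W)$ is injective, and $\smbase_{\K}$ is obtained from it by restricting the codomain to a subspace containing the image, $\smbase_{\K}$ is injective as well. Applying this with $V=\Lambda_{\A}^{\wtk}$ and $W=\Lambda_{\A}^{\wtl}$ (which are type-$\one$ $U_{\A}(\om)$-modules and finitely generated free over $\A$ by Theorem \ref{T:basis-for-LambdaA} and Lemma \ref{L:FundA-homs-free-fin-gen}) shows $\base_{\K}$ is faithful on each $\Hom$-space, hence faithful.

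The only genuinely delicate point is bookkeeping: one must make sure the canonical identification $\K\otimes\Hom_{\A}(V,W)\cong\Hom_{\K}(\K\otimes V,\K\otimes W)$ is compatible with the identifications $\iota_{\wtk}$ of Remark \ref{R:Lambda-over-R} used in the definition of $\base_{\K}$, so that the abstract injectivity statement really is the faithfulness of the stated functor; this is routine but should be spelled out. Everything else is the standard fact that localization at a multiplicative set (here, passing from $\A$ to its fraction field $\K$) is exact.
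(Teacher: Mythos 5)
Your argument is correct and is essentially an expanded version of the paper's one-line proof, which simply says faithfulness ``follows from injectivity of $\smbase_{\K}$, see Remark \ref{R:K-is-flat-A-mod}''; both rest on the fact that $\K$ is flat over $\A$ (being its fraction field), so the inclusion $\Hom_{U_{\A}(\om)}(V,W)\hookrightarrow\Hom_{\A}(V,W)$ stays injective after applying $\K\otimes_{\A}(-)$, and this base-change map is exactly $\smbase_{\K}$ by the construction in Lemma \ref{L:rephombasechange}. The parenthetical claim that the inclusion is pure (hence split over $\A$) is true but, as you note, unnecessary.
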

\begin{proof}
Follows from injectivity of $\smbase_{\K}$, see Remark \ref{R:K-is-flat-A-mod}.
\end{proof} 

\subsection{Generating intertwiners for tensor products of exterior powers}\label{ssec:gen-intertwiners}

\def\freewebgenone
{\begin{FGtric}
\draw [scale=0.8] (0,0)--(90:1) (0,0)--(210:1) (0,0)--(330:1);
\draw (90:0.8)node[black,above,scale=0.7]{$k+1$}
      (210:0.8)node[black,below,scale=0.7]{$k$}
      (330:0.8)node[black,below,scale=0.7]{$1$}; 
\end{FGtric}
}

\def\freewebgentwo
{\begin{FGtric}
\draw [scale=0.8] (0,0)--(90:1) (0,0)--(210:1) (0,0)--(330:1);
\draw (90:0.8)node[black,above,scale=0.7]{$k+1$}
      (210:0.8)node[black,below,scale=0.7]{$1$}
      (330:0.8)node[black,below,scale=0.7]{$k$}; 
\end{FGtric}
}

\def\VertoMultiRef
{\begin{Gtric}
\draw [scale=0.8] (0,0)--(90:1) (0,0)--(210:1) (0,0)--(330:1);
\draw (90:0.8)node[black,above,scale=0.7]{$i+j$}
      (210:0.8)node[black,below,scale=0.7]{$i$}
      (330:0.8)node[black,below,scale=0.7]{$j$}; 
\end{Gtric}
}

\def\CapKRef
{\begin{FGtric}
\draw (0,0)..controls(0.5,1)and(1,1)..(1.5,0); 
\draw (0.75,0.7)node[black,above,scale=0.7]{$k$};
\end{FGtric}
}

\def\CupKRef
{\begin{FGtric}
\draw (0,0)..controls(0.5,-1)and(1,-1)..(1.5,0); 
\draw (0.75,-0.7)node[black,below,scale=0.7]{$k$};
\end{FGtric}
}

\def\GCapKRef
{\begin{Gtric}
\draw (0,0)..controls(0.5,1)and(1,1)..(1.5,0); 
\draw (0.75,0.7)node[black,above,scale=0.7]{$k$};
\end{Gtric}
}

\def\GCupKRef
{\begin{Gtric}
\draw (0,0)..controls(0.5,-1)and(1,-1)..(1.5,0); 
\draw (0.75,-0.7)node[black,below,scale=0.7]{$k$};
\end{Gtric}
}

So far, we have studied the objects in $\Fund$, and the dimensions of homomorphism spaces between these objects, see Proposition \ref{P:dimhomrep}. Next, we will study specific morphisms in $\Fund$, namely
\begin{equation}\label{eqn:mec-maps}
{^\R m}_{i,j}^{i+j}:\Lambda_\R^i\otimes \Lambda_\R^j\longrightarrow \Lambda_\R^{i+j}, \quad {^\R e}_k:\Lambda_\R^k\otimes \Lambda_\R^k\longrightarrow \Lambda_\R^{0}, \quad \text{and} \quad
{^\R c}_k:\Lambda_\R^{0}\longrightarrow\Lambda_\R^k\otimes \Lambda_\R^k,
\end{equation}
which will end up generating all morphisms in $\Fund$. We will introduce a graphical calculus\footnote{Precisely, we define a functor from a free web category to $\Fund$. To prevent confusion with the graphical calculus for the generators and relations category $\Web$, which is a quotient of the free web category, our convention is that the graphical calculus for the free web category is apricot, and the image of web diagrams in $\Fund$ are gray.} for morphisms in $\Fund$. In this graphical calculus the maps from equation \eqref{eqn:mec-maps} are
\begin{gather*} \VertoMultiRef, \quad \GCapKRef, \quad \text{and} \quad \GCupKRef
\end{gather*}
respectively.

After we introduce these special morphisms in $\Fund$, we then establish some basic relations they satisfy with one another, as well as their relation to the braiding for the defining vector representation.

The first map in equation \ref{eqn:mec-maps} is just a graded component of the multiplication map on the quantum exterior algebra, see Definition \ref{D:mij}. The second and third maps in equation \ref{eqn:mec-maps} are defined in a sightly more subtle way. In the case $k=1$, both are defined explicitly on a basis of $\Lambda_{\R}^1\otimes \Lambda_{\R}^1$, see Definition \ref{D:e1-and-c1} and Remark \ref{R:e1-and-c1}. The maps ${^\R e}_1$ and ${^\R c}_1$ are defined in a way which ensures that they satisfy the zig-zag relation \eqref{E:zigzag}. When $k>1$, the map ${^\R c}_k$ is defined inductively (in a way which ensures that the bigon relation holds, see equation \ref{E:grey-skein-bigon}). Finally Definition \ref{D:ek} describes ${^\R e}_k$ in a way which ensures that it will satisfy the zig-zag relation with ${^\R c}_k$, see Lemma \ref{l:eczigzag}.

\begin{remark}
We do not directly define comultiplication maps $\Lambda_\R^{i+j}\longrightarrow \Lambda_\R^i\otimes \Lambda_\R^j$. As far as we are aware, an explicit description for the comultiplication on the quantum exterior algebra for $U_q(\om)$, i.e. formulas analogous to those found in \cite[Lemma 3.1.2]{CKM}, has not appeared in the literature. Instead, in Definition \ref{D:comult} we specify the comultiplication by using the cup and cap morphisms, i.e. the second and third maps in equation \ref{eqn:mec-maps}, to rotate the multiplication.
\end{remark}

\begin{definition}\label{D:mij}
Let $\R\in \lbrace \kk, \A, \K\rbrace$ and define
\[
{^\R m}_{i,j}^{i+j}:\Lambda_\R^i\otimes \Lambda_\R^j\longrightarrow \Lambda_\R^{i+j}
\]
by $x\otimes y\mapsto xy$.
\end{definition}

\begin{remark}\label{R:m-is-non-zero}
The map ${^\R m}_{i,j}^{i+j}$ is a $U_{\R}(\om)$-linear transformation such that
\[
v_{\lbrace 1, \ldots, i\rbrace}\otimes v_{\lbrace i+1, \ldots, i+j\rbrace}\mapsto v_{\lbrace 1,\ldots,i, i+1, \ldots, {i+j}\rbrace}.
\]
In particular, when $i+j\le m$, the map is non-zero and therefore is surjective. Moreover, $\base_{\R}( {^{\A}m}_{i,j}^{i+j})= {^{\R}m}_{i,j}^{i+j}$.
\end{remark}

Let $\R\in\{\kk, \A, \K\}$ and let $X$ be a free $\R$-module with basis $B_X= \{b_1, \dots, b_d\}$. Then $X^*:=\Hom_\R(X, \R)$ has basis $\{b_1^*, \dots, b_d^*\}$, where $b_i^*(b_j)= \delta_{i,j}$. Consider the elements $C\in X\otimes X^*$ defined by
\[
C=\sum_{i=1}^d b_i\otimes b_i^*. 
\]
There are $\R$-linear maps
\[
\text{coev}:\R\rightarrow X\otimes X^*, \quad 1\mapsto C, \quad \text{and} \quad \text{ev}:X^*\otimes X\rightarrow \R,
\quad 
f\otimes x\mapsto f(x).
\]
It is easy to verify that
\begin{equation}\label{E:zigzag}
(\id_X\otimes \text{ev})\circ (\text{coev}\otimes \id_X) = \id_{X} \quad \text{and} \quad (\text{ev}\otimes \id_{X^*})\circ (\id_{X^*}\otimes \text{coev}) = \id_{X^*}.
\end{equation}

\begin{remark}
We can regard $\R$ as the trivial $U_\R(\om)$-module via the counit, denoted $\epsilon$ in Definition \ref{D:Uo}. Also, If $X$ is a $U_{\R}(\om)$-module, which is free over $\R$, then $X^*$ is as well, via the antipode, denoted $S$ in Definition \ref{D:Uo}. One easily checks that the maps $\text{ev}$ and $\text{coev}$ are $U_{\R}(\om)$-module maps, where $U_{\R}(\om)$ acts on $X\otimes X^*$ via the coproduct.
\end{remark}

\begin{definition}\label{D:e1-and-c1}
Let $\R\in \lbrace \kk, \A, \K\rbrace$. Define 
\[
{^\R\varphi}_1:V_\R\longrightarrow (V_\R)^*
\]
to be the unique $U_\R(\om)$-linear map such that 
\[
v_1\mapsto v_{m}^*.
\]
This is easily seen to be an isomorphism of $\R$-modules. Also, we have $\base_{\R}({^{\A}\varphi}_1) = {^{\R}\varphi}_1$.

We define 
\[
{^\R e}_1:V_\R\otimes V_\R\longrightarrow \R
\quad\text{by}\quad \text{ev}\circ ({^\R\varphi}_1\otimes \id),
\]
and
\[
{^\R c}_1:\R\longrightarrow V_\R\otimes V_\R
\quad \text{by} \quad (\id \otimes ({^\R\varphi}_1)^{-1})\circ \text{coev}.
\]
Note that for $\R\in \lbrace \kk, \A, \K\rbrace$, we have $\base_{\R}( {^{\A}c}_k) = {^\R c}_k$ and $\base_{\R}( {^{\A}e}_k)= {^\R e}_k$.
\end{definition}

\begin{remark}\label{R:e1-and-c1}
Let $\R\in \{\kk, \A, \K\}$. Recall that $V_\R$ is generated over $U_\R(\som)^{<0}$ by $a_1$ while $V_\R^*$ is similarly generated by $b_1^*$. It then follows from explicit calculation using: $U_\R(\om)$ equivariance of ${^\R\varphi_1}$, the formulas for the antipode in Definition \ref{defn:quantumgp}, and the description of the action on $V_\R$ in Definition \ref{D:action-on-V}, that if $m=2n+1$ is odd, then
\[
{^\R\varphi_1}(a_i) = (-q^2)^{i-1}b_i^*, \quad {^\R\varphi_1}(u) = (-q^2)^{n-1}[2]_q u^*, \quad \text{and} \quad {^\R\varphi_1}(b_i) = (-q^2)^{m-i}a_i^*,
\]
and if $m=2n$ is even, then
\[
{^\R\varphi_1}(a_i) = (-q^2)^{i-1}b_i^* \quad \text{and} \quad {^\R\varphi_1}(b_i) = (-q^2)^{m-i-1}a_i^*.
\]
\end{remark}

\begin{definition}\label{D:caps}
Let $\R\in \{\kk, \A, \K\}$. Define ${^\R{c_k}}\in \Hom_{U_\R(\om)}(\R, \Lambda_\R^k\otimes \Lambda_\R^k)$  inductively by 
\[
{^\R{c_k}}:=\frac{[2]}{[2k]}\cdot ({^\R{m_{k-1,1}^k}}\otimes {^\R{m_{1,k-1}^k}})\circ (\id_{\Lambda_\R^{k-1}}\otimes {^\R c}_1\otimes \id_{\Lambda_\R^{k-1}}) \circ {^\R{c_{k-1}}}.
\]
\end{definition}

\begin{lemma}\label{L:ckne0}
Let $\R \in \{\kk, \A, \K\}$. Then ${^{\R}c}_k \ne 0$.
\end{lemma}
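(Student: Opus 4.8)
The plan is to prove ${^\R c}_k\ne 0$ by induction on $k$, tracking the image of a single explicit basis vector through the inductive definition in Definition \ref{D:caps}. The base case $k=1$ is immediate: ${^\R c}_1 = (\id\otimes ({^\R\varphi}_1)^{-1})\circ \text{coev}$ is a composition of the nonzero map $\text{coev}$ with an isomorphism, hence nonzero. For the inductive step, assume ${^\R c}_{k-1}\ne 0$; I want to show ${^\R c}_k\ne 0$.

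First I would pin down a convenient formula for ${^\R c}_{k-1}$. Since ${^\R c}_{k-1}\in \Hom_{U_\R(\om)}(\R, \Lambda_\R^{k-1}\otimes \Lambda_\R^{k-1})$ and $\Lambda_\R^{k-1}$ is (for $\R\in\{\kk,\K\}$, by Proposition \ref{L:ext-powers-irredicible}) self-dual and irreducible, the space of such homomorphisms is one-dimensional, so ${^\R c}_{k-1}(1)$ is a nonzero multiple of a fixed copy of the trivial module inside $\Lambda_\R^{k-1}\otimes \Lambda_\R^{k-1}$. By weight considerations, this element must have a nonzero component along $v_{\{1,\dots,k-1\}}\otimes v_{\{1,\dots,k-1\}'}$ (the ``dominant tensor'' pairing the highest weight vector with its dual partner); this follows because the coevaluation-type element for an irreducible always pairs highest weight vector with lowest, and $\base_\R$ sends the $\A$-form to the $\R$-form so it suffices to check over $\K$ or even over $\kk$. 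Then apply $\id_{\Lambda_\R^{k-1}}\otimes {^\R c}_1\otimes \id_{\Lambda_\R^{k-1}}$: using the explicit formula for ${^\R c}_1$ from the remark after Definition \ref{D:e1-and-c1}, the term $v_{\{1,\dots,k-1\}}\otimes v_{\{1,\dots,k-1\}'}$ produces, among others, the term $v_{\{1,\dots,k-1\}}\otimes v_k\otimes v_{k'}\otimes v_{\{1,\dots,k-1\}'}$ (taking the $i=k$ summand of ${^\R c}_1$). Finally apply ${^\R m}_{k-1,1}^k\otimes {^\R m}_{1,k-1}^k$: by Remark \ref{R:m-is-non-zero} the multiplication map sends $v_{\{1,\dots,k-1\}}\otimes v_k\mapsto v_{\{1,\dots,k\}}$ and $v_{k'}\otimes v_{\{1,\dots,k-1\}'}\mapsto v_{\{1,\dots,k\}'}$, so this particular summand contributes $v_{\{1,\dots,k\}}\otimes v_{\{1,\dots,k\}'}$ to ${^\R c}_k(1)$ (up to the nonzero scalar $[2]/[2k]$, which is a unit in $\A$ and hence nonzero in each specialization since $1\le k\le m$).

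The main obstacle is ruling out cancellation: other summands arising from other terms in ${^\R c}_{k-1}(1)$ and from other summands $i\ne k$ of ${^\R c}_1$ could in principle also land on $v_{\{1,\dots,k\}}\otimes v_{\{1,\dots,k\}'}$ after multiplication and cancel the contribution above. To handle this cleanly I would track weights carefully: the tensor factor $v_{\{1,\dots,k\}}$ in $\Lambda_\R^k$ is the unique monomial of that weight (weight $\epsilon_1+\dots+\epsilon_k$), and similarly on the right. Working backwards, the only way the left multiplication map produces $v_{\{1,\dots,k\}}$ from something of the form $w\otimes v_i$ with $w$ a monomial of degree $k-1$ coming from the first tensor slot of ${^\R c}_{k-1}(1)$ and $v_i$ from ${^\R c}_1$, is to have $\{$indices of $w\}\sqcup\{i\}=\{1,\dots,k\}$; by the weight-grading of ${^\R c}_{k-1}(1)$ (all its terms $v_S\otimes v_T$ satisfy $\wt v_S = -\wt v_T$ and $|S|=|T|=k-1$), combined with the pairing structure of ${^\R c}_1$, one checks the coefficient of $v_{\{1,\dots,k\}}\otimes v_{\{1,\dots,k\}'}$ in ${^\R c}_k(1)$ is a nonzero scalar multiple of the coefficient of $v_{\{1,\dots,k-1\}}\otimes v_{\{1,\dots,k-1\}'}$ in ${^\R c}_{k-1}(1)$, which is nonzero by the induction hypothesis. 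Alternatively, and perhaps more robustly, one can avoid the bookkeeping entirely: compute ${^\R e}_k\circ {^\R c}_k$ or the ``bubble'' ${^\R e}_k\circ\,(\text{rotation})\circ{^\R c}_k$ and recognize it via Equation \eqref{E:thick-web-circle} (after applying the functor $\Phi_\R$, or by a direct parallel computation) as the nonzero quantum dimension $\frac{[2m-4k][m]}{[m-2k][2m]}\begin{bmatrix} m\\ k\end{bmatrix}_{q^2}$, which is nonzero in $\A$ and its specializations for $0\le k\le m$; since a composite through ${^\R c}_k$ is nonzero, ${^\R c}_k$ itself is nonzero. I expect to present the weight-tracking argument as the main line, with the quantum-dimension computation as a cross-check, and the delicate point — confirming non-cancellation of the distinguished coefficient — as the step requiring the most care.
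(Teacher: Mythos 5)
Your main line is essentially the paper's: exhibit a nonvanishing coefficient of ${^{\R}c}_k(1)$, specifically the coefficient of $v_{\{1,\dots,k\}}\otimes v_{\{1,\dots,k\}'}$. The paper streamlines this by reducing to $\R=\kk$ at the outset (via ${^{\R}c}_k = \base_{\R}({^{\A}c}_k)$ together with injectivity of $\base_{\K}$, Lemma \ref{L:sufficestocheckoverK}), so that the scalar $[2]/[2k]$ becomes $1/k$ and the exterior algebra collapses to the ordinary one. The recursion in Definition \ref{D:caps} then unrolls, after projecting by $\pi_k$, into a single sum over $w\in S_k$ in which each term contributes $(-1)^{\ell(w)}\cdot(-1)^{\ell(w)}=1$ times the fixed unit $t_1\cdots t_k$, so the non-cancellation you identify as the delicate step is immediate. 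You gesture at this reduction (``or even over $\kk$'') but do not commit to it; doing so from the start, rather than tracking weights through the $q$-deformed relations, is what makes the bookkeeping tractable, and you should make that the first move of the argument.

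A substantive problem with your proposed cross-check: the quantum-dimension computation is circular and must be dropped. Lemma \ref{L:graycircle}, which says ${^{\R}e}_k\circ {^{\R}c}_k = d_k(m)\cdot\id$, cannot be invoked here because ${^{\R}e}_k$ is defined through ${^{\R}\varphi}_k = ({^{\R}\psi}_k)^{-1}$, and Lemma \ref{L:psi-an-iso}, which establishes that $\psi_k$ is invertible, cites Lemma \ref{L:ckne0} in its proof. The quantum-dimension identity therefore only becomes available after the present lemma is established, not before.
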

\begin{proof}
Since ${^{\R}c}_k = \base_{\R}({^{\A}c}_k)$ for $\R\in \{\kk, \K\}$, it suffices to show ${^{\kk}c}_k\ne 0$. 

Let $\pi_k\in \Hom_{\kk}(\Lambda_{\kk}^k\otimes \Lambda_{\kk}^k,\kk)$ be the projection to $v_1\dots v_k\otimes v_{m-k+1}\dots v_m$, with respect to the basis $\{v_S \otimes v_T\}_{\substack{ S,T\subset \{1, \dots, m\}\\ |S|=|T| =k}}$. It suffices to show that $\pi_k\circ {^{\kk}c}_k \ne 0$.

Write $({^{\kk}\varphi}_1)^{-1}(v_{j}^*)= t_jv_{j'}$, where $j'= m-k+1$. Note that $t_j\in \kk^{\times}$ for $j=1, \dots, m$. Then
\begin{align*}
\pi_k\circ {^{\kk}c}_k(1) &= \frac{1}{k!}\sum_{w\in S_k}\pi_k\left(v_{w(1)}\dots v_{w(k)}\otimes {^{\kk}\varphi}_1^{-1}(v_{w(k)})\dots {^{\kk}\varphi}_1^{-1}(v_{w(1)})\right) \\
&=\frac{t_1\dots t_k}{k!} \cdot \sum_{w\in S_k}\pi_k\left(v_{w(1)}\dots v_{w(k)}\otimes v_{w(k)'}\dots v_{w(1)'}\right) \\
&=\frac{t_1\dots t_k}{k!} \cdot \sum_{w\in S_k}\pi_k\left((-1)^{\ell(w)}v_{1}\dots v_{k}\otimes (-1)^{\ell(w)}v_{m-{k-1}}\dots v_{m}\right) \\
&=t_1\dots t_k \ne 0.
\end{align*}
\end{proof}

\begin{definition}
Let
\[
{^\R\psi}_k:=(\text{ev}\otimes \id_{\Lambda_\R^k})\circ (\id_{(\Lambda_\R^k)^*}\otimes {^\R{c_k}})
\] 
Since $\text{ev}\in \Hom_{U_\R(\om)}((\Lambda_\R^k)^*\otimes \Lambda_\R^k, \R)$, it follows that ${^\R\psi}_k\in \Hom_{U_\R(\om)}((\Lambda_\R^k)^*, \Lambda_\R^k)$. 
\end{definition}

\begin{lemma}
We have the following equality in $\Hom_{U_\R(\om)}(\R, \Lambda_\R^k\otimes \Lambda_\R^k)$
\[
{^\R{c_k}} = (\id_{\Lambda_\R^k}\otimes {^\R\psi}_k) \circ \text{coev}
\]

\end{lemma}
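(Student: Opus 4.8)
The plan is to recognize this identity as an instance of the snake (zig-zag) relations for the rigid duality on $U_\R(\om)$-modules, combined with the definition of ${^\R\psi}_k$. Write $X:=\Lambda_\R^k$ for brevity. Unfolding the right-hand side using ${^\R\psi}_k=(\text{ev}\otimes \id_X)\circ (\id_{X^*}\otimes {^\R{c_k}})$ and the strictness of $\otimes$, one gets
\[
(\id_X\otimes {^\R\psi}_k)\circ \text{coev} = (\id_X\otimes \text{ev}\otimes \id_X)\circ(\id_X\otimes \id_{X^*}\otimes {^\R{c_k}})\circ \text{coev}.
\]

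First I would use bifunctoriality of $\otimes$ to slide ${^\R{c_k}}$ past $\text{coev}$: since $\text{coev}\colon \R\to X\otimes X^*$ and ${^\R{c_k}}\colon \R\to X\otimes X$ are both morphisms out of the monoidal unit, one has $(\id_{X\otimes X^*}\otimes {^\R{c_k}})\circ \text{coev} = (\text{coev}\otimes \id_{X\otimes X})\circ {^\R{c_k}}$ (both equal $\text{coev}\otimes {^\R{c_k}}$ after identifying $\R\otimes\R=\R$). Substituting this and regrouping the tensor factors, the right-hand side becomes
\[
\left(\big[(\id_X\otimes \text{ev})\circ(\text{coev}\otimes \id_X)\big]\otimes \id_X\right)\circ {^\R{c_k}},
\]
and the bracketed composite is exactly $\id_X$ by the first zig-zag identity in \eqref{E:zigzag}. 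Hence the right-hand side equals $(\id_X\otimes \id_X)\circ {^\R{c_k}} = {^\R{c_k}}$, which is the claim.

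Both sides are already known to be $U_\R(\om)$-module maps — the maps $\text{ev}$ and $\text{coev}$ are $U_\R(\om)$-linear by the remark following \eqref{E:zigzag}, and ${^\R{c_k}}$ is $U_\R(\om)$-linear by construction in Definition~\ref{D:caps} — so the equality in $\Hom_{U_\R(\om)}(\R,\Lambda_\R^k\otimes \Lambda_\R^k)$ follows from the equality of underlying $\R$-linear maps established above. The only point requiring care is the bookkeeping of which tensor factor each $\id$, $\text{ev}$, and $\text{coev}$ acts on; I do not anticipate a genuine obstacle, as this is precisely the standard ``mate under adjunction'' computation: ${^\R\psi}_k$ is by design the partial transpose of ${^\R{c_k}}$, and the lemma merely records the resulting symmetry.
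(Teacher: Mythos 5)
Your proof is correct and is essentially the same argument the paper intends: the paper's proof is just ``Follows from Equation \ref{E:zigzag},'' and your computation spells out exactly how the zig-zag identity implies the claim, via the interchange law to slide ${^\R c_k}$ past $\text{coev}$ and then contracting $(\id_X\otimes \text{ev})\circ(\text{coev}\otimes\id_X)=\id_X$.
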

\begin{proof}
Follows from Equation \ref{E:zigzag}. 
\end{proof}

\begin{lemma}\label{L:psi-an-iso}
Let $\R\in \{\kk, \A, \K\}$. Then ${^\R\psi}_k$ is an isomorphism. 
\end{lemma}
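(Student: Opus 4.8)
The plan is to prove the statement first over the two fields $\kk$ and $\K$, where irreducibility is available, and then transfer to $\A$ by a specialization argument.

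Over $\R\in\{\kk,\K\}$ I would argue as follows. By Proposition \ref{L:ext-powers-irredicible} the module $\Lambda_\R^k$ is irreducible, hence (being self-dual) so is $(\Lambda_\R^k)^*$; since a nonzero $U_\R(\om)$-module homomorphism between irreducible modules is automatically an isomorphism (its kernel and image are subrepresentations), it suffices to show ${^\R\psi}_k\neq 0$. For this I would invoke the preceding lemma, which gives ${^\R c}_k=(\id_{\Lambda_\R^k}\otimes {^\R\psi}_k)\circ\text{coev}$, together with Lemma \ref{L:ckne0}, which says ${^\R c}_k\neq 0$. Since $\text{coev}\colon\R\to\Lambda_\R^k\otimes(\Lambda_\R^k)^*$ is a nonzero map out of the one-dimensional object $\R$, it is injective, so if ${^\R\psi}_k$ vanished then ${^\R c}_k$ would vanish, a contradiction. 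Hence ${^\R\psi}_k\neq 0$ and is therefore an isomorphism.

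For $\R=\A$ I would first check that $\base_\R({^\A\psi}_k)={^\R\psi}_k$ for $\R\in\{\kk,\K\}$: this follows from monoidality of $\base_\R$ once one notes, by induction on $k$ using Definition \ref{D:caps}, that ${^\A\psi}_k$ is assembled from ${^\A c}_1$, the multiplication maps ${^{\A}m}_{i,j}^{i+j}$, $\text{ev}$, and $\text{coev}$, each of which base-changes correctly (Definition \ref{D:e1-and-c1}, Remark \ref{R:m-is-non-zero}). Now ${^\A\psi}_k\colon(\Lambda_\A^k)^*\to\Lambda_\A^k$ is an $\A$-linear map between free $\A$-modules of the same finite rank (Theorem \ref{T:basis-for-LambdaA}), so in the bases $\{v_S^*\}$ and $\{v_S\}$ it is represented by a square matrix $M$ over $\A$, and the matrix of ${^\kk\psi}_k=\base_\kk({^\A\psi}_k)$ is the image $\bar M$ of $M$ in $\mathrm{Mat}(\A/(q-1)\A)=\mathrm{Mat}(\kk)$. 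Since ${^\kk\psi}_k$ is an isomorphism, $\det M\notin(q-1)\A$; as $\A$ is local with maximal ideal $(q-1)\A$, this forces $\det M\in\A^\times$, so $M$ is invertible over $\A$ and ${^\A\psi}_k$ is an isomorphism. As a byproduct this also recovers the $\K$-case, since $M$ is then already invertible over $\A\subseteq\K$; alternatively the $\K$-case follows by repeating the second paragraph over $\K$.

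I do not anticipate a serious obstacle. The only points that need care are the bookkeeping that $\base_\R$ correctly intertwines the $\A$- and $\R$-versions of the morphisms ${^\R\psi}_k$ and ${^\R c}_k$, and the elementary observation that an element of the local ring $\A$ with nonzero image in $\kk$ is a unit. All the substantive inputs — irreducibility and self-duality of $\Lambda_\R^k$, non-vanishing of ${^\R c}_k$, and the zigzag identity relating ${^\R c}_k$ to ${^\R\psi}_k$ — are already in hand.
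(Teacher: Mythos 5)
Your proof is correct and follows the same overall strategy as the paper's: establish ${^{\kk}\psi}_k\neq 0$ from Lemma~\ref{L:ckne0} together with the zigzag identity, use irreducibility (Proposition~\ref{L:ext-powers-irredicible}) to upgrade nonvanishing to invertibility over the fields, and then transfer to $\A$ via the base-change compatibility $\base_{\R}({^{\A}\psi}_k)={^{\R}\psi}_k$ and the fact that $\A$ is local. The only real difference is in how you close the $\A$-case: the paper pins $\psi_{\A}$ down by weight considerations to a single scalar $\xi_k$ and checks $\xi_k\in\A^{\times}$, while you apply the standard determinant criterion for invertibility of a square matrix over a local ring — equivalent, and arguably cleaner since it avoids appealing to the generation of $(\Lambda_{\A}^k)^*$ by a single vector. (One small redundancy: you don't need injectivity of $\text{coev}$ — if $\psi_k=0$ then $\id\otimes\psi_k=0$ already forces $c_k=0$.)
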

\begin{proof}
Thanks to Proposition \ref{L:ext-powers-irredicible}, for $\R\in \{\kk, \K\}$, ${^\R\psi}_k$ is an isomorphism if and only if ${^\R\psi}_k\ne 0$. We know from weight considerations that ${^{\A}\psi_k}(v_{\{m-k+1,\dots, m\}}^*)= \xi_k\cdot v_{\{1, \dots, k\}}$ for some $\xi_k\in \A$, and ${^{\A}\psi}_k$ is an isomorphism if and only if $\xi_k\in \A^{\times}$ if and only if $\xi_k$ does not map to zero under $\A\rightarrow \kk$. Since $\base_{\R}(^{\A}\psi_k) = {^{\R}\psi}_k$ and $\base_{\K}$ is faithful, the claim will follow if we show ${^{\kk}\psi_k}\ne 0$. This follows from Lemma \ref{L:ckne0} and Equation \ref{E:zigzag}.
\end{proof}

\begin{defn}\label{D:ek}
Let $\R\in \{\kk, \A, \K\}$. Define ${^\R\varphi}_k:=({^\R\psi}_k)^{-1}$ and
\[
{^\R e}_k:=\text{ev}\circ ({^\R\varphi}_k\otimes \id_{\Lambda_\R^k}) \in \Hom_{U_\R(\om)}(\Lambda_\R^k\otimes \Lambda_\R^k, \R).
\]
\end{defn}

\begin{remark}
When $k=1$, the previous definition agrees with Definition \ref{D:e1-and-c1}.
\end{remark}

\begin{lemma}\label{l:eczigzag}
Let $\R\in \{\kk, \A, \K\}$. The following equality of morphisms holds. 
\[
(\id_{\Lambda_\R^k}\otimes {^\R e}_k)\circ ({^\R c}_k\otimes \id_{\Lambda_\R^k}) = \id_{\Lambda_\R^k} \quad \text{and}\quad ({^\R e}_k\otimes \id_{\Lambda_\R^k})\circ (\id_{\Lambda_\R^k}\otimes {^\R c}_k) = \id_{\Lambda_\R^k}.
\]
\end{lemma}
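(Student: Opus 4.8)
The plan is to recognize that Lemma \ref{l:eczigzag} is just the statement that $({^\R c}_k, {^\R e}_k)$ form a duality pairing exhibiting $\Lambda_\R^k$ as self-dual, and that this follows formally from the two facts already in hand: the abstract zig-zag identities \eqref{E:zigzag} for the generic evaluation and coevaluation on the free module $\Lambda_\R^k$ and its linear dual, and the two identities proved just above, namely ${^\R c}_k = (\id_{\Lambda_\R^k}\otimes {^\R\psi}_k)\circ \coev$ together with ${^\R e}_k = \ev\circ({^\R\varphi}_k\otimes \id_{\Lambda_\R^k})$ where ${^\R\varphi}_k = ({^\R\psi}_k)^{-1}$.

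First I would substitute these expressions into the left-hand side of the first equation. One gets
\[
(\id\otimes {^\R e}_k)\circ({^\R c}_k\otimes \id) = (\id\otimes \ev)\circ(\id\otimes {^\R\varphi}_k\otimes \id)\circ(\id\otimes {^\R\psi}_k\otimes\id)\circ(\coev\otimes\id).
\]
Since ${^\R\varphi}_k\circ {^\R\psi}_k = \id_{(\Lambda_\R^k)^*}$, the middle composite $(\id\otimes {^\R\varphi}_k\otimes\id)\circ(\id\otimes {^\R\psi}_k\otimes\id) = \id_{\Lambda_\R^k\otimes (\Lambda_\R^k)^*\otimes \Lambda_\R^k}$ collapses, leaving $(\id_{\Lambda_\R^k}\otimes \ev)\circ(\coev\otimes \id_{\Lambda_\R^k})$, which is $\id_{\Lambda_\R^k}$ by the first half of \eqref{E:zigzag} applied to $X = \Lambda_\R^k$. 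The second equation is symmetric: substitute ${^\R c}_k = (\id\otimes {^\R\psi}_k)\circ\coev$ and ${^\R e}_k = \ev\circ({^\R\varphi}_k\otimes\id)$ into $({^\R e}_k\otimes\id)\circ(\id\otimes {^\R c}_k)$, use ${^\R\varphi}_k\circ{^\R\psi}_k = \id$ to cancel the ${^\R\psi}_k$ and ${^\R\varphi}_k$ insertions, and then invoke the second half of \eqref{E:zigzag} with $X = \Lambda_\R^k$. There is a small bookkeeping point: in the second identity the natural cancellation wants ${^\R\psi}_k\circ{^\R\varphi}_k = \id_{\Lambda_\R^k}$ rather than ${^\R\varphi}_k\circ{^\R\psi}_k = \id$, but both hold since ${^\R\varphi}_k$ and ${^\R\psi}_k$ are mutually inverse isomorphisms by Lemma \ref{L:psi-an-iso}.

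I do not expect any real obstacle here; the content of the lemma is entirely formal given the preparatory lemmas, and the only thing to be careful about is drawing the interchange/associativity coherences correctly so that the insertions of ${^\R\psi}_k$ and its inverse are genuinely composed (and hence cancel) rather than sitting on different tensor factors. One could phrase the whole argument in string-diagram language for brevity: the defining picture for ${^\R c}_k$ is a cup decorated by ${^\R\psi}_k$, the picture for ${^\R e}_k$ is a cap decorated by ${^\R\varphi}_k$, and sliding the cap along the cup makes the ${^\R\varphi}_k$ and ${^\R\psi}_k$ labels meet and annihilate, reducing the configuration to a straightened strand. I would write it out algebraically as above to keep it self-contained.
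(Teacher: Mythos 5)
Your proof is correct and matches the paper's own argument: the paper's proof of Lemma \ref{l:eczigzag} is a one-line citation of \eqref{E:zigzag}, the definitions of ${^\R e}_k$ and ${^\R c}_k$, and the interchange law, which is exactly the reasoning you spell out. Your remark about the bookkeeping in the second identity is the right thing to flag—after sliding with the interchange law the relevant composite is ${^\R\psi}_k\circ{^\R\varphi}_k$ rather than ${^\R\varphi}_k\circ{^\R\psi}_k$, and both equal the identity since ${^\R\varphi}_k=({^\R\psi}_k)^{-1}$ by construction and Lemma \ref{L:psi-an-iso}.
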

\begin{proof}
Using Equation \eqref{E:zigzag} this follows from the definition of ${^\R e}_k$ and ${^\R c}_k$ along with the interchange law for monoidal categories\footnote{If $\mathcal{C}$ is a monoidal category, then for $\alpha \in \Hom_{\mathcal{C}}(X,Y)$ and $\alpha' \in \Hom_{\mathcal{C}}(X',Y')$, we have $(\alpha\otimes \id_{Y'})\circ (\id_{X}\otimes \alpha') = (\id_{Y}\otimes \alpha') \circ (\alpha \otimes \id_{X'})$.}
\end{proof}

We will now explain how the category $\Fund$ can be described by graphical calculus, in the spirit of \cite[Theorem 5.1]{MR1036112}. Intuitively, we mean that the morphism in $\Fund$ can be represented by isotopy classes of unoriented trivalent graphs. Precisely, we mean that $\Fund$ receives a pivotal functor from the following diagrammatically defined category.

\begin{definition}
    For $\R\in \{\A, \kk, \K\}$, let $\textbf{FreeWeb}_{\R}(\mathfrak{o}_m)$ denote the pivotal $\R$-linear category generated by self-dual objects $n\in \{1, \dots, m\}$, and morphisms generated by the following trivalent vertices: $\freewebgenone$ and $\freewebgentwo$.
\end{definition}

Morphisms in $\textbf{FreeWeb}_{\R}(\mathfrak{o}_m)$ are $\R$-linear combinations of planar diagrams which locally are one of the following: trivalent vertex, cap morphisms $ \CapKRef$, or cup morphism $\CupKRef$, modulo the relations implied by being pivotal, i.e. unoriented versions of \cite[Relations 1,2, and 13]{MR1036112}.

\def\KZigZag
{\begin{FGtric}
\draw [scale=0.7](0,0)..controls(0,1)and(1,1)..(1,0);
\draw [scale=0.7](1,0)..controls(1,-1)and(2,-1)..(2,0);
\draw [scale=0.7](0,0)--(0,-2)node[left,black,midway,scale=0.7]{$k$}
                 (2,0)--(2,2);
\end{FGtric}
}

\def\KZigZaga
{\begin{FGtric}
\draw [scale=0.7](0,-2)--(0,2)node[left,black,pos=0.3,scale=0.7]{$k$};
\end{FGtric}}

\def\KZigZagb
{\begin{FGtric}
\draw [scale=0.7](0,0)..controls(0,1)and(-1,1)..(-1,0);
\draw [scale=0.7](-1,0)..controls(-1,-1)and(-2,-1)..(-2,0);
\draw [scale=0.7](0,0)--(0,-2)node[left,black,midway,scale=0.7]{$k$}
(-2,0)--(-2,2);
\end{FGtric}
}

\def\MorphZigzag{
\begin{tric}
\draw [darkgreen]
(.4,0.4)--(.4,3)   node[above,scale=0.7,black]{$1$}
(0.9,0.4)--(0.9,3)  node[above,scale=0.7,black]{$1$}
(2.5,0.4)--(2.5,3)  node[above,scale=0.7,black]{$1$}

(.4,-0.4)--(.4,-3) node[below,scale=0.7,black]{$1$}
(0.9,-0.4)--(0.9,-3) node[below,scale=0.7,black]{$1$}
(2.5,-0.4)--(2.5,-3) node[below,scale=0.7,black]{$1$}     ;

\filldraw[black] (1.35,1.7) circle (1pt) (1.7,1.7) circle (1pt) (2.05,1.7) circle (1pt) 
(1.35,-1.7) circle (1pt) (1.7,-1.7) circle (1pt) (2.05,-1.7) circle (1pt);

\draw[darkred,thick] (-0.2,-0.4)rectangle(3.1,0.4)  
                     node[pos=0.5,scale=0.7,black]{$k+1$};
\end{tric}
}

\def\PivotalMorphA
{\begin{FGtric}
\draw  (3.7,2.5)--(3.7,2)..controls(3.7,-0.5)and(3.3,-1.65)..(1.8,-1.65)..controls(1.2,-1.65)and(0,-1)..(0,0)--(0,1.5)..controls(0,2)and(-0.7,2.3)..(-0.7,0)--(-0.7,-1)

 (3.2,2.5)--(3.2,2)..controls(3.2,-0.5)and(2.8,-1.2)..(1.8,-1.2)..controls(1.2,-1.2)and(0.5,-0.6)
 ..(0.5,0)--(0.5,1.5)..controls(0.5,2.5)and(-1.2,3)..(-1.2,0)--(-1.2,-1)
 
(2.2,2.5)--(2.2,2)..controls(2.2,0)and(2.1,-0.4)..(1.8,-0.4)..controls(1.7,-0.4)and(1.5,-0.3)..(1.5,0)--(1.5,1.5)..controls(1.5,2)and(1,3)..(0,3)..controls(-1.5,3)and(-2.2,2)..(-2.2,0)--(-2.2,-1);

\fill[white] (0.75,1) ellipse (1.2 and 0.3);
\draw[very thick,black](0.75,1) ellipse (1.2 and 0.3);
\draw(0.3,1.6)node[black,right]{$\dots$}
(0.3,0.4)node[black,right]{$\dots$}
(-2.38,-0.6)node[black,right]{$\dots$}
(2,2.1)node[black,right]{$\dots$};
\draw [very thick, decorate, decoration = {calligraphic brace}] 
(-0.7,-1.25) -- (-2.2,-1.25);
\draw [very thick, decorate, decoration = {calligraphic brace}] 
(2.2,2.7)--(3.7,2.7);
\draw (0.75,1)node[scale=0.7,black]{$f$}
      (-1.45,-1.4)node[scale=0.7,black,below]{$d$} 
     (2.95,2.85)node[scale=0.7,black,above]{$e$};
\end{FGtric}
}

\def\PivotalMorphB
{\begin{FGtric}
\begin{scope}[yscale=-1]
\draw  (3.7,2.5)--(3.7,2)..controls(3.7,-0.5)and(3.3,-1.65)..(1.8,-1.65)..controls(1.2,-1.65)and(0,-1)..(0,0)--(0,1.5)..controls(0,2)and(-0.7,2.3)..(-0.7,0)--(-0.7,-1)

 (3.2,2.5)--(3.2,2)..controls(3.2,-0.5)and(2.8,-1.2)..(1.8,-1.2)..controls(1.2,-1.2)and(0.5,-0.6)
 ..(0.5,0)--(0.5,1.5)..controls(0.5,2.5)and(-1.2,3)..(-1.2,0)--(-1.2,-1)
 
(2.2,2.5)--(2.2,2)..controls(2.2,0)and(2.1,-0.4)..(1.8,-0.4)..controls(1.7,-0.4)and(1.5,-0.3)..(1.5,0)--(1.5,1.5)..controls(1.5,2)and(1,3)..(0,3)..controls(-1.5,3)and(-2.2,2)..(-2.2,0)--(-2.2,-1);

\fill[white] (0.75,1) ellipse (1.2 and 0.3);
\draw[very thick,black](0.75,1) ellipse (1.2 and 0.3);
\draw(0.3,1.6)node[black,right]{$\dots$}
(0.3,0.4)node[black,right]{$\dots$}
(-2.38,-0.6)node[black,right]{$\dots$}
(2,2.1)node[black,right]{$\dots$};
\draw [very thick, decorate, decoration = {calligraphic brace}] 
(-2.2,-1.25)--(-0.7,-1.25);
\draw [very thick, decorate, decoration = {calligraphic brace}] 
(3.7,2.7)--(2.2,2.7);
\draw (0.75,1)node[scale=0.7,black]{$f$}
      (-1.45,-1.4)node[scale=0.7,black,above]{$e$} 
     (2.95,2.85)node[scale=0.7,black,below]{$d$};
\end{scope}     
\end{FGtric}
}

\begin{remark}
    The existence of cap and cup morphisms are implicit in $\textbf{FreeWeb}_{\R}(\mathfrak{o}_m)$ being pivotal, that is a pivotal category is a rigid category, so has left and right duals, i.e. there are cap and cup morphisms satisfying the following relation. 
    \begin{equation}\label{zigzig-graph}
    \KZigZag \quad = \quad \KZigZaga \quad = \quad \KZigZagb
    \end{equation}
    The key feature of a pivotal category is that the functor of left dual is equal to the functor of right dual. This is expressed in graphical calculus as follows.
    \begin{equation}\label{leftdualisrightgraph}\PivotalMorphA=\PivotalMorphB 
    \end{equation}
    We say that the 180 degree counterclockwise rotation of a morphism is equal to a 180 degree clockwise rotation of the same morphism
\end{remark}

Our proof of Theorem \ref{T:main-thm} requires certain base change arguments. These arguments require a graphical calculus for $\Fund$ when $\R\in \{\A, \K, \kk\}$, and compatibility of the graphical calculus descriptions with change of rings from $\A$ to $\R\in \{\K, \kk\}$. The following proposition uses known results about graphical calculus for $\FundK$ to deduce a graphical calculus for $\FundA$.

\begin{proposition}\label{P:greycalculuswelldefined}
There is a pivotal functor $\Phi_{\A}^{free}:\textbf{FreeWeb}_{\A}(\mathfrak{o}_m)\longrightarrow \FundA$ such that $k\mapsto \Lambda^k$, $\freewebgenone \mapsto {^{\A} m}_{k,1}^{k+1}$, and $\freewebgentwo\mapsto {^{\A} m}_{1,k}^{k+1}$. Moreover, we have $\CapKRef \mapsto {^{\A}e}_k$ and $\CupKRef \mapsto {^{\A}c}_k$.
\end{proposition}

\begin{proof}
Denote by $\textbf{Rep}(U_{\K}(\mathfrak{so}_m))$, the category of finite dimensional type-$\mathbf{1}$ $U_{\K}(\mathfrak{so}_m)$-modules, equipped with the pivotal structure from \cite{SnyderTingley}. This pivotal structure is such that all self-dual irreducible representations have Frobenius-Schur index $+1$. Viewing $\FundK$ as a subcategory of $\textbf{Rep}(U_{\K}(\mathfrak{so}_m))$, and recalling Lemma \ref{L:selfduality-of-O-reps}, it follows from the discussion in \cite[Section 2.2 and Theorem 5.1]{bodish2021type}, see also \cite{Sel2}, that there is a pivotal functor $\Phi_{\K}^{free}:\textbf{FreeWeb}_{\K}(\mathfrak{o}_m)\longrightarrow \FundK$, such that the generating trivalent vertices are sent to the morphisms ${^{\K} m}_{k,1}^{k+1}$ and ${^{\K} m}_{1,k}^{k+1}$.

Consider the morphisms $\text{cup}_k:=\Phi_{\K}^{free}\left(\CupKRef\right)$ and $\text{cap}_k = \Phi_{\K}^{free}\left(\CapKRef\right)$. Note that
\begin{equation}\label{eqn:cupk-capk-zigzag}
(\id_{\Lambda_\R^k}\otimes \text{cap}_k)\circ (\text{cup}_k\otimes \id_{\Lambda_\R^k}) = \id_{\Lambda_\R^k} \quad \text{and}\quad (\text{cap}_k\otimes \id_{\Lambda_\R^k})\circ (\id_{\Lambda_\R^k}\otimes \text{cup}_k) = \id_{\Lambda_\R^k}.
\end{equation}
If we re-scale $\text{cup}_k$ by any $\lambda_k\in \K^{\times}$, and also re-scale $\text{cap}_k$ by $\lambda_k^{-1}$, then this preserves the pivotal structure on $\FundK$. So we may assume that $\Phi_{\K}^{free}$ is such that $ \CapKRef \mapsto \lambda_k\cdot\text{cap}_k$ and $\CupKRef\mapsto \lambda_k^{-1}\cdot \text{cup}_k$.

We know that $\Hom_{U_{\K}(\om)}(\K, \Lambda_{\K}^k\otimes \Lambda_{\K}^k)$ is one dimensional. Also, both $\text{cup}_k$ and ${^{\K}c}_k$ are non-zero elements of $\Hom_{U_{\K}(\om)}(\K, \Lambda_{\K}^k\otimes \Lambda_{\K}^k)$. Therefore, there is $\lambda_k\in \K^{\times}$ such that ${^{\K}c}_k = \lambda_k\cdot \text{cup}_k$. Using equation \ref{eqn:cupk-capk-zigzag} and Lemma \ref{l:eczigzag} we find that
\begin{align*}
{^{\K}e_k} &= {^{\K}e_k}  \circ \left( \id_{\Lambda_{\K}^k}  \otimes \big( (\id_{\Lambda_{\K}^k}\otimes \lambda_{k}^{-1}\cdot\text{cap}_k)\circ (\lambda_{k}\cdot\text{cup}_k\otimes \id_{\Lambda_{\K}^k}) \big) \right)\\
&=\lambda_{k}^{-1}\cdot\text{cap}_k \circ \left( \big( ({^{\K}e_k}\otimes \id_{\Lambda_{\K}^k})\circ (\id_{\Lambda_{\K}^k}\otimes \lambda_k\cdot\text{cup}_k) \big)\otimes \id_{\Lambda_{\K}^k} \right) \\
&=\lambda_{k}^{-1}\cdot \text{cap}_k\circ \left( \big( ({^{\K}e_k}\otimes \id_{\Lambda_{\K}^k})\circ (\id_{\Lambda_{\K}^k}\otimes {^{\K}c_k}) \big) \otimes \id_{\Lambda_{\K}^k} \right)\\
&=\lambda_{k}^{-1}\cdot\text{cap}_k.
\end{align*}
So we can (and do) assume that $\Phi_{\K}^{free}$ sends the gray cap  labeled $k$ to ${^{\K}e}_k$ and the gray cup labeled $k$ to ${^{\K}c}_k$. 

Since $\Phi_{\K}^{free}$ is pivotal, it follows that left dual equals left dual \eqref{leftdualisrightgraph} in $\FundK$, when using caps ${^{\K}}e_k$ and cups ${^{\K}}c_k$. Note that $
\base_{\K}({^{\A}}e_k) = {^{\K}}e_k$ and $\base_{\K}({^{\A}c}_k) = {^{\K}c}_k$. From Lemma \ref{L:sufficestocheckoverK}, we see that left dual is equal to right dual in $\FundA$, when using ${^{\A}}e_k$ and ${^{\A}}c_k$. Also, we have $\base_{\K}({^{\A} m}_{i,j}^{i+j}) = {^{\K} m}_{i,j}^{i+j}$. Thus, there is a pivotal functor as in the statement of the proposition.

\end{proof}

\begin{definition}\label{D:comult}
Define ${^{\A}m}_{i+j}^{i,j}$ using the graphical calculus for morphisms as the $180$ degree rotation of ${^{\A}m}_{j,i}^{i+j}$. For $\R\in \{\kk, \K\}$, define ${^{\R}m}_{i+j}^{i,j}:=\base_{\R}( {^{\A}m}_{i+j}^{i,j})$. By Proposition \ref{P:greycalculuswelldefined} it does not matter whether we rotate clockwise or counterclockwise. 
\end{definition}

\begin{lemma}\label{L:graybigon}
Let $\R\in \{\kk, \A, \K\}$. For $k=1, \dots, m$,
\begin{equation*}
{^{\R}m}_{1,k-1}^{k}\circ {^{\R}m}_{k}^{1,k-1} = \frac{[2k]}{[2]}\id_{\Lambda_{\R}^k} \ .
\end{equation*}
\end{lemma}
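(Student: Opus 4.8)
The plan is to compute the composite ${^{\R}m}_{1,k-1}^{k}\circ {^{\R}m}_{k}^{1,k-1}$ directly by unwinding the definitions, reducing everything to the $\R = \kk$ case via the faithfulness of $\base_{\K}$, exactly as in the proof of Proposition \ref{P:greycalculuswelldefined}. Recall that ${^{\R}m}_{k}^{1,k-1}$ is the $180$-degree rotation of ${^{\R}m}_{1,k-1}^{k}$, which by the graphical calculus of Proposition \ref{P:greycalculuswelldefined} can be written as $(\id_{\Lambda_{\R}^1}\otimes \id_{\Lambda_{\R}^{k-1}}\otimes {^{\R}e}_k)\circ (\id_{\Lambda_{\R}^1}\otimes {^{\R}m}_{k-1,k}^{?}\ldots)$ — more precisely, as a bent version of the multiplication map using the cup ${^{\R}c}_k$. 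So the composite ${^{\R}m}_{1,k-1}^{k}\circ {^{\R}m}_{k}^{1,k-1}$ is an endomorphism of $\Lambda_{\R}^k$, hence (by irreducibility over $\kk$ or $\K$, Proposition \ref{L:ext-powers-irredicible}, and Schur's lemma, Lemma \ref{L:o-Schurslemma}) a scalar multiple of the identity; it remains only to pin down the scalar.

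To find the scalar I would evaluate the composite on the distinguished weight vector $v_{\{1,\ldots,k\}}$ and extract the coefficient of $v_{\{1,\ldots,k\}}$ in the output, using the $\{v_S\}$ basis of $\Lambda_{\A}^{\bullet}$ from Theorem \ref{T:basis-for-LambdaA}. Since ${^{\R}m}_{k}^{1,k-1}$ is the rotation of ${^{\R}m}_{1,k-1}^{k}$ by the cup ${^{\R}c}_k$, and ${^{\R}c}_k$ is defined inductively in Definition \ref{D:caps} with the explicit coefficient $\frac{[2]}{[2k]}$ appearing at each stage, the recursive structure of Definition \ref{D:caps} is what produces the factor $\frac{[2k]}{[2]}$. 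Concretely, I expect an induction on $k$: the base case $k=1$ is the zigzag identity of Lemma \ref{l:eczigzag} (or Equation \eqref{E:zigzag}) together with the normalization ${^{\R}c}_1, {^{\R}e}_1$ being honest duality maps, which gives the scalar $[2]/[2] = 1$ — but note the statement has $[2k]/[2]$, so at $k=1$ this is $1$, consistent. For the inductive step one uses $\id_{\Lambda_{\R}^k}\otimes {^{\R}e}_k$ composed with ${^{\R}c}_k\otimes \id_{\Lambda_{\R}^k} = \id$ (Lemma \ref{l:eczigzag}) to rewrite the rotation in terms of the $k-1$ data, picking up exactly one factor of $\frac{[2]}{[2k]}$ from the inductive definition of ${^{\R}c}_k$ and one factor of $\frac{[2k]}{[2]}$ from the combinatorics of how many basis monomials $v_i\cdot v_S$ with $|S| = k-1$ rebuild $v_{\{1,\ldots,k\}}$ (weighted by the $q$-powers in the defining relations of $\Lambda_{\A}^{\bullet}$), which is precisely the $q$-analogue of the multinomial count $\binom{k}{1}$.

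The main obstacle I anticipate is the bookkeeping in the weighted monomial count: when one expands ${^{\R}m}_{1,k-1}^{k}\circ {^{\R}m}_{k}^{1,k-1}$ applied to $v_{\{1,\ldots,k\}}$, one must sum over the $k$ ways of splitting off a single generator $v_i$ and reinserting it, and each term carries a sign and a power of $q^2$ coming from the skew-commutation relations $v_j v_i = -q^2 v_i v_j$ in Definition \ref{D:exterior-algebra}; verifying that these sum to $[2k]/[2] = [k]_{q^2} \cdot (\text{something})$ — in fact to $q^{k-1} + q^{k-3} + \cdots + q^{-(k-1)}$ after accounting for the $\frac{[2]}{[2k]}$ in ${^{\R}c}_k$ — is the one genuinely computational point. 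Once the $\kk$ case ($q=1$, where the count is just the plain integer $k = [2k]/[2]|_{q=1}$... wait, $[2k]/[2]|_{q=1} = k$, matching the classical fact $\binom{k}{1} = k$) is established by this elementary combinatorics, the general $\R$ statement follows since ${^{\R}m}^{i,j}_{i+j}$, ${^{\R}c}_k$, ${^{\R}e}_k$ are all defined as $\base_{\R}$ of the corresponding $\A$-morphisms (Remark \ref{R:m-is-non-zero}, Definition \ref{D:caps} and subsequent remarks) and $\base_{\K}$ is faithful (Lemma \ref{L:sufficestocheckoverK}); alternatively one runs the whole induction over $\A$ directly, since all the identities used (the zigzag relations, Definition \ref{D:caps}) hold over $\A$.
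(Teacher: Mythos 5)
The paper's own argument here is a one-line graphical one. By isotopy invariance in the calculus of Proposition \ref{P:greycalculuswelldefined}, composing the unnormalized cup $\tilde c_k := ({^{\A}m}_{k-1,1}^k\otimes {^{\A}m}_{1,k-1}^k)\circ(\id\otimes{^{\A}c}_1\otimes\id)\circ {^{\A}c}_{k-1}$ with a zigzag, i.e.\ forming $(\id_{\Lambda_{\A}^k}\otimes {^{\A}e}_k)\circ(\tilde c_k\otimes\id_{\Lambda_{\A}^k})$, yields precisely the bigon ${^{\A}m}_{1,k-1}^k\circ{^{\A}m}_k^{1,k-1}$; since $\tilde c_k=\frac{[2k]}{[2]}\,{^{\A}c}_k$ by Definition \ref{D:caps} and the zigzag of ${^{\A}c}_k$ is the identity by Lemma \ref{l:eczigzag}, the bigon equals $\frac{[2k]}{[2]}\id_{\Lambda_{\A}^k}$ --- no basis computation, no Schur's lemma. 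Applying $\base_{\R}$ then gives the other two cases.

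Your proposal goes a genuinely different route (reduce to a scalar via Schur, then pin it down on $v_{\{1,\dots,k\}}$), but as written it has a real gap. You say the inductive step picks up ``exactly one factor of $\frac{[2]}{[2k]}$ from the inductive definition of ${^{\R}c}_k$ and one factor of $\frac{[2k]}{[2]}$ from the combinatorics''; these two factors cancel, so they cannot supply the multiplicative increment needed to take the scalar from $\frac{[2k-2]}{[2]}$ at stage $k-1$ to $\frac{[2k]}{[2]}$ at stage $k$, and you do not point to any other source of that factor. The proposed explicit evaluation also requires unwinding ${^{\R}m}_k^{1,k-1}$ in the $\{v_S\}$ basis, which means unwinding ${^{\R}c}_k$ and hence ${^{\R}\varphi}_k$ --- objects defined only implicitly/inductively --- and this is considerably more bookkeeping than you allow; it is precisely the work the isotopy argument bypasses by never leaving the graphical calculus. (A smaller slip: ${^{\R}m}_k^{1,k-1}$ is defined as the $180^\circ$ rotation of ${^{\R}m}_{k-1,1}^k$, not of ${^{\R}m}_{1,k-1}^k$.) You do correctly locate the key input --- the $\frac{[2]}{[2k]}$ normalization in Definition \ref{D:caps} is where the answer comes from --- and running the argument over $\A$ and descending via $\base_{\R}$ is the right framework; but the weight-vector calculation needs to be replaced by, or carefully reduced to, the isotopy argument before this becomes a proof.
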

\begin{proof}
For $\R= \A$, this follows by using the graphical calculus for $\FundA$ and Definition \ref{D:caps}. Then applying $\base_{\R}$ yields the result for $\R\in \{\kk, \K\}$. 
\end{proof}

\begin{lemma}\label{L:graycircle}
Let $\R\in \{\kk, \A, \K\}$. For $k=1, \dots, m$,
\begin{equation*}
{^{\R}e}_k\circ {^{\R}c}_k = \frac{[2m-4k][m]}{[m-2k][2m]}
\begin{bmatrix}
m \\
k 
\end{bmatrix} _{q^2}\cdot \id_{\Lambda_{\R}^0} \ .
\end{equation*}
\end{lemma}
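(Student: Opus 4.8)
The plan is to prove the identity for $\R=\A$ and then apply the functor $\base_{\R}$ to obtain the cases $\R\in\{\kk,\K\}$: since $\base_{\R}({^\A c}_k)={^\R c}_k$ and $\base_{\R}({^\A e}_k)={^\R e}_k$ and $\End_{\FundA}(\Lambda^0_{\A})=\A$, an equality of scalars over $\A$ specializes correctly (one should note in passing that the right-hand side, read via Remark \ref{R:L'Hospitals} when $m=2k$, actually lies in $\A$, which is routine as every quantum integer occurring becomes a nonzero complex number at $q=1$). All diagrams below are interpreted in the planar graphical calculus for $\FundA$ furnished by Proposition \ref{P:greycalculuswelldefined}.

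I would argue by induction on $k$, parallel to the proof of the web relation \eqref{E:thick-web-circle}. For $k=1$: the scalar ${^\R e}_1\circ{^\R c}_1$ is $\sum_j{^\R\varphi}_1(v_j)\big(({^\R\varphi}_1)^{-1}(v_j^*)\big)$ with $v_j$ ranging over the weight basis of $V_\R$; substituting the explicit formula for ${^\R\varphi}_1$ (the remark after Definition \ref{D:e1-and-c1}) and summing gives $\tfrac{[2m-4][m]}{[m-2][2]}$, which is the asserted value since $\begin{bmatrix} m \\ 1 \end{bmatrix}_{q^2}=[m]_{q^2}=\tfrac{[2m]}{[2]}$. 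For the inductive step I would expand ${^\R c}_{k+1}$ using Definition \ref{D:caps} and ${^\R e}_{k+1}$ as its $180^{\circ}$ rotation, then close the diagram up; applying the bigon identity of Lemma \ref{L:graybigon} to the two strands labelled $k+1$ rewrites the resulting $(k{+}1)$-circle as $\tfrac{[2]}{[2k+2]}$ times the $k$-circle, multiplied by the scalar by which a ``reverse bigon'' — the endomorphism of $\Lambda^k_{\A}$ assembled from the multiplication maps ${^\A m}$ and from ${^\A c}_1,{^\A e}_1$ in the pattern of the left-hand side of \eqref{reversebigon} — acts on $\Lambda^k_{\A}$.

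The crux is therefore to establish the representation-theoretic analogue of \eqref{reversebigon}, namely that this reverse bigon equals $\tfrac{[2m-2k][2m-4k-4][m-2k]}{[2][m-2k-2][2m-4k]}\cdot\id_{\Lambda^k_{\A}}$. It is an endomorphism of $\Lambda^k_{\A}$ that factors through $\Lambda^{k+1}_{\A}$; since $\Lambda^k_{\R}$ is irreducible for $\R\in\{\kk,\K\}$ (Proposition \ref{L:ext-powers-irredicible}), Schur's lemma (Lemma \ref{L:o-Schurslemma}) together with faithfulness of $\base_{\K}$ (Lemma \ref{L:sufficestocheckoverK}) shows it is a scalar, and Lemma \ref{L:graybigon} applied to $\Lambda^{k+1}$ controls the normalization on the $\Lambda^{k+1}$ side. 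The remaining scalar is then forced by one explicit computation: either a direct calculation in the exterior algebra $\Lambda^{\bullet}_{\A}$ using the $\{v_S\}$-basis and the formulas for ${^\R\varphi}_1$ — the exact analogue of the derivation of \eqref{reversebigon} in the web category — or, equivalently, by recognizing ${^\R e}_k\circ{^\R c}_k$ as the quantum dimension of $\Lambda^k$ for the pivotal structure of Proposition \ref{P:greycalculuswelldefined} and evaluating it as a weighted sum over $\{v_S\}_{|S|=k}$. I expect this single explicit scalar evaluation, which is what breaks the circularity otherwise inherent in the Schur/quantum-trace bookkeeping, to be the main obstacle.

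Granting the reverse bigon, the inductive step produces the recursion ${^\R e}_{k+1}\circ{^\R c}_{k+1}=\tfrac{[2m-2k][2m-4k-4][m-2k]}{[2k+2][m-2k-2][2m-4k]}\cdot({^\R e}_k\circ{^\R c}_k)$, identical to the one in the proof of \eqref{E:thick-web-circle}. Iterating it from the base case and verifying — by the same elementary quantum-integer manipulations used there, notably $[m-k]_{q^2}=[2m-2k]/[2]$ — that the telescoping product collapses to $\tfrac{[2m-4k][m]}{[m-2k][2m]}\begin{bmatrix} m \\ k \end{bmatrix}_{q^2}$ completes the proof.
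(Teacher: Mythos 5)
Your plan is structurally coherent, and it does take a genuinely different route from the paper, so let me first compare and then flag the gap.

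The paper's proof is direct: it observes that $\End_{U_{\A}(\om)}(\Lambda^0_{\A})=\A\cdot\id$, reduces the whole problem to one scalar $d_k(m)\in\A$ and hence to the value over $\K$, then identifies ${^{\K}e}_k\circ{^{\K}c}_k$ with the quantum dimension and evaluates it via the quantum Weyl dimension formula in one shot. Your proposal instead mirrors the inductive proof of the \emph{web} relation \eqref{E:thick-web-circle}: verify the $k=1$ value from the explicit $\varphi_1$ formulas, and then run a recursion that removes one strand at a time using a representation-theoretic ``reverse bigon.'' These are genuinely different: the paper trades the induction for a known character-theoretic formula, while you trade the character formula for an induction plus one auxiliary scalar. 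Your route would have the advantage of staying inside the explicit $\{v_S\}$-calculus and avoiding the Weyl denominator formula entirely.

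The gap is exactly where you point: you need the representation-theoretic analogue of \eqref{reversebigon} before the lemma at hand, and you cannot obtain it from the paper. In the paper, the gray reverse bigon \eqref{Grayreversebigon} is proved \emph{from} Lemma \ref{L:graycircle} --- the scalar $\alpha$ is pinned down precisely by equating two evaluations of a closed diagram, each expressed via the $k$- and $(k{+}1)$-circle values. So citing \eqref{Grayreversebigon}, or mimicking its proof, is circular. Your first proposed resolution (a direct evaluation of the reverse bigon in $\Lambda^{\bullet}_{\A}$ on the $\{v_S\}$-basis) is not circular, but you have not carried it out, and it is not ``the exact analogue of the derivation of \eqref{reversebigon} in the web category'' as you say: that web derivation goes through the defining relation ({\ref{defskein}e}), whose representation-theoretic analogue (Proposition \ref{P:gray-IH}) is again downstream of the circle value. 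You would instead have to compute the reverse-bigon endomorphism of $\Lambda^k_{\A}$ on a highest-weight vector from scratch. Your second proposed resolution --- recognizing $e_k\circ c_k$ as a quantum dimension and evaluating it as a weighted sum over $\{v_S\}_{|S|=k}$ --- is valid, but notice that if you can do \emph{that} for a general $k$, you are already done and the entire inductive scaffolding (base case, reverse bigon, telescoping) is unnecessary; at that point you have essentially reproduced the paper's argument with a weight-sum evaluation of the quantum trace in place of the Weyl product formula. So: the outline is sound, but the one ingredient you identify as the main obstacle is genuinely the missing piece, and of your two suggested ways to supply it, one is still an open computation and the other makes the rest of your argument redundant.

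One small caution for your base case: when you ``sum $\sum_j{^\R\varphi}_1(v_j)\big(({^\R\varphi}_1)^{-1}(v_j^*)\big)$,'' keep careful track of the $q$-power conventions. The module action is normalized via $K_{\alpha}v=(q^2)^{(\alpha,\wt v)}v$, and the formula $\frac{[2m-4][m]}{[m-2][2]}$ is stated with $[\cdot]=[\cdot]_q$; reconciling those with the explicit $\varphi_1$-coefficients from the remark after Definition \ref{D:e1-and-c1} is exactly the sort of bookkeeping that is easy to get off by a factor of two in the exponent, so verify a low-rank case numerically before trusting the closed form.
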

\begin{proof}
We give a sketch, for more details see \cite[Section 2.2]{bodish2021type}. 

First, we observe that $\End_{\A}(\Lambda_{\A}^0)= \A\cdot \id_{\Lambda_{\A}^0}$, and every $\A$-linear endomorphism of the trivial module commutes with $U_{\A}(\om)$, so $\End_{U_{\A}(\om)}(\Lambda_{\A}^0)= \A\cdot \id_{\Lambda_{\A}^0}$. It follows that ${^{\A}e}_k\circ {^{\A}c_k}= d_k(m)\cdot \id_{\Lambda_{\A}^0}$, for some $d_k(m)\in \A$. Thus, 
\[
{^{\kk}e}_k\circ {^{\kk}c_k}= \overline{d_k(m)}\cdot \id_{\Lambda_{\kk}^0} \quad \text{and} \quad {^{\K}e}_k\circ {^{\K}c_k}= d_k(m)\cdot \id_{\Lambda_{\K}^0},
\]
where $\overline{d_k(m)}$ denotes the image of $d_k(m)$ under $\A\rightarrow \kk$. So it suffices to show the claim for $\R= \K$. 

Taking the trace of the identity of an object, with respect to our chosen pivotal structure, gives the quantum dimension. The quantum Weyl dimension formula states that for our chosen pivotal structure on $\textbf{Rep}(U_{q}(\som))$ 
\[
\mathrm{qdim}(L_{\K}(\wta)) := \tr_q\left(\id_{L_{\K}(\wta)}\right) =  (-1)^{(\rho^{\vee}, 2\wta)} \prod_{\alpha\in \Phi_+}\frac{[(\alpha^{\vee}, \wta+ \rho)]_{v_{\alpha}}}{[(\alpha^{\vee}, \wta)]_{v_{\alpha}}} \ ,
\]
where $v=q$, if $m$ is odd, and $v=q^2$, if $m$ is even. On the other hand, the trace of $\id_{\Lambda_{\K}^k}$, with respect to our chosen pivotal structure, is exactly the coefficient $d_k(m)$. 

We leave it as an exercise to use the quantum dimension formula, along with Remark \ref{R:L'Hospitals}, to derive the dimension formula in the statement of the Lemma. For a hint, look at proof of \cite[Proposition 2.2]{bodish2021type}.
\end{proof}

The last intertwiner we will consider is the braiding isomorphism $\brrepK:V_{\K}\otimes V_{\K}\rightarrow V_{\K}\otimes V_{\K}$. To this end, we observe that $V_{\K}\otimes V_{\K}$ is a direct sum of three non-isomorphic irreducible representations\footnote{Except when $m=1$, and we have $V_{\K}(2)=0 = V_{\K}(1,1)$. In this case $v_1\otimes v_1$ generates $V_{\K}(0)$.}: $L_{\K}(2\varpi_1, +1)$, which is characterized as containing $v_1\otimes v_1$, $L_{\K}(\varpi_2, +1)$, which is isomorphic to $\Lambda_{\K}^2$, and $V_{\K}(0, +1)$, the trivial module. Write $\pi_{(2)}$, $\pi_{(1,1)}$, and $\pi_0$ for the projections in $\End_{U_{\K}(\om)}(V_{\K}^{\otimes 2})$ with image $L_{\K}(2\varpi_1, +1)$, $L_{\K}(\varpi_2, +1)$, and $L_{\K}(0, +1)$ respectively. Then it follows from \cite[Equation 6.12]{LZ-stronglymultifree} that
\[
\brrepK = q^2\pi_{(2)} - q^{-2}\pi_{(1,1)} + q^{2-2m}\pi_{(0)} \quad \text{and} \quad \brrepKi = q^{-2}\pi_{(2)} - q^{2}\pi_{(1,1)} + q^{-2+2m}\pi_{(0)}.
\]

\begin{lemma}\label{L:brrep}
We can express the braiding and its inverse in terms of our previously defined morphisms as 
\[
\brrepK = q^2\cdot \id_{V_{\K}\otimes V_{\K}} - {^{\K}m}_{2}^{1,1}\circ {^{\K}m}_{1,1}^2 - \frac{[m-2]}{[2m-4]}(q^2-q^{-2})q^{-m+2}\cdot  {^{\K}c}_1\circ {^{\K}e}_1
\]
and
\[
\brrepKi = q^{-2}\cdot \id_{V_{\K}\otimes V_{\K}} - {^{\K}m}_{2}^{1,1}\circ {^{\K}m}_{1,1}^2 + \frac{[m-2]}{[2m-4]}(q^2-q^{-2})q^{m-2}\cdot  {^{\K}c}_1\circ {^{\K}e}_1.
\]
\end{lemma}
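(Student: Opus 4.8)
The plan is to identify the three morphisms
$\id_{V_{\K}\otimes V_{\K}}$, ${^{\K}m}_{2}^{1,1}\circ {^{\K}m}_{1,1}^2$, and ${^{\K}c}_1\circ {^{\K}e}_1$
with (scalar multiples of) the three projections $\pi_{(2)}, \pi_{(1,1)}, \pi_{(0)}$, and then read off the formula for $\brrepK$ and $\brrepKi$ from the given eigenvalue expansion. The key observation is that $V_{\K}^{\otimes 2}$ decomposes into three pairwise non-isomorphic irreducibles, so $\End_{U_{\K}(\om)}(V_{\K}^{\otimes 2})$ is a commutative three-dimensional semisimple algebra with basis $\{\pi_{(2)}, \pi_{(1,1)}, \pi_{(0)}\}$; any endomorphism is determined by its three eigenvalues on the three summands.

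First I would analyze ${^{\K}c}_1\circ {^{\K}e}_1$. By Lemma \ref{L:graycircle} with $k=1$ (equivalently Relation (\ref{defskein}a) under $\Phi_{\K}$), we have ${^{\K}e}_1\circ {^{\K}c}_1 = \frac{[2m-4][m]}{[m-2][2]}\cdot \id_{\Lambda_{\K}^0}$, so $e_1\circ c_1 \ne 0$; hence $\frac{[m-2][2]}{[2m-4][m]}\cdot {^{\K}c}_1\circ {^{\K}e}_1$ is an idempotent whose image is the image of ${^{\K}c}_1$, which is the trivial submodule $L_{\K}(0,+1)$. Therefore ${^{\K}c}_1\circ {^{\K}e}_1 = \frac{[2m-4][m]}{[m-2][2]}\pi_{(0)}$. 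Next, for ${^{\K}m}_{2}^{1,1}\circ {^{\K}m}_{1,1}^2$: the map ${^{\K}m}_{1,1}^2$ is surjective onto $\Lambda_{\K}^2 \cong L_{\K}(\varpi_2,+1)$ (Remark \ref{R:m-is-non-zero}), and by Lemma \ref{L:graybigon} with $k=2$ we have ${^{\K}m}_{1,2-1}^{2}\circ {^{\K}m}_{2}^{1,2-1} = \frac{[4]}{[2]}\id_{\Lambda_{\K}^2} = [2]_{q^2}\id_{\Lambda_{\K}^2}$, so $\frac{[2]}{[4]}\cdot {^{\K}m}_{2}^{1,1}\circ {^{\K}m}_{1,1}^2$ is an idempotent with image $L_{\K}(\varpi_2,+1)$, i.e. ${^{\K}m}_{2}^{1,1}\circ {^{\K}m}_{1,1}^2 = \frac{[4]}{[2]}\pi_{(1,1)}$. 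Here I should note ${^{\K}m}_{2}^{1,1}$ is the rotation of ${^{\K}m}_{1,1}^2$ (via the pivotal structure of Proposition \ref{P:greycalculuswelldefined}), which is why the bigon composition is the scalar computed in Lemma \ref{L:graybigon}. Also $\id_{V_{\K}\otimes V_{\K}} = \pi_{(2)} + \pi_{(1,1)} + \pi_{(0)}$.

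Now substitute: using $\brrepK = q^2\pi_{(2)} - q^{-2}\pi_{(1,1)} + q^{2-2m}\pi_{(0)}$ from \cite[Equation 6.12]{LZ-stronglymultifree}, write
\[
\brrepK = q^2\id_{V_{\K}\otimes V_{\K}} - (q^2+q^{-2})\pi_{(1,1)} - (q^2 - q^{2-2m})\pi_{(0)}.
\]
Substituting $\pi_{(1,1)} = \frac{[2]}{[4]}\,{^{\K}m}_{2}^{1,1}\circ {^{\K}m}_{1,1}^2 = \frac{1}{[2]_{q^2}}\,{^{\K}m}_{2}^{1,1}\circ {^{\K}m}_{1,1}^2$ gives coefficient $-(q^2+q^{-2})/[2]_{q^2} = -1$ on ${^{\K}m}_{2}^{1,1}\circ {^{\K}m}_{1,1}^2$, as desired. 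Substituting $\pi_{(0)} = \frac{[m-2][2]}{[2m-4][m]}\,{^{\K}c}_1\circ {^{\K}e}_1$ produces the coefficient $-(q^2 - q^{2-2m})\cdot\frac{[m-2][2]}{[2m-4][m]}$; the remaining task is the elementary quantum-integer identity
\[
(q^2 - q^{2-2m})\frac{[m-2][2]}{[2m-4][m]} = \frac{[m-2]}{[2m-4]}(q^2-q^{-2})q^{-m+2},
\]
which follows since $q^2 - q^{2-2m} = q^{2-m}(q^m - q^{-m}) = q^{2-m}(q^2-q^{-2})\frac{[m]}{[2]}$ (i.e. $q^m - q^{-m} = [m](q-q^{-1})$ and $q^2 - q^{-2} = [2](q-q^{-1})$). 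The formula for $\brrepKi$ is obtained identically from $\brrepKi = q^{-2}\pi_{(2)} - q^{2}\pi_{(1,1)} + q^{2m-2}\pi_{(0)}$, using the companion identity $q^{2m-2} - q^{-2} = q^{m-2}(q^2-q^{-2})\frac{[m]}{[2]}$, so the only genuinely computational step is this pair of identities (which are in any case recorded implicitly in Remark \ref{R:identities}); I do not expect any real obstacle beyond bookkeeping.
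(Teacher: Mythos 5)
Your proposal is correct and follows essentially the same approach as the paper's proof: identify $\pi_{(1,1)}$ and $\pi_{(0)}$ as explicit scalar multiples of ${^{\K}m}_{2}^{1,1}\circ {^{\K}m}_{1,1}^2$ and ${^{\K}c}_1\circ {^{\K}e}_1$ respectively, express $\pi_{(2)}$ via $\id = \pi_{(2)}+\pi_{(1,1)}+\pi_{(0)}$, and substitute into the eigenvalue expansion from \cite[Equation 6.12]{LZ-stronglymultifree}. The paper states the scalar identifications of the projections without detailed justification, while you supply the short idempotent argument (via Lemma \ref{L:graybigon} and Lemma \ref{L:graycircle}); your form $\frac{[m-2][2]}{[2m-4][m]}$ for the $\pi_{(0)}$ scalar agrees with the paper's $\frac{[m-2][2m]}{[2m-4][m][m]_{q^2}}$ since $[m]_{q^2}=[2m]/[2]$.
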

\begin{proof}
First, we note that 
\[
\pi_{(1,1)} = \frac{[2]}{[4]}\cdot {^{\K}m}_{2}^{1,1}\circ {^{\K}m}_{1,1}^2 \quad \text{and} \quad \pi_{(0)} = \frac{[m-2][2m]}{[2m-4][m][m]_{q^2}}\cdot {^{\K}c}_1\circ {^{\K}e}_1.
\]
Since $\id_{V_{\K}\otimes V_{\K}} = \pi_{(2)} + \pi_{(1,1)} + \pi_0$, it follows that
\[
\pi_{(2)} = \id_{V_{\K}\otimes V_{\K}} -\frac{[2]}{[4]}\cdot {^{\K}m}_{2}^{1,1}\circ {^{\K}m}_{1,1}^2 - \frac{[m-2][2m]}{[2m-4][m][m]_{q^2}}\cdot {^{\K}c}_1\circ {^{\K}e}_1.
\]
Thus, 
\begin{align*}
\brrepK &= q^2\id_{V_{\K}\otimes V_{\K}} + \left(-q^{2}\frac{[2]}{[4]} -q^{-2} \frac{[2]}{[4]}\right) \cdot {^{\K}m}_{2}^{1,1}\circ {^{\K}m}_{1,1}^2 \\
&+\left(- q^2\frac{[m-2][2m]}{[2m-4][m][m]_{q^2}} + q^{2-2m}\frac{[m-2][2m]}{[2m-4][m][m]_{q^2}} \right) \cdot {^{\K}c}_1\circ {^{\K}e}_1 \\
&=q^2\cdot \id_{V_{\K}\otimes V_{\K}} - {^{\K}m}_{2}^{1,1}\circ {^{\K}m}_{1,1}^2 - \frac{[m-2]}{[2m-4]}(q^2-q^{-2})q^{-m+2}\cdot  {^{\K}c}_1\circ {^{\K}e}_1. 
\end{align*}

The same argument is used to derive the formula for $\brrepKi$. 
\end{proof}

\begin{definition}\label{D:graybraiding}
Let $\R\in \{\kk, \A, \K\}$, we define
\[
\brrep:= q^2\cdot \id_{V_{\R}\otimes V_{\R}} - {^{\R}m}_{2}^{1,1}\circ {^{\R}m}_{1,1}^2 - \frac{[m-2]}{[2m-4]}(q^2-q^{-2})q^{-m+2}\cdot  {^{\R}c}_1\circ {^{\R}e}_1
\]
and
\[
\brrepi:= q^{-2}\cdot \id_{V_{\R}\otimes V_{\R}} - {^{\R}m}_{2}^{1,1}\circ {^{\R}m}_{1,1}^2 + \frac{[m-2]}{[2m-4]}(q^2-q^{-2})q^{m-2}\cdot  {^{\R}c}_1\circ {^{\R}e}_1. 
\]
\end{definition}

\begin{lemma}\label{gray-twistbraiding-is-inverse}
Let $\R\in \{\kk, \A, \K\}$. The $90$ degree rotation of $\brrep$ is $\brrepi$. 
\end{lemma}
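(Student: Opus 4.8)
\textbf{Proof proposal for Lemma \ref{gray-twistbraiding-is-inverse}.}

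The plan is to reduce the claim to the case $\R = \K$, where it can be deduced from the representation-theoretic identification of $\brrepK$ with the genuine braiding on $V_{\K}^{\otimes 2}$, together with the pivotal calculus established in Proposition \ref{P:greycalculuswelldefined}. First I would observe that the $90$ degree rotation operation on morphisms in $\FundA$ is a well-defined operation, since by Proposition \ref{P:greycalculuswelldefined} the category $\FundA$ admits a planar pivotal diagrammatic calculus in which the cups ${^{\A}c}_k$ and caps ${^{\A}e}_k$ are the rotation-data; moreover the functors $\base_{\R}$ intertwine this rotation operation with the corresponding rotation in $\Fund$, because $\base_{\R}({^{\A}c}_k) = {^{\R}c}_k$, $\base_{\R}({^{\A}e}_k) = {^{\R}e}_k$, and $\base_{\R}$ is monoidal. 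Hence it suffices to verify, for each $\R \in \{\kk, \A, \K\}$, that the $90$ degree rotation of $\brrep$ equals $\brrepi$; and since $\base_{\K}$ is faithful (Lemma \ref{L:sufficestocheckoverK}) and $\base_{\kk} = \base_{\kk}\circ \base_{\A}$-compatible, it is enough to check the identity for $\R = \K$.

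For $\R = \K$, I would use Lemma \ref{L:brrep}: the morphism $\brrepK$ defined in Definition \ref{D:graybraiding} coincides with the genuine braiding $\beta_{V_{\K}, V_{\K}}$ on the tensor square of the vector representation, and $\brrepKi$ coincides with its inverse $\beta^{-1}_{V_{\K}, V_{\K}}$, which is also the genuine braiding with the strands crossed the opposite way. Since $\textbf{Fund}(U_{\K}(\mathfrak{so}_m))$ is a ribbon category whose pivotal structure is the one from \cite{SnyderTingley} (with all Frobenius–Schur indicators $+1$), the $90$ degree rotation of the braiding morphism $\beta_{V_{\K}, V_{\K}}$ — using the cups ${^{\K}c}_1$ and caps ${^{\K}e}_1$, which agree with the canonical ribbon cups and caps as shown in the proof of Proposition \ref{P:greycalculuswelldefined} — is precisely $\beta^{-1}_{V_{\K}, V_{\K}}$. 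This is the standard fact that rotating a positive crossing by $90$ degrees in a ribbon category gives the inverse (negative) crossing, which one may also verify directly by a bigon/zig-zag computation using the Reidemeister-type identities already derived in the excerpt (the analogues of \EQ \eqref{ReideOne} and \EQ \eqref{ReideTwo}, transported to $\FundK$ via Proposition \ref{P:functor-bmw-to-web} and Theorem \ref{T:main-thm}, or independently from the explicit eigenvalue decomposition $\beta_{V_{\K},V_{\K}} = q^2\pi_{(2)} - q^{-2}\pi_{(1,1)} + q^{2-2m}\pi_{(0)}$ and the corresponding decomposition of the rotated map).

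Alternatively, and perhaps more self-containedly, I would simply compute the rotation of $\brrepK$ term by term using the graphical calculus: rotating $q^2\cdot \id_{V_{\K}\otimes V_{\K}}$ and $-\,{^{\K}m}_{2}^{1,1}\circ{^{\K}m}_{1,1}^2$ is straightforward since both are invariant or nearly invariant under rotation, and rotating the cup–cap term $-\tfrac{[m-2]}{[2m-4]}(q^2-q^{-2})q^{-m+2}\cdot{^{\K}c}_1\circ{^{\K}e}_1$ produces a scalar multiple of the same cup–cap; one then collects terms and matches the result against the formula for $\brrepKi$ in Lemma \ref{L:brrep}, using Lemma \ref{L:graycircle} (the value of the $1$-labelled circle) and Lemma \ref{L:graybigon} to handle any bigons that appear. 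The main obstacle I anticipate is bookkeeping: correctly tracking the pivotal rotation of the identity term, which after rotation becomes a cup–cap composite, so that the identity is really a consequence of the relation $\id_{V_{\K}\otimes V_{\K}}$ rotated $=$ (a crossing-type term), i.e. one needs the analogue of the $k=1$ case of relation (\ref{defskein}e) inside $\FundK$ — this is exactly the content that makes the ``identity equals merge-split plus cup-cap'' relation hold, and it is available via Theorem \ref{T:main-thm} or directly from the eigenvalue decomposition. Once that rotated-identity relation is in hand, the rest is a short scalar computation.
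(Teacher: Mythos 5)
Your main argument (first two paragraphs) coincides with the paper's: reduce to $\R = \K$ using faithfulness of $\base_{\K}$ (Lemma~\ref{L:sufficestocheckoverK}), identify $\brrepK$ with the honest braiding on $V_{\K}^{\otimes 2}$ via Lemma~\ref{L:brrep}, and conclude by general braided-category considerations. The paper makes the ``standard ribbon fact'' you quote precise without invoking the ribbon twist or Frobenius--Schur data: it reformulates the claim as the zigzag identity $(\id\otimes {^{\K}e}_1)\circ(\brrepK\otimes\id)\circ(\id\otimes\brrepK) = {^{\K}e}_1\otimes\id$ and proves it from the Hexagon axiom, naturality of $\beta$, and $\beta_{\K, V_{\K}}=\id$ alone. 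So the two arguments are essentially the same, with the paper being slightly more economical in which axioms it actually uses.

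One caution on the alternative routes you sketch in the third paragraph. The $k=1$ IH relation in $\FundK$ --- Equation~\eqref{grayIHbase}, and hence Proposition~\ref{P:gray-IH} --- is itself proved \emph{using} the present lemma, and Theorem~\ref{T:main-thm} is further downstream; so neither, nor the Reidemeister identities transported from $\Web$ through $\Phi_{\K}$, may be invoked here without circularity. As you anticipated, the escape hatch is to compute directly from the eigenvalue decomposition $\brrepK = q^2\pi_{(2)} - q^{-2}\pi_{(1,1)} + q^{2-2m}\pi_{(0)}$. Also a small imprecision there: neither the identity nor ${^{\K}m}_2^{1,1}\circ{^{\K}m}_{1,1}^2$ is ``invariant or nearly invariant under rotation'' --- $\id$ rotates to ${^{\K}c}_1\circ{^{\K}e}_1$, and the $I$-shaped merge-split rotates to the $H$-shaped diagram, which is exactly why the IH relation would have to enter that computation --- though your subsequent ``bookkeeping'' remark shows you already caught this.
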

\begin{proof}
The claim is equivalent to 
\[
(\id_{V_{\R}}\otimes {^{\R}e}_1)\circ (\brrep\otimes \id_{V_{\R}})\circ (\id_{V_{\R}}\otimes \brrep) = ({^{\R}e}_1\otimes \id_{V_{\R}}).
\]

By Lemma \ref{L:sufficestocheckoverK} it suffices to show this is true for $\brrepK$. The Hexagon equation implies that $\beta_{V_{\K}\otimes V_{\K}, V_{\K}} = (\brrepK\otimes \id_{V_{\K}})\circ (\id_{V_{\K}}\otimes \brrepK)$. Since we are in a strict braided monoidal category, we also have $\beta_{\K, V_{\K}} = \id_{V_{\K}}$. Also, by naturality of the braiding we have $(\id_{V_{\K}}\otimes \ {^{\K}e}_1)\circ \beta_{V_{\K}\otimes V_{\K}, V_{\K}}
= \beta_{\K, V_{\K}} \circ ({^{\K}e}_1\otimes \id_{V_{\K}})$. Thus,
\begin{align*}
(\id_{V_{\K}}\otimes {^{\K}e}_1)\circ (\brrepK\otimes \id_{V_{\K}})\circ (\id_{V_{\K}}\otimes \brrepK) &=(\id_{V_{\K}}\otimes {^{\K}e}_1)\circ \beta_{V_{\K}\otimes V_{\K}, V_{\K}} \\
&= \beta_{\K, V_{\K}} \circ ({^{\K}e}_1\otimes \id_{V_{\K}}) \\
&= \id_{V_{\K}}\circ ({^{\K}e}_1\otimes \id_{V_{\K}}) \\
&=({^{\K}e}_1\otimes \id_{V_{\K}}).
\end{align*}
\end{proof}

Unsurprisingly, the braiding when $q=1$ is just the tensor flip map.

\begin{lemma}\label{L:brrepC-is-flipmap}
The map $\brrepC$ acts on $V_{\kk}^{\otimes 2}$ by $v\otimes w\mapsto w\otimes v$.
\end{lemma}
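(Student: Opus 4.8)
The plan is to specialize the formula for $\brrepC$ from Definition \ref{D:graybraiding} at $q=1$ and match it with the tensor flip by a computation on the basis $\{v_i\otimes v_j\}$ of $V_{\kk}^{\otimes2}$.

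First I would observe that the third summand in $\brrepC$ disappears at $q=1$: its coefficient $\tfrac{[m-2]}{[2m-4]}(q^2-q^{-2})q^{-m+2}$ vanishes there, since $q^2-q^{-2}\to0$ while $\tfrac{[m-2]}{[2m-4]}\to\tfrac12$ remains finite. So $\brrepC=\id_{V_{\kk}\otimes V_{\kk}}-{^{\kk}m}_2^{1,1}\circ{^{\kk}m}_{1,1}^2$. Then, by Corollary \ref{C:exterior-algebra-confluentpresentation} every correction term in the presentation of $\Lambda_{\A}^{\bullet}$ carries a factor $q^2-q^{-2}$, so $\Lambda_{\kk}^{\bullet}$ is the ordinary exterior algebra on $v_1,\dots,v_m$; in particular ${^{\kk}m}_{1,1}^2$ is the usual wedge multiplication $v_i\otimes v_j\mapsto v_i\wedge v_j$, and $\{v_i\wedge v_j\}_{i<j}$ is a basis of $\Lambda_{\kk}^2$.

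Next I would identify the split map ${^{\kk}m}_2^{1,1}\colon\Lambda_{\kk}^2\to V_{\kk}^{\otimes2}$. Since at $q=1$ the coproduct is cocommutative, antisymmetrization $x\otimes y\mapsto x\otimes y-y\otimes x$ is $U_{\kk}(\om)$-equivariant and factors through $\Lambda_{\kk}^2$, yielding a nonzero (for $m\ge2$) equivariant map $\iota\colon v_i\wedge v_j\mapsto v_i\otimes v_j-v_j\otimes v_i$ ($i<j$). On the other hand, Lemma \ref{L:pieri} for $k=1$ together with Proposition \ref{L:ext-powers-irredicible} and Schur's lemma (Lemma \ref{L:o-Schurslemma}) shows $\Hom_{U_{\kk}(\om)}(\Lambda_{\kk}^2,V_{\kk}^{\otimes2})$ is one-dimensional. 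Hence ${^{\kk}m}_2^{1,1}=c\,\iota$ for some $c\in\kk$, and comparing ${^{\kk}m}_{1,1}^2\circ\iota(v_i\wedge v_j)=v_i\wedge v_j-v_j\wedge v_i=2\,v_i\wedge v_j$ with ${^{\kk}m}_{1,1}^2\circ{^{\kk}m}_2^{1,1}=\tfrac{[4]}{[2]}\id=2\,\id$ (Lemma \ref{L:graybigon} at $q=1$) gives $c=1$, so ${^{\kk}m}_2^{1,1}=\iota$.

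Combining these, ${^{\kk}m}_2^{1,1}\circ{^{\kk}m}_{1,1}^2(v_i\otimes v_j)=\iota(v_i\wedge v_j)=v_i\otimes v_j-v_j\otimes v_i$ for all $i,j$ (the cases $i=j$ and $i>j$ being immediate from $v_i\wedge v_i=0$ and $v_i\wedge v_j=-v_j\wedge v_i$), whence $\brrepC(v_i\otimes v_j)=v_j\otimes v_i$, as claimed. The cases $m=1,2$, where $\som$ is not semisimple, are checked directly as with the other small-$m$ statements in the paper; for instance when $m=1$ one has $\Lambda_{\kk}^2=0$ (Example \ref{E:m=1}), so $\brrepC=\id$, which is the flip on the one-dimensional $V_{\kk}$. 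The step I expect to need the most care is the identification ${^{\kk}m}_2^{1,1}=\iota$ — i.e.\ recognizing the abstract rotation coming from the pivotal structure as the explicit antisymmetrizer; the one-dimensionality of the $\Hom$-space together with the normalization of Lemma \ref{L:graybigon} settles it without any computation of cups and caps.
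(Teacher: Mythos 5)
Your proof is correct and follows essentially the same route as the paper: drop the third term at $q=1$, then identify ${^{\kk}m}_2^{1,1}\circ {^{\kk}m}_{1,1}^2$ with $\id - s$ via the antisymmetrizing idempotent on $V_{\kk}^{\otimes 2}$. The paper asserts that identification tersely (``factors through $\Lambda_{\kk}^2$ and squares to $2$''), whereas you supply the multiplicity-one Hom-space argument and the bigon normalization to pin down ${^{\kk}m}_2^{1,1}$ as the antisymmetrization explicitly — a useful elaboration, but not a different method.
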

\begin{proof}
Let $s$ denote the tensor flip map. Since $q-1=0$ in $\kk$, we see that Definition \ref{D:graybraiding} simplifies to $\brrepC = \id_{V_{\kk}\otimes V_{\kk}} - {^{\R}m}_2^{1,1}\circ {^{\R}m}_{1,1}^2$. Since ${^{\kk}m}_2^{1,1}\circ {^{\kk}m}_{1,1}^2$ factors through $\Lambda_{\kk}^2$ and squares to $2$, we have $\frac{1}{2}{^{\kk}m}_2^{1,1}\circ {^{\kk}m}_{1,1}^2 = \frac{1}{2}\left(\id_{V_{\kk}\otimes V_{\kk}} - s\right)$, the anti-symmetrizing idempotent. Thus, $\brrepC = \id_{V_{\kk}\otimes V_{\kk}} - \left(\id_{V_{\kk}\otimes V_{\kk}} -s\right) = s$. 
\end{proof}

\section{Existence of the functor}

Let $\R\in \lbrace \K, \A, \kk\rbrace$. We will prove that there is a pivotal functor $\Phi_{\R}:\Web\longrightarrow \Fund$. For later arguments, it is important for us to have canonical identifications between $\R\otimes \Phi_{\A}$ and $\Phi_\R$. To make this precise, we construct $\Phi_{\A}$ first, then define $\Phi_\R:=\base_{\R}\circ (\R\otimes \Phi_{\A})$. Since $\WebA$ is a generators and relations category, it suffices to define where generators go and check relations. We already defined where the generators go in Proposition \ref{P:greycalculuswelldefined}, when we established the existence of $\Phi_{\A}^{free}$. Thus, the majority of this section is devoted to deriving various relations among morphisms in $\Fund$. 

\subsection{Deriving relations}

In this section, all the graphical calculations take place in the category $\FundA$, as opposed to $\WebA$. To differentiate the graphical calculus for $\FundA$ from $\WebA$ and $\textbf{FreeWeb}_{\A}(\mathfrak{o}_m)$ we use gray diagrams. That this is valid is justified by Proposition \ref{P:greycalculuswelldefined}.

\begin{notation}
    We have the following gray graphical calculus for morphisms in $\FundA$. 
\begin{gather*}
    \GCapKRef:=\Phi_{\A}^{free}\left(\CapKRef\right), \quad \GCupKRef:=\Phi_{\A}^{free}\left(\CupKRef\right), \quad \text{and} \\
    \VertoMultiRef:={^{\A} m}_{i,j}^{i+j}
\end{gather*}
\end{notation}

\begin{remark}
    When we use gray diagrams in this section, the reader should interpret these diagrams as being $\Phi_{\A}^{free}$ applied to the apricot colored diagram in $\textbf{FreeWeb}_{\A}(\mathfrak{o}_m)$. The reader should also note that we will work in $\FundA$, unless we explicitly say that we are working over $\R\in \{\K, \kk\}$, in which case the gray diagram can be taken to represent $\Phi_{\R}$ applied to a apricot colored free web diagram.
\end{remark}

First, since $\Lambda_{\A}^{\bullet}$ is an associative graded algebra, we have the following.

\def\genSkeinassocia
{\begin{Gtric}
\draw[scale=0.8] (0,0)..controls(0,0.5)and(0.2,0.7)..(0.5,1)
      (1,0)..controls(1,0.5)and(0.8,0.7)..(0.5,1)
      (0.5,1)..controls(0.5,1.5)and(0.3,1.7)..(0,2) 
              node[right,black,midway, scale=0.7]{$k+l$}
      (-1,0)..controls(-1,1)and(-0.6,1.5)..(0,2)
      (0,2)--(0,3)
      (0,0)node[below,black,scale=0.7]{$k$}
      (1,0)node[below,black,scale=0.7]{$l$}
      (-1,0)node[below,black,scale=0.7]{$m$}
      (0,3)node[above,black,scale=0.7]{$k+l+m$};
\end{Gtric}
}

\def\genSkeinassoci
{\begin{Gtric}
\draw[scale=0.8] (0,0)..controls(0,0.5)and(-0.2,0.7)..(-0.5,1)
      (-1,0)..controls(-1,0.5)and(-0.8,0.7)..(-0.5,1)
      (-0.5,1)..controls(-0.5,1.5)and(-0.3,1.7)..(0,2) 
              node[left,black,midway, scale=0.7]{$k+m$}
      (1,0)..controls(1,1)and(0.6,1.5)..(0,2)
      (0,2)--(0,3)
      (0,0)node[below,black,scale=0.7]{$k$}
      (-1,0)node[below,black,scale=0.7]{$m$}
      (1,0)node[below,black,scale=0.7]{$l$}
      (0,3)node[above,black,scale=0.7]{$k+l+m$};
\end{Gtric}
}

\def\NextSkeinIH
{\begin{Gtric}
\draw[scale=0.7] (-1.732,-2)--(0,-1)--(1.732,-2) (1.732,2)--(0,1)--(-1.732,2) 
(-1.732,-2)node[below,black,scale=0.7]{$k+1$}
(1.732,-2)node[below,black,scale=0.7]{$1$}
(1.732,2)node[above,black,scale=0.7]{$1$}
(-1.732,2) node[above,black,scale=0.7]{$k+1$}
(0,-1)--(0,1)node[left,midway,black,scale=0.7]{$k+2$};
\end{Gtric}
}

\def\NextSkeinIHa
{\begin{Gtric}
\draw [scale=0.7]  (2,-1.74)--(1,0)--(2,1.74) 
(-3,1.74)--(0,0) node[above,pos=0.7,black,scale=0.7]{$1$}
(0,0)--(-3,-1.74) node[below,pos=0.3,black,scale=0.7]{$1$}
(0,0)--(1,0) node[below,midway,black,scale=0.7]{$2$}
(-3,-1.74) node[below,black,scale=0.7]{$k+1$}
(-3,1.74) node[above,black,scale=0.7]{$k+1$}
(2,-1.74) node[below,black,scale=0.7]{$1$}
(2,1.74) node[above,black,scale=0.7]{$1$}
(-2,1.16)--(-2,-1.16)node[left,midway,black,scale=0.7]{$k$};
\end{Gtric}
}

\def\NextSkeinIHb
{\begin{Gtric}
\draw[scale=0.7] (-1.732,-2)--(0,-1)--(1.732,-2) (1.732,2)--(0,1)--(-1.732,2) 
(-1.732,-2)node[below,black,scale=0.7]{$k+1$}
(1.732,-2)node[below,black,scale=0.7]{$1$}
(1.732,2)node[above,black,scale=0.7]{$1$}
(-1.732,2) node[above,black,scale=0.7]{$k+1$}
(0,-1)--(0,1)node[left,midway,black,scale=0.7]{$k$};
\end{Gtric}
}

\def\NextSkeinIHc
{\begin{Gtric}
\draw  [scale=0.7] 
(2,-2)..controls(1,-1)and(1,1)..(2,2) (2,-2)node[below,black, scale=0.7]{$1$}
(-2,2)node[above,black,scale=0.7]{$\phantom{k+1}$} 
(-2,2)..controls(-1,1)and(-1,-1)..(-2,-2) (-2,-2)node[below,black,scale=0.7]{$k+1$};
\end{Gtric}
}

\def\LASkeinIH
{\begin{Gtric}
\draw[scale=0.7] (-1.732,-2)--(-1,0) (1,0)--(2,-1.732)
(-1,0)..controls(-1,2)and(1,2)..(1,0) node[above,midway,black,scale=0.7]{$1$}
(-2,-1.732)node[below,black,scale=0.7]{$k+1$}
(2,-1.732) node[below,black,scale=0.7]{$k+1$}
(-1,0)--(1,0)node[below,midway,black,scale=0.7]{$k+2$};
\end{Gtric}
}

\def\LASkeinIHa
{\begin{Gtric}
\draw [scale=0.7]  (0,1)..controls(1,1)and(1,2)..(0,2)..controls(-1,2)and(-1,1)..(0,1)
(0.7,1.5) node[right,black,scale=0.7]{$1$}
(1.74,-3)--(0,0) node[right,pos=0.7,black,scale=0.7]{$1$}
(0,0)--(-1.74,-3) node[left,pos=0.3,black,scale=0.7]{$1$}
(0,0)--(0,1) node[left,midway,black,scale=0.7]{$2$}
(-1.74,-3) node[below,black,scale=0.7]{$k+1$}
(1.74,-3) node[below,black,scale=0.7]{$k+1$}
(1.16,-2)--(-1.16,-2)node[below,midway,black,scale=0.7]{$k$};
\end{Gtric}
}

\def\LASkeinIHb
{\begin{Gtric}
\draw[scale=0.7] (-2,-1.732)--(-1,0)  (1,0)--(2,-1.732) 
(-1,0)..controls(-1,2)and(1,2)..(1,0) node[above,midway,black,scale=0.7]{$1$}
(-2,-1.732)node[below,black,scale=0.7]{$k+1$}
(2,-1.732) node[below,black,scale=0.7]{$k+1$}
(-1,0)--(1,0)node[below,midway,black,scale=0.7]{$k$};
\end{Gtric}
}

\def\LASkeinIHc
{\begin{Gtric}
\draw  [scale=0.7] 
(0,0.5) circle (1) 
(0,1.5) node[above,black,scale=0.7]{$1$}
(2,-2)..controls(1,-1)and(-1,-1)..(-2,-2) node[below,black,midway,scale=0.7]{$k+1$};
\end{Gtric}
}

\def\LBSkeinIH
{\begin{Gtric}
\draw[scale=0.7] (-2,-1.732)--(-1,0)  (1,0)--(2,-1.732) 
(1,0)--(0,1.5) node[right,pos=0.6,black,scale=0.7]{$1$}
(0,1.5)--(-1,0) node[left,pos=0.4,black,scale=0.7]{$1$}
(0,1.5)--(0,3) node[left,midway,black,scale=0.7]{$2$}
(-2,-1.732)node[below,black,scale=0.7]{$k+1$}
(2,-1.732) node[below,black,scale=0.7]{$k+1$}
(-1,0)--(1,0)node[below,midway,black,scale=0.7]{$k+2$};
\end{Gtric}
}

\def\LBSkeinIHa
{\begin{Gtric}
\draw [scale=0.7] (0,1)..controls(1,1)and(1,2)..(0,2)..controls(-1,2)and(-1,1)..(0,1)
(-0.7,1.5) node[left,black,scale=0.7]{$1$}
(0.7,1.5) node[right,black,scale=0.7]{$1$}
(0,2)--(0,3)node[left,black,pos=0.6,scale=0.7]{$2$}
(1.74,-3)--(0,0) node[right,pos=0.7,black,scale=0.7]{$1$}
(0,0)--(-1.74,-3) node[left,pos=0.3,black,scale=0.7]{$1$}
(0,0)--(0,1) node[left,midway,black,scale=0.7]{$2$}
(-1.74,-3) node[below,black,scale=0.7]{$k+1$}
(1.74,-3) node[below,black,scale=0.7]{$k+1$}
(1.16,-2)--(-1.16,-2)node[below,midway,black,scale=0.7]{$k$};
\end{Gtric}
}

\def\LBSkeinIHb
{\begin{Gtric}
\draw[scale=0.7] (-2,-1.732)--(-1,0) (1,0)--(2,-1.732) 
(-2,-1.732)node[below,black,scale=0.7]{$k+1$}
(1,0)--(0,1.5) node[right,pos=0.6,black,scale=0.7]{$1$}
(0,1.5)--(-1,0) node[left,pos=0.4,black,scale=0.7]{$1$}
(0,1.5)--(0,3) node[left,midway,black,scale=0.7]{$2$}
(2,-1.732) node[below,black,scale=0.7]{$k+1$}
(-1,0)--(1,0)node[below,midway,black,scale=0.7]{$k$};
\end{Gtric}
}

\def\ZeroTriangleAtBound
{\begin{Gtric}
\draw[scale=0.7] (-2,-1.732)--(-1,0) (1,0)--(2,-1.732) 
(-2,-1.732)node[below,black,scale=0.7]{$m$}
(1,0)--(0,1.5) node[right,pos=0.6,black,scale=0.7]{$1$}
(0,1.5)--(-1,0) node[left,pos=0.4,black,scale=0.7]{$1$}
(0,1.5)--(0,3) node[left,midway,black,scale=0.7]{$2$}
(2,-1.732) node[below,black,scale=0.7]{$m$}
(-1,0)--(1,0)node[below,midway,black,scale=0.7]{$m-1$};
\end{Gtric}
}

\def\LBSkeinIHc
{\begin{Gtric}
\draw  [scale=0.7] 
(0,-1)..controls(1,-1)and(1,0)..(0,0)..controls(-1,0)and(-1,-1)..(0,-1)
(0.7,-0.5) node[right,black,scale=0.7]{$1$}
(0,0)--(0,1.5)node[left,black,pos=0.6,scale=0.7]{$2$}
(2,-3)..controls(1,-2)and(-1,-2)..(-2,-3) node[below,black,midway,scale=0.7]{$k+1$};
\end{Gtric}
}

\def\LCSkeinIH
{\begin{Gtric}
\draw[scale=0.7] (-2.5,-1.732)--(-1.5,0)--(-2.5,1.732) 
(1,0)..controls(1,1.5)and(2.5,1.5)..(2.5,0) node[above,midway,black,scale=0.7]{$1$}
(1,0)..controls(1,-1.5)and(2.5,-1.5)..(2.5,0) node[below,midway,black,scale=0.7]{$k+1$}
(2.5,0)--(5,0) node[above,midway,black,scale=0.7]{$k+2$}
(-2,-1.732)node[below,black,scale=0.7]{$k+1$}
(-2,1.732)node[above,black,scale=0.7]{$1$}
(-1.5,0)--(1,0)node[above,midway,black,scale=0.7]{$k+2$};
\end{Gtric}
}

\def\LCSkeinIHa
{\begin{Gtric}
\draw [scale=0.7]  (-1.74,2)--(0,1)
(1.16,-2)--(0,0) node[right,pos=0.7,black,scale=0.7]{$1$}
(0,0)--(-1.74,-3) node[left,pos=0.3,black,scale=0.7]{$1$}
(0,0)--(0,1) node[left,midway,black,scale=0.7]{$2$}
(-1.74,-3) node[below,black,scale=0.7]{$k+1$}
(-1.74,2) node[above,black,scale=0.7]{$1$}
(1.16,-2)--(-1.16,-2)node[below,midway,black,scale=0.7]{$k$}
(2,0)--(0,1) node[above,midway,black,scale=0.7]{$1$}
(2,0)--(1.16,-2) node[right,pos=0.7,black,scale=0.7]{$k+1$}
(2,0)--(4,0) node[above,midway,black,scale=0.7]{$k+2$};
\end{Gtric}
}

\def\LCSkeinIHb
{\begin{Gtric}
\draw[scale=0.7] (-2.5,-1.732)--(-1.5,0)--(-2.5,1.732) 
(1,0)..controls(1,1.5)and(2.5,1.5)..(2.5,0) node[above,midway,black,scale=0.7]{$1$}
(1,0)..controls(1,-1.5)and(2.5,-1.5)..(2.5,0) node[below,midway,black,scale=0.7]{$k+1$}
(2.5,0)--(5,0) node[above,midway,black,scale=0.7]{$k+2$}
(-2,-1.732)node[below,black,scale=0.7]{$k+1$}
(-2,1.732)node[above,black,scale=0.7]{$1$}
(-1.5,0)--(1,0)node[above,midway,black,scale=0.7]{$k$};
\end{Gtric}
}

\def\LCSkeinIHc
{\begin{Gtric}
\draw[scale=0.7] (-2.5,-1.732)--(-1.5,0)--(-2.5,1.732) 
(-2,-1.732)node[below,black,scale=0.7]{$k+1$}
(-2,1.732)node[above,black,scale=0.7]{$1$}
(-1.5,0)--(1,0)node[above,midway,black,scale=0.7]{$k+2$};
\end{Gtric}
}

\def\LCSkeinIHlem
{\begin{Gtric}
\draw [scale=0.8] 
(0,0)--(120:2) node[black,anchor=south,scale=0.7]{$1$}
(0,0)--(240:2) node[black,anchor=north,scale=0.7]{$k+1$}
(0,0)--(0:2) node[black,above,scale=0.7]{$k+2$};
\end{Gtric}
}

\def\TTcalc
{\begin{Gtric}
\draw[scale=0.7] (-2,-1.732)--(-1,0)  
(-2,-1.732)node[below,black,scale=0.7]{$k+1$}
(1,0)--(0,1.5) node[right,pos=0.6,black,scale=0.7]{$1$}
(0,1.5)--(-1,0) node[left,pos=0.4,black,scale=0.7]{$1$}
(-1,0)--(1,0)node[below,midway,black,scale=0.7]{$k+2$}

(0,1.5)..controls(0,2)and(4,2)..(4,1.5) 
node[above,midway,black,scale=0.7]{$2$}
(1,0)..controls(1.5,-1.5)and(2.5,-1.5)..(3,0) node[below,midway,black,scale=0.7]{$k+1$}

(6,-1.732)--(5,0)  
(6,-1.732)node[below,black,scale=0.7]{$k+1$}
(5,0)--(4,1.5) node[right,pos=0.6,black,scale=0.7]{$1$}
(4,1.5)--(3,0) node[left,pos=0.4,black,scale=0.7]{$1$}
(3,0)--(5,0)node[below,midway,black,scale=0.7]{$k$};
\end{Gtric}
}

\def\TTcalca
{\begin{Gtric}
\draw[scale=0.7] (-1,-1.732)--(0,0)  
(-1,-1.732)node[below,black,scale=0.7]{$k+1$}
(5,-1.732)--(4,0)  
(5,-1.732)node[below,black,scale=0.7]{$k+1$}
(0,0)--(4,0) node[below,black,scale=0.7,midway]{$k+2$}
(0,0)..controls(0,2)and(4,2)..(4,0) node[above,black,scale=0.7,midway]{$1$} ;
\end{Gtric}
}

\def\TTcalcb
{\begin{Gtric}
\draw[scale=0.7] 
(0,-1.732) .. controls(0,1)and(3,1) .. (3,-1.732)
 node[above,black,scale=0.7,midway]{$k+1$} ;
\end{Gtric}
}

\def\TTcalcc
{\begin{Gtric}
\draw[scale=0.7] (-2,-1.732)--(-1,0)  
(-2,-1.732)node[below,black,scale=0.7]{$k+1$}
(1,0)--(0,1.5) node[right,pos=0.6,black,scale=0.7]{$1$}
(0,1.5)--(-1,0) node[left,pos=0.4,black,scale=0.7]{$1$}
(-1,0)--(1,0)node[below,midway,black,scale=0.7]{$k$}

(0,1.5)..controls(0,2)and(4,2)..(4,1.5) 
node[above,midway,black,scale=0.7]{$2$}
(1,0)..controls(1.5,-1.5)and(2.5,-1.5)..(3,0) node[below,midway,black,scale=0.7]{$k+1$}

(6,-1.732)--(5,0)  
(6,-1.732)node[below,black,scale=0.7]{$k+1$}
(5,0)--(4,1.5) node[right,pos=0.6,black,scale=0.7]{$1$}
(4,1.5)--(3,0) node[left,pos=0.4,black,scale=0.7]{$1$}
(3,0)--(5,0)node[below,midway,black,scale=0.7]{$k$};
\end{Gtric}
}

\def\TTcalcd
{\begin{Gtric}
\draw[scale=0.7] (-2,-1.732)--(-1,0)  
(-2,-1.732)node[below,black,scale=0.7]{$k+1$}
(1,0)..controls(1,1.5)and(-1,1.5)..(-1,0) node[above,midway,black,scale=0.7]{$1$}
(-1,0)--(1,0)node[below,midway,black,scale=0.7]{$k$}

(1,0)..controls(1.5,-1.5)and(2.5,-1.5)..(3,0) node[below,midway,black,scale=0.7]{$k+1$}

(6,-1.732)--(5,0)  
(6,-1.732)node[below,black,scale=0.7]{$k+1$}
(5,0)..controls(5,1.5)and(3,1.5)..(3,0) node[above,midway,black,scale=0.7]{$1$}
(3,0)--(5,0)node[below,midway,black,scale=0.7]{$k$};
\end{Gtric}
}

\def\TTcalce
{\begin{Gtric}
\draw[scale=0.7] (-2,-1.732)--(-1,0)  
(-2,-1.732)node[below,black,scale=0.7]{$k+1$}
(-1,0)--(1,0)node[below,midway,black,scale=0.7]{$k$}

(1,0)..controls(1.5,-1.5)and(2.5,-1.5)..(3,0) node[below,midway,black,scale=0.7]{$k+1$}

(6,-1.732)--(5,0)  
(6,-1.732)node[below,black,scale=0.7]{$k+1$}
(3,0)--(5,0)node[below,midway,black,scale=0.7]{$k$}

(-1,0)..controls(0,2)and(1,2.5)..(2,2.5)
node[above,midway,black,scale=0.7]{$1$}
(5,0)..controls(4,2)and(3,2.5)..(2,2.5)
node[above,midway,black,scale=0.7]{$1$}
(2,2.5)--(2,1)node[left,midway,black,scale=0.7]{$2$}
(2,1)..controls(1.5,1)and(1,0.5)..(1,0)
node[left,pos=0.4,black,scale=0.7]{$1$}
(2,1)..controls(2.5,1)and(3,0.5)..(3,0)
node[right,pos=0.4,black,scale=0.7]{$1$};
\end{Gtric}
}

\def\TTcalcf
{\begin{Gtric}
\draw[scale=0.7] (-2,-1.732)--(-1,0)  
(-2,-1.732)node[below,black,scale=0.7]{$k+1$}
(-1,0)--(1,0)node[below,midway,black,scale=0.7]{$k$}

(1,0)..controls(1.5,-1.5)and(2.5,-1.5)..(3,0) node[below,midway,black,scale=0.7]{$k+1$}

(6,-1.732)--(5,0)  
(6,-1.732)node[below,black,scale=0.7]{$k+1$}
(3,0)--(5,0)node[below,midway,black,scale=0.7]{$k$}

(-1,0)..controls(0,2)and(1,2.5)..(2,2.5)
(5,0)..controls(4,2)and(3,2.5)..(2,2.5)
(2,2.5)node[above,black,scale=0.7]{$1$}
(3,0)..controls(3,0.5)and(2.5,1)..(2,1)..controls(1.5,1)and(1,0.5)..(1,0)
(2,1)node[above,black,scale=0.7]{$1$};
\end{Gtric}
}

\def\TTcalcg
{\begin{Gtric}
\draw[scale=0.7] (-0.5,-1.732)--(-0.5,0)  
(-0.5,-1.732)node[below,black,scale=0.7]{$k+1$}
(-0.5,0)--(1,0)node[below,midway,black,scale=0.7]{$k$}

(1,0)..controls(1.5,-1)and(2.5,-1)..(3,0) node[below,midway,black,scale=0.7]{$k-1$}

(4.5,-1.732)--(4.5,0)  
(4.5,-1.732)node[below,black,scale=0.7]{$k+1$}
(3,0)--(4.5,0)node[below,midway,black,scale=0.7]{$k$}

(-0.5,0)..controls(0,2)and(1,2.5)..(2,2.5)
node[above,midway,black,scale=0.7]{$1$}
(4.5,0)..controls(4,2)and(3,2.5)..(2,2.5)
node[above,midway,black,scale=0.7]{$1$}
(2,2.5)--(2,1)node[left,midway,black,scale=0.7]{$2$}
(2,1)..controls(1.5,1)and(1,0.5)..(1,0)
node[left,pos=0.4,black,scale=0.7]{$1$}
(2,1)..controls(2.5,1)and(3,0.5)..(3,0)
node[right,pos=0.4,black,scale=0.7]{$1$};
\end{Gtric}
}

\def\TTcalcgg
{\begin{Gtric}
\draw[scale=0.7] (-0.5,-1.732)--(-0.5,0)  
(-0.5,-1.732)node[below,black,scale=0.7]{$k+1$}
(-0.5,0)--(1,0)node[below,midway,black,scale=0.7]{$k$}

(1,0)..controls(1.5,-1)and(2.5,-1)..(3,0) node[below,midway,black,scale=0.7]{$k+1$}

(4.5,-1.732)--(4.5,0)  
(4.5,-1.732)node[below,black,scale=0.7]{$k+1$}
(3,0)--(4.5,0)node[below,midway,black,scale=0.7]{$k$}

(-0.5,0)..controls(0,2)and(1,2.5)..(2,2.5)
node[above,midway,black,scale=0.7]{$1$}
(4.5,0)..controls(4,2)and(3,2.5)..(2,2.5)
node[above,midway,black,scale=0.7]{$1$}
(2,2.5)--(2,1)node[left,midway,black,scale=0.7]{$2$}
(2,1)..controls(1.5,1)and(1,0.5)..(1,0)
node[left,pos=0.4,black,scale=0.7]{$1$}
(2,1)..controls(2.5,1)and(3,0.5)..(3,0)
node[right,pos=0.4,black,scale=0.7]{$1$};
\end{Gtric}
}

\def\TTcalch
{\begin{Gtric}
\draw[scale=0.7] (-0.5,-1.732)--(-0.5,0)  
(-0.5,-1.732)node[below,black,scale=0.7]{$k+1$}
(-0.5,0)--(1,0)node[below,midway,black,scale=0.7]{$k$}

(1,0)..controls(1.5,-1.5)and(2.5,-1.5)..(3,0) node[below,midway,black,scale=0.7]{$k-1$}

(4.5,-1.732)--(4.5,0)  
(4.5,-1.732)node[below,black,scale=0.7]{$k+1$}
(3,0)--(4.5,0)node[below,midway,black,scale=0.7]{$k$}

(-0.5,0)..controls(0,4)and(1,4)..(2,4)
node[above,midway,black,scale=0.7]{$1$}
(4.5,0)..controls(4,4)and(3,4)..(2,4)
node[above,midway,black,scale=0.7]{$1$}

(2,2)--(2,1)node[left,midway,black,scale=0.7]{$2$}
(2,2)..controls(1.5,2)and(1.5,3)..(2,3)
node[left,midway,black,scale=0.7]{$1$}
(2,2)..controls(2.5,2)and(2.5,3)..(2,3)
node[right,midway,black,scale=0.7]{$1$}
(2,3)--(2,4)node[left,midway,black,scale=0.7]{$2$}

(2,1)..controls(1.5,1)and(1,0.5)..(1,0)
node[left,pos=0.4,black,scale=0.7]{$1$}
(2,1)..controls(2.5,1)and(3,0.5)..(3,0)
node[right,pos=0.4,black,scale=0.7]{$1$};
\end{Gtric}
}

\def\TTcalci
{\begin{Gtric}
\draw[scale=0.7] (0.5,-1.732)--(0.5,0)  
(0.5,-1.732)node[below,black,scale=0.7]{$k+1$}
(0.5,0)--(3.5,0)node[below,midway,black,scale=0.7]{$k$}

(3.5,-1.732)--(3.5,0)  
(3.5,-1.732)node[below,black,scale=0.7]{$k+1$}

(0.5,0)..controls(0.5,4)..(2,4)
node[left,pos=0.3,black,scale=0.7]{$1$}
(3.5,0)..controls(3.5,4)..(2,4)
node[right,pos=0.3,black,scale=0.7]{$1$}

(2,1)..controls(1.5,1)and(1.5,2)..(2,2)
(2,1)..controls(2.5,1)and(2.5,2)..(2,2)
(2,2)node[right,midway,black,scale=0.7]{$1$}
(2,2)--(2,4)node[left,midway,black,scale=0.7]{$2$};
\end{Gtric}
}

\def\TTcalcj
{\begin{Gtric}
\draw[scale=0.7] (-0.5,-1.732)--(-0.5,0)  
(-0.5,-1.732)node[below,black,scale=0.7]{$k+1$}
(-0.5,0)--(3,0)node[below,midway,black,scale=0.7]{$k-1$}

(-0.5,0)--(-0.5,1)node[left,midway,black,scale=0.7]{$2$}

(4.5,-1.732)--(4.5,0)  
(4.5,-1.732)node[below,black,scale=0.7]{$k+1$}
(3,0)--(4.5,0)node[below,midway,black,scale=0.7]{$k$}

(-0.5,1)..controls(0,2)and(1,2.5)..(2,2.5)
node[above,midway,black,scale=0.7]{$1$}
(4.5,0)..controls(4,2)and(3,2.5)..(2,2.5)
node[above,midway,black,scale=0.7]{$1$}
(2,2.5)--(2,1)node[right,midway,black,scale=0.7]{$2$}
(2,1)--(-0.5,1) node[above,pos=0.4,black,scale=0.7]{$1$}
(2,1)..controls(2.5,1)and(3,0.5)..(3,0)
(2.65,0.75)node[right,black,scale=0.7]{$1$};
\end{Gtric}
}

\def\TTcalck
{\begin{Gtric}
\draw[scale=0.7] (0.5,-1.732)--(0.5,0)  
(0.5,-1.732)node[below,black,scale=0.7]{$k+1$}
(0.5,0)--(3,0)node[below,midway,black,scale=0.7]{$k-1$}

(0.5,0)--(0.5,1)node[left,midway,black,scale=0.7]{$2$}

(4.5,-1.732)--(4.5,0)  
(4.5,-1.732)node[below,black,scale=0.7]{$k+1$}
(3,0)--(4.5,0)node[below,midway,black,scale=0.7]{$k$}

(0.5,1)..controls(0.7,2)and(1.2,2.5)..(2,2.5)
node[above,black,scale=0.7]{$1$}
(4.5,0)..controls(4,2)and(3,2.5)..(2,2.5)

(2,1)--(0.5,1) node[above,pos=0.2,black,scale=0.7]{$1$}
(2,1)..controls(2.5,1)and(3,0.5)..(3,0);
\end{Gtric}
}

\def\TTcalcl
{\begin{Gtric}
\draw[scale=0.7] (-0.5,-1.732)--(-0.5,0)  
(-0.5,-1.732)node[below,black,scale=0.7]{$k+1$}
(0.7,0)--(3.3,0)node[below,midway,black,scale=0.7]{$k-1$}
(-0.5,0)--(0.7,0)node[below,midway,black,scale=0.7]{$k$}

(4.5,-1.732)--(4.5,0)  
(4.5,-1.732)node[below,black,scale=0.7]{$k+1$}
(3.3,0)--(4.5,0)node[below,midway,black,scale=0.7]{$k$}

(-0.5,0)..controls(0,2)and(1,2.5)..(2,2.5)
node[above,black,scale=0.7]{$1$}
(4.5,0)..controls(4,2)and(3,2.5)..(2,2.5)

(2,1)..controls(1.5,1)and(0.7,0.5)..(0.7,0) (2,1)node[above,black,scale=0.7]{$1$}
(2,1)..controls(2.5,1)and(3.3,0.5)..(3.3,0);
\end{Gtric}
}

\def\GrayQuaVertexa
{\begin{Gtric}
\draw[scale=0.7] (-1.732,-2)--(0,-1)--(1.732,-2) (1.732,2)--(0,1)--(-1.732,2) 
(-1.732,-2)node[below,black,scale=0.7]{$1$}
(1.732,-2)node[below,black,scale=0.7]{$1$}
(1.732,2)node[above,black,scale=0.7]{$1$}
(-1.732,2) node[above,black,scale=0.7]{$1$}
(0,-1)--(0,1)node[left,midway,black,scale=0.7]{$2$};
\end{Gtric}
}

\def\GrayQuaVertexb
{\begin{Gtric}
\draw  [scale=0.7] 
(-2,2)..controls(-1,1)and(1,1)..(2,2) node[above,black,midway,scale=0.7]{$1$}
(2,-2)..controls(1,-1)and(-1,-1)..(-2,-2) node[below,black,midway,scale=0.7]{$1$};
\end{Gtric}
}

\def\GrayQuaVertexc
{\begin{Gtric}
\draw[scale=0.7] (-2,-1.732)--(-1,0)--(-2,1.732) (2,1.732)--(1,0)--(2,-1.732) 
(-2,-1.732)node[below,black,scale=0.7]{$1$}
(-2,1.732)node[above,black,scale=0.7]{$1$}
(2,1.732)node[above,black,scale=0.7]{$1$}
(2,-1.732) node[below,black,scale=0.7]{$1$}
(-1,0)--(1,0)node[below,midway,black,scale=0.7]{$2$};
\end{Gtric}
}

\def\GrayQuaVertexd
{\begin{Gtric}
\draw  [scale=0.7] 
(-2,2)..controls(-1,1)and(-1,-1)..(-2,-2) node[left,black,midway,scale=0.7]{$1$}
(2,-2)..controls(1,-1)and(1,1)..(2,2) node[right,black,midway,scale=0.7]{$1$};
\end{Gtric}
}

\begin{equation}\label{E:gray-Assoc}
    \genSkeinassocia = \genSkeinassoci  .
\end{equation}

\def\GraySkeinak
{\begin{Gtric}
\draw (0.7,0) circle (0.7);
\draw (0,0)node[left,black,scale=0.7]{$k$}; 
\end{Gtric}
}

\def\GraySkeingg
{\begin{Gtric}
\draw[scale=0.8] (0,-1)--(0,0) (0,0)..controls(0.7,0.5)and(0.7,1.5)..(0,1.5)..controls(-0.7,1.5)and(-0.7,0.5)..(0,0);
\draw (0,-0.8)node[below,black,scale=0.7]{$2$};
\draw (-0.4,0.7)node[left,black,scale=0.7]{$1$};
\end{Gtric}
}

\def\GraySkeinbigon
{\begin{Gtric}
\draw[scale= 0.8] (0,0.7)--(0,1.5) (0,-0.7)--(0,-1.5)
        (0,1.5)node[above,black,scale=0.7]{$k$}
        (0,-1.5)node[below,black,scale=0.7]{$k$}
      (0,0.7)..controls(-0.5,0.7)and(-0.5,-0.7)..(0,-0.7) node[left,midway,black,scale=0.7]{$1$} (0,0.7)..controls(0.5,0.7)and(0.5,-0.7)..(0,-0.7)
      node[right,midway,black,scale=0.7]{$k-1$} ;
\end{Gtric}
}

\def\GraySkeinbigona
{\begin{Gtric}
\draw [scale=0.8] (0,1.5)--(0,-1.5)  node[below,black,scale=0.7]{$k$} ;
\end{Gtric}
}

\def\GrayAltbigon
{\begin{Gtric}
\draw[scale= 0.8] (0,0.7)--(0,1.5) (0,-0.7)--(0,-1.5)
        (0,1.5)node[above,black,scale=0.7]{$k$}
        (0,-1.5)node[below,black,scale=0.7]{$k$}
      (0,0.7)..controls(-0.5,0.7)and(-0.5,-0.7)..(0,-0.7) node[left,midway,black,scale=0.7]{$1$} (0,0.7)..controls(0.5,0.7)and(0.5,-0.7)..(0,-0.7)
      node[right,midway,black,scale=0.7]{$k+1$} ;
\end{Gtric}
}

\def\GrayAltbigona
{\begin{Gtric}
\draw [scale=0.8] (0,1.5)--(0,-1.5)  node[below,black,scale=0.7]{$k$} ;
\end{Gtric}
}

\def\GrayAltbigonb
{\begin{Gtric}
\draw[scale= 0.8] 
        (0,0.7)..controls(0,1.5)and(-2,1.5)..(-2,0) 
        (0,-0.7)..controls(0,-1.5)and(-2,-1.5)..(-2,0)
        (-2,0)node[left,black,scale=0.7]{$k$}
      (0,0.7)..controls(-0.5,0.7)and(-0.5,-0.7)..(0,-0.7) node[left,midway,black,scale=0.7]{$1$} (0,0.7)..controls(0.5,0.7)and(0.5,-0.7)..(0,-0.7)
      node[right,midway,black,scale=0.7]{$k+1$} ;
\end{Gtric}
}

\def\GrayZeroBigon
{\begin{Gtric}
   \draw[scale= 0.8] (0,0.7)--(0,1.5) (0,-0.7)--(0,-1.5)
        (0,1.5)node[above,black,scale=0.7]{$k$}
        (0,-1.5)node[below,black,scale=0.7]{$k+2$}
      (0,0.7)..controls(-0.5,0.7)and(-0.5,-0.7)..(0,-0.7) node[left,midway,black,scale=0.7]{$1$} (0,0.7)..controls(0.5,0.7)and(0.5,-0.7)..(0,-0.7)
      node[right,midway,black,scale=0.7]{$k+1$} ;
\end{Gtric}
}

\def\GrayCirK
{\begin{Gtric}
\draw (0.7,0) circle (0.7);
\draw (0.7,0.7)node[above,black,scale=0.7]{$k$}
      (0.7,-0.7)node[below,black,scale=0.7]{\phantom{x}}; 
\end{Gtric}
}

\def\GrayCirKplus
{\begin{Gtric}
\draw (0.7,0) circle (0.7);
\draw (0.7,0.7)node[above,black,scale=0.7]{$k+1$}
      (0.7,-0.7)node[below,black,scale=0.7]{\phantom{x}}; 
\end{Gtric}
}

\noindent Lemma \ref{L:graycircle} says that

\begin{equation}\label{E:gray-circle}
 \GraySkeinak = \frac{[2m-4k][m]}{[m-2k][2m]}
\begin{bmatrix}
m \\
k 
\end{bmatrix} _{q^2} , 
\end{equation}
and from Lemma \ref{L:graybigon}  we find, 

\begin{equation}\label{E:grey-skein-bigon}
\GraySkeinbigon  \ = \ \frac{[2k]}{[2]} \ \GraySkeinbigona \ \ .   
\end{equation}
Since $\Hom_{U_{\A}(\om)}(\Lambda^{k+2}_{\A}, \Lambda^k_{\A}) = 0$, we also have
\begin{equation}\label{E:grey-skein-zerobigon}
 \GrayZeroBigon = 0 .
\end{equation}
Let $k=0$ in \EQ \eqref{E:grey-skein-zerobigon}, we have 
\begin{equation}\label{E:grey-skein-monogon}
 \GraySkeingg = 0 .
\end{equation}

\begin{lemma} 
    \begin{align}   \GrayAltbigon =   \frac{[2m-2k][2m-4k-4][m-2k]}{[2][m-2k-2][2m-4k]} \GrayAltbigona \label{Grayreversebigon}
     \end{align} 
\end{lemma}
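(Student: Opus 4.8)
The plan is to recognize $\GrayAltbigon$ as a scalar multiple of $\id_{\Lambda_{\A}^k}$ and then to pin down the scalar by closing the diagram into circles. First, $\GrayAltbigon$ is an element of $\End_{U_{\A}(\om)}(\Lambda_{\A}^k)$; when $k=m$ it factors through $\Lambda_{\A}^{m+1}=0$, so both sides vanish (consistent with $[2m-2k]=[0]=0$), and I may assume $0\le k<m$. Since the $\Hom$-spaces of $\FundA$ are free $\A$-modules (Lemma \ref{L:FundA-homs-free-fin-gen}) and $\base_{\K}$ is faithful (Lemma \ref{L:sufficestocheckoverK}), it suffices to prove the identity in $\FundK$. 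There $\Lambda_{\K}^k$ is irreducible (Proposition \ref{L:ext-powers-irredicible}), so Schur's lemma (Lemma \ref{L:o-Schurslemma}) forces $\GrayAltbigon=\lambda_k\cdot\id_{\Lambda_{\K}^k}$ for a unique $\lambda_k\in\K$, and it remains only to compute $\lambda_k$.

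To do this I would take the quantum trace of both sides, i.e.\ join the two outer $k$-strands of $\GrayAltbigon$ by an arc. On the one hand this gives $\lambda_k\cdot\GraySkeinak$, whose value is recorded in \eqref{E:grey-circle} and is a nonzero element of $\K$. On the other hand, closing up $\GrayAltbigon$ produces the closed ``theta'' diagram whose two trivalent vertices are joined by three edges labelled $1$, $k+1$ and $k$. Inside this closed diagram the sub-bigon formed by the edges labelled $1$ and $k$ --- whose two external legs both carry the label $k+1$ --- may be evaluated by \eqref{E:grey-skein-bigon} with $k$ replaced by $k+1$; this is legitimate for a subdiagram because of the graphical calculus of Proposition \ref{P:greycalculuswelldefined}. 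That produces $\tfrac{[2k+2]}{[2]}\cdot\GrayCirKplus$, and a second application of \eqref{E:grey-circle} evaluates the remaining $(k+1)$-circle.

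Equating the two evaluations and cancelling $[m]/[2m]$ yields
\[
\lambda_k=\frac{[2k+2]}{[2]}\cdot\frac{[2m-4k-4]\,[m-2k]}{[m-2k-2]\,[2m-4k]}\cdot\frac{\binom{m}{k+1}_{q^2}}{\binom{m}{k}_{q^2}}.
\]
Using $[2n]_q=[2]_q[n]_{q^2}$ one has $\binom{m}{k+1}_{q^2}/\binom{m}{k}_{q^2}=[m-k]_{q^2}/[k+1]_{q^2}=[2m-2k]/[2k+2]$, so the right-hand side collapses to the asserted $\frac{[2m-2k][2m-4k-4][m-2k]}{[2][m-2k-2][2m-4k]}$ (the quotients $[2m-4k]/[m-2k]$ and $[2m-4k-4]/[m-2k-2]$ being interpreted via Remark \ref{R:L'Hospitals} when $m=2k$ or $m=2k+2$). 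Finally, faithfulness of $\base_{\K}$ transports the identity from $\FundK$ back to $\FundA$. The only non-routine step is the evaluation of the inner bigon \emph{as a subdiagram} of the closed theta graph --- which is exactly what Proposition \ref{P:greycalculuswelldefined} licenses --- together with the quantum-integer collapse at the end; the rest is Schur's lemma and the already-established circle and bigon relations.
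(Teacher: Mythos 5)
Your proposal is correct and follows essentially the same route as the paper: reduce to $\FundK$ via faithfulness of $\base_{\K}$, invoke Schur's lemma to write the left side as a scalar multiple of the identity, close the diagram into a circle on both sides, evaluate one closure as $\lambda_k$ times a $k$-circle and the other via the bigon relation \eqref{E:grey-skein-bigon} as $\frac{[2k+2]}{[2]}$ times a $(k+1)$-circle, then solve for the scalar using \eqref{E:gray-circle}. The only cosmetic differences are that you spell out the quantum-binomial simplification and the $k=m$ degenerate case explicitly, whereas the paper leaves them to the reader.
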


\begin{proof}
We temporarily work over $\K$. Since  $\Hom_{U_{\K}(\om)}(\Lambda_{\K}^k,  \Lambda_{\K}^k)$ is 1-dimensional, there exists $\alpha \in\ \K$ such that
\[
\GrayAltbigon =   \alpha \GrayAltbigona.
\]
If we show \EQ \eqref{Grayreversebigon} is true over $\K$, then in particular $\alpha\in \A$, so Lemma \ref{L:sufficestocheckoverK} implies the equation also holds in $\FundA$.

Observe that 
$$\GrayAltbigonb = \alpha \ \GrayCirK
  = \alpha \ \frac{[2m-4k][m]}{[m-2k][2m]}
\begin{bmatrix}
m \\
k 
\end{bmatrix} _{q^2} .$$ On the other hand, 

$$\GrayAltbigonb = \frac{[2k+2]}{[2]} \  \GrayCirKplus
  = \frac{[2k+2]}{[2]}  \ \frac{[2m-4k-4][m]}{[m-2k-2][2m]}
\begin{bmatrix}
m \\
k+1 
\end{bmatrix} _{q^2} .$$ Thus, 
\[
\alpha \ \frac{[2m-4k][m]}{[m-2k][2m]}
\begin{bmatrix}
m \\
k 
\end{bmatrix} _{q^2}\cdot \id_{\Lambda^0_{\K}}  =
\frac{[2k+2]}{[2]}  \ \frac{[2m-4k-4][m]}{[m-2k-2][2m]}
\begin{bmatrix}
m \\
k+1 
\end{bmatrix} _{q^2}\cdot \id_{\Lambda^0_{\K}},
\]
and since $\id_{\Lambda^0_{\K}}\ne 0$, we can compare coefficients and solve for $\alpha$ from
\[
\alpha \ \frac{[2m-4k][m]}{[m-2k][2m]}
\begin{bmatrix}
m \\
k 
\end{bmatrix} _{q^2}  =
\frac{[2k+2]}{[2]}  \ \frac{[2m-4k-4][m]}{[m-2k-2][2m]}
\begin{bmatrix}
m \\
k+1 
\end{bmatrix} _{q^2}.
\]
  
\end{proof}

\def\GrayBraidTriv
{\begin{Gtric}
\draw (-1.5,1.5)--(1.5,-1.5) ;
\draw (1.5,1.5)--(-1.5,-1.5);
\end{Gtric}
}

\def\GrayBraida
{\begin{Gtric}
\draw [scale=0.7] 
(-2,2)..controls(-1,1)and(-1,-1)..(-2,-2) node[left,black,midway,scale=0.7]{$1$}
(2,-2)..controls(1,-1)and(1,1)..(2,2) node[right,black,midway,scale=0.7]{$1$};
\end{Gtric}
}

\def\GrayBraid
{\begin{Gtric}
\draw (-1.5,1.5)--(1.5,-1.5) ;
\draw[double=darkwhite,ultra thick,white,double distance=0.8pt,line width=3pt] (1.5,1.5)--(-1.5,-1.5);
\end{Gtric}
}

\def\GrayBraidb
{\begin{Gtric}
\draw[scale=0.7] (-1.732,-2)--(0,-1)--(1.732,-2) (1.732,2)--(0,1)--(-1.732,2) 
(-1.732,-2)node[below,black,scale=0.7]{$1$}
(1.732,-2)node[below,black,scale=0.7]{$1$}
(1.732,2)node[above,black,scale=0.7]{$1$}
(-1.732,2) node[above,black,scale=0.7]{$1$}
(0,-1)--(0,1)node[left,midway,black,scale=0.7]{$2$};
\end{Gtric}
}

\def\GrayBraidc
{\begin{Gtric}
\draw [scale=0.7] 
(-2,2)..controls(-1,1)and(1,1)..(2,2) node[above,black,midway,scale=0.7]{$1$}
(2,-2)..controls(1,-1)and(-1,-1)..(-2,-2) node[below,black,midway,scale=0.7]{$1$};
\end{Gtric}
}

\def\GrayNegBraid
{\begin{Gtric}
\draw  (1.5,1.5)--(-1.5,-1.5);
\draw[double=darkwhite,ultra thick,white,double distance=0.8pt,line width=3pt] (-1.5,1.5)--(1.5,-1.5);
\end{Gtric}
}

\def\GraySkeinIH
{\begin{Gtric}
\draw[scale=0.7] (-2,-1.732)--(-1,0)--(-2,1.732) (2,1.732)--(1,0)--(2,-1.732) 
(-2,-1.732)node[below,black,scale=0.7]{$k$}
(-2,1.732)node[above,black,scale=0.7]{$1$}
(2,1.732)node[above,black,scale=0.7]{$1$}
(2,-1.732) node[below,black,scale=0.7]{$k$}
(-1,0)--(1,0)node[below,midway,black,scale=0.7]{$k+1$};
\end{Gtric}
}

\def\GraySkeinIHa
{\begin{Gtric}
\draw [scale=0.7]  (-1.74,2)--(0,1)--(1.74,2) 
(1.74,-3)--(0,0) node[right,pos=0.7,black,scale=0.7]{$1$}
(0,0)--(-1.74,-3) node[left,pos=0.3,black,scale=0.7]{$1$}
(0,0)--(0,1) node[left,midway,black,scale=0.7]{$2$}
(-1.74,-3) node[below,black,scale=0.7]{$k$}
(1.74,-3) node[below,black,scale=0.7]{$k$}
(-1.74,2) node[above,black,scale=0.7]{$1$}
(1.74,2) node[above,black,scale=0.7]{$1$}
(1.16,-2)--(-1.16,-2)node[below,midway,black,scale=0.7]{$k-1$};
\end{Gtric}
}

\def\GraySkeinIHb
{\begin{Gtric}
\draw[scale=0.7] (-2,-1.732)--(-1,0)--(-2,1.732) (2,1.732)--(1,0)--(2,-1.732) 
(-2,-1.732)node[below,black,scale=0.7]{$k$}
(-2,1.732)node[above,black,scale=0.7]{$1$}
(2,1.732)node[above,black,scale=0.7]{$1$}
(2,-1.732) node[below,black,scale=0.7]{$k$}
(-1,0)--(1,0)node[below,midway,black,scale=0.7]{$k-1$};
\end{Gtric}
}

\def\GraySkeinIHbm
{\begin{Gtric}
\draw[scale=0.7] (-2,-1.732)--(-1,0)--(-2,1.732) (2,1.732)--(1,0)--(2,-1.732) 
(-2,-1.732)node[below,black,scale=0.7]{$m$}
(-2,1.732)node[above,black,scale=0.7]{$1$}
(2,1.732)node[above,black,scale=0.7]{$1$}
(2,-1.732) node[below,black,scale=0.7]{$m$}
(-1,0)--(1,0)node[below,midway,black,scale=0.7]{$m-1$};
\end{Gtric}
}

\def\GraySkeinIHc
{\begin{Gtric}
\draw  [scale=0.7] 
(-2,2)..controls(-1,1)and(1,1)..(2,2) node[above,black,midway,scale=0.7]{$1$}
(2,-2)..controls(1,-1)and(-1,-1)..(-2,-2) node[below,black,midway,scale=0.7]{$k$};
\end{Gtric}
}

\def\GraySkeinIHcm
{\begin{Gtric}
\draw  [scale=0.7] 
(-2,2)..controls(-1,1)and(1,1)..(2,2) node[above,black,midway,scale=0.7]{$1$}
(2,-2)..controls(1,-1)and(-1,-1)..(-2,-2) node[below,black,midway,scale=0.7]{$m$};
\end{Gtric}
}

\begin{lemma}
 
\begin{align}
   \GrayQuaVertexa 
   + \frac{[2m-8][m-2]}{[m-4][2m-4]}\GrayQuaVertexb   = \GrayQuaVertexc
   +\frac{[2m-8][m-2]}{[m-4][2m-4]}   \GrayQuaVertexd \label{grayIHbase}
\end{align}   

\end{lemma}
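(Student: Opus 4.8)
The plan is to obtain \eqref{grayIHbase} by comparing the two formulas of Definition \ref{D:graybraiding} for $\brrep$ and $\brrepi$ via the $90$ degree rotation of the graphical calculus for $\FundA$ (which is legitimate by Proposition \ref{P:greycalculuswelldefined}). The first thing to record is the effect of this rotation on each of the relevant diagrams: the $90$ degree rotation of the vertical web
\[
{^{\A}m}_{2}^{1,1}\circ {^{\A}m}_{1,1}^{2}\ =\ \GrayQuaVertexa
\]
is the horizontal web $\GrayQuaVertexc$\,; the $90$ degree rotation of $\id_{V_{\A}\otimes V_{\A}}=\GrayQuaVertexd$ is the cup-cap ${^{\A}c}_1\circ {^{\A}e}_1=\GrayQuaVertexb$\,; and, by the zig-zag relations (Equation \eqref{E:zigzag} and Lemma \ref{l:eczigzag}), the $90$ degree rotation of ${^{\A}c}_1\circ {^{\A}e}_1$ is $\id_{V_{\A}\otimes V_{\A}}$.

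Since the $90$ degree rotation is $\A$-linear and can be applied termwise to any relation, I would apply it to the defining identity
\[
\brrep\ =\ q^2\,\id_{V_{\A}\otimes V_{\A}}\ -\ {^{\A}m}_{2}^{1,1}\circ {^{\A}m}_{1,1}^{2}\ -\ \tfrac{[m-2]}{[2m-4]}(q^2-q^{-2})q^{-m+2}\,{^{\A}c}_1\circ {^{\A}e}_1 ,
\]
using Lemma \ref{gray-twistbraiding-is-inverse} to identify the rotation of $\brrep$ with $\brrepi$. This produces a second expression for $\brrepi$, now written in terms of $\GrayQuaVertexc$, ${^{\A}c}_1\circ {^{\A}e}_1$, and $\id_{V_{\A}\otimes V_{\A}}$. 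Setting it equal to the formula for $\brrepi$ from Definition \ref{D:graybraiding} makes the $\brrepi$ terms cancel; collecting the coefficients of $\id_{V_{\A}\otimes V_{\A}}$ and of ${^{\A}c}_1\circ {^{\A}e}_1$, and applying the two scalar identities in Remark \ref{R:identities} (which rewrite the scalars $\mp\tfrac{[m-2]}{[2m-4]}(q^2-q^{-2})q^{\mp m\pm 2}$ occurring in the braiding formulas as $q^{\mp 2}-\tfrac{[2m-8][m-2]}{[m-4][2m-4]}$), collapses the result to exactly \eqref{grayIHbase}. The argument is uniform in $\R\in\{\kk,\A,\K\}$, since Definition \ref{D:graybraiding}, Lemma \ref{gray-twistbraiding-is-inverse}, the pivotal calculus, and Remark \ref{R:identities} all hold over each $\R$\,; alternatively one proves it over $\K$ and transports it along the faithful functor $\base_{\K}$ of Lemma \ref{L:sufficestocheckoverK}.

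I do not expect a genuine obstacle here, only two points that need care. The first is the bookkeeping just outlined: identifying the rotations of the four diagrams correctly — in particular that rotating the cup-cap returns the identity (a pivotal/zig-zag fact) and that rotating $\brrep$ returns $\brrepi$ in the precise sense of Lemma \ref{gray-twistbraiding-is-inverse} — and then pushing the quantum-integer coefficients through Remark \ref{R:identities}. The second is the behaviour of the coefficient $\tfrac{[2m-8][m-2]}{[m-4][2m-4]}$ when $m\le 4$, where numerator and denominator quantum integers can vanish simultaneously; as elsewhere in the paper (cf.\ Remark \ref{R:L'Hospitals}) these are read as the appropriate limiting values, and since the scalar $\tfrac{[m-2]}{[2m-4]}(q^2-q^{-2})q^{-m+2}$ defining the braiding is already well defined in this limit, the derivation is unaffected. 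As an independent check one can also verify \eqref{grayIHbase} directly in $\End_{U_{\K}(\om)}(V_{\K}\otimes V_{\K})$, which is three dimensional with basis the orthogonal idempotents $\pi_{(2)},\pi_{(1,1)},\pi_{(0)}$ onto the summands of $V_{\K}\otimes V_{\K}$: three of the four diagrams have known expansions ($\GrayQuaVertexa=\tfrac{[4]}{[2]}\pi_{(1,1)}$, ${^{\K}c}_1\circ {^{\K}e}_1=\tfrac{[2m-4][m]}{[m-2][2]}\pi_{(0)}$, $\id_{V_{\K}\otimes V_{\K}}=\pi_{(2)}+\pi_{(1,1)}+\pi_{(0)}$), and expanding the rotated web $\GrayQuaVertexc$ reduces the claim to three elementary identities among quantum integers.
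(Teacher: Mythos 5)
Your argument is exactly the paper's: rotate the defining formula for $\brrep$ from Definition \ref{D:graybraiding} by $90^\circ$, identify the result with $\brrepi$ via Lemma \ref{gray-twistbraiding-is-inverse}, set it equal to the defining formula for $\brrepi$, and collect coefficients using Remark \ref{R:identities}. The extra remarks you add (the degenerate $m\le 4$ coefficients, the independent check via the idempotents $\pi_{(2)},\pi_{(1,1)},\pi_{(0)}$) are sound but supplementary; the core proof is the same.
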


\begin{proof}

By Definition \ref{D:graybraiding} and Lemma \ref{gray-twistbraiding-is-inverse}, we know that the $90$ degree rotation of $\brrep$ is the same as $\brrepi$, so
\begin{align*}
 q^2  \GrayBraidc \ &- \GrayQuaVertexc
- \frac{[m-2]}{[2m-4]}(q^2-q^{-2})\cdot q^{-m+2} \ \GrayBraida \\ 
&= q^{-2} \ \GrayBraida \ - \GrayBraidb 
+ \frac{[m-2]}{[2m-4]}(q^2-q^{-2})\cdot q^{m-2} \GrayBraidc \quad .
\end{align*}
We obtain Equation \eqref{grayIHbase} by combining terms and using the identities in Remark \ref{R:identities}.  
\end{proof}

\def\SmallTriangle 
{\begin{Gtric}
\draw [scale=0.7]
(1.74,-3)--(0,0) node[right,pos=0.7,black,scale=0.7]{$1$}
(0,0)--(-1.74,-3) node[left,pos=0.3,black,scale=0.7]{$1$}
(0,0)--(0,1.5) node[left,midway,black,scale=0.7]{$2$}
(-1.74,-3) node[below,black,scale=0.7]{$1$}
(1.74,-3) node[below,black,scale=0.7]{$1$}
(1.16,-2)--(-1.16,-2)node[below,midway,black,scale=0.7]{$2$};
\end{Gtric}
}
 
\def\SmallTrianglea
{\begin{Gtric}
\draw [scale=0.7] 
(1.16,-2) node[below,black,scale=0.7]{$1$}
(1.16,-2)--(0,0) 
(-1.16,-2) node[below,black,scale=0.7]{$1$}
(0,0)--(-1.16,-2)
(0,0)--(0,2) node[left,midway,black,scale=0.7]{$2$};
\end{Gtric}
}

\def\TbyS
{\begin{Gtric}
\draw
(-90:1)--(-150:2) node[below,midway,black,scale=0.7]{$2$} 
(-90:1)--(-30:2) node[below,midway,black,scale=0.7]{$1$} 
(-150:2)--(150:1) node[left,black,scale=0.7]{$1$}
(150:1)--(0,2)
(-30:2)--(30:1) node[right,midway,black,scale=0.7]{$k$} 
(30:1)--(0,2)   node[right,midway,black,scale=0.7]{$k+1$} 
(0,2)--(0,3.5) node[left,midway,black,scale=0.7]{$k+2$}
(-150:2)--(-150:3.5) node[below,black,scale=0.7]{$1$}
(-30:2)--(-30:3.5) node[below,black,scale=0.7]{$k+1$}
(30:1)--(-90:1)node[left,midway,black,scale=0.7]{$1$};
\end{Gtric}
}

\def\TbySa
{\begin{Gtric}
\draw
(-90:1)--(-150:2) node[below,midway,black,scale=0.7]{$2$} 
(-90:1)--(-30:2) node[below,midway,black,scale=0.7]{$1$} 
(-150:2)--(150:1) node[left,midway,black,scale=0.7]{$1$}
(150:1)--(0,2)  node[left,midway,black,scale=0.7]{$2$}
(-30:2)--(30:1) node[right,black,scale=0.7]{$k$} 
(30:1)--(0,2)    
(0,2)--(0,3.5) node[left,midway,black,scale=0.7]{$k+2$}
(-150:2)--(-150:3.5) node[below,black,scale=0.7]{$1$}
(-30:2)--(-30:3.5) node[below,black,scale=0.7]{$k+1$}
(-90:1)--(150:1)node[right,midway,black,scale=0.7]{$1$} ;
\end{Gtric}
}

\def\TbySb
{\begin{Gtric}
\draw [scale=0.5]
(-150:2)--(-90:1) node[below,black,scale=0.7]{$1$} 
(-90:1)--(-30:2) 
(-150:2)--(150:1) node[left,black,scale=0.7]{$2$}
(150:1)--(0,2)  
(-30:2)--(30:1) node[right,black,scale=0.7]{$k$} 
(30:1)--(0,2)    
(0,2)--(0,4) node[above,black,scale=0.7]{$k+2$}
(-150:2)--(-150:4) node[below,black,scale=0.7]{$1$}
(-30:2)--(-30:4) node[below,black,scale=0.7]{$k+1$} ;
\end{Gtric}
}

\def\TbySc
{\begin{Gtric}
\draw [scale=0.5]
(-30:2)..controls(0.4,-0.2)and(0.2,0)..(0.5,2) node[left,midway,black,scale=0.7]{$1$} 
(0,4)--(-2,-2) node[below,black,scale=0.7]{$1$}
(0,4)--(0,6) node[above,black,scale=0.7]{$k+2$}

(-30:2)--(0.5,2)   node[right,midway,black,scale=0.7]{$k$}   
(0.5,2)--(0,4) node[right,midway,black,scale=0.7]{$k+1$}
(-30:2)--(-30:4) node[below,black,scale=0.7]{$k+1$} ;
\end{Gtric}
}

\def\TbySd
{\begin{Gtric}
\draw [scale=0.7] 
(1.16,-2) node[below,black,scale=0.7]{$k+1$}
(1.16,-2)--(0,0) 
(-1.16,-2) node[below,black,scale=0.7]{$1$}
(0,0)--(-1.16,-2)
(0,0)--(0,2) node[above,black,scale=0.7]{$k+2$};
\end{Gtric}
}

\def\LASkeinIHbRotate
{\begin{Gtric}
\draw[scale=0.7] (-1.732,-2)--(0,-1)  (0,1)--(-1.732,2) 
(0,-1)..controls(2,-1)and(2,1)..(0,1) node[right,midway,black,scale=0.7]{$1$}
(-1.732,-2)node[below,black,scale=0.7]{$k+1$}
(-1.732,2) node[above,black,scale=0.7]{$k+1$}
(0,-1)--(0,1)node[left,midway,black,scale=0.7]{$k$};
\end{Gtric}
}

\def\LASkeinIHcRotate
{\begin{Gtric}
\draw  [scale=0.7] 
(0.5,0) circle (1) 
(1.5,0) node[right,black,scale=0.7]{$1$}
(-2,2)node[above,black,scale=0.7]{$\phantom{k+1}$} 
(-2,2)..controls(-1,1)and(-1,-1)..(-2,-2) node[below,black,scale=0.7]{$k+1$};
\end{Gtric}
}

\begin{lemma}\label{smalltriangle}
\begin{align} \label{twooneone}
   \SmallTriangle=\frac{[2m][m-2]}{[m][2m-4]} \SmallTrianglea 
\end{align} 
\end{lemma}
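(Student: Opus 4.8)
Since we are working in $\FundA$, I would first read the four diagrams in \eqref{grayIHbase} as endomorphisms of $\Lambda_{\A}^1\otimes\Lambda_{\A}^1$: there $\GrayQuaVertexa = {^{\A}m}_{2}^{1,1}\circ{^{\A}m}_{1,1}^{2}$, $\GrayQuaVertexb = {^{\A}c}_1\circ{^{\A}e}_1$, $\GrayQuaVertexd = \id_{\Lambda_{\A}^1\otimes\Lambda_{\A}^1}$, and $\GrayQuaVertexc$ is the ``horizontal $H$''. The point I would exploit is that merging the two top legs of $\GrayQuaVertexc$ by ${^{\A}m}_{1,1}^{2}$ produces exactly the triangle on the left of \eqref{twooneone}, that is ${^{\A}m}_{1,1}^{2}\circ\GrayQuaVertexc = \SmallTriangle$, whereas ${^{\A}m}_{1,1}^{2}\circ\GrayQuaVertexd = {^{\A}m}_{1,1}^{2} = \SmallTrianglea$.

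The plan is then to postcompose every term of \eqref{grayIHbase} with ${^{\A}m}_{1,1}^{2}$ and simplify. By Lemma \ref{L:graybigon} with $k=2$ we have ${^{\A}m}_{1,1}^{2}\circ\GrayQuaVertexa = {^{\A}m}_{1,1}^{2}\circ{^{\A}m}_{2}^{1,1}\circ{^{\A}m}_{1,1}^{2} = \tfrac{[4]}{[2]}\,{^{\A}m}_{1,1}^{2} = \tfrac{[4]}{[2]}\SmallTrianglea$, and ${^{\A}m}_{1,1}^{2}\circ\GrayQuaVertexb = \bigl({^{\A}m}_{1,1}^{2}\circ{^{\A}c}_1\bigr)\circ{^{\A}e}_1 = 0$, because ${^{\A}m}_{1,1}^{2}\circ{^{\A}c}_1$ is (a rotation of) the monogon killed by \eqref{E:grey-skein-monogon}. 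Hence \eqref{grayIHbase} collapses to
\[
\frac{[4]}{[2]}\SmallTrianglea \;=\; \SmallTriangle \;+\; \frac{[2m-8][m-2]}{[m-4][2m-4]}\SmallTrianglea ,
\]
i.e. $\SmallTriangle = \bigl(\tfrac{[4]}{[2]} - \tfrac{[2m-8][m-2]}{[m-4][2m-4]}\bigr)\SmallTrianglea$.

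Finally I would check the quantum-integer identity $\tfrac{[4]}{[2]} - \tfrac{[2m-8][m-2]}{[m-4][2m-4]} = \tfrac{[2m][m-2]}{[m][2m-4]}$. Clearing the denominator $[2m-4]$ and using $\tfrac{[4]}{[2]} = q^2+q^{-2}$, $\tfrac{[2m-8]}{[m-4]} = q^{m-4}+q^{4-m}$, $\tfrac{[2m]}{[m]} = q^{m}+q^{-m}$ together with $(q^{a}+q^{-a})[b] = [a+b]-[a-b]$, both sides reduce to $[2m-2]-[2]$; this is purely mechanical (and compatible with the $m=4$ reading of the coefficients via Remark \ref{R:L'Hospitals}). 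The only step requiring genuine care is the diagrammatic identification ${^{\A}m}_{1,1}^{2}\circ\GrayQuaVertexc = \SmallTriangle$, together with the observation that ${^{\A}m}_{1,1}^{2}\circ{^{\A}c}_1$ is literally the monogon of \eqref{E:grey-skein-monogon}; once those are in place the argument is immediate, so I do not anticipate a real obstacle.
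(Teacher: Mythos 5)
Your proposal is correct and follows exactly the route the paper sketches: postcompose \eqref{grayIHbase} with ${^{\A}m}_{1,1}^{2}$, simplify the resulting bigon by \eqref{E:grey-skein-bigon}, kill the cup--merge term by \eqref{E:grey-skein-monogon}, and check the quantum-integer identity $\tfrac{[4]}{[2]} - \tfrac{[2m-8][m-2]}{[m-4][2m-4]} = \tfrac{[2m][m-2]}{[m][2m-4]}$, which indeed holds (both sides equal $[2m-2]-[2]$ after clearing $[2m-4]$). This is precisely the content of the paper's one-line proof citing \eqref{grayIHbase}, \eqref{E:grey-skein-bigon}, and \eqref{E:grey-skein-monogon}.
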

\begin{proof}
This is immediate from Equation \eqref{grayIHbase}, Equation \eqref{E:grey-skein-bigon}, and Equation \eqref{E:grey-skein-monogon}.
\end{proof}

\begin{lemma}
\begin{equation}\label{TrbySq}
\LCSkeinIHa = \frac{[2k+2][2m][m-2]}{[2][m][2m-4]} \LCSkeinIHlem 
\end{equation}
\end{lemma}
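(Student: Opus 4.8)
The plan is to reduce the left-hand side of \eqref{TrbySq} graphically, using exactly the relations already available in the grey calculus, and to recognise the two quantum-integer factors $\tfrac{[2m][m-2]}{[m][2m-4]}$ and $\tfrac{[2k+2]}{[2]}$ as coming respectively from a $1,1,2$-triangle (Lemma \ref{smalltriangle}, equation \eqref{twooneone}) and from a bigon on a $(k{+}1)$-labelled strand (equation \eqref{E:grey-skein-bigon}, read with $k$ replaced by $k{+}1$). Concretely, the left-hand diagram has a $(1,1,2)$ vertex stacked on a $(1,2,1)$ vertex along the $2$-labelled edge, sitting above the central region whose boundary involves the $k$- and $(k{+}1)$-labelled strands. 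First I would apply the associativity relation \eqref{E:gray-Assoc} to re-bracket this fork, putting the picture into a shape in which the thick-vertex relation \eqref{grayIHbase} can be applied to the $1,1,k$-region in the middle. In the resulting linear combination, every term in which a $1$-labelled strand gets capped off against a vertex vanishes by \eqref{E:grey-skein-monogon} (equivalently \eqref{E:grey-skein-zerobigon}), so only one term survives.

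Next I would identify the surviving term as a $1,1,2$-triangle placed directly on top of a bigon whose inner strands are labelled $1$ and $k{+}1$; collapsing the triangle via \eqref{twooneone} contributes the factor $\tfrac{[2m][m-2]}{[m][2m-4]}$ and replaces it by a single $(1,1,2)$ merge, and then collapsing the remaining bigon via \eqref{E:grey-skein-bigon} contributes the factor $\tfrac{[2k+2]}{[2]}$ and leaves precisely the trivalent vertex $\LCSkeinIHlem = {^{\A}m}^{k+2}_{1,k+1}$ (up to the pivotal rotation justified by Proposition \ref{P:greycalculuswelldefined}). Multiplying the two factors gives the stated coefficient. Throughout, the computation takes place in $\FundA$; should it be more convenient, one may instead run the whole argument over $\K$ and invoke Lemma \ref{L:sufficestocheckoverK} at the end, since the coefficient lies in $\A$.

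If the explicit re-bracketing turns out to be awkward, the fallback is the dimension-count method used in the proof of \eqref{Grayreversebigon}: by Lemma \ref{L:pieri} and Lemma \ref{L:o-Schurslemma} the space $\Hom_{U_{\K}(\om)}(\Lambda_{\K}^{1}\otimes\Lambda_{\K}^{k+1},\Lambda_{\K}^{k+2})$ is one-dimensional for $k{+}2\le m$ (and the statement is $0=0$ otherwise), and $\LCSkeinIHlem$ is nonzero by Remark \ref{R:m-is-non-zero}, so the left-hand side is a scalar multiple of it; one then determines the scalar by gluing ${^{\K}m}^{k+2}_{1,k+1}$ and the cap ${^{\K}e}_{k+2}$ onto both sides, evaluating the right side to the circle value \eqref{E:gray-circle} times the bigon value \eqref{E:grey-skein-bigon}, reducing the left side with \eqref{twooneone}, \eqref{E:gray-Assoc} and \eqref{E:grey-skein-bigon}, and comparing. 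Either way, the only real obstacle is bookkeeping — arranging the sub-diagrams so that \eqref{grayIHbase} and \eqref{twooneone} apply to exactly the right spots, and carrying out the elementary quantum-integer identity at the end — rather than anything conceptual.
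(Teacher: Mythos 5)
Your underlying strategy matches the paper's exactly: re-associate to form a $(1,1,2)$-triangle, collapse it via Lemma \ref{smalltriangle} (equation \eqref{twooneone}) for the factor $\tfrac{[2m][m-2]}{[m][2m-4]}$, then collapse a bigon via \eqref{E:grey-skein-bigon} for the factor $\tfrac{[2k+2]}{[2]}$. But there is one genuine gap: after collapsing the triangle, you are \emph{not} yet looking at a bigon. What you have is a $(1,1,2)$-merge whose $2$-output feeds a $(2,k,k+2)$-merge, while one of its $1$-inputs comes from a $(1,k,k+1)$-split. A \emph{second} application of \eqref{E:gray-Assoc} — the re-bracketing ${^{\A}m}_{2,k}^{k+2}\circ({^{\A}m}_{1,1}^{2}\otimes\id)={^{\A}m}_{1,k+1}^{k+2}\circ(\id\otimes{^{\A}m}_{1,k}^{k+1})$ — is needed before the bigon appears. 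Also, that bigon's inner strands are $1$ and $k$ with \emph{outer} label $k+1$ (not inner strands $1$ and $k+1$ as you wrote); \eqref{E:grey-skein-bigon} read with $k\mapsto k+1$ then gives $\tfrac{[2(k+1)]}{[2]}=\tfrac{[2k+2]}{[2]}$.

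Two more remarks. The proposed step ``apply \eqref{grayIHbase} to the $1,1,k$-region and kill terms by \eqref{E:grey-skein-monogon}'' is both imprecise and unnecessary: \eqref{grayIHbase} applies only around a $2$-labelled edge with four $1$-labelled legs; if you apply it to the $2$-edge inside the triangle, what you get is not ``one surviving term'' but a monogon (which dies), an I-term, and an identity-term, and the latter two must still be combined via a $\Lambda^2$-bigon — in other words, you are re-deriving Lemma \ref{smalltriangle} rather than invoking it. Just cite \eqref{twooneone}. Finally, your fallback via one-dimensionality of $\Hom_{U_\K(\om)}(\Lambda_\K^1\otimes\Lambda_\K^{k+1},\Lambda_\K^{k+2})$ and closing both sides off (the method of the proof of \eqref{Grayreversebigon}) is sound and would work, but the reduction of the closed diagram requires the very same associativity/triangle/bigon moves, so it is not a shortcut. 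Once the second associativity move is restored, the direct argument goes through and is the paper's proof.
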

\begin{proof}
\begin{align*}
  \TbyS &\stackrel{\eqref{E:gray-Assoc}}{=} \TbySa 
  \stackrel{\eqref{twooneone}}{=} \
  \frac{[2m][m-2]}{[m][2m-4]} \TbySb \\
  &\stackrel{\eqref{E:gray-Assoc}}{=}  \ \frac{[2m][m-2]}{[m][2m-4]}\TbySc  
  \stackrel{\eqref{E:grey-skein-bigon}}{=} \ 
  \frac{[2k+2][2m][m-2]}{[2][m][2m-4]} \TbySd
\end{align*} 
\end{proof}

\begin{lemma}\label{L:ne-to-zero}
For $k+1 < m$, 
\begin{equation*}
\LBSkeinIHb \neq 0.
\end{equation*}
\end{lemma}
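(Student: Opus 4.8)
The plan is to reduce to the classical case $\R=\kk$, there realize $\LBSkeinIHb$ as an explicit intertwiner built from the exterior multiplication, its rotations, and the pairing ${^{\kk}e}_k$, and then evaluate it on a monomial weight vector. Since $\base_{\kk}$ sends multiplications, their rotations, and the caps ${^{\A}e}_j$ to the corresponding morphisms over $\kk$, it carries the grey diagram $\LBSkeinIHb$ to the same diagram read in $\Fundk$; as $\base_{\kk}$ is a functor it therefore suffices to prove $\LBSkeinIHb\neq 0$ over $\kk$ (nonvanishing over $\A$ then follows, and over $\K$ by faithfulness of $\base_{\K}$, Lemma \ref{L:sufficestocheckoverK}). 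We may assume $k\ge 1$, since for $k=0$ the middle strand is erased and $\LBSkeinIHb={^{\kk}m}_{1,1}^{2}$ is surjective.

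Unravelling the grey diagram according to Proposition \ref{P:greycalculuswelldefined},
\[
\LBSkeinIHb \;=\; {^{\kk}m}_{1,1}^{2}\circ\bigl(\id_{\Lambda_\kk^1}\otimes {^{\kk}e}_k\otimes \id_{\Lambda_\kk^1}\bigr)\circ\bigl({^{\kk}m}_{k+1}^{1,k}\otimes {^{\kk}m}_{k+1}^{k,1}\bigr).
\]
At $q=1$ the one-dimensionality of the pertinent Hom spaces (Lemma \ref{L:pieri}) together with the circle value $\binom mk$ (Lemma \ref{L:graycircle}) identifies ${^{\kk}m}_{k+1}^{1,k}$ and ${^{\kk}m}_{k+1}^{k,1}$ with the deconcatenation comultiplications of the exterior algebra; in particular, for a monomial $v_S$ with $|S|=k+1$ each coefficient appearing in ${^{\kk}m}_{k+1}^{1,k}(v_S)=\sum_{s\in S}\pm\,v_s\otimes v_{S\setminus s}$ and ${^{\kk}m}_{k+1}^{k,1}(v_S)=\sum_{s\in S}\pm\,v_{S\setminus s}\otimes v_s$ is $\pm 1$.

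Now evaluate on a pair of monomials $v_S\otimes v_T$ with $|S|=|T|=k+1$. Because these are monomials, the $v_i\otimes v_t$-coefficient of $\bigl(\id\otimes {^{\kk}e}_k\otimes\id\bigr)\bigl({^{\kk}m}_{k+1}^{1,k}(v_S)\otimes {^{\kk}m}_{k+1}^{k,1}(v_T)\bigr)$ is the single term $\pm\,{^{\kk}e}_k(v_{S\setminus i}\otimes v_{T\setminus t})$, and ${^{\kk}e}_k(v_P\otimes v_Q)=\bigl({^{\kk}\varphi}_k(v_P)\bigr)(v_Q)$ vanishes unless $\wt v_Q=-\wt v_P$, since ${^{\kk}\varphi}_k=({^{\kk}\psi}_k)^{-1}$ is a weight-preserving isomorphism (Lemma \ref{L:psi-an-iso}). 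One then chooses $S,T$ so that: exactly one pair $(i,t)$ with $i\neq t$ satisfies $\wt v_{T\setminus t}=-\wt v_{S\setminus i}$; the set $S\setminus i$ is the \emph{unique} $k$-subset of $\{1,\dots,m\}$ of its weight (forcing that weight space of $\Lambda_\kk^k$ to be one-dimensional, hence ${^{\kk}e}_k(v_{S\setminus i}\otimes v_{(S\setminus i)'})\neq 0$, where $(S\setminus i)'=\{m+1-p:p\in S\setminus i\}$); and no other pair produces $\pm v_{\{i,t\}}$ after applying ${^{\kk}m}_{1,1}^{2}$. Then the $v_{\{i,t\}}$-coefficient of $\LBSkeinIHb(v_S\otimes v_T)$ is a nonzero element of $\kk$ ($v_{\{i,t\}}$ being a basis vector of $\Lambda_\kk^2$ by Theorem \ref{T:basis-for-LambdaA}), so $\LBSkeinIHb\neq 0$. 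When $m$ is large relative to $k$ the choice $S=\{1,\dots,k+1\}$, $T=\{1\}\cup\{m-k+1,\dots,m\}$ works: the only contributing pair is $(i,t)=(k+1,1)$ and $S\setminus i=\{1,\dots,k\}$ is the unique subset of its (nonzero) weight.

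The crux is the bookkeeping in this last step, and the hypothesis $k+1<m$ is used precisely here. For $k$ close to $m$ one needs a more careful input, since a naive choice such as $v_{\{1,\dots,k+1\}}\otimes v_{\{m-k,\dots,m\}}$ has its relevant weight space equal to the weight-zero space of $\Lambda_\kk^k$ — which need not be one-dimensional — with several surviving terms that cancel; indeed for $k+1=m$ the morphism vanishes identically, consistently with the hypothesis, because its domain $\Lambda_\kk^m\otimes\Lambda_\kk^m$ is the trivial module while $\Lambda_\kk^2$ has no trivial submodule (equivalently, ${^{\kk}m}_{1,1}^{2}$ annihilates the trivial summand of $\Lambda_\kk^1\otimes\Lambda_\kk^1$ through which the composite then factors). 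So the real work is to exhibit, for every $k\le m-2$ (possibly splitting into "small" and "large" $k$), a monomial input isolating one nonzero matrix coefficient of ${^{\kk}e}_k$.
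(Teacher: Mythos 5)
The paper's proof is one line and follows a different route: pre- and post-composing the triangle with further splits and merges produces the left-hand side of Equation~\eqref{TrbySq}, which equals $\frac{[2k+2][2m][m-2]}{[2][m][2m-4]}\,{^{\R}m}_{1,k+1}^{k+2}$, a nonzero morphism by Remark~\ref{R:m-is-non-zero} precisely when $k+2\le m$; since composing with a zero morphism gives zero, the triangle cannot vanish. Your strategy --- reduce to $q=1$, unravel the triangle as ${^{\kk}m}_{1,1}^2\circ(\id\otimes{^{\kk}e}_k\otimes\id)\circ({^{\kk}m}_{k+1}^{1,k}\otimes{^{\kk}m}_{k+1}^{k,1})$, and exhibit a monomial witness --- is viable in principle, and the reduction to $\kk$ via $\base_\R$ is correctly justified, but your writeup has a genuine gap that you yourself flag: the exhibited witness $S=\{1,\dots,k+1\}$, $T=\{1\}\cup\{m-k+1,\dots,m\}$ requires $\{1,\dots,k\}$ to be the unique $k$-subset of its weight and requires that the competing pair $(s,t')=(1,k+1)$ not contribute, and you have only checked these for ``$m$ large relative to $k$'' (essentially $k\lesssim m/2$), deferring ``the real work'' of covering all $1\le k\le m-2$ to an unwritten case analysis. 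A secondary issue is the claim that every coefficient of ${^{\kk}m}_{k+1}^{1,k}(v_S)$ is $\pm1$: one-dimensionality of the Hom space fixes the rotated split only up to scalar, and the circle value by itself does not pin that scalar down, so this normalization would need to be derived (for instance from the bigon relation~\eqref{E:grey-skein-bigon} applied to the definition of the rotation). The paper's argument sidesteps all of this by embedding the triangle into a composite already computed to be a nonzero multiple of a known surjection, which is shorter, uniform in $k$, and requires no case analysis.
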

\begin{proof}
Since this triangle is part of the left hand side of \EQ \eqref{TrbySq}, and the right hand side of \EQ \eqref{TrbySq} is a non-zero scalar multiple of the map $m_{1,k+1}^{k+2}$, which, as long as $k+2\le m$, is non-zero by Remark \ref{R:m-is-non-zero}. 
\end{proof}

\begin{lemma}\label{L:e-to-zero}
\begin{equation}\label{E:mm-triangle}
\ZeroTriangleAtBound = 0
\end{equation}
\end{lemma}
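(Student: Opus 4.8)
The plan is to read off the source and target of the displayed diagram and then observe that the Hom-space containing it is already zero for representation-theoretic reasons. Inspecting the boundary, $\ZeroTriangleAtBound$ has its two bottom strands labelled $m$ and its single top strand labelled $2$, so it is an element of $\Hom_{U_{\A}(\om)}(\Lambda_{\A}^m \otimes \Lambda_{\A}^m, \Lambda_{\A}^2)$. By Lemma \ref{L:sufficestocheckoverK} the functor $\base_{\K}$ is faithful, so, following the same reduction used in the proof of \eqref{Grayreversebigon}, it suffices to show that $\base_{\K}(\ZeroTriangleAtBound) = 0$; for this I would simply show $\Hom_{U_{\K}(\om)}(\Lambda_{\K}^m \otimes \Lambda_{\K}^m, \Lambda_{\K}^2) = 0$.

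To see this, first identify $\Lambda_{\K}^m \otimes \Lambda_{\K}^m$ with the trivial module. Indeed $\Lambda_{\K}^m \cong \det_{\K}$ (the remark following Proposition \ref{L:ext-powers-irredicible}), which is one-dimensional with $U_{\K}(\som)$ acting trivially and $\sigma$ acting by $-1$; since $\Delta(\sigma) = \sigma \otimes \sigma$ and $\Delta(K_{\alpha}) = K_{\alpha}\otimes K_{\alpha}$, on the tensor square $\sigma$ acts by $(-1)^2 = 1$ and $U_{\K}(\som)$ still acts trivially, so $\Lambda_{\K}^m \otimes \Lambda_{\K}^m \cong \triv$. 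On the other hand $\Lambda_{\K}^2 \not\cong \triv$: by Proposition \ref{L:ext-powers-irredicible} it is irreducible, and by Lemma \ref{L:samecharacter} it has highest weight $\varpi_2 \ne 0$ when $m \geq 3$ (while $\Lambda_{\K}^2 = 0$ if $m = 1$, and $\Lambda_{\K}^2 \cong \det_{\K}$ is still not $\triv$ if $m = 2$). Hence, since $\triv$ and $\Lambda_{\K}^2$ are distinct irreducibles from the list in Proposition \ref{P:o-list-of-simples}, Schur's lemma (Lemma \ref{L:o-Schurslemma}) gives $\Hom_{U_{\K}(\om)}(\triv, \Lambda_{\K}^2) = 0$, which completes the argument.

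I do not expect a real obstacle here: once the source and target of the diagram are correctly read off, the statement is essentially immediate, and the only points requiring a word of care are the identification $\Lambda^m \cong \det$ (so that its tensor square is trivial) and the degenerate cases $m = 1, 2$. As an alternative that avoids $\base_{\K}$, one could instead compute $\dim_{\kk}\Hom_{U_{\kk}(\om)}(\Lambda_{\kk}^m \otimes \Lambda_{\kk}^m, \Lambda_{\kk}^2) = 0$ directly, using the classical fact that $\Lambda^2(\kk^m)$ has no $O(m)$-invariant vectors, and then transport the equality of dimensions to $\K$ via Proposition \ref{P:dimhomrep}.
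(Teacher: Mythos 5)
Your proof is correct and matches the paper's argument: both identify the diagram as a morphism $\Lambda^m \otimes \Lambda^m \to \Lambda^2$, identify $\Lambda_{\K}^m \otimes \Lambda_{\K}^m \cong \triv$, deduce that $\Hom_{U_{\K}(\om)}(\Lambda_{\K}^0, \Lambda_{\K}^2) = 0$ from Proposition~\ref{L:ext-powers-irredicible}, and conclude over $\A$ via faithfulness of $\base_{\K}$ (Lemma~\ref{L:sufficestocheckoverK}). You just unpack the Hom-vanishing step (citing Lemma~\ref{L:o-Schurslemma} explicitly and noting the $m=1,2$ degenerations) where the paper leaves it to Proposition~\ref{L:ext-powers-irredicible}.
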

\begin{proof}
We know that $\Lambda_{\K}^m\otimes \Lambda_{\K}^m\cong \det_{\K}\otimes \det_{\K} \cong \Lambda_{\K}^0$. Also, Proposition \ref{L:ext-powers-irredicible} implies that
\[
\dim_{\K}\Hom_{U_{\K}(\om)}(\Lambda_{\K}^0, \Lambda_{\K}^2) = 0.
\]
This means that \EQ \eqref{E:mm-triangle} holds after applying $\base_{\K}$, and the claim follows from Lemma \ref{L:sufficestocheckoverK}.
\end{proof}

\begin{proposition}\label{P:gray-IH}
Suppose that $k\le m$. Then the following equation holds in $\FundA$.  \begin{align}
\GraySkeinIHa =  \GraySkeinIH + \frac{[2m-4k-4][m-2k]}{[m-2k-2][2m-4k]}  \GraySkeinIHb
  -  \frac{[2m-4k-4][m-2]}{[m-2k-2][2m-4]} \GraySkeinIHc \label{grayIH}
  \end{align}
\end{proposition}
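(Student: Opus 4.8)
The plan is to reduce the identity to the field $\K$ and then carry it out as a finite-dimensional linear-algebra computation, pinning down the four coefficients by composing with well-chosen test morphisms and invoking the scalar evaluations already established in this section. Concretely, since $\base_{\K}$ is faithful (Lemma \ref{L:sufficestocheckoverK}) and each of the four diagrams in \eqref{grayIH} is the image under $\base_{\R}$ of the corresponding morphism in $\FundA$, it suffices to verify \eqref{grayIH} in $\FundK$; note that \eqref{grayIH} is exactly the $\FundA$-analogue of the defining relation (\ref{defskein}e), rearranged so as to solve for the diagram with a rung labelled $2$.

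Assume first that $k \le m-1$. Using the pivotal structure of $\FundK$, all four morphisms in \eqref{grayIH} may be regarded as elements of the $\Hom$-space $\Hom_{U_{\K}(\om)}(\Lambda_{\K}^{k}\otimes \Lambda_{\K}^{k}, V_{\K}\otimes V_{\K})$ (equivalently, after rotation, of $\End_{U_{\K}(\om)}(\Lambda_{\K}^{k}\otimes V_{\K})$). By the Pieri decomposition $\Lambda_{\K}^{k}\otimes V_{\K}\cong L_{\K}(\varpi_1+\varpi_k,+1)\oplus \Lambda_{\K}^{k+1}\oplus \Lambda_{\K}^{k-1}$ (Lemma \ref{L:pieri}) together with Schur's lemma (Lemma \ref{L:o-Schurslemma}), this space is $3$-dimensional. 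The diagram $\GraySkeinIH$ factors through $\Lambda_{\K}^{k+1}$, the diagram $\GraySkeinIHb$ factors through $\Lambda_{\K}^{k-1}$, and $\GraySkeinIHc$ factors through the trivial constituent; each of these three is nonzero (for $\GraySkeinIHb$ this is Lemma \ref{L:ne-to-zero}, and for the other two it follows from Lemma \ref{L:graybigon}, Lemma \ref{L:ckne0}, and Lemma \ref{L:psi-an-iso}), so they are linearly independent and form a basis. Hence $\GraySkeinIHa$ has a unique expansion $\GraySkeinIHa = a\,\GraySkeinIH + b\,\GraySkeinIHb + c\,\GraySkeinIHc$ with $a,b,c\in \K$, and it remains to identify $a,b,c$ with the coefficients appearing in \eqref{grayIH}.

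To compute the coefficients, one pre- and post-composes the hypothetical identity with suitable merge, split, cup, and cap morphisms so that each of the four terms collapses to a scalar multiple of a single standard diagram whose value is already known: the circle evaluation \eqref{E:gray-circle}, the bigon \eqref{E:grey-skein-bigon} and reverse bigon \eqref{Grayreversebigon}, the small-triangle relation \eqref{twooneone}, and the identity \eqref{TrbySq} are exactly the ingredients needed, while Lemmas \ref{L:ne-to-zero} and \ref{L:e-to-zero} guarantee that the chosen test diagrams do not vanish, so that the coefficients are genuinely detected. Three independent such probes yield a linear system for $a,b,c$; solving it and recognizing the solutions as the prescribed rational functions of quantum integers (via elementary manipulations of the kind in Remark \ref{R:identities}) finishes the case $k\le m-1$. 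The remaining case $k=m$ is easier: there $\Lambda_{\K}^{m+1}=0$, so $\GraySkeinIH=0$ and the relevant $\Hom$-space is $1$-dimensional, and \eqref{grayIH} reduces to expressing $\GraySkeinIHa$ as a scalar multiple of $\GraySkeinIHc$, which follows from \eqref{TrbySq} at $k=m$ together with Lemma \ref{L:e-to-zero} and a comparison of coefficients. (Alternatively, one can derive \eqref{grayIH} for all $k$ by induction, starting from the base case \eqref{grayIHbase}, which is precisely \eqref{grayIH} for $k=1$, and thickening one strand at a time using \eqref{E:grey-skein-bigon}, \eqref{Grayreversebigon}, and \eqref{twooneone}.) The main obstacle is the bookkeeping in this coefficient extraction: choosing the test morphisms so that the linear system is solvable, and then matching the resulting solution to the stated quantum-integer coefficients.
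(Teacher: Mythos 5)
Your setup matches the paper's: reduce to $\K$ via faithfulness of $\base_{\K}$, use the Pieri decomposition (Lemma \ref{L:pieri}) plus Schur's lemma to see that the relevant $\Hom$-space is $3$-dimensional with basis the merge-split through $k+1$, the merge-split through $k-1$, and the cup-cap, and then determine the three coefficients $x,y,z$ by composing with test morphisms and solving the resulting linear system. Your treatment of the boundary case $k=m$ is also in line with the paper's.

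However, there is a real gap in the non-inductive route you lead with. You claim that three well-chosen probes will each collapse all four terms to scalar multiples of a single known diagram, and that solving the $3\times 3$ system then finishes the argument. The paper attempts exactly this and finds that one of the three probes (the one that post-composes with the split $\Lambda^{k+2}\to \Lambda^{k+1}\otimes V\to \Lambda^{k+1}\otimes \Lambda^2$ followed by a merge) produces, on the right-hand side, two different trivalent "triangle" diagrams landing in the one-dimensional $\Hom$-space $\Hom_{U_{\K}(\om)}(\Lambda^{k+1}_{\K}\otimes \Lambda^{2}_{\K}, \Lambda_{\K}^{k+1})$. Turning that probe into a usable scalar equation requires knowing the ratio $\tau$ between these two triangles (Equation \eqref{TrickyTriangle}), and the paper evaluates $\tau$ by computing the same closed diagram two ways, one of which explicitly invokes the inductive hypothesis \eqref{grayIH} at rank $k$. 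So "the main obstacle" you flag — pinning down the coefficients — is not just bookkeeping: with the natural choice of probes, one of the scalar reductions cannot be carried out from the already-established relations \eqref{E:gray-circle}, \eqref{E:grey-skein-bigon}, \eqref{Grayreversebigon}, \eqref{twooneone}, \eqref{TrbySq} alone. You would either need to produce three probes that all avoid this phenomenon (which you have not done, and the paper did not find), or use induction. The inductive route you mention only parenthetically, and sketch too loosely ("thickening one strand at a time"), is in fact the actual mechanism of the paper's proof: it is still the three-probe linear system, but the inductive hypothesis is an essential input to the second probe rather than an optional alternative.

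One smaller point: you assert that all three basis diagrams are nonzero, citing Lemma \ref{L:ne-to-zero} for $\GraySkeinIHb$. That lemma is stated for $k+1<m$, which matches the range you need, but be aware that the nonvanishing of the triangle (hence linear independence of the basis) is itself proved via \eqref{TrbySq} and Remark \ref{R:m-is-non-zero}, so it is not independent of the same circle of facts. The logical ordering in the paper — first \eqref{TrbySq}, then Lemma \ref{L:ne-to-zero}, then \eqref{TrickyTriangle}, then the linear system — is load-bearing and should be preserved.
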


\begin{proof}
We prove Equation \eqref{grayIH}  by induction. We verified the base case, that \EQ \eqref{grayIH} holds when $k=1$, in Lemma \ref{grayIHbase}. Suppose \EQ \eqref{grayIH} holds in $\FundA$ for $k\ge 1$, then we will show that it is true for $k+1$. Since $\Lambda_{\A}^{k+1}=0$ when $k+1>m$, we may assume $k+1\le m$, in order for \EQ \eqref{grayIH} to be non-trivial in $\FundA$. Suppose that $k+1 < m$. We temporarily work over $\K$. It follows from Lemma \ref{L:pieri} that $\Lambda_{\K}^{k+1}\otimes V_{\K} \cong \Lambda_{\K}^{k+2}\oplus \Lambda_{\K}^{k}\oplus L_{\K}(\varpi_1+\varpi_k, +1)$. Therefore, the merge-split to $k+2$, the merge-split to $k$, and $\id_{\Lambda^{k+1}_{\K}\otimes V_{\K}}$ form a basis for $\End_{U_{\K}(\om)}(\Lambda_{\K}^i\otimes V_{\K})$. In particular, there are $x, y, z\in \K$ such that 
\begin{equation}\label{E:HI-xyz}
\NextSkeinIHa  =  x \NextSkeinIH + y \NextSkeinIHb + z \NextSkeinIHc \ \ .
\end{equation}
Now, in order to find a linear system of equations about $x$,$y$,$z$, we first attach caps or trivalent vertices, in three different ways, to each term of \EQ \eqref{E:HI-xyz}.

$$\LASkeinIHa =  x \LASkeinIH + y \LASkeinIHb + z \LASkeinIHc $$

$$\LBSkeinIHa =  x \LBSkeinIH  + y \LBSkeinIHb + z \LBSkeinIHc $$

$$\LCSkeinIHa =  x \LCSkeinIH  + y \LCSkeinIHb + z \LCSkeinIHc $$

In order to simplify the second equation, we need to relate the two triangles on the right hand side. Using results in \cite{KoikeTerada}, one can check that since $k+1< m$, $\dim_{\K}\Hom_{U_{\K}(\om)}(\Lambda^{k+1}_{\K}\otimes \Lambda^{2}_{\K}, \Lambda_{\K}^{k+1})=1$. By Lemma \ref{L:ne-to-zero} $$ \LBSkeinIHb \neq 0 \  . $$ So there exists $\tau\in \K$ such that  
 \begin{align}
     \LBSkeinIH = \tau \LBSkeinIHb. \label{TrickyTriangle}
 \end{align} 

On one hand, we know: 
\begin{align*}
\TTcalc      
             &\stackrel{\eqref{TrbySq}}{=} \
              \frac{[2k+2][2m][m-2]}{[2][m][2m-4]} \TTcalca \\ 
             &\stackrel{\eqref{Grayreversebigon}}{=} \
              \frac{[2k+2][2m][m-2][2m-2k-2][2m-4k-8][m-2k-2]}
                   {[2][m][2m-4][2][m-2k-4][2m-4k-4]} \TTcalcb
\end{align*}

On the other hand, we know that
\begin{align*}
\TTcalcc\ \stackrel{\eqref{grayIHbase}}{=} \
& -\frac{[2m-8][m-2]}{[m-4][2m-4]}\TTcalcd + \TTcalce \\ 
& + \frac{[2m-8][m-2]}{[m-4][2m-4]}\TTcalcf ,
\end{align*}
where 
\begin{align*}
   \TTcalcd &\stackrel{\eqref{E:grey-skein-bigon}}{=} \ 
   \left(\frac{[2k+2]}{[2]}\right)^2  \TTcalcb \\
   \TTcalcf 
&\stackrel{\substack{\eqref{Grayreversebigon} \\ \eqref{E:grey-skein-bigon}}}{=} \ 
\frac{[2m-2k][2m-4k-4][m-2k]}{[2][m-2k-2][2m-4k]}\frac{[2k+2]}{[2]} \TTcalcb\ {\ }_{.} 
\end{align*}
By the inductive hypothesis,
\begin{align*}
&\TTcalcgg \stackrel{\eqref{grayIH}}{=} \ 
\TTcalch- \frac{[2m-4k-4][m-2k]}{[m-2k-2][2m-4k]}\TTcalcg  
 \\ &+ \frac{[2m-4k-4][m-2]}{[m-2k-2][2m-4]} \TTcalci
\qquad \quad  \stackrel{\substack{\eqref{E:grey-skein-bigon}\\ \eqref{E:grey-skein-monogon}}}{=}  
\frac{[2m-4k+4][m-2k]}{[m-2k+2][2m-4k]} \TTcalcg \\
& \stackrel{\eqref{E:gray-Assoc}}{=} 
\frac{[2m-4k+4][m-2k]}{[m-2k+2][2m-4k]} \TTcalcj
\stackrel{\eqref{twooneone}}{=}  
\frac{[2m-4k+4][m-2k]}{[m-2k+2][2m-4k]} \frac{[2m][m-2]}{[m][2m-4]} \TTcalck \\
&\stackrel{\eqref{E:gray-Assoc}}{=} 
\frac{[2m-4k+4][m-2k][2m][m-2]}{[m-2k+2][2m-4k][m][2m-4]} \TTcalcl \\
&\stackrel{\eqref{E:grey-skein-bigon}}{=}  
\frac{[2m-4k+4][m-2k][2m][m-2]}{[m-2k+2][2m-4k][m][2m-4]} \frac{[2k]}{[2]} \frac{[2k+2]}{[2]} \ \TTcalcb\ {\ }_{.} 
\end{align*}

In conclusion, 
$$  \tau = \frac{[2m-4k][m-2k-2]}{[m-2k][2m-4k-4]} . $$ 

So by applying Equations \eqref{E:gray-circle}, \eqref{E:grey-skein-bigon}, \eqref{E:grey-skein-zerobigon}, \eqref{E:grey-skein-monogon}, \eqref{Grayreversebigon}, \eqref{TrbySq}, and \eqref{TrickyTriangle}, we have the following system of linear equations.

\begin{align*}
  & 0 = \frac{[2m-2k-2][2m-4k-8][m-2k-2]}{[2][m-2k-4][2m-4k-4]} x
   + \frac{[2k+2]}{[2]}y + \frac{[2m-4][m]}{[m-2][2]} z,\\
  & \frac{[4]}{[2]}  
  =  \frac{[2m-4k][m-2k-2]}{[m-2k][2m-4k-4]} x + y, \quad \text{and} \\
  & \frac{[2k+2][2m][m-2]}{[2][m][2m-4]} =  \frac{[2k+4]}{[2]} x + z.
\end{align*}
The values  
\begin{align*}
 x&=1\\
 y&= \frac{[2m-4k-8][m-2k-2]}{[m-2k-4][2m-4k-4]}\\
 z&=  - \frac{[2m-4k-8][m-2]}{[m-2k-4][2m-4]}
\end{align*}
satisfy the equations, and therefore are a unique set of solutions. Since $x,y,z\in \A$, it follows from Lemma \ref{L:sufficestocheckoverK} that Equation \eqref{grayIH} holds in $\FundA$ when $k+1 < m$.

Now, suppose that $k+1=m$.  Lemma \ref{L:pieri} implies that $\Lambda_{\K}^{k+1}\otimes V_{\K} \cong \Lambda_{\K}^{k}$. Thus, there exists $\gamma \in \K $ such that 
\begin{equation} \label{boundaryIH}
    \NextSkeinIHc= \gamma \NextSkeinIHb,
\end{equation} \ so
$$\LASkeinIHcRotate
= \gamma \LASkeinIHbRotate.$$
Using Equations \eqref{E:gray-circle} and \eqref{E:grey-skein-bigon}, we get 
$\cfrac{[2m-4][m]}{[m-2][2]} \cdot \id_{\Lambda^{k+1}_{\K}}
=\gamma \cdot \cfrac{[2k+2]}{[2]} \cdot \id_{\Lambda^{k+1}_{\K}}$, so $\gamma= \cfrac{[2m-4][m]}{[m-2][2m]}\in \A$. It follows from Lemma \ref{L:sufficestocheckoverK} that \EQ \eqref{boundaryIH} also holds in $\FundA$.

On the other hand, consider \EQ \eqref{grayIH} when $k=m$.  Lemma \ref{L:e-to-zero} implies that the left hand side of \EQ \eqref{grayIH} is zero. Also, the first term on the right hand side of \EQ \eqref{grayIH} has a label $m+1$, so is also zero. Thus, \EQ \eqref{grayIH} becomes 
\begin{equation*}
0 = 0 + \frac{[2m-4m-4][m-2m]}{[m-2m-2][2m-4m]}  \GraySkeinIHbm
  -  \frac{[2m-4m-4][m-2]}{[m-2m-2][2m-4]} \GraySkeinIHcm ,
\end{equation*}
which agrees with \EQ \eqref{boundaryIH}. 
\end{proof}

\def\VertoMulti
{\begin{tric}
\draw [scale=0.8] (0,0)--(90:1) (0,0)--(210:1) (0,0)--(330:1);
\draw (90:1)node[black,anchor=south,scale=0.7]{$i+1$}
      (210:1)node[black,anchor=north,scale=0.7]{$i$}
      (330:1)node[black,anchor=north,scale=0.7]{$1$}; 
\end{tric}
}

\def\VertoMultia
{\begin{tric}
\draw [scale=0.8] (0,0)--(90:1) (0,0)--(210:1) (0,0)--(330:1);
\draw (90:1)node[black,anchor=south,scale=0.7]{$i+1$}
      (210:1)node[black,anchor=north,scale=0.7]{$1$}
      (330:1)node[black,anchor=north,scale=0.7]{$i$}; 
\end{tric}
}

\begin{thm}\label{T:existence-of-functor}
Let $\R\in \{\kk, \A, \K\}$. There is a pivotal functor 
\[
\Phi_{\R}: \Web\rightarrow \Fund,
\]
such that
\[
\VertoMulti \mapsto {^\R m}_{i,1}^{i+1}, \quad \VertoMultia \mapsto {^\R m}_{1,i}^{i+1},
 \quad \text{and} \quad \Phi_{\R }(\brweb) = \brrep.
\]
Moreover, we have canonical identifications $\Phi_{\R } = \base_{\R}\circ (\R \otimes \Phi_{\A})$. 
\end{thm}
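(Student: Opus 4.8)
The plan is to verify that the assignment on generators respects all five defining relations of $\Web$ from Definition \ref{DefofWeb}, together with the tensor-ideal relation $\id_k = 0$ for $k > m$. Since $\Web$ is presented by generators and relations as a pivotal category, once we know the images of the generators satisfy the relations, the universal property produces the functor $\Phi_\R$ automatically; the ``canonical identification'' $\Phi_\R = \base_\R \circ (\R \otimes \Phi_\A)$ will then be immediate from the fact that $\base_\R$ sends the relevant $\A$-morphisms to their $\R$-counterparts (Remark \ref{R:m-is-non-zero}, Definition \ref{D:e1-and-c1}, Definition \ref{D:graybraiding}), provided we build $\Phi_\A$ first and define $\Phi_\R$ by that formula. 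So the real content is checking relations in $\FundA$, and by Lemma \ref{L:sufficestocheckoverK} it suffices to check everything after applying $\base_\K$, i.e. in $\FundK$ (or even, by Lemma \ref{L:dim1} and the character arguments, reduce further — but working over $\K$ is cleanest).

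First I would record that the tensor-ideal condition is automatic: $\Lambda_\R^k = 0$ for $k > m$, so $\id_k \mapsto 0$. Next, relation $(\ref{defskein}d)$, the associativity of the merge maps, is exactly \EQ \eqref{E:gray-Assoc}, which holds because $\Lambda_\A^\bullet$ is an associative graded algebra. Relation $(\ref{defskein}c)$, the bigon, is Lemma \ref{L:graybigon}, recorded as \EQ \eqref{E:grey-skein-bigon}. Relation $(\ref{defskein}a)$, the value of the circle labelled $1$, is the $k=1$ case of Lemma \ref{L:graycircle}, i.e. \EQ \eqref{E:gray-circle}; one checks $\frac{[2m-4][m]}{[m-2][2m]}[m]_{q^2} = \frac{[2m-4][m]}{[m-2][2]}$ using $[m]_{q^2} = [2m]/[2]$. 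Relation $(\ref{defskein}b)$, the monogon, is \EQ \eqref{E:grey-skein-monogon} (the $k=0$ specialization of the vanishing bigon \eqref{E:grey-skein-zerobigon}, which holds since $\Hom_{U_\A(\om)}(\Lambda^2_\A, \Lambda^0_\A) = 0$). The main relation to verify is $(\ref{defskein}e)$, the ``IH'' relation, and this is precisely Proposition \ref{P:gray-IH}, \EQ \eqref{grayIH} — note the sign on the coefficients there matches $(\ref{defskein}e)$ because the trivalent vertices of $\Web$ map to merge maps whose rotations are controlled by the pivotal structure of Proposition \ref{P:greycalculuswelldefined}. Finally, the braiding generator $\brweb$ is sent to $\brrep$ by fiat (Definition \ref{D:graybraiding}); one must check this is consistent, i.e. that the defining equation \eqref{defofbraid} of $\brweb$ in $\Web$ is sent to a valid identity in $\FundA$ — but \eqref{defofbraid} expresses $\brweb$ as a specific linear combination of $\id$, merge-split, and cup-cap, and Definition \ref{D:graybraiding} defines $\brrep$ by the image of exactly that combination, so there is nothing further to check beyond knowing the targets of $\id_1$, $m^{1,1}_2 \circ m^2_{1,1}$, and $c_1 \circ e_1$ are well-defined, which they are.

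The one genuinely delicate point — the step I expect to be the main obstacle to present cleanly rather than technically hard — is bookkeeping the pivotal structure. The generators of $\Web$ are the two trivalent vertices $\Vertc$ and $\Vertd$, and the defining relations are stated using the graphical calculus of $\Web$, which includes rotations (e.g. the right-hand side of $(\ref{defskein}e)$ involves $180^\circ$-rotated vertices and cups/caps). To land these in $\FundA$ we need Proposition \ref{P:greycalculuswelldefined}, which says $\FundA$ has a planar pivotal calculus with cups/caps equal to ${^\A c}_k$, ${^\A e}_k$; then a rotated trivalent vertex of $\Web$ maps to the corresponding rotation ${^\A m}^{i,j}_{i+j}$ of ${^\A m}^{i+j}_{j,i}$, defined via that calculus. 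Once this dictionary is fixed, each defining relation of $\Web$ becomes a graphical identity in $\FundA$ that we have already proved in Section 4.1, and the functor exists. I would spend a sentence or two making the dictionary explicit and then simply cite \eqref{E:gray-Assoc}, \eqref{E:grey-skein-bigon}, \eqref{E:gray-circle}, \eqref{E:grey-skein-monogon}, \eqref{grayIH}, and Definition \ref{D:graybraiding} for the six relations, and close with the observation that $\base_\R$ intertwines the $\A$-level generators with the $\R$-level ones, giving $\Phi_\R = \base_\R \circ (\R \otimes \Phi_\A)$ and hence — via Lemma \ref{L:sufficestocheckoverK} — pivotality of $\Phi_\R$ for all $\R \in \{\kk, \A, \K\}$.
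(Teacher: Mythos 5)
Your proposal matches the paper's proof: build $\Phi_{\A}$ by verifying the defining relations \eqref{defskein} in $\FundA$ via \eqref{E:gray-circle}, \eqref{E:grey-skein-bigon}, \eqref{E:grey-skein-monogon}, \eqref{E:gray-Assoc}, \eqref{grayIH}, then define $\Phi_{\R} := \base_{\R}\circ(\R\otimes\Phi_{\A})$ and read off the braiding claim from Definitions \ref{D:brweb} and \ref{D:graybraiding}. The additional details you spell out (the tensor-ideal relation is automatic since $\Lambda_\R^k=0$ for $k>m$, the $[m]_{q^2}=[2m]/[2]$ check, and the use of Proposition \ref{P:greycalculuswelldefined} to make sense of rotated vertices) are exactly the implicit content of the paper's terse proof.
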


\begin{proof}
Suppose we have $\Phi_{\A}$ as in the statement of the theorem. Then for $\R\in \{\kk, \K\}$, we define $\Phi_{\R}$ as the composition 
\[
\Web=\R\otimes \WebA\xrightarrow{\R\otimes \Phi_{\A}} \R\otimes \FundA\xrightarrow{\base_{\R}}\Fund.
\]
It is then easy to see that it suffices to prove the result over $\A$. To this end, we just need to check the defining relations in $\WebA$, see \EQ \eqref{defskein}, are satisfied in $\FundA$. This follows from \EQS \eqref{E:gray-circle}, \eqref{E:grey-skein-bigon}, \eqref{E:grey-skein-monogon}, \eqref{E:gray-Assoc}, and \eqref{grayIH}.

Having established everything else in the statement of the theorem, the equality $\Phi_{\R}(\brweb) = \brrep$ follows from comparing Definition \ref{D:graybraiding} and Definition \ref{D:brweb}.
\end{proof}

\subsection{Compatability with classical invariant theory.}

Let $\kk^m$ be a vector space with basis $\{v_1, \dots, v_m\}$ and bilinear form $(v_i, v_j) = \delta_{i,j}$. We write $O(\kk^m)$ for the subgroup of $GL(\kk^m)$ preserving $(-,-)$.

We want to identify $\StdRepC$ with the full monoidal subcategory of $\textbf{Rep}(O(\kk^m))$ generated by $\kk^m$. To this end, consider the $\kk$-basis for $V_{\kk}$:
\[
\begin{cases}
(\sqrt{-1})^{i-1}\left(\frac{a_i + b_i}{2}\right), \quad (\sqrt{-1})^{i-1}\left(\frac{a_i - b_i}{2}\right), \quad i=1, \dots, n, \quad \text{if $m=2n$} \\
(\sqrt{-1})^{i-1}\left(\frac{a_i + b_i}{2}\right), \quad (\sqrt{-1})^{n-1}\frac{u}{\sqrt{2}}, \quad \text{and}\quad (\sqrt{-1})^{i-1}\left(\frac{a_i - b_i}{2}\right), \quad i=1, \dots, n, \quad \text{if $m=2n+1$}.
\end{cases}
\]
This basis gives an identification $\kk^m = V_{\kk}$ under which the form $(v_i,v_j)= \delta_{i,j}$ on $\kk^m$ agrees with the form $(v,w) = {^{\kk}e}_{1}(v\otimes w)$ on $V_{\kk}$.

Let $\mathcal{B}(m)$ be the Brauer category as defined in \cite[Definition 2.4 and Theorem 2,6]{LZbrauercat}. Comparing Lehrer-Zhang's definition of $\mathcal{B}(m)$ with Definition \ref{def:BMW}, it follows that there is an identification $\BMWC=\mathcal{B}(m)$. Lehrer-Zhang prove there is a unique monoidal functor $F:\mathcal{B}(m) \rightarrow \textbf{Rep}(O(\kk^m)) = \RepC$ such that the crossing diagram maps to the tensor flip map, and the cup and cap diagrams map to to the natural homomorphisms constructed with $(-,-)$ \cite[Theorem 3.4]{LZbrauercat}. 

On the other hand, we have constructed a monoidal functor 
\[
\Phi_{\kk}|_{\StdWebC}:\StdWebC\rightarrow \StdRepC.
\]
Using the identification $\BMWC=\mathcal{B}(m)$, Proposition \ref{P:functor-bmw-to-web} gives a monoidal functor $\eta_{\kk}:\mathcal{B}(m)\rightarrow \StdWebC$. By definition, $F\circ \eta_{\kk}$ sends $\brwebC$ to the tensor flip map, and by Lemma \ref{L:brrepC-is-flipmap} $\Phi_{\kk}$ acts the same way. Thus, after identifying $\StdRepC$ with the full monoidal subcategory of $\textbf{Rep}(O(\kk^m))$ generated by $V_{\kk}$, we have $F = \Phi_{\kk}|_{\StdWebC}\circ \eta_{\kk}$. 

\section{Proof of the equivalence}

\subsection{Reduction to standard subcategories}

Let $\R = \K$ or $\kk$. In Theorem \ref{T:existence-of-functor} we showed the existence of a pivotal functor $\Phi_\R :\Web\rightarrow \Fund$. Our goal is to show that the functor $\Phi_\R $ is an equivalence. Essential sujectivity is immediate from the definitions, but we need to work to show $\Phi_\R $ is full and faithful. The first step is to reduce to showing that $\Phi_\R |_{\StdWeb}:\StdWeb\rightarrow \StdRep$ is full and faithful. 

\begin{lemma}\label{L:reduction-to-allones}
Let $\R \in \{\kk, \K\}$. If $\Phi_\R |_{\StdWeb}$ is full, then $\Phi_\R $ is full. If $\Phi_\R |_{\StdWeb}$ is faithful, then $\Phi_\R $ is faithful. 
\end{lemma}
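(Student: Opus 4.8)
The plan is to reduce Hom-spaces in $\Web$ between arbitrary objects $\Lambda^{\wtk}$ to Hom-spaces in $\StdWeb$ by using the fact that each $\Lambda^k_{\R}$ is a retract of $1^{\otimes k}$. Concretely, the morphisms ${^{\R}c}_k$ and ${^{\R}e}_k$ (equivalently, the thick cups/caps in $\Web$, which by Lemma~\ref{L:graybigon} and its web analogue compose correctly up to the invertible scalar $[2k]/[2]$) exhibit the object $k$ in $\Web$ as a direct summand of $1^{\otimes k}$: there are web morphisms $\iota_k\colon k\to 1^{\otimes k}$ and $\pi_k\colon 1^{\otimes k}\to k$ built from iterated merge/split vertices with $\pi_k\circ\iota_k = \lambda_k\cdot\id_k$ for some invertible $\lambda_k\in\A$ (hence in $\R$), since splitting a $k$-strand into $k$ strands labelled $1$ and then re-merging gives a composite of bigons each evaluating to an invertible scalar. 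Idempotents $\iota_k\pi_k/\lambda_k\in\End_{\StdWeb}(1^{\otimes k})$ then present $k$, and tensoring gives, for $\wtk = (\wtk_1,\dots,\wtk_s)$ and $\wtl=(\wtl_1,\dots,\wtl_t)$, morphisms $\iota_{\wtk}\colon \Lambda^{\wtk}\to 1^{\otimes|\wtk|}$, $\pi_{\wtk}\colon 1^{\otimes|\wtk|}\to \Lambda^{\wtk}$ with $\pi_{\wtk}\iota_{\wtk}$ invertible.

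\textbf{Key steps.} First I would record the retract data above, checking that $\iota_k,\pi_k$ are genuine morphisms in $\StdWeb$ (they are composites of generating vertices, all of whose non-$1$ edges have labels $\le k$, so they lie in the full subcategory generated by $1$ once we note every object $k\le m$ is hit — actually they lie in $\StdWeb$ precisely because $\StdWeb$ is a \emph{full} subcategory, and the relevant morphisms from $k$ to $1^{\otimes k}$ in $\Web$ are what matter). Second, for fullness: given $g\in\Hom_{\Fund}(\Lambda^{\wtk}_{\R},\Lambda^{\wtl}_{\R})$, consider $\Phi_\R(\pi_{\wtl})^{-1}\circ(\text{something})$ — more precisely, form $g' := \Phi_\R(\iota_{\wtl})\circ g\circ \Phi_\R(\pi_{\wtk})\in\Hom_{\StdRep}(V_\R^{\otimes|\wtk|},V_\R^{\otimes|\wtl|})$, which by hypothesis equals $\Phi_\R(f')$ for some $f'\in\Hom_{\StdWeb}(1^{\otimes|\wtk|},1^{\otimes|\wtl|})$; then $f := \lambda_{\wtl}^{-1}\,\pi_{\wtl}\circ f'\circ \iota_{\wtk}$ satisfies $\Phi_\R(f) = \lambda_{\wtl}^{-1}\Phi_\R(\pi_{\wtl})\Phi_\R(f')\Phi_\R(\iota_{\wtk}) = \lambda_{\wtl}^{-1}\Phi_\R(\pi_{\wtl})\Phi_\R(\iota_{\wtl})\,g\,\Phi_\R(\pi_{\wtk})\Phi_\R(\iota_{\wtk}) = g$ after cancelling the invertible scalars (using $\Phi_\R(\pi_k\iota_k) = \lambda_k\id$ and that $g$ factors through the idempotents, i.e. $\Phi_\R(\iota_{\wtk})\Phi_\R(\pi_{\wtk})$ acts as $\lambda_{\wtk}$ on the image of $\Lambda^{\wtk}_{\R}$). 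Third, for faithfulness: suppose $f\in\Hom_{\Web}(\Lambda^{\wtk},\Lambda^{\wtl})$ with $\Phi_\R(f)=0$; then $\Phi_\R(\iota_{\wtl} f \pi_{\wtk}) = 0$, and $\iota_{\wtl} f \pi_{\wtk}\in\Hom_{\StdWeb}(1^{\otimes|\wtk|},1^{\otimes|\wtl|})$, so faithfulness of $\Phi_\R|_{\StdWeb}$ gives $\iota_{\wtl} f \pi_{\wtk}=0$; then $f = \lambda_{\wtk}^{-1}\lambda_{\wtl}^{-1}\,\pi_{\wtl}(\iota_{\wtl} f \pi_{\wtk})\iota_{\wtk} = 0$, using once more that $f$ absorbs the idempotents $\pi_k\iota_k/\lambda_k$ on both sides (this last point requires knowing $\pi_{\wtl}\iota_{\wtl}\circ f = \lambda_{\wtl} f$ and $f\circ \pi_{\wtk}\iota_{\wtk} = \lambda_{\wtk} f$ in $\Web$, which holds because $\pi_k\iota_k = \lambda_k\id_k$ as an identity internal to $\Web$).

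\textbf{Main obstacle.} The only subtle point is the internal identity $\pi_k\circ\iota_k=\lambda_k\cdot\id_k$ in $\Web$ with $\lambda_k\in\R^\times$ — i.e. that the chosen splitting–merging composite on a $k$-labelled strand really evaluates, using only the defining relations (\ref{defskein}c), to an invertible scalar. This is essentially the iterated bigon computation behind Lemma~\ref{L:graybigon}/the web analogue of Definition~\ref{D:caps}, and one must be careful that the scalar is a product of quantum integers $[2j]/[2]$, all of which are invertible in $\A$ (hence in $\K$ and $\kk$); I would cite the relation (\ref{defskein}c) repeatedly, noting $[2j]/[2]\in\A^\times$ for $1\le j\le m$. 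Everything else is formal nonsense about retracts in a monoidal category. I expect this lemma's proof to be short, reducing to the displayed bookkeeping with $\iota,\pi,\lambda$ above.
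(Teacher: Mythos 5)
Your argument is the same as the paper's: observe that every generating object $k$ of $\Web$ is a retract of $1^{\otimes k}$ via iterated splits and merges, with $\pi_k\iota_k$ equal to the invertible scalar $\prod_{j=2}^{k}[2j]/[2]$ times $\id_k$ by repeated use of (\ref{defskein}c); the paper then simply cites \cite[Lemma~5.5]{bodish2021type} for the formal retract bookkeeping that you carry out by hand. Two small points: (i) in the fullness step you should set $f:=\lambda_{\wtl}^{-1}\lambda_{\wtk}^{-1}\,\pi_{\wtl}\circ f'\circ\iota_{\wtk}$ — with only $\lambda_{\wtl}^{-1}$ you land on $\lambda_{\wtk}\,g$, and the remark about $g$ "absorbing the idempotents" does not repair this; (ii) the opening sentence identifies $\iota_k,\pi_k$ with the cups/caps ${^{\R}c}_k,{^{\R}e}_k$, which is not what you mean — those are morphisms $\mathbf{1}\leftrightarrow k\otimes k$, whereas the retract data are the split/merge towers $k\leftrightarrow 1^{\otimes k}$ that you go on to describe correctly.
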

\begin{proof}
Using the merge and split trivalent vertices, it is easy to see that each generating object $k$ in $\Web$ is a direct summand of $1^{\otimes k}$. The claim then follows from \cite[Lemma 5.5]{bodish2021type}. 
\end{proof}


\subsection{Fullness}

It is well known that if $\R = \kk$ or $\K$, then the Brauer algebra, respectively the BMW algebra, is Schur-Weyl dual to $U_\R (\om)$ acting on tensor powers of $V_\R $. An adjunction argument, see \cite[Theorem 5.8]{bodish2021type}, then yields fullness of $\Phi_{\R}|_{\StdWeb}$. We give precise citations in the proof below.

\begin{thm}\label{T:full}
Let $\R \in \{\kk, \K\}$. The functor $\Phi_\R |_{\StdWeb}$ is full.
\end{thm}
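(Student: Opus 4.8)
The plan is to deduce fullness of $\Phi_\R|_{\StdWeb}$ from two classical inputs: the surjectivity of the natural map from the Brauer algebra (when $\R=\kk$) or the BMW algebra (when $\R=\K$) onto $\End_{U_\R(\om)}(V_\R^{\otimes d})$, and the fact that $\StdWeb = \WebB$, so that every web endomorphism of $1^{\otimes d}$ is a linear combination of braidings, cups, and caps. First I would recall the Schur--Weyl statements: for $\R=\kk$, the Brauer algebra $\mathcal{B}_d(m)$ surjects onto $\End_{O(\kk^m)}(V_{\kk}^{\otimes d}) = \End_{U_{\kk}(\om)}(V_{\kk}^{\otimes d})$, which is the classical result of Brauer (see \cite{LZbrauercat} for a modern treatment); for $\R=\K$, the analogous statement is that the BMW algebra $\BMWK$ surjects onto $\End_{U_{\K}(\om)}(V_{\K}^{\otimes d})$, which is \cite[Theorem 8.2]{LZbrauercat} (or the Schur--Weyl duality statements cited there).

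Next I would assemble the commuting square of functors. Proposition \ref{P:functor-bmw-to-web} gives a full functor $\eta_{\R}:\BMW\to \StdWeb$, and Theorem \ref{T:existence-of-functor} restricts to $\Phi_\R|_{\StdWeb}:\StdWeb\to \StdRep$. The composite $\Phi_\R|_{\StdWeb}\circ \eta_\R:\BMW\to \StdRep$ sends the generating crossing to $\brrep$, which by construction (Definition \ref{D:graybraiding}, and the identification of $\brrep$ with the genuine braiding $\brrepK$ via Lemma \ref{L:brrep} when $\R=\K$, or with the tensor flip via Lemma \ref{L:brrepC-is-flipmap} when $\R=\kk$) is exactly the image of the crossing under the Schur--Weyl map $\BMW\to \StdRep$. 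Since both functors out of $\BMW$ agree on the generating object and the generating morphism, and $\BMW$ is given by generators and relations, they coincide: $\Phi_\R|_{\StdWeb}\circ \eta_\R$ equals the Schur--Weyl functor $\BMW\to \StdRep$.

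The conclusion then follows formally: on each hom-space, the Schur--Weyl functor $\Hom_{\BMW}(1^{\otimes c},1^{\otimes d})\to \Hom_{\StdRep}(V_\R^{\otimes c},V_\R^{\otimes d})$ is surjective (this is the content of the Schur--Weyl duality, after the standard reduction from endomorphisms to arbitrary hom-spaces via cups and caps, or by noting the target is spanned by images of diagrams), and it factors through $\Phi_\R|_{\StdWeb}\circ \eta_\R$. Since $\eta_\R$ is full (Proposition \ref{P:functor-bmw-to-web}), surjectivity of the composite forces surjectivity of $\Phi_\R|_{\StdWeb}$ on hom-spaces. This is precisely the adjunction/Schur--Weyl argument of \cite[Theorem 5.8]{bodish2021type}, and I would invoke that lemma directly once the commuting triangle is in place.

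The main obstacle — though it is really just bookkeeping rather than a genuine difficulty — is making sure the Schur--Weyl surjectivity is invoked in the correct form: one needs surjectivity onto $\Hom_{\StdRep}(V_\R^{\otimes c}, V_\R^{\otimes d})$ for all $c,d$, not merely onto endomorphism algebras $\End(V_\R^{\otimes d})$. This is handled by the standard observation that any morphism $V_\R^{\otimes c}\to V_\R^{\otimes d}$ can be bent, using the self-duality of $V_\R$ and the (co)evaluation maps, into an endomorphism of $V_\R^{\otimes (c+d)}$, and these bending maps are themselves realized by cups and caps in $\BMW$; so surjectivity onto endomorphism algebras propagates to all hom-spaces. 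One must also confirm that $\BMWK = \mathcal{B}(m)$ when $q=1$, which is the identification already used in Section 4.3, so that the $\R=\kk$ case really is covered by Brauer's classical theorem as packaged in \cite{LZbrauercat}.
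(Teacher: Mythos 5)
Your proposal follows essentially the same approach as the paper's proof. For $\R=\kk$ the paper likewise uses that $F$ is full (\cite[Theorem 4.8]{LZbrauercat}) and the factorization $F = \Phi_{\kk}|_{\StdWebC}\circ \eta_{\kk}$; for $\R=\K$ the paper uses \cite[Theorem 8.5]{LZ-stronglymultifree} to get that braidings generate $\End_{U_{\K}(\om)}(V_{\K}^{\otimes d})$, deduces surjectivity of $\Phi_{\K}$ onto endomorphism algebras from $\Phi_\K(\brwebK)=\brrepK$, and then uses the pivotal/adjunction bend to extend to all hom-spaces. One small remark: in your last step, fullness of $\eta_\R$ is not needed — if a composite $G\circ H$ is surjective on hom-spaces, so is $G$, regardless of $H$; the paper's $\R=\kk$ argument uses only the factorization, not fullness of $\eta_{\kk}$.
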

\begin{proof}
We first argue for $\R = \kk$. We know that $F$ is full \cite[Theorem 4.8]{LZbrauercat}. Also, $F= \Phi_{\kk}|_{\StdWebC}\circ \eta_{\kk}$, so it follows that $\Phi_{\kk}|_{\StdWebC}$ is full.

Suppose $\R  = \K$. Then by \cite[Theorem 8.5]{LZ-stronglymultifree} the operators $\id\otimes \brrepK\otimes \id$ generate $\End_{U_{\K}(\om)}(V_\R ^{\otimes d})$, for all $d\in \mathbb{Z}_{\ge 0}$. Since $\Phi_{\K}$ is monoidal and $\Phi_{\K}(\brwebK) = \brrepK$, it follows that the map
\[
\Phi_{\K}:\End_{\StdWebK}(1^{\otimes d})\rightarrow \End_{U_{\K}(\om)}(V_{\K}^{\otimes d})
\]
is surjective for all $d\in \mathbb{Z}_{\ge 0}$. Since $\Phi_{\K}$ is pivotal, we can use adjunction in $\StdWebK$ and $\StdRepK$ to deduce that 
\[
\Phi_{\K}:\Hom_{\StdWebK}(1^{\otimes b}, 1^{\otimes c})\rightarrow \Hom_{U_{\K}(\om)}(V_{\K}^{\otimes b}, V_{\K}^{\otimes c})
\]
is surjective for all $b,c\in \mathbb{Z}_{\ge 0}$ such that $b+c$ is even. Since the homomorphism spaces are zero when $b+c$ is odd, it follows that $\Phi_{\K}|_{\StdWebK}$ is full. 
\end{proof}

\subsection{Faithfulness}

Our goal is to show that a functor is faithful, so we will necessarily have to analyze the kernel of a functor. The kernel of a monoidal functor is a monoidal ideal, so we recall some Lemmas about the interactions between monoidal functors and monoidal ideals.

\begin{notation}
Let $\mathcal{C}$ be an $\R$-linear monoidal category and let $x$ be a homomorphism in $\mathcal{C}$. Write $\langle x\rangle$ to denote the monoidal ideal generated by $x$ in $\mathcal{C}$. Given a morphism $y\in \Hom_{\mathcal{C}}(X,Y)$, we write $y\in \langle x\rangle$ if $y\in \Hom_{\langle x\rangle}(X,Y)$. 
\end{notation}

\begin{lemma}\label{L:monoidalfunctor/ideal-compatibility}
Suppose that $G:\mathcal{C}\rightarrow \mathcal{D}$ is an $\R$-linear monoidal functor and $x$ is a morphism in $\mathcal{C}$. Then if $y\in \langle x\rangle$,
then $G(y)\in \langle G(x)\rangle$. 
\end{lemma}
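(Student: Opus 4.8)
The statement is a completely routine structural fact about monoidal ideals, so the plan is to unwind the definition of $\langle x\rangle$ and apply the functor $G$ termwise. Recall that $\langle x\rangle$ is the smallest collection of morphisms in $\mathcal{C}$ that contains $x$, is closed under pre- and post-composition with arbitrary morphisms of $\mathcal{C}$, is closed under tensoring (on either side) with arbitrary morphisms of $\mathcal{C}$, and is closed under $\R$-linear combinations. Concretely, a morphism $y\in \langle x\rangle$ can be written as a finite $\R$-linear combination $y = \sum_i c_i\, g_i\circ (\id\otimes\, f_i\, x\, h_i\otimes\id)\circ g_i'$ for scalars $c_i\in\R$ and morphisms $f_i, h_i, g_i, g_i'$ in $\mathcal{C}$ (with appropriate identity morphisms tensored on either side to make the composites typecheck).

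First I would note that the hypotheses on $G$ — that it is $\R$-linear and monoidal — say exactly that $G$ commutes with each of the operations used to build $\langle x\rangle$ from $x$: $G(g\circ h) = G(g)\circ G(h)$, $G(f\otimes h) = G(f)\otimes G(h)$, $G(\id_X) = \id_{G(X)}$, and $G(\sum_i c_i y_i) = \sum_i c_i\, G(y_i)$. Applying $G$ to the expression above for $y$ therefore yields $G(y) = \sum_i c_i\, G(g_i)\circ (\id\otimes\, G(f_i)\, G(x)\, G(h_i)\otimes\id)\circ G(g_i')$, which is manifestly a morphism in the monoidal ideal $\langle G(x)\rangle$ of $\mathcal{D}$. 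Equivalently, and slightly more slickly, one observes that the collection $\{\, y \in \operatorname{Mor}(\mathcal{C}) : G(y) \in \langle G(x)\rangle\,\}$ is itself closed under composition, tensoring, and $\R$-linear combination (because $\langle G(x)\rangle$ is and $G$ respects these operations), and it contains $x$; hence it contains the smallest such collection, namely $\langle x\rangle$.

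There is no real obstacle here — the only thing to be slightly careful about is bookkeeping of the identity morphisms tensored on either side, which is needed so that the expression for a general element of a monoidal ideal typechecks, but this is a standard formality and I would not belabor it. I would write the proof in the second, more conceptual form (closure of the preimage-type set), since it avoids writing out the explicit normal form for elements of a monoidal ideal.
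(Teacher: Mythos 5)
Your proposal is correct and matches the paper's proof, which is the same one-line observation that $G$ preserves linear combinations, tensor products, and compositions; you simply spell out the normal form for elements of a monoidal ideal and give a slightly slicker closure argument.
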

\begin{proof}
Follows from observing that $G$ preserves linear combinations, tensor products, and compositions of morphisms. 
\end{proof}

Classical invariant theory gives us a description of the kernel of the functor $F:\mathcal{B}(m)\rightarrow \textbf{Rep}(O(\kk^m))$. 

\begin{defn}
Given an element $w\in S_k$, we naturally get an element $w\in\End_{\mathcal{B}(m)}(1^{\otimes k})$. Let $a_k:=\frac{1}{k!}\sum_{w\in S_k}(-1)^{\ell(w)}w\in \End_{\mathcal{B}(m)}(1^{\otimes k})$. We will represent the elements $a_k$ and $\eta_{\kk}(a_k)$ graphically by a box labelled by $k$.
\end{defn}

\def\SumTwist{
\begin{tric}
\draw [darkgreen]
(.4,0.4)--(.4,3)   node[above,scale=0.7,black]{$1$}
(0.9,0.4)--(0.9,3)  node[above,scale=0.7,black]{$1$}
(2.5,0.4)--(2.5,3)  node[above,scale=0.7,black]{$1$}

(.4,-0.4)--(.4,-3) node[below,scale=0.7,black]{$1$}
(0.9,-0.4)--(0.9,-3) node[below,scale=0.7,black]{$1$}
(2.5,-0.4)--(2.5,-3) node[below,scale=0.7,black]{$1$}     ;

\filldraw[black] (1.35,1.7) circle (1pt) (1.7,1.7) circle (1pt) (2.05,1.7) circle (1pt) 
(1.35,-1.7) circle (1pt) (1.7,-1.7) circle (1pt) (2.05,-1.7) circle (1pt);

\draw[darkred,thick] (-0.2,-0.4)rectangle(3.1,0.4)  
                     node[pos=0.5,scale=0.7,black]{$k$};
\end{tric}
}

\def\BigIk{
\begin{tric}
\draw [darkgreen](0,0)--(0,1) (0,1)..controls(-1,1)and(-1,1.5)..(-1,2) 
       node[midway,left,scale=0.7,black]{$k-1$}
       (0,1)..controls(1,2)..(1,7)node[above,scale=0.7,black]{$1$}
       (-1,2)..controls(-2,2)and(-2,2.5)..(-2,3) 
        node[midway,left,scale=0.7,black]{$k-2$}
       (-1,2)..controls(0,3)..(0,7)node[above,scale=0.7,black]{$1$};
\filldraw[black] (-2.25,3.25) circle (1pt) 
                 (-2.5,3.5) circle (1pt) (-2.75,3.75) circle (1pt)
      (-0.9,6.5) circle (1pt) (-1.25,6.5) circle (1pt) (-1.6,6.5) circle (1pt); 
\draw [darkgreen]
(-3,4)..controls(-3.5,4)and(-3.5,4.5)..(-3.5,5)node[midway,left,scale=0.7,black]{$3$}
(-3.5,5)..controls(-4.5,5)and(-4.5,5.5)..(-4.5,6)node[midway,left,scale=0.7,black]{$2$}
(-4.5,6)..controls(-5.5,6)and(-5.5,6.5)..(-5.5,7)node[above,scale=0.7,black]{$1$}
(-3.5,5)..controls(-2.5,5.5)..(-2.5,7) node[above,scale=0.7,black]{$1$}
(-4.5,6)..controls(-3.5,6)and(-3.5,6.5)..(-3.5,7)node[above,scale=0.7,black]{$1$};
\draw (0,0)node[left,scale=0.7,black]{$k$};
\draw [darkgreen] (0,0)--(0,-1) (0,-1)..controls(-1,-1)and(-1,-1.5)..(-1,-2) 
       node[midway,left,scale=0.7,black]{$k-1$}
       (0,-1)..controls(1,-2)..(1,-7)node[below,scale=0.7,black]{$1$}
       (-1,-2)..controls(-2,-2)and(-2,-2.5)..(-2,-3) 
        node[midway,left,scale=0.7,black]{$k-2$}
       (-1,-2)..controls(0,-3)..(0,-7)node[below,scale=0.7,black]{$1$};
\filldraw[black] (-2.25,-3.25) circle (1pt) 
                 (-2.5,-3.5) circle (1pt) (-2.75,-3.75) circle (1pt)
      (-0.9,-6.5) circle (1pt) (-1.25,-6.5) circle (1pt) (-1.6,-6.5) circle (1pt); 
\draw [darkgreen]
(-3,-4)..controls(-3.5,-4)and(-3.5,-4.5)..(-3.5,-5)node[midway,left,scale=0.7,black]{$3$}
(-3.5,-5)..controls(-4.5,-5)and(-4.5,-5.5)..(-4.5,-6)node[midway,left,scale=0.7,black]{$2$}
(-4.5,-6)..controls(-5.5,-6)and(-5.5,-6.5)..(-5.5,-7)node[below,scale=0.7,black]{$1$}
(-3.5,-5)..controls(-2.5,-5.5)..(-2.5,-7) node[below,scale=0.7,black]{$1$}
(-4.5,-6)..controls(-3.5,-6)and(-3.5,-6.5)..(-3.5,-7)node[below,scale=0.7,black]{$1$};
\end{tric}
}

\def\SumTwista{
\begin{tric}
\draw [darkgreen]
(.4,0.4)--(.4,3)   node[above,scale=0.7,black]{$1$}
(0.9,0.4)--(0.9,3)  node[above,scale=0.7,black]{$1$}
(2.5,0.4)--(2.5,3)  node[above,scale=0.7,black]{$1$}

(.4,-0.4)--(.4,-3) node[below,scale=0.7,black]{$1$}
(0.9,-0.4)--(0.9,-3) node[below,scale=0.7,black]{$1$}
(2.5,-0.4)--(2.5,-3) node[below,scale=0.7,black]{$1$}     ;

\filldraw[black] (1.35,1.7) circle (1pt) (1.7,1.7) circle (1pt) (2.05,1.7) circle (1pt) 
(1.35,-1.7) circle (1pt) (1.7,-1.7) circle (1pt) (2.05,-1.7) circle (1pt);

\draw[darkred,thick] (-0.2,-0.4)rectangle(3.1,0.4)  
                     node[pos=0.5,scale=0.7,black]{$k+1$};
\end{tric}
}

\def\SumTwistb{
\begin{tric}
\draw [darkgreen]
 (4,3)--(4,-3) node[below,scale=0.7,black]{$1$}
 
(.4,0.4)--(.4,3)   node[above,scale=0.7,black]{$1$}
(0.9,0.4)--(0.9,3)  node[above,scale=0.7,black]{$1$}
(2.5,0.4)--(2.5,3)  node[above,scale=0.7,black]{$1$}

(.4,-0.4)--(.4,-3) node[below,scale=0.7,black]{$1$}
(0.9,-0.4)--(0.9,-3) node[below,scale=0.7,black]{$1$}
(2.5,-0.4)--(2.5,-3) node[below,scale=0.7,black]{$1$}     ;

\filldraw[black] (1.35,1.7) circle (1pt) (1.7,1.7) circle (1pt) (2.05,1.7) circle (1pt) 
(1.35,-1.7) circle (1pt) (1.7,-1.7) circle (1pt) (2.05,-1.7) circle (1pt);

\draw[darkred,thick] (-0.2,-0.4)rectangle(3.1,0.4)  
                     node[pos=0.5,scale=0.7,black]{$k$};
\end{tric}
}

\def\SumTwistc{
\begin{tric}
\draw [darkgreen]  

(.4,0.4)--(.4,2.2)   
(0.9,0.4)--(0.9,2.2)  
(2.5,0.4)--(2.5,2.2)  

(.4,-0.4)--(.4,-2)   node[below,scale=0.7,black]{$1$}
(0.9,-0.4)--(0.9,-2)  node[below,scale=0.7,black]{$1$}
(2.5,-0.4)--(2.5,-2)  node[below,scale=0.7,black]{$1$}
(3,-0.4)--(3,-2)  node[below,scale=0.7,black]{$1$}

(.4,3)--(.4,4.6)   node[above,scale=0.7,black]{$1$}
(0.9,3)--(0.9,4.6)  node[above,scale=0.7,black]{$1$}
(2.5,3)--(2.5,4.6)  node[above,scale=0.7,black]{$1$}
(3,3)--(3,4.6)  node[above,scale=0.7,black]{$1$};

\filldraw[black] (1.35,3.8) circle (1pt) (1.7,3.8) circle (1pt) (2.05,3.8) circle (1pt) 

(1.35,-1.2) circle (1pt) (1.7,-1.2) circle (1pt) (2.05,-1.2) circle (1pt)

(1.35,1.3) circle (1pt) (1.7,1.3) circle (1pt) (2.05,1.3) circle (1pt) ;

\draw [darkgreen] 
(3,0.4)..controls(3,1.3)and(4.5,1.3)..(4.5,4.6)
node[above,scale=0.7,black]{$1$}
      (3,2.2)..controls(3,1.3)and(4.5,1.3)..(4.5,-2)
      node[below,scale=0.7,black]{$1$};

\draw[darkred,thick] (-0.2,-0.4)rectangle(3.6,0.4)  
    node[pos=0.5,scale=0.7,black]{$k$}
                     
(-0.2,2.2)rectangle(3.6,3)  
    node[pos=0.5,scale=0.7,black]{$k$};
\end{tric}
}

\def\SumTwistd{
\begin{tricc}
\draw (2,7)--(2,-7)node[below,scale=0.7,black]{$1$};
\draw (0,0)--(0,1) (0,1)..controls(-1,1)and(-1,1.5)..(-1,2) 
       node[midway,left,scale=0.7,black]{$k-1$}
       (0,1)..controls(1,2)..(1,7)node[above,scale=0.7,black]{$1$}
       (-1,2)..controls(-2,2)and(-2,2.5)..(-2,3) 
        node[midway,left,scale=0.7,black]{$k-2$}
       (-1,2)..controls(0,3)..(0,7)node[above,scale=0.7,black]{$1$};
\filldraw[black] (-2.25,3.25) circle (1pt) 
                 (-2.5,3.5) circle (1pt) (-2.75,3.75) circle (1pt)
      (-0.9,6.5) circle (1pt) (-1.25,6.5) circle (1pt) (-1.6,6.5) circle (1pt); 
\draw 
(-3,4)..controls(-3.5,4)and(-3.5,4.5)..(-3.5,5)node[midway,left,scale=0.7,black]{$3$}
(-3.5,5)..controls(-4.5,5)and(-4.5,5.5)..(-4.5,6)node[midway,left,scale=0.7,black]{$2$}
(-4.5,6)..controls(-5.5,6)and(-5.5,6.5)..(-5.5,7)node[above,scale=0.7,black]{$1$}
(-3.5,5)..controls(-2.5,5.5)..(-2.5,7) node[above,scale=0.7,black]{$1$}
(-4.5,6)..controls(-3.5,6)and(-3.5,6.5)..(-3.5,7)node[above,scale=0.7,black]{$1$};
\draw (0,0)node[left,scale=0.7,black]{$k$};
\draw (0,0)--(0,-1) (0,-1)..controls(-1,-1)and(-1,-1.5)..(-1,-2) 
       node[midway,left,scale=0.7,black]{$k-1$}
       (0,-1)..controls(1,-2)..(1,-7)node[below,scale=0.7,black]{$1$}
       (-1,-2)..controls(-2,-2)and(-2,-2.5)..(-2,-3) 
        node[midway,left,scale=0.7,black]{$k-2$}
       (-1,-2)..controls(0,-3)..(0,-7)node[below,scale=0.7,black]{$1$};
\filldraw[black] (-2.25,-3.25) circle (1pt) 
                 (-2.5,-3.5) circle (1pt) (-2.75,-3.75) circle (1pt)
      (-0.9,-6.5) circle (1pt) (-1.25,-6.5) circle (1pt) (-1.6,-6.5) circle (1pt); 
\draw 
(-3,-4)..controls(-3.5,-4)and(-3.5,-4.5)..(-3.5,-5)node[midway,left,scale=0.7,black]{$3$}
(-3.5,-5)..controls(-4.5,-5)and(-4.5,-5.5)..(-4.5,-6)node[midway,left,scale=0.7,black]{$2$}
(-4.5,-6)..controls(-5.5,-6)and(-5.5,-6.5)..(-5.5,-7)node[below,scale=0.7,black]{$1$}
(-3.5,-5)..controls(-2.5,-5.5)..(-2.5,-7) node[below,scale=0.7,black]{$1$}
(-4.5,-6)..controls(-3.5,-6)and(-3.5,-6.5)..(-3.5,-7)node[below,scale=0.7,black]{$1$};
\end{tricc}
}

\def\SumTwiste{
\begin{tricc}
\draw (0,0)--(0,1) (0,1)..controls(-1,1)and(-1,1.5)..(-1,2) 
       node[midway,left,scale=0.7,black]{$k-1$}
       (0,1)..controls(1,2)..(1,7)node[above,scale=0.7,black]{$1$}
       (-1,2)..controls(-2,2)and(-2,2.5)..(-2,3) 
        node[midway,left,scale=0.7,black]{$k-2$}
       (-1,2)..controls(0,3)..(0,7)node[above,scale=0.7,black]{$1$};
\filldraw[black] (-2.25,3.25) circle (1pt) 
                 (-2.5,3.5) circle (1pt) (-2.75,3.75) circle (1pt)
      (-0.9,6.5) circle (1pt) (-1.25,6.5) circle (1pt) (-1.6,6.5) circle (1pt); 
\draw 
(-3,4)..controls(-3.5,4)and(-3.5,4.5)..(-3.5,5)node[midway,left,scale=0.7,black]{$3$}
(-3.5,5)..controls(-4.5,5)and(-4.5,5.5)..(-4.5,6)node[midway,left,scale=0.7,black]{$2$}
(-4.5,6)..controls(-5.5,6)and(-5.5,6.5)..(-5.5,7)node[above,scale=0.7,black]{$1$}
(-3.5,5)..controls(-2.5,5.5)..(-2.5,7) node[above,scale=0.7,black]{$1$}
(-4.5,6)..controls(-3.5,6)and(-3.5,6.5)..(-3.5,7)node[above,scale=0.7,black]{$1$};
\draw (0,0)node[left,scale=0.7,black]{$k$};
\draw (0,0)--(0,-1) (0,-1)..controls(-1,-1)and(-1,-1.5)..(-1,-2) 
       node[midway,left,scale=0.7,black]{$k-1$}
       (0,-1)..controls(1,-2)..(2,-21)node[below,scale=0.7,black]{$1$}
       (-1,-2)..controls(-2,-2)and(-2,-2.5)..(-2,-3) 
        node[midway,left,scale=0.7,black]{$k-2$}
       (-1,-2)..controls(0,-3)..(0,-7)node[left,scale=0.7,black]{$1$};
\filldraw[black] (-2.25,-3.25) circle (1pt) 
                 (-2.5,-3.5) circle (1pt) (-2.75,-3.75) circle (1pt)
      (-1,-7) circle (1pt) (-1.35,-7) circle (1pt) (-1.7,-7) circle (1pt); 
\draw 
(-3,-4)..controls(-3.5,-4)and(-3.5,-4.5)..(-3.5,-5)node[midway,left,scale=0.7,black]{$3$}
(-3.5,-5)..controls(-4.5,-5)and(-4.5,-5.5)..(-4.5,-6)node[midway,left,scale=0.7,black]{$2$}
(-4.5,-6)..controls(-5.5,-6)and(-5.5,-6.5)..(-5.5,-7)node[left,scale=0.7,black]{$1$}
(-3.5,-5)..controls(-2.5,-5.5)..(-2.5,-7) node[left,scale=0.7,black]{$1$}
(-4.5,-6)..controls(-3.5,-6)and(-3.5,-6.5)..(-3.5,-7)node[left,scale=0.7,black]{$1$};
\draw (0,-14)--(0,-13) (0,-13)..controls(-1,-13)and(-1,1.5-14)..(-1,2-14) 
       node[midway,left,scale=0.7,black]{$k-1$}
       (0,1-14)..controls(1,2-14)..(2,7)node[above,scale=0.7,black]{$1$}
       (-1,2-14)..controls(-2,2-14)and(-2,2.5-14)..(-2,3-14) 
        node[midway,left,scale=0.7,black]{$k-2$}
       (-1,2-14)..controls(0,3-14)..(0,7-14);
\filldraw[black] (-2.25,3.25-14) circle (1pt) 
                 (-2.5,3.5-14) circle (1pt) (-2.75,3.75-14) circle (1pt); 
\draw 
(-3,4-14)..controls(-3.5,4-14)and(-3.5,4.5-14)..(-3.5,5-14)node[midway,left,scale=0.7,black]{$3$}
(-3.5,5-14)..controls(-4.5,5-14)and(-4.5,5.5-14)..(-4.5,6-14)node[midway,left,scale=0.7,black]{$2$}
(-4.5,6-14)..controls(-5.5,6-14)and(-5.5,6.5-14)..(-5.5,7-14)
(-3.5,5-14)..controls(-2.5,5.5-14)..(-2.5,7-14)
(-4.5,6-14)..controls(-3.5,6-14)and(-3.5,6.5-14)..(-3.5,7-14);
\draw (0,0-14)node[left,scale=0.7,black]{$k$};
\draw (0,0-14)--(0,-1-14) (0,-1-14)..controls(-1,-1-14)and(-1,-1.5-14)..(-1,-2-14) 
       node[midway,left,scale=0.7,black]{$k-1$}
       (0,-1-14)..controls(1,-2-14)..(1,-7-14)node[below,scale=0.7,black]{$1$}
       (-1,-2-14)..controls(-2,-2-14)and(-2,-2.5-14)..(-2,-3-14) 
        node[midway,left,scale=0.7,black]{$k-2$}
       (-1,-2-14)..controls(0,-3-14)..(0,-7-14)node[below,scale=0.7,black]{$1$};
\filldraw[black] (-2.25,-3.25-14) circle (1pt) 
                 (-2.5,-3.5-14) circle (1pt) (-2.75,-3.75-14) circle (1pt)
      (-0.9,-6.5-14) circle (1pt) (-1.25,-6.5-14) circle (1pt) (-1.6,-6.5-14) circle (1pt); 
\draw 
(-3,-4-14)..controls(-3.5,-4-14)and(-3.5,-4.5-14)..(-3.5,-5-14)node[midway,left,scale=0.7,black]{$3$}
(-3.5,-5-14)..controls(-4.5,-5-14)and(-4.5,-5.5-14)..(-4.5,-6-14)node[midway,left,scale=0.7,black]{$2$}
(-4.5,-6-14)..controls(-5.5,-6-14)and(-5.5,-6.5-14)..(-5.5,-7-14)node[below,scale=0.7,black]{$1$}
(-3.5,-5-14)..controls(-2.5,-5.5-14)..(-2.5,-7-14) node[below,scale=0.7,black]{$1$}
(-4.5,-6-14)..controls(-3.5,-6-14)and(-3.5,-6.5-14)..(-3.5,-7-14)node[below,scale=0.7,black]{$1$};
\end{tricc}
}

\def\SumTwistf{
\begin{tricc}
\draw (2,7)--(2,-7);
\draw (0,0)--(0,1) (0,1)..controls(-1,1)and(-1,1.5)..(-1,2) 
       node[midway,left,scale=0.7,black]{$k-1$}
       (0,1)..controls(1,2)..(1,7)node[above,scale=0.7,black]{$1$}
       (-1,2)..controls(-2,2)and(-2,2.5)..(-2,3) 
        node[midway,left,scale=0.7,black]{$k-2$}
       (-1,2)..controls(0,3)..(0,7)node[above,scale=0.7,black]{$1$};
\filldraw[black] (-2.25,3.25) circle (1pt) 
                 (-2.5,3.5) circle (1pt) (-2.75,3.75) circle (1pt)
      (-0.9,6.5) circle (1pt) (-1.25,6.5) circle (1pt) (-1.6,6.5) circle (1pt); 
\draw 
(-3,4)..controls(-3.5,4)and(-3.5,4.5)..(-3.5,5)node[midway,left,scale=0.7,black]{$3$}
(-3.5,5)..controls(-4.5,5)and(-4.5,5.5)..(-4.5,6)node[midway,left,scale=0.7,black]{$2$}
(-4.5,6)..controls(-5.5,6)and(-5.5,6.5)..(-5.5,7)node[above,scale=0.7,black]{$1$}
(-3.5,5)..controls(-2.5,5.5)..(-2.5,7) node[above,scale=0.7,black]{$1$}
(-4.5,6)..controls(-3.5,6)and(-3.5,6.5)..(-3.5,7)node[above,scale=0.7,black]{$1$};
\draw (0,0)node[left,scale=0.7,black]{$k$};
\draw (0,0)--(0,-1) (0,-1)..controls(-1,-1)and(-1,-1.5)..(-1,-2) 
       node[midway,left,scale=0.7,black]{$k-1$}
       (0,-1)..controls(1,-2)..(1,-7)node[below,scale=0.7,black]{$1$}
       (-1,-2)..controls(-2,-2)and(-2,-2.5)..(-2,-3) 
        node[midway,left,scale=0.7,black]{$k-2$}
       (-1,-2)..controls(0,-3)..(0,-7)node[below,scale=0.7,black]{$1$};
\filldraw[black] (-2.25,-3.25) circle (1pt) 
                 (-2.5,-3.5) circle (1pt) (-2.75,-3.75) circle (1pt)
      (-0.9,-6.5) circle (1pt) (-1.25,-6.5) circle (1pt) (-1.6,-6.5) circle (1pt); 
\draw 
(-3,-4)..controls(-3.5,-4)and(-3.5,-4.5)..(-3.5,-5)node[midway,left,scale=0.7,black]{$3$}
(-3.5,-5)..controls(-4.5,-5)and(-4.5,-5.5)..(-4.5,-6)node[midway,left,scale=0.7,black]{$2$}
(-4.5,-6)..controls(-5.5,-6)and(-5.5,-6.5)..(-5.5,-7)node[below,scale=0.7,black]{$1$}
(-3.5,-5)..controls(-2.5,-5.5)..(-2.5,-7) node[below,scale=0.7,black]{$1$}
(-4.5,-6)..controls(-3.5,-6)and(-3.5,-6.5)..(-3.5,-7)node[below,scale=0.7,black]{$1$};
\end{tricc}
}

\def\SumTwistg{
\begin{tricc}
\draw (0,0)--(0,1) (0,1)..controls(-1,1)and(-1,1.5)..(-1,2) 
       node[midway,left,scale=0.7,black]{$k-1$}
       (0,1)..controls(1,2)..(1,7)node[above,scale=0.7,black]{$1$}
       (-1,2)..controls(-2,2)and(-2,2.5)..(-2,3) 
        node[midway,left,scale=0.7,black]{$k-2$}
       (-1,2)..controls(0,3)..(0,7)node[above,scale=0.7,black]{$1$};
\filldraw[black] (-2.25,3.25) circle (1pt) 
                 (-2.5,3.5) circle (1pt) (-2.75,3.75) circle (1pt)
      (-0.9,6.5) circle (1pt) (-1.25,6.5) circle (1pt) (-1.6,6.5) circle (1pt); 
\draw 
(-3,4)..controls(-3.5,4)and(-3.5,4.5)..(-3.5,5)node[midway,left,scale=0.7,black]{$3$}
(-3.5,5)..controls(-4.5,5)and(-4.5,5.5)..(-4.5,6)node[midway,left,scale=0.7,black]{$2$}
(-4.5,6)..controls(-5,6)and(-5,6.5)..(-5,7)node[above,scale=0.7,black]{$1$}
(-3.5,5)..controls(-2.5,5.5)..(-2.5,7) node[above,scale=0.7,black]{$1$}
(-4.5,6)..controls(-3.5,6)and(-3.5,6.5)..(-3.5,7)node[above,scale=0.7,black]{$1$};
\draw (0,0)node[left,scale=0.7,black]{$k$};
\draw (0,0)--(0,-1) (0,-1)..controls(-1,-1)and(-1,-1.5)..(-1,-2) 
       node[midway,left,scale=0.7,black]{$k-1$}
       (0,-1)..controls(1,-2)..(2,-21)node[below,scale=0.7,black]{$1$}
       (-1,-2)..controls(-2,-2)and(-2,-2.5)..(-2,-3) 
        node[midway,left,scale=0.7,black]{$k-2$}
       (-1,-2)..controls(0,-3)..(0,-7)node[left,scale=0.7,black]{$1$};
\filldraw[black] (-2.25,-3.25) circle (1pt) 
                 (-2.5,-3.5) circle (1pt) (-2.75,-3.75) circle (1pt)
      (-1,-7) circle (1pt) (-1.35,-7) circle (1pt) (-1.7,-7) circle (1pt); 
\draw 
(-3,-4)..controls(-3.5,-4)and(-3.5,-4.5)..(-3.5,-5)node[midway,left,scale=0.7,black]{$3$}
(-3.5,-5)..controls(-4.5,-5)and(-4.5,-5.5)..(-4.5,-7)node[right,scale=0.7,black]{$2$}
(-3.5,-5)..controls(-2.5,-5.5)..(-2.5,-7) node[left,scale=0.7,black]{$1$};
\draw (0,-14)--(0,-13) (0,-13)..controls(-1,-13)and(-1,1.5-14)..(-1,2-14) 
       node[midway,left,scale=0.7,black]{$k-1$}
       (0,1-14)..controls(1,2-14)..(2,7)node[above,scale=0.7,black]{$1$}
       (-1,2-14)..controls(-2,2-14)and(-2,2.5-14)..(-2,3-14) 
        node[midway,left,scale=0.7,black]{$k-2$}
       (-1,2-14)..controls(0,3-14)..(0,7-14);
\filldraw[black] (-2.25,3.25-14) circle (1pt) 
                 (-2.5,3.5-14) circle (1pt) (-2.75,3.75-14) circle (1pt); 
\draw 
(-3,4-14)..controls(-3.5,4-14)and(-3.5,4.5-14)..(-3.5,5-14)node[midway,left,scale=0.7,black]{$3$}
(-3.5,5-14)..controls(-4.5,5-14)and(-4.5,5.5-14)..(-4.5,7-14)
(-3.5,5-14)..controls(-2.5,5.5-14)..(-2.5,7-14);
\draw (0,0-14)node[left,scale=0.7,black]{$k$};
\draw (0,0-14)--(0,-1-14) (0,-1-14)..controls(-1,-1-14)and(-1,-1.5-14)..(-1,-2-14) 
       node[midway,left,scale=0.7,black]{$k-1$}
       (0,-1-14)..controls(1,-2-14)..(1,-7-14)node[below,scale=0.7,black]{$1$}
       (-1,-2-14)..controls(-2,-2-14)and(-2,-2.5-14)..(-2,-3-14) 
        node[midway,left,scale=0.7,black]{$k-2$}
       (-1,-2-14)..controls(0,-3-14)..(0,-7-14)node[below,scale=0.7,black]{$1$};
\filldraw[black] (-2.25,-3.25-14) circle (1pt) 
                 (-2.5,-3.5-14) circle (1pt) (-2.75,-3.75-14) circle (1pt)
      (-0.9,-6.5-14) circle (1pt) (-1.25,-6.5-14) circle (1pt) (-1.6,-6.5-14) circle (1pt); 
\draw 
(-3,-4-14)..controls(-3.5,-4-14)and(-3.5,-4.5-14)..(-3.5,-5-14)node[midway,left,scale=0.7,black]{$3$}
(-3.5,-5-14)..controls(-4.5,-5-14)and(-4.5,-5.5-14)..(-4.5,-6-14)node[midway,left,scale=0.7,black]{$2$}
(-4.5,-6-14)..controls(-5,-6-14)and(-5,-6.5-14)..(-5,-7-14)node[below,scale=0.7,black]{$1$}
(-3.5,-5-14)..controls(-2.5,-5.5-14)..(-2.5,-7-14) node[below,scale=0.7,black]{$1$}
(-4.5,-6-14)..controls(-3.5,-6-14)and(-3.5,-6.5-14)..(-3.5,-7-14)node[below,scale=0.7,black]{$1$};
\end{tricc}
}

\def\SumTwisth{
\begin{tricc}
\draw (2,7)--(2,-7);
\draw (0,0)--(0,1) (0,1)..controls(-1,1)and(-1,1.5)..(-1,2) 
       node[midway,left,scale=0.7,black]{$k-1$}
       (0,1)..controls(1,2)..(1,7)node[above,scale=0.7,black]{$1$}
       (-1,2)..controls(-2,2)and(-2,2.5)..(-2,3) 
        node[midway,left,scale=0.7,black]{$k-2$}
       (-1,2)..controls(0,3)..(0,7)node[above,scale=0.7,black]{$1$};
\filldraw[black] (-2.25,3.25) circle (1pt) 
                 (-2.5,3.5) circle (1pt) (-2.75,3.75) circle (1pt)
      (-0.9,6.5) circle (1pt) (-1.25,6.5) circle (1pt) (-1.6,6.5) circle (1pt); 
\draw 
(-3,4)..controls(-3.5,4)and(-3.5,4.5)..(-3.5,5)node[midway,left,scale=0.7,black]{$3$}
(-3.5,5)..controls(-4.5,5)and(-4.5,5.5)..(-4.5,6)node[midway,left,scale=0.7,black]{$2$}
(-4.5,6)..controls(-5.5,6)and(-5.5,6.5)..(-5.5,7)node[above,scale=0.7,black]{$1$}
(-3.5,5)..controls(-2.5,5.5)..(-2.5,7) node[above,scale=0.7,black]{$1$}
(-4.5,6)..controls(-3.5,6)and(-3.5,6.5)..(-3.5,7)node[above,scale=0.7,black]{$1$};
\draw (0,0)node[left,scale=0.7,black]{$k$};
\draw (0,0)--(0,-1) (0,-1)..controls(-1,-1)and(-1,-1.5)..(-1,-2) 
       node[midway,left,scale=0.7,black]{$k-1$}
       (0,-1)..controls(1,-2)..(1,-7)node[below,scale=0.7,black]{$1$}
       (-1,-2)..controls(-2,-2)and(-2,-2.5)..(-2,-3) 
        node[midway,left,scale=0.7,black]{$k-2$}
       (-1,-2)..controls(0,-3)..(0,-7)node[below,scale=0.7,black]{$1$};
\filldraw[black] (-2.25,-3.25) circle (1pt) 
                 (-2.5,-3.5) circle (1pt) (-2.75,-3.75) circle (1pt)
      (-0.9,-6.5) circle (1pt) (-1.25,-6.5) circle (1pt) (-1.6,-6.5) circle (1pt); 
\draw 
(-3,-4)..controls(-3.5,-4)and(-3.5,-4.5)..(-3.5,-5)node[midway,left,scale=0.7,black]{$3$}
(-3.5,-5)..controls(-4.5,-5)and(-4.5,-5.5)..(-4.5,-6)node[midway,left,scale=0.7,black]{$2$}
(-4.5,-6)..controls(-5.5,-6)and(-5.5,-6.5)..(-5.5,-7)node[below,scale=0.7,black]{$1$}
(-3.5,-5)..controls(-2.5,-5.5)..(-2.5,-7) node[below,scale=0.7,black]{$1$}
(-4.5,-6)..controls(-3.5,-6)and(-3.5,-6.5)..(-3.5,-7)node[below,scale=0.7,black]{$1$};
\end{tricc}
}

\def\SumTwisti{
\begin{tricc}
\draw (0,0)--(0,1) (0,1)..controls(-1,1)and(-1,1.5)..(-1,2) 
       node[midway,left,scale=0.7,black]{$k-1$}
       (0,1)..controls(1,2)..(1,7)node[above,scale=0.7,black]{$1$}
       (-1,2)..controls(-2,2)and(-2,2.5)..(-2,3) 
        node[midway,left,scale=0.7,black]{$k-2$}
       (-1,2)..controls(0,3)..(0,7)node[above,scale=0.7,black]{$1$};
\filldraw[black] (-2.25,3.25) circle (1pt) 
                 (-2.5,3.5) circle (1pt) (-2.75,3.75) circle (1pt)
      (-0.9,6.5) circle (1pt) (-1.25,6.5) circle (1pt) (-1.6,6.5) circle (1pt); 
\draw 
(-3,4)..controls(-3.5,4)and(-3.5,4.5)..(-3.5,5)node[midway,left,scale=0.7,black]{$3$}
(-3.5,5)..controls(-4.5,5)and(-4.5,5.5)..(-4.5,6)node[midway,left,scale=0.7,black]{$2$}
(-4.5,6)..controls(-5,6)and(-5,6.5)..(-5,7)node[above,scale=0.7,black]{$1$}
(-3.5,5)..controls(-2.5,5.5)..(-2.5,7) node[above,scale=0.7,black]{$1$}
(-4.5,6)..controls(-3.5,6)and(-3.5,6.5)..(-3.5,7)node[above,scale=0.7,black]{$1$};
\draw (0,0)node[left,scale=0.7,black]{$k$};
\draw (0,0)--(0,-1) (0,-1)..controls(-1,-1)and(-1,-1.5)..(-1,-2) 
       node[midway,left,scale=0.7,black]{$k-1$}
       (0,-1)..controls(1,-2)..(2,-17)node[below,scale=0.7,black]{$1$}
       (-1,-2)..controls(-2,-2)and(-2,-2.5)..(-2,-3) 
        node[midway,left,scale=0.7,black]{$k-2$}
       (-1,-2)..controls(0,-3)..(0,-5)node[left,scale=0.7,black]{$1$};
\filldraw[black] (-2.25,-3.25) circle (1pt) 
                 (-2.5,-3.5) circle (1pt) (-2.75,-3.75) circle (1pt)
      (-1.35,-5) circle (1pt) (-1.7,-5) circle (1pt) (-2.05,-5) circle (1pt); 
\draw 
(-3,-4)..controls(-3.5,-4)and(-3.5,-4.5)..(-3.5,-5)node[left,scale=0.7,black]{$3$} ;
\draw (0,-11)--(0,-9) (0,-9)..controls(-1,-9)and(-1,1.5-10)..(-1,2-10) 
       node[midway,left,scale=0.7,black]{$k-1$}
       (0,1-10)..controls(1,2-10)..(2,7)node[above,scale=0.7,black]{$1$}
       (-1,2-10)..controls(-2,2-10)and(-2,2.5-10)..(-2,3-10) 
        node[midway,left,scale=0.7,black]{$k-2$}
       (-1,2-10)..controls(0,3-10)..(0,-5);
\filldraw[black] (-2.25,3.25-10) circle (1pt) 
                 (-2.5,3.5-10) circle (1pt) (-2.75,3.75-10) circle (1pt); 
\draw 
(-3,4-10)..controls(-3.5,4-10)and(-3.5,4.5-10)..(-3.5,-5);
\draw (0,0-10)node[left,scale=0.7,black]{$k$};
\draw (0,0-10)--(0,-1-10) (0,-1-10)..controls(-1,-1-10)and(-1,-1.5-10)..(-1,-2-10) 
       node[midway,left,scale=0.7,black]{$k-1$}
       (0,-1-10)..controls(1,-2-10)..(1,-7-10)node[below,scale=0.7,black]{$1$}
       (-1,-2-10)..controls(-2,-2-10)and(-2,-2.5-10)..(-2,-3-10) 
        node[midway,left,scale=0.7,black]{$k-2$}
       (-1,-2-10)..controls(0,-3-10)..(0,-7-10)node[below,scale=0.7,black]{$1$};
\filldraw[black] (-2.25,-3.25-10) circle (1pt) 
                 (-2.5,-3.5-10) circle (1pt) (-2.75,-3.75-10) circle (1pt)
      (-0.9,-6.5-10) circle (1pt) (-1.25,-6.5-10) circle (1pt) (-1.6,-6.5-10) circle (1pt); 
\draw 
(-3,-4-10)..controls(-3.5,-4-10)and(-3.5,-4.5-10)..(-3.5,-5-10)node[midway,left,scale=0.7,black]{$3$}
(-3.5,-5-10)..controls(-4.5,-5-10)and(-4.5,-5.5-10)..(-4.5,-6-10)node[midway,left,scale=0.7,black]{$2$}
(-4.5,-6-10)..controls(-5,-6-10)and(-5,-6.5-10)..(-5,-7-10)node[below,scale=0.7,black]{$1$}
(-3.5,-5-10)..controls(-2.5,-5.5-10)..(-2.5,-7-10) node[below,scale=0.7,black]{$1$}
(-4.5,-6-10)..controls(-3.5,-6-10)and(-3.5,-6.5-10)..(-3.5,-7-10)node[below,scale=0.7,black]{$1$};
\end{tricc}
}

\def\SumTwistj{
\begin{tricc}
\draw (2,7)--(2,-7)node[below,scale=0.7,black]{$1$};
\draw (0,0)--(0,1) (0,1)..controls(-1,1)and(-1,1.5)..(-1,2) 
       node[midway,left,scale=0.7,black]{$k-1$}
       (0,1)..controls(1,2)..(1,7)node[above,scale=0.7,black]{$1$}
       (-1,2)..controls(-2,2)and(-2,2.5)..(-2,3) 
        node[midway,left,scale=0.7,black]{$k-2$}
       (-1,2)..controls(0,3)..(0,7)node[above,scale=0.7,black]{$1$};
\filldraw[black] (-2.25,3.25) circle (1pt) 
                 (-2.5,3.5) circle (1pt) (-2.75,3.75) circle (1pt)
      (-0.9,6.5) circle (1pt) (-1.25,6.5) circle (1pt) (-1.6,6.5) circle (1pt); 
\draw 
(-3,4)..controls(-3.5,4)and(-3.5,4.5)..(-3.5,5)node[midway,left,scale=0.7,black]{$3$}
(-3.5,5)..controls(-4.5,5)and(-4.5,5.5)..(-4.5,6)node[midway,left,scale=0.7,black]{$2$}
(-4.5,6)..controls(-5.5,6)and(-5.5,6.5)..(-5.5,7)node[above,scale=0.7,black]{$1$}
(-3.5,5)..controls(-2.5,5.5)..(-2.5,7) node[above,scale=0.7,black]{$1$}
(-4.5,6)..controls(-3.5,6)and(-3.5,6.5)..(-3.5,7)node[above,scale=0.7,black]{$1$};
\draw (0,0)node[left,scale=0.7,black]{$k$};
\draw (0,0)--(0,-1) (0,-1)..controls(-1,-1)and(-1,-1.5)..(-1,-2) 
       node[midway,left,scale=0.7,black]{$k-1$}
       (0,-1)..controls(1,-2)..(1,-7)node[below,scale=0.7,black]{$1$}
       (-1,-2)..controls(-2,-2)and(-2,-2.5)..(-2,-3) 
        node[midway,left,scale=0.7,black]{$k-2$}
       (-1,-2)..controls(0,-3)..(0,-7)node[below,scale=0.7,black]{$1$};
\filldraw[black] (-2.25,-3.25) circle (1pt) 
                 (-2.5,-3.5) circle (1pt) (-2.75,-3.75) circle (1pt)
      (-0.9,-6.5) circle (1pt) (-1.25,-6.5) circle (1pt) (-1.6,-6.5) circle (1pt); 
\draw 
(-3,-4)..controls(-3.5,-4)and(-3.5,-4.5)..(-3.5,-5)node[midway,left,scale=0.7,black]{$3$}
(-3.5,-5)..controls(-4.5,-5)and(-4.5,-5.5)..(-4.5,-6)node[midway,left,scale=0.7,black]{$2$}
(-4.5,-6)..controls(-5.5,-6)and(-5.5,-6.5)..(-5.5,-7)node[below,scale=0.7,black]{$1$}
(-3.5,-5)..controls(-2.5,-5.5)..(-2.5,-7) node[below,scale=0.7,black]{$1$}
(-4.5,-6)..controls(-3.5,-6)and(-3.5,-6.5)..(-3.5,-7)node[below,scale=0.7,black]{$1$};
\end{tricc}
}

\def\SumTwistk{
\begin{tricc}
\draw (0,0)--(0,1) (0,1)..controls(-1,1)and(-1,1.5)..(-1,2) 
       node[midway,left,scale=0.7,black]{$k-1$}
       (0,1)..controls(1,2)..(1,7)node[above,scale=0.7,black]{$1$}
       (-1,2)..controls(-2,2)and(-2,2.5)..(-2,3) 
        node[midway,left,scale=0.7,black]{$k-2$}
       (-1,2)..controls(0,3)..(0,7)node[above,scale=0.7,black]{$1$};
\filldraw[black] (-2.25,3.25) circle (1pt) 
                 (-2.5,3.5) circle (1pt) (-2.75,3.75) circle (1pt)
      (-0.9,6.5) circle (1pt) (-1.25,6.5) circle (1pt) (-1.6,6.5) circle (1pt); 
\draw 
(-3,4)..controls(-3.5,4)and(-3.5,4.5)..(-3.5,5)node[midway,left,scale=0.7,black]{$3$}
(-3.5,5)..controls(-4.5,5)and(-4.5,5.5)..(-4.5,6)node[midway,left,scale=0.7,black]{$2$}
(-4.5,6)..controls(-5,6)and(-5,6.5)..(-5,7)node[above,scale=0.7,black]{$1$}
(-3.5,5)..controls(-2.5,5.5)..(-2.5,7) node[above,scale=0.7,black]{$1$}
(-4.5,6)..controls(-3.5,6)and(-3.5,6.5)..(-3.5,7)node[above,scale=0.7,black]{$1$};
\draw (0,0)node[left,scale=0.7,black]{$k$};
\draw (0,0)--(0,-1) (0,-1)..controls(-1,-1)and(-1,-2)..(-1,-3) 
       node[left,scale=0.7,black]{$k-1$}
       (0,-1)..controls(1,-2)..(2,-13)node[below,scale=0.7,black]{$1$};
\draw   (0,-6)--(0,-5) (0,-5)..controls(-1,-5)and(-1,-4)..(-1,-3) 
       (0,1-6)..controls(1,2-6)..(2,7)node[above,scale=0.7,black]{$1$};
\draw (0,0-6)node[left,scale=0.7,black]{$k$};
\draw (0,0-6)--(0,-1-6) (0,-1-6)..controls(-1,-1-6)and(-1,-1.5-6)..(-1,-2-6) 
       node[midway,left,scale=0.7,black]{$k-1$}
       (0,-1-6)..controls(1,-2-6)..(1,-7-6)node[below,scale=0.7,black]{$1$}
       (-1,-2-6)..controls(-2,-2-6)and(-2,-2.5-6)..(-2,-3-6) 
        node[midway,left,scale=0.7,black]{$k-2$}
       (-1,-2-6)..controls(0,-3-6)..(0,-7-6)node[below,scale=0.7,black]{$1$};
\filldraw[black] (-2.25,-3.25-6) circle (1pt) 
                 (-2.5,-3.5-6) circle (1pt) (-2.75,-3.75-6) circle (1pt)
      (-0.9,-6.5-6) circle (1pt) (-1.25,-6.5-6) circle (1pt) (-1.6,-6.5-6) circle (1pt); 
\draw 
(-3,-4-6)..controls(-3.5,-4-6)and(-3.5,-4.5-6)..(-3.5,-5-6)node[midway,left,scale=0.7,black]{$3$}
(-3.5,-5-6)..controls(-4.5,-5-6)and(-4.5,-5.5-6)..(-4.5,-6-6)node[midway,left,scale=0.7,black]{$2$}
(-4.5,-6-6)..controls(-5,-6-6)and(-5,-6.5-6)..(-5,-7-6)node[below,scale=0.7,black]{$1$}
(-3.5,-5-6)..controls(-2.5,-5.5-6)..(-2.5,-7-6) node[below,scale=0.7,black]{$1$}
(-4.5,-6-6)..controls(-3.5,-6-6)and(-3.5,-6.5-6)..(-3.5,-7-6)node[below,scale=0.7,black]{$1$};
\end{tricc}
}

\def\SumTwistl{
\begin{tricc}
\draw  (0,1)..controls(-1,1)and(-1,1.5)..(-1,2) 
       node[midway,left,scale=0.7,black]{$k-1$}
       (0,1)..controls(1,2)..(1,7)node[above,scale=0.7,black]{$1$}
       (-1,2)..controls(-2,2)and(-2,2.5)..(-2,3) 
        node[midway,left,scale=0.7,black]{$k-2$}
       (-1,2)..controls(0,3)..(0,7)node[above,scale=0.7,black]{$1$};

\draw   (0,-1)--(0,1) node[midway,left,scale=0.7,black]{$k$}
       (0,-5)--(0,-3) node[midway,left,scale=0.7,black]{$k$}
        (0,-1)..controls(2,0)..(2,7)node[above,scale=0.7,black]{$1$}
       (0,-3)..controls(2,-4)..(2,-11)node[below,scale=0.7,black]{$1$}
       (0,-1)--(0,-3) node[midway,left,scale=0.7,black]{$k+1$};
       
\filldraw[black] (-2.25,3.25) circle (1pt) 
                 (-2.5,3.5) circle (1pt) (-2.75,3.75) circle (1pt)
      (-0.9,6.5) circle (1pt) (-1.25,6.5) circle (1pt) (-1.6,6.5) circle (1pt); 
\draw 
(-3,4)..controls(-3.5,4)and(-3.5,4.5)..(-3.5,5)node[midway,left,scale=0.7,black]{$3$}
(-3.5,5)..controls(-4.5,5)and(-4.5,5.5)..(-4.5,6)node[midway,left,scale=0.7,black]{$2$}
(-4.5,6)..controls(-5,6)and(-5,6.5)..(-5,7)node[above,scale=0.7,black]{$1$}
(-3.5,5)..controls(-2.5,5.5)..(-2.5,7) node[above,scale=0.7,black]{$1$}
(-4.5,6)..controls(-3.5,6)and(-3.5,6.5)..(-3.5,7)node[above,scale=0.7,black]{$1$};
\draw  (0,-1-4)..controls(-1,-1-4)and(-1,-1.5-4)..(-1,-2-4) 
       node[midway,left,scale=0.7,black]{$k-1$}
       (0,-1-4)..controls(1,-2-4)..(1,-7-4) node[below,scale=0.7,black]{$1$}
    (-1,-2-4)..controls(-2,-2-4)and(-2,-2.5-4)..(-2,-3-4) 
        node[midway,left,scale=0.7,black]{$k-2$}
       (-1,-2-4)..controls(0,-3-4)..(0,-7-4) node[below,scale=0.7,black]{$1$};
\filldraw[black] (-2.25,-3.25-4) circle (1pt) 
                 (-2.5,-3.5-4) circle (1pt) (-2.75,-3.75-4) circle (1pt)
      (-0.9,-6.5-4) circle (1pt) (-1.25,-6.5-4) circle (1pt) (-1.6,-6.5-4) circle (1pt); 
\draw 
(-3,-4-4)..controls(-3.5,-4-4)and(-3.5,-4.5-4)..(-3.5,-5-4)node[midway,left,scale=0.7,black]{$3$}
(-3.5,-5-4)..controls(-4.5,-5-4)and(-4.5,-5.5-4)..(-4.5,-6-4)node[midway,left,scale=0.7,black]{$2$}
(-4.5,-6-4)..controls(-5,-6-4)and(-5,-6.5-4)..(-5,-7-4)node[below,scale=0.7,black]{$1$}
(-3.5,-5-4)..controls(-2.5,-5.5-4)..(-2.5,-7-4) node[below,scale=0.7,black]{$1$}
(-4.5,-6-4)..controls(-3.5,-6-4)and(-3.5,-6.5-4)..(-3.5,-7-4)node[below,scale=0.7,black]{$1$};
\end{tricc}
}

\begin{thm}\label{thm:LZ-kernel}
The kernel of the functor 
\[
F:\mathcal{B}(m)\longrightarrow \StdRepC
\]
is the monoidal ideal generated by $a_{m+1}$.
\end{thm}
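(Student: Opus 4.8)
The plan is to establish the two inclusions $\langle a_{m+1}\rangle \subseteq \ker F$ and $\ker F \subseteq \langle a_{m+1}\rangle$ separately. The first is the easy direction. Since $F$ sends a permutation $w\in S_{m+1}\subset \End_{\mathcal{B}(m)}(1^{\otimes(m+1)})$ to the operator on $(\kk^m)^{\otimes(m+1)}$ that permutes the tensor factors according to $w$, the morphism $F(a_{m+1})$ is the antisymmetrizing projection of $(\kk^m)^{\otimes(m+1)}$ onto $\Lambda^{m+1}(\kk^m)$. As $\dim_{\kk}\kk^m = m < m+1$, we have $\Lambda^{m+1}(\kk^m)=0$, so $F(a_{m+1})=0$. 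The kernel of a $\kk$-linear monoidal functor is a monoidal ideal (it is closed under addition and under pre/post-composition and tensoring with arbitrary morphisms, since $F$ preserves all of these), so it contains the monoidal ideal generated by any of its elements; applying this to $a_{m+1}$ gives $\langle a_{m+1}\rangle \subseteq \ker F$.

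For the reverse inclusion I would invoke the diagrammatic form of the second fundamental theorem of invariant theory for the orthogonal group due to Lehrer--Zhang. Concretely, \cite[Theorem 4.8]{LZbrauercat} asserts that the functor $F$ descends to an equivalence of monoidal categories
\[
\overline{F}:\mathcal{B}(m)/\langle a_{m+1}\rangle \xrightarrow{\ \sim\ } \StdRepC,
\]
where the ideal being quotiented by is generated exactly by the antisymmetrizing idempotent on $m+1$ strands, i.e. by our $a_{m+1}$, and where the target is identified with the full monoidal subcategory of $\textbf{Rep}(O(\kk^m))$ generated by $\kk^m$ via the basis change fixed just before the statement. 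In particular $\overline{F}$ is faithful, which is precisely the assertion that every morphism $x$ of $\mathcal{B}(m)$ with $F(x)=0$ already lies in $\langle a_{m+1}\rangle$. Combining this with the previous paragraph yields $\ker F = \langle a_{m+1}\rangle$.

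I do not expect a genuine obstacle here: the content is entirely classical and the only work is matching conventions with \cite{LZbrauercat} (the normalization of the cup and cap morphisms, the precise form of the generator of the ideal, and the identification of $\StdRepC$ with the subcategory of $\textbf{Rep}(O(\kk^m))$ generated by $\kk^m$ set up above). The one point to state carefully is that the target category in \cite[Theorem 4.8]{LZbrauercat} is exactly $\StdRepC$ and not all of $\textbf{Rep}(O(\kk^m))$, so that ``faithfulness of $\overline F$'' can be read off directly. Were one to want a self-contained argument in place of the citation, the hard step would be the dimension count showing that $\Hom_{\mathcal{B}(m)/\langle a_{m+1}\rangle}(1^{\otimes r},1^{\otimes s})$ is spanned by Brauer diagrams containing no antisymmetrizable block of size $m+1$, together with the linear independence of the images of such diagrams in $\StdRepC$ --- but this is the technical heart of \cite{LZbrauercat} and there is no reason to reproduce it.
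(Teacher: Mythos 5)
Your proof is correct and, at its core, is the same as the paper's: the paper's entire proof is the citation to \cite[Theorem 4.8(ii)]{LZbrauercat}, and your argument for the hard inclusion is exactly that citation. The explicit verification that $F(a_{m+1})=0$ via $\Lambda^{m+1}(\kk^m)=0$ is a nice sanity check but is already contained in the cited result.
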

\begin{proof}
This is \cite[Theorem 4.8(ii)]{LZbrauercat}.
\end{proof}

\begin{lemma}\label{L:kernel-classical-stdwebfunctor}
The kernel of the functor $\Phi_{\kk}|_{\StdWebC}$ is the monoidal ideal of the category $\StdWebC$ generated (as a monoidal ideal) by $\eta_{\kk}(a_{m+1})$. 
\end{lemma}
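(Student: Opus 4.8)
The goal is to identify $\ker(\Phi_{\kk}|_{\StdWebC})$ with the monoidal ideal $\langle \eta_{\kk}(a_{m+1})\rangle$ inside $\StdWebC$, and the strategy is to transfer Theorem \ref{thm:LZ-kernel} across the functor $\eta_{\kk}:\mathcal{B}(m)\to \StdWebC$, using the already-established factorization $F = \Phi_{\kk}|_{\StdWebC}\circ \eta_{\kk}$ together with the fact that $\eta_{\kk}$ is full (Proposition \ref{P:functor-bmw-to-web}, via the identification $\BMWC = \mathcal{B}(m)$) and the classical Schur--Weyl/fullness statement $F$ full (Theorem \ref{T:full}, $\R=\kk$).

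First I would prove the inclusion $\langle \eta_{\kk}(a_{m+1})\rangle \subseteq \ker(\Phi_{\kk}|_{\StdWebC})$. This is the easy direction: since $F(a_{m+1}) = 0$ by Theorem \ref{thm:LZ-kernel} and $F = \Phi_{\kk}|_{\StdWebC}\circ \eta_{\kk}$, we get $\Phi_{\kk}|_{\StdWebC}(\eta_{\kk}(a_{m+1})) = 0$; then Lemma \ref{L:monoidalfunctor/ideal-compatibility} applied to $G = \Phi_{\kk}|_{\StdWebC}$ and $x = \eta_{\kk}(a_{m+1})$ shows every morphism in $\langle \eta_{\kk}(a_{m+1})\rangle$ is killed by $\Phi_{\kk}|_{\StdWebC}$.

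For the reverse inclusion $\ker(\Phi_{\kk}|_{\StdWebC}) \subseteq \langle \eta_{\kk}(a_{m+1})\rangle$, let $y\in \Hom_{\StdWebC}(1^{\otimes b}, 1^{\otimes c})$ with $\Phi_{\kk}(y) = 0$. Since $\eta_{\kk}$ is full, choose $\tilde y\in \Hom_{\mathcal{B}(m)}(1^{\otimes b}, 1^{\otimes c})$ with $\eta_{\kk}(\tilde y) = y$. Then $F(\tilde y) = \Phi_{\kk}(\eta_{\kk}(\tilde y)) = \Phi_{\kk}(y) = 0$, so $\tilde y\in \ker F = \langle a_{m+1}\rangle$ by Theorem \ref{thm:LZ-kernel}. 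Applying Lemma \ref{L:monoidalfunctor/ideal-compatibility} to $G = \eta_{\kk}$, we conclude $y = \eta_{\kk}(\tilde y) \in \langle \eta_{\kk}(a_{m+1})\rangle$. Combining the two inclusions gives the claim.

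I do not expect a serious obstacle here: the argument is a formal diagram chase once the three inputs (the factorization $F = \Phi_{\kk}|_{\StdWebC}\circ \eta_{\kk}$ established in the ``Compatibility with classical invariant theory'' subsection, fullness of $\eta_{\kk}$, and Theorem \ref{thm:LZ-kernel}) are in hand. The only point requiring a line of care is making sure the lift $\tilde y$ can be chosen with the \emph{same} source and target objects as $y$; this is immediate because $\eta_{\kk}$ is the identity on objects (it sends $1^{\otimes k}$ to $1^{\otimes k}$), so fullness gives surjectivity on each Hom-space $\Hom_{\mathcal{B}(m)}(1^{\otimes b},1^{\otimes c})\twoheadrightarrow \Hom_{\StdWebC}(1^{\otimes b},1^{\otimes c})$ without any object-matching subtlety. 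One might also remark that $\StdWebC$ is generated as a monoidal category by the single object $1$, so every Hom-space is of the form $\Hom(1^{\otimes b},1^{\otimes c})$ and the above covers all cases.
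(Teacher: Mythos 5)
Your proposal is correct and follows essentially the same route as the paper: the easy inclusion from $F(a_{m+1})=0$ and the factorization $F = \Phi_{\kk}|_{\StdWebC}\circ\eta_{\kk}$, and the reverse inclusion by lifting along the full functor $\eta_{\kk}$, applying Theorem \ref{thm:LZ-kernel}, and pushing the ideal membership forward with Lemma \ref{L:monoidalfunctor/ideal-compatibility}. The only addition is your closing remark about object-matching, which is a reasonable (if minor) point of hygiene.
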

\begin{proof}
Since $\Phi_{\kk}(\eta_{\kk}(a_{m+1})) = F(a_{m+1}) = 0$, we have $\langle \eta_{\kk}(a_{m+1})\rangle \subset \ker \Phi_{\kk}$. To show the reverse inclusion, let $f\in \ker\Phi_{\kk}$. Since $\BMWC=\mathcal{B}(m)$ it follows from Proposition \ref{P:functor-bmw-to-web} that $\eta_{\kk}$ is full. Thus, there is some $\tilde{f}$ in $\mathcal{B}(m)$ such that $\eta_{\kk}(\tilde{f}) = f$. Thus, $F(\tilde{f}) = \Phi_{\kk}\circ\eta_{\kk}(\tilde{f}) = \Phi_{\kk}(f) = 0$, so $\tilde{f}$ is in the kernel of $F$. Theorem \ref{thm:LZ-kernel} implies $\tilde{f}\in \langle a_{m+1}\rangle$, and by Lemma \ref{L:monoidalfunctor/ideal-compatibility} we have $f = \eta_{\kk}(\tilde{f})\in \langle \eta_{\kk}(a_{m+1})\rangle$.
\end{proof}

We will show that $\Phi_{\kk}|_{\StdWebC}$ is faithful. Since this is equivalent to the kernel of $\Phi_{\kk}$ being $\langle 0\rangle$, we want to argue that $\eta_{\kk}(a_{m+1})$ is already equal to zero in $\StdWebC$. This is implied by a diagrammatic calculation in $\WebC$, which relies on the following Lemma.

\begin{lemma}
The following equation holds in $\StdWebC$. 
\begin{equation}\label{E:anti-symmetrizer-double-coset-recursion}
\SumTwista  \  \ = \ \  \SumTwistb \ \  -\frac{1}{(k-1)!} \ \ \SumTwistc
\end{equation}
\end{lemma}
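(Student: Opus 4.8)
The identity \eqref{E:anti-symmetrizer-double-coset-recursion} should be read as the ``branching rule'' for the antisymmetrizers $a_k$ expressed diagrammatically through $\eta_{\kk}$, where the left-hand side is $\eta_{\kk}(a_{k+1})$ written via the double-ladder presentation on $k+1$ strands all labelled $1$, and the two terms on the right involve $\eta_{\kk}(a_k)$. The plan is to work entirely in $\StdWebC$, using that $\eta_{\kk}$ is full (Proposition \ref{P:functor-bmw-to-web}) and the concrete description of $a_k$ as $\frac{1}{k!}\sum_{w\in S_k}(-1)^{\ell(w)}w\in \End_{\mathcal{B}(m)}(1^{\otimes k})$, so that $\eta_{\kk}(a_k)$ is genuinely the image of that element in the braided category $\StdWebC$, where $\brwebC = \brwebC^{-1}$ by \EQ \eqref{E:q=1-braid}. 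First I would recall the standard recursion for the antisymmetrizer in the group algebra of the symmetric group: writing $S_{k+1}/S_k$ with coset representatives $s_k s_{k-1}\cdots s_i$ (the cycle moving the last strand leftward past $k-i$ strands), one has
\[
a_{k+1} = \frac{1}{k+1}\,a_k\left(\sum_{i=1}^{k+1}(-1)^{k+1-i}\,s_k s_{k-1}\cdots s_i\right)a_k
\]
(with the convention that the $i=k+1$ term is the identity), or equivalently, using $a_k^2 = a_k$ and that $a_k$ absorbs each $s_j$ with a sign, the cleaner form $a_{k+1} = a_k\cdot\big(\id - \text{(lower coset terms)}\big)$ after normalizing. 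The content of \eqref{E:anti-symmetrizer-double-coset-recursion} is that, once interpreted in the web category with the double-ladder drawing, the sum over the coset $S_{k+1}/S_k$ splits into the leading term (a strand running straight through, which after sandwiching by $a_k$ gives the first diagram on the right) plus the single ``cap-cup'' style term coming from the element that crosses the new strand all the way across — this is where the $-\frac{1}{(k-1)!}$ arises, as the sum over the $k$ nontrivial coset representatives $s_k\cdots s_i$, $i=1,\dots,k$, gets resummed using $\brwebC^2 = \id$ and the bigon/Schur relations available in $\StdWebC$.

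The key steps, in order, would be: (1) Fix the double-ladder normal form for $\eta_{\kk}(a_k)$, i.e.\ the diagram drawn as the box labelled $k$ in the excerpt, and record that it is an idempotent (up to the scalar built into the box convention) and that precomposing or postcomposing with a single crossing $\brwebC$ on any two adjacent strands multiplies by $-1$. (2) Expand $\eta_{\kk}(a_{k+1})$ as $\eta_{\kk}(a_k)$ on the first $k$ strands composed with the normalized sum over the coset $S_{k+1}/S_k$ composed again with $\eta_{\kk}(a_k)$ — this is just the image under the monoidal functor $\eta_{\kk}$ of the group-algebra recursion, so it requires no new web relations, only Lemma \ref{L:monoidalfunctor/ideal-compatibility}-style compatibility. (3) In that sum, isolate the identity coset representative: sandwiched between two copies of $\eta_{\kk}(a_k)$ it yields exactly $\frac{1}{k+1}\,\eta_{\kk}(a_k)$ type contribution; combined with the overall $\frac{1}{k+1}$ and the idempotence of $\eta_{\kk}(a_k)$, after matching the box normalizations this is the first term $\SumTwistb$ on the right of \eqref{E:anti-symmetrizer-double-coset-recursion}. (4) For the remaining $k$ coset representatives $s_k s_{k-1}\cdots s_i$ ($i=1,\dots,k$), use that $\eta_{\kk}(a_k)$ absorbs the portion of each such permutation lying in $S_k$ with a sign, collapsing all $k$ terms onto a single diagram — the one with the new strand crossing the box and getting ``captured'' — and count the resulting combinatorial coefficient. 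The arithmetic $\frac{1}{k+1}\cdot\frac{k!}{(k-1)!}\cdot(\text{signs}) = \frac{1}{(k-1)!}$ up to sign, together with the $q=1$ identification $\brwebC = \brwebC^{-1}$ which makes the crossing term symmetric, produces the coefficient $-\frac{1}{(k-1)!}$ in front of $\SumTwistc$.

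The main obstacle I expect is step (4): correctly resumming the $k$ coset terms into a single web diagram and nailing the sign and the $1/(k-1)!$ normalization. This is delicate because it mixes three bookkeeping conventions — the normalization built into the ``box'' notation for $\eta_{\kk}(a_k)$, the $\frac{1}{k!}$ in the definition of $a_k$, and the sign $(-1)^{\ell(w)}$ weighting each coset representative — and one must check that the diagram obtained after absorbing the $S_k$-parts of $s_k\cdots s_i$ into the adjacent box is literally the diagram drawn as $\SumTwistc$ (with its specified strand labels and the two nested boxes), not merely equal to it up to a further application of web relations. I would handle this by first doing the cases $k=1$ and $k=2$ completely explicitly in $\StdWebC$ — where $\SumTwistc$ degenerates to a recognizable cup-cap-with-box diagram and the coefficient $-\frac{1}{(k-1)!}$ is $-1$ and $-1$ respectively — to pin down all normalization conventions, and then run the general induction, invoking the idempotence $\eta_{\kk}(a_k)^2 = \eta_{\kk}(a_k)$ and the sign-absorption property as the only web-categorical inputs beyond the symmetric-group identity. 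Faithfulness of $\base_{\K}$ and the flatness remarks are not needed here since everything is over $\kk$ and the statement is purely about $\StdWebC$.
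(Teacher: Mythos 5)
Your proposal takes a genuinely different route from the paper: the paper's proof is a one-line citation ---``apply $\eta_{\kk}$ to the analogous equation in $\mathcal{B}(m)$, which holds by \cite[Lemma 2.11(1)]{LZbrauercat}''--- whereas you propose to re-derive the Brauer-category identity from scratch via the antisymmetrizer recursion. Your skeleton (the double-coset decomposition $S_{k+1} = S_k \sqcup S_k s_k S_k$, sandwiching by boxes, and resumming) is exactly the mechanism behind the Lehrer--Zhang lemma, so the plan is sound in outline. The trade is clear: the paper buys brevity and offloads the combinatorics to a reference; your route is self-contained but must get the bookkeeping exactly right.

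That said, there are three concrete issues. First, you repeatedly describe the middle of $\SumTwistc$ as a ``cap-cup style term'' where the new strand gets ``captured.'' Read off from the diagram, the two arcs joining the boxes actually cross: $\SumTwistc$ is $(\text{box})\circ s_k\circ(\text{box})$, i.e.\ the transposition of the $k$-th and $(k+1)$-st strands sandwiched by two antisymmetrizer boxes --- a permutation, not a Temperley--Lieb generator. This matters: it means \eqref{E:anti-symmetrizer-double-coset-recursion} is an identity inside the group algebra $\kk[S_{k+1}]\subset\End_{\mathcal{B}(m)}(1^{\otimes(k+1)})$, already true in $\mathcal{B}(m)$ before applying $\eta_{\kk}$. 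Your invocations of $\brwebC^2=\id$ and ``bigon/Schur relations available in $\StdWebC$'' are therefore extraneous; you should be able to prove the statement with no web input at all beyond the monoidality of $\eta_{\kk}$. Second, the coefficient computation you sketch in step (4) does not close: $\frac{1}{k+1}\cdot\frac{k!}{(k-1)!}=\frac{k}{k+1}$, not $\frac{1}{(k-1)!}$. Working literally with $a_k = \frac{1}{k!}\sum_{w\in S_k}(-1)^{\ell(w)}w$, the double-coset computation gives
\[
a_{k+1} \;=\; \tfrac{1}{k+1}\,(a_k\otimes\id) \;-\; \tfrac{k}{k+1}\,(a_k\otimes\id)\,s_k\,(a_k\otimes\id),
\]
which does not have the shape claimed. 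The coefficients in \eqref{E:anti-symmetrizer-double-coset-recursion} come out right precisely when the box labelled $k$ denotes the unnormalized sum $\Sigma_k := \sum_{w\in S_k}(-1)^{\ell(w)}w = k!\,a_k$: each element of $S_k s_k S_k$ is written as $w's_kw''$ in exactly $(k-1)!$ ways, so
\[
\Sigma_{k+1} \;=\; \Sigma_k\otimes\id \;-\; \tfrac{1}{(k-1)!}\,(\Sigma_k\otimes\id)\,s_k\,(\Sigma_k\otimes\id),
\]
which you can and should check directly for $k=1,2$. Third, and most importantly, step (4) --- which you yourself flag as ``the main obstacle'' --- is left as a description of what should happen rather than a computation that does happen. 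As it stands, the proposal is an outline of the right strategy with a normalization mismatch and the crucial arithmetic not done; completing it amounts to reproving the cited Lehrer--Zhang lemma, which is a reasonable thing to do but needs to actually be carried out.
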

\begin{proof}
Apply $\eta_{\kk}$ to the analogous equation in $\mathcal{B}(m)$, which holds by \cite[Lemma 2.11(1)]{LZbrauercat}.
\end{proof}

Next, we perform the calculation in $\WebC$ required to deduce $\eta_{\kk}(a_{m+1})=0$.

\begin{proposition}\label{P:verification-of-classicalkernel}
Let $k\in \mathbb{Z}_{\ge 0}$. The following equality holds in $\StdWebC$.

  \begin{align}
         \SumTwist = \BigIk \label{claspVSsym}
  \end{align}
\end{proposition}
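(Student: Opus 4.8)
The statement asserts that the antisymmetrizer $a_k \in \End_{\mathcal{B}(m)}(1^{\otimes k})$, transported to $\StdWebC$ via $\eta_{\kk}$ (the left-hand side of \eqref{claspVSsym}, drawn as a box labelled $k$ with $k$ through-strands), equals the ``ladder'' diagram on the right-hand side, i.e. the composite of merges $1^{\otimes k} \to k$ followed by splits $k \to 1^{\otimes k}$ built up one strand at a time. The natural approach is induction on $k$. The base cases $k=0,1$ are trivial: both sides are the identity (or empty). For the inductive step, I would use the double-coset recursion \eqref{E:anti-symmetrizer-double-coset-recursion} for $a_{k+1}$ in terms of $a_k$, which expresses the box labelled $k+1$ as the box labelled $k$ (tensored with a through-strand) minus $\frac{1}{(k-1)!}$ times a diagram where two copies of the box labelled $k$ are joined by a cap-cup on one pair of strands.

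The key computational step is then to apply the inductive hypothesis to \emph{both} occurrences of the box labelled $k$ appearing on the right side of \eqref{E:anti-symmetrizer-double-coset-recursion}, turning each into the ladder diagram $\BigIk$-style composite, and to simplify the resulting web. After substitution, the first term becomes the ladder for $k$ with an extra identity strand; the second (subtracted) term becomes a web in which two ladders are glued along a cap-cup on one leg, which creates a bigon on a strand labelled $k$ (or, after resolving the merge/split structure, a circle or a bigon that can be evaluated). Here is where the defining relations of $\WebC$ come in: using \eqref{E:grey-skein-bigon} (the bigon relation, coefficient $\frac{[2k]}{[2]}$, which at $q=1$ is just $k$), the circle relation \eqref{E:grey-skein-monogon}, and the associativity relation \eqref{E:gray-Assoc}, one collapses the glued-ladder term into a scalar multiple of a shorter ladder. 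The arithmetic should work out so that the combination ``ladder-for-$k$-plus-strand'' minus $\frac{1}{(k-1)!} \cdot (\text{scalar}) \cdot (\text{shorter ladder-with-strand})$ reassembles exactly into the ladder for $k+1$, where the $\frac{1}{(k-1)!}$ cancels against the factorial bookkeeping implicit in how the merges and splits normalize (each successive merge-split pair in $\StdWebC$ carries the normalization making $m_{1,j}^{j+1} \circ m_{j+1}^{1,j}$ equal to $\frac{[2j+2]}{[2]}$, which at $q=1$ is $j+1$).

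The main obstacle I anticipate is the bookkeeping in the second step: correctly identifying, after applying the inductive hypothesis to the two boxes in \eqref{E:anti-symmetrizer-double-coset-recursion}, exactly which strands the cap-cup connects and what web simplification (bigon on which label, how many circles) results, then matching the scalar. This is a purely diagrammatic manipulation but it is easy to mislabel strands; drawing the intermediate webs carefully and tracking the labels $1, 2, \dots, k-1, k, k+1$ on the successive ``rungs'' of the ladder is essential. A secondary point requiring care is that \eqref{E:anti-symmetrizer-double-coset-recursion} is stated in $\StdWebC$ (so all the coefficients are the specialized $q=1$ values), and I should consistently work with the green ($\kk$) calculus throughout, invoking Remark \ref{R:coeffs-simple-q=1-2} and Remark \ref{R:coeffs-simple-q=1-3} for the simplified coefficients, so that the factorials appearing in the definition of $a_{k+1}$ interact cleanly with the integer coefficients $[2j]/[2] = j$ coming from the web relations. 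Once the scalar is pinned down, the induction closes and \eqref{claspVSsym} follows.
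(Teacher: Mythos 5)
Your overall strategy matches the paper's: induct on $k$, use the double-coset recursion \eqref{E:anti-symmetrizer-double-coset-recursion}, apply the inductive hypothesis to the boxes, and remove bigons via the $q=1$ specialization of \EQ (\ref{defskein}c) to produce a factor of $(k-1)!$ cancelling the normalization. Up to that point you have correctly reproduced the skeleton of the argument.

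However, there is a genuine gap at the final step, and it is precisely the step you flag as ``the arithmetic should work out so that the combination ... reassembles exactly into the ladder for $k+1$.'' After the bigon cancellations, one is left with the difference of two webs --- roughly (ladder for $k$) $\otimes$ (identity strand) minus a diagram in which the cup-cap from the recursion survives at the top of the ladder --- and one must show this difference is the ladder for $k+1$. This is not a matter of scalar bookkeeping; it is a nontrivial diagrammatic rewrite. The paper closes the argument by invoking \EQ \eqref{EQ:classicalcrossing} from Lemma \ref{classicalcrossing}, the $q=1$ relation
\[
\GrSkeinIH \ = \ \GrSkeinIHc \ - \ \BraidTriva ,
\]
which is itself derived from the $q=1$ braiding formula \eqref{E:q=1-braid} together with the defining $I=H$ relation (\ref{defskein}e). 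Your plan never mentions this relation (or the $I=H$ relation at all), and the relations you do list --- the circle relation \eqref{E:grey-skein-monogon} and associativity \eqref{E:gray-Assoc} --- are not the ones that do the work here. Without identifying Lemma \ref{classicalcrossing} as the mechanism for assembling ladder$_k \otimes \id$ and the leftover cup-cap term into ladder$_{k+1}$, the induction does not close. So while your outline is directionally correct, you would get stuck exactly where you anticipated difficulty, because the missing ingredient is not bookkeeping but a specific derived relation.
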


\begin{proof}
We use proof by induction. The base case, $k=1$, is trivial. Suppose \eqref{claspVSsym} holds for $k \in \mathbb{Z}_{\ge 1}$. The following graphical calculation proves that Equation \eqref{claspVSsym} holds for $k+1$.
\begin{align*}
\SumTwista \stackrel{\eqref{E:anti-symmetrizer-double-coset-recursion}}{=} \ \  \SumTwistb \ \  -\frac{1}{(k-1)!} \ \ \SumTwistc \\
    \stackrel{\eqref{claspVSsym}}{=}   \SumTwistd -\frac{1}{(k-1)!}  \SumTwiste \\
\end{align*}

\begin{align*}    
\stackrel{(\ref{defskein}c)}{=} . . . \stackrel{(\ref{defskein}c)}{=}\SumTwistj -\frac{(k-1)!}{(k-1)!}  \SumTwistk \stackrel{\eqref{EQ:classicalcrossing}}{=}  \SumTwistl
\end{align*}
\end{proof}

\begin{corollary}\label{C:PhiC-std-is-faithful}
The functor 
\[
\Phi_{\kk}|_{\StdWebC}:\StdWebC\longrightarrow \StdRepC
\]
is faithful.
\end{corollary}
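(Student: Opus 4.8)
The plan is to deduce Corollary~\ref{C:PhiC-std-is-faithful} directly from the preceding results, with essentially no new computation. The key observation is that faithfulness of a functor is equivalent to its kernel being the zero ideal $\langle 0\rangle$, and Lemma~\ref{L:kernel-classical-stdwebfunctor} already identifies $\ker\Phi_{\kk}|_{\StdWebC}$ as the monoidal ideal $\langle \eta_{\kk}(a_{m+1})\rangle$. So it suffices to show $\eta_{\kk}(a_{m+1}) = 0$ in $\StdWebC$, and then conclude that the ideal it generates is zero. The mechanism for showing $\eta_{\kk}(a_{m+1})=0$ is Proposition~\ref{P:verification-of-classicalkernel}: taking $k = m+1$ in \EQ~\eqref{claspVSsym}, the right hand side $\BigIk$ contains a strand labelled $m+1$ (the leftmost vertical edge produced by the iterated merges has label $k = m+1$), and in $\WebC$ any diagram with a strand labelled greater than $m$ is zero, since $\id_{m+1}$ lies in the tensor ideal we quotiented by in Definition~\ref{DefofWeb}. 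Hence $\eta_{\kk}(a_{m+1})$, which is the left hand side $\SumTwist$ with $k = m+1$, equals zero in $\StdWebC$.

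First I would state: by Lemma~\ref{L:kernel-classical-stdwebfunctor}, $\ker\Phi_{\kk}|_{\StdWebC} = \langle \eta_{\kk}(a_{m+1})\rangle$. Next I would invoke Proposition~\ref{P:verification-of-classicalkernel} with $k = m+1$, noting that the diagram $\BigIk$ on the right hand side of \EQ~\eqref{claspVSsym} has a strand carrying the label $m+1$; since $\id_{m+1} = 0$ in $\WebC$ (and hence in the full subcategory $\StdWebC$), the entire right hand side is zero, so $\eta_{\kk}(a_{m+1}) = \SumTwist = 0$. Consequently $\langle \eta_{\kk}(a_{m+1})\rangle = \langle 0\rangle$, so $\ker\Phi_{\kk}|_{\StdWebC} = \langle 0\rangle$, which is precisely the statement that $\Phi_{\kk}|_{\StdWebC}$ is faithful.

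I do not anticipate a genuine obstacle here, since all the hard work is already done in Proposition~\ref{P:verification-of-classicalkernel} and Lemma~\ref{L:kernel-classical-stdwebfunctor}; the only thing to be careful about is the bookkeeping of which strand in $\BigIk$ carries the label $m+1$, and confirming that the passage from ``a single morphism is zero'' to ``the ideal it generates is zero'' is immediate (it is: every morphism in $\langle x \rangle$ is a linear combination of composites and tensor products built from $x$, so if $x=0$ the ideal is $\langle 0\rangle$). If one wanted to be fully explicit one could also remark that faithfulness need only be checked on $\Hom$-spaces between objects $1^{\otimes b}$ and $1^{\otimes c}$, but this is not needed. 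Here is the writeup.

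\begin{proof}
A monoidal functor is faithful if and only if its kernel is the zero ideal $\langle 0\rangle$. By Lemma~\ref{L:kernel-classical-stdwebfunctor}, the kernel of $\Phi_{\kk}|_{\StdWebC}$ is the monoidal ideal $\langle \eta_{\kk}(a_{m+1})\rangle$. Thus it suffices to prove that $\eta_{\kk}(a_{m+1}) = 0$ in $\StdWebC$, since then $\langle \eta_{\kk}(a_{m+1})\rangle = \langle 0\rangle$.

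Apply Proposition~\ref{P:verification-of-classicalkernel} with $k = m+1$. The left hand side of \EQ~\eqref{claspVSsym} is, by definition, $\eta_{\kk}(a_{m+1})$, and the right hand side $\BigIk$ is a web diagram in which one of the edges carries the label $m+1$. Since $\id_{k} = 0$ in $\WebC$ for all $k > m$, every web diagram containing a strand labelled strictly greater than $m$ is zero; in particular the right hand side of \EQ~\eqref{claspVSsym} vanishes when $k = m+1$. Therefore $\eta_{\kk}(a_{m+1}) = 0$ in $\StdWebC$.

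It follows that $\ker \Phi_{\kk}|_{\StdWebC} = \langle \eta_{\kk}(a_{m+1})\rangle = \langle 0\rangle$, so $\Phi_{\kk}|_{\StdWebC}$ is faithful.
\end{proof}
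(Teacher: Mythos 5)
Your proof is correct and takes essentially the same route as the paper: apply Proposition~\ref{P:verification-of-classicalkernel} with $k=m+1$, observe that the resulting diagram has a strand labelled $m+1$ and is therefore zero in $\StdWebC$, conclude $\eta_{\kk}(a_{m+1})=0$, and then invoke Lemma~\ref{L:kernel-classical-stdwebfunctor} to identify the kernel as $\langle 0\rangle$. Your writeup is a bit more explicit about the equivalence between faithfulness and triviality of the kernel ideal, but the substance is identical.
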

\begin{proof}
Since strands labelled by $m+1$ are equal to zero in $\StdWebC$, Proposition \ref{P:verification-of-classicalkernel} implies that $\eta_{\kk}(a_{m+1})=0$. We then deduce from Lemma \ref{L:kernel-classical-stdwebfunctor} that $\ker \Phi_{\kk} = \langle 0 \rangle$, i.e. $\Phi_{\kk}$ is faithful.
\end{proof}

Before we prove that $\Phi_{\K}|_{\StdWebK}$ is faithful, we state two technical lemmas.

\begin{lemma}\label{L:technical-base-change-lemma}
Let $W, F$ be $\A$-modules and assume that $W$ is finitely generated over $\A$. Let $f:W\longrightarrow F$ be an $\A$-module homomorphism. Suppose that $\R \otimes f:\R \otimes W\longrightarrow \R \otimes F$ is surjective for $\R \in\{ \K,\kk\}$, and that $\dim_{\kk}(\kk\otimes F)= \dim_{\K}(\K\otimes F)$. If $\kk\otimes f$ is injective, then $\K\otimes f$ is injective.
\end{lemma}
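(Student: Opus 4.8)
\textbf{Proof plan for Lemma \ref{L:technical-base-change-lemma}.}

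The plan is to reduce everything to the structure theorem for finitely generated modules over the PID $\A$, applied to the image of $f$ and to a presentation of $F$. First I would observe that since $W$ is finitely generated over $\A$, the image $M:=\im(f)\subset F$ is a finitely generated $\A$-submodule of $F$. The map $f$ factors as $W\twoheadrightarrow M\hookrightarrow F$. Because $\R\otimes(-)$ is right exact, surjectivity of $\R\otimes f$ for $\R\in\{\kk,\K\}$ gives that the induced maps $\R\otimes M\to \R\otimes F$ are surjective; but I must be careful that $\R\otimes M\to\R\otimes F$ need not be injective a priori (only $\K$ is flat over $\A$, per Remark \ref{R:K-is-flat-A-mod}). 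So the first real step is: from $\K\otimes M\twoheadrightarrow \K\otimes F$ and $\kk\otimes M\twoheadrightarrow \kk\otimes F$ together with $\dim_{\kk}(\kk\otimes F)=\dim_{\K}(\K\otimes F)=:d$, deduce that $F$ is a free $\A$-module of rank $d$. Indeed $F$ is a quotient of the finitely generated module $M$ hence finitely generated; writing $F\cong \A^{r}\oplus T$ with $T$ torsion by the structure theorem, we get $\dim_{\K}(\K\otimes F)=r$ and $\dim_{\kk}(\kk\otimes F)=r+\dim_{\kk}(\kk\otimes T)$, so the hypothesis forces $\dim_{\kk}(\kk\otimes T)=0$; since $T$ is a torsion module over a PID with $\kk\otimes_{\A}T=T/(q-1)T=0$, Nakayama (for the local ring $\A$) gives $T=0$. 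Hence $F\cong \A^{d}$ is free.

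Next I would consider $N:=\ker(f)\subset W$ and the short exact sequence $0\to N\to W\xrightarrow{f} F\to 0$ obtained by replacing $F$ with $\im f$ — wait, more precisely, $f$ need not be surjective onto $F$, so instead I work with $0\to N\to W\to M\to 0$ and separately the inclusion $j:M\hookrightarrow F$. Applying $\K\otimes(-)$, which is exact, to $0\to N\to W\to M\to 0$ gives $0\to \K\otimes N\to \K\otimes W\to \K\otimes M\to 0$, so $\ker(\K\otimes f)=\ker(\K\otimes W\to\K\otimes M)=\K\otimes N$ provided $\K\otimes j$ is injective; and $\K\otimes j$ is injective because $\K$ is flat and $j$ is injective. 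Thus it suffices to prove $\K\otimes N=0$, i.e. $N$ is a torsion $\A$-module, and then conclude as before. To get there, the key input is the hypothesis that $\kk\otimes f$ is injective: applying $\kk\otimes(-)$ to $0\to N\to W\to M\to 0$ produces the exact sequence $\mathrm{Tor}_1^{\A}(\kk,M)\to \kk\otimes N\to \kk\otimes W\to \kk\otimes M\to 0$, and composing $\kk\otimes W\to\kk\otimes M\xrightarrow{\kk\otimes j}\kk\otimes F$ is exactly $\kk\otimes f$, which is injective by hypothesis; if I also knew $\kk\otimes j$ injective then $\kk\otimes W\to\kk\otimes M$ would be injective, forcing the image of $\kk\otimes N$ in $\kk\otimes W$ to be zero. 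But $\kk\otimes j$ injective follows once $M=F$: and in fact $M=F$ because we showed $F$ is free of rank $d$ and $M\twoheadrightarrow F$... no — rather $M\subset F$ with $\kk\otimes M\twoheadrightarrow \kk\otimes F$ of dimension $d$, while $M$ is finitely generated, so choosing $d$ elements of $M$ whose images are a $\kk$-basis of $\kk\otimes F=F/(q-1)F$, Nakayama shows they generate $F$, hence $M=F$. Good: so $M=F$ and $f$ is surjective onto $F$ after all.

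With $M=F$ established, the sequence $0\to N\to W\xrightarrow{f}F\to 0$ is exact with $F$ free, hence it splits, so $W\cong N\oplus F$ and in particular $N$ is finitely generated. Tensoring the split sequence with $\kk$ gives $0\to \kk\otimes N\to \kk\otimes W\xrightarrow{\kk\otimes f}\kk\otimes F\to 0$ exact (splitting is preserved), so injectivity of $\kk\otimes f$ forces $\kk\otimes N=0$; since $N$ is finitely generated over the local ring $\A$, Nakayama gives $N=0$. Then $\K\otimes N=0$, and exactness of $\K\otimes(-)$ on the split sequence shows $\K\otimes f:\K\otimes W\to \K\otimes F$ is an isomorphism, in particular injective, which is the claim. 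The main obstacle — really the only subtlety — is handling the failure of flatness of $\kk\otimes_{\A}(-)$: one cannot blindly tensor short exact sequences with $\kk$, so the argument must be sequenced so that the short exact sequence in play is already split (equivalently, $F$ is already known to be free) before $\kk$ is applied. Everything else is a routine application of the structure theorem over a PID and Nakayama's lemma for the local ring $\A$.
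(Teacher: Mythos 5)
Your proof takes a genuinely different route from the paper's, and it has a gap that the paper's approach is specifically designed to sidestep.

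The paper's proof never decomposes $F$ at all; it applies the structure theorem for finitely generated modules over the local PID $\A$ only to $W$, writing $W\cong\A^{s}\oplus T$ with $s=\dim_{\K}(\K\otimes W)$ and $T$ a finite direct sum of modules $\A/M^{r_i}$, so that $\dim_{\kk}(\kk\otimes W)-\dim_{\K}(\K\otimes W)\geq 0$ automatically. Then injectivity of $\kk\otimes f$ (together with the given surjectivity) makes $\kk\otimes f$ an isomorphism, which forces $\dim_{\kk}(\kk\otimes F)=\dim_{\kk}(\kk\otimes W)<\infty$; surjectivity of $\K\otimes f$ gives $\dim_{\K}(\K\otimes W)\geq\dim_{\K}(\K\otimes F)$; and the hypothesis $\dim_{\kk}(\kk\otimes F)=\dim_{\K}(\K\otimes F)$ squeezes the chain of inequalities to equalities. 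A surjection of finite-dimensional $\K$-vector spaces of equal dimension is an isomorphism, and you are done — all without ever knowing anything structural about $F$ beyond what the two scalar dimensions tell you.

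Your argument, by contrast, tries to show $\ker(f)=0$ outright by first proving $F$ is free of finite rank, then $\im(f)=F$, then splitting $0\to\ker f\to W\to F\to 0$. The problem is that the lemma as stated does not assume $F$ is finitely generated over $\A$, and both of your key steps require it. Your initial justification (``$F$ is a quotient of the finitely generated module $M$'') is backwards — $M=\im(f)$ is a \emph{sub}module of $F$, as you yourself note mid-paragraph — and once that is corrected you have no finiteness hypothesis on $F$ left. Without it, the structure-theorem decomposition $F\cong\A^{r}\oplus T$ is unavailable, and the Nakayama step ``choose $d$ elements of $M$ whose images form a $\kk$-basis of $\kk\otimes F$; they generate $F$'' fails: Nakayama in this form requires $F$ finitely generated. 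Concretely, with $\A=\mathbb{Z}_{p}$ (any DVR behaves like $\A$ here), taking $W=0$, $f=0$, and $F=\mathbb{Q}_{p}/\mathbb{Z}_{p}$ satisfies every hypothesis of the lemma ($\kk\otimes F=\K\otimes F=0$, all maps trivially surjective and injective), yet $M=0\neq F$, so your intermediate claim $M=F$ is simply false in general.

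If you add the hypothesis that $F$ is a finitely generated $\A$-module, your argument does go through, and in the paper's only application (Lemma \ref{L:tech-base-change-lemma-2}, where $F=\Hom_{\StdRepA}(V_{\A}^{\otimes d},V_{\A}^{\otimes e})$ is free of finite rank by Lemma \ref{L:FundA-homs-free-fin-gen}) that hypothesis does hold. But as a proof of the lemma as stated, the paper's argument is strictly more general, and it is also shorter: it trades your explicit split-exact-sequence analysis for a three-line dimension count. The moral you draw at the end — that non-flatness of $\kk\otimes_{\A}(-)$ forces careful sequencing — is a good instinct, but the cleanest way to dodge it here is the paper's: never tensor an exact sequence at all, just compare dimensions of the four vector spaces $\R\otimes W$ and $\R\otimes F$ for $\R\in\{\kk,\K\}$.
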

\begin{proof}
Since $\A$ is a principal ideal domain and $W$ is finitely generated, it follows that $W\cong \A^{\dim_{\K}(\K\otimes W)} \oplus T$ where $\K\otimes T=0$. Since $\A$ is also a local ring with maximal ideal $M$ and residue field $\A/M=\kk$, it follows that there are $r_1, \dots, r_d\in\mathbb{Z}_{\ge 1}$ such that $T\cong \oplus_{i=1}^d(\A/M)^{r_i}$ and $d = \dim_{\kk}(\kk\otimes W) - \dim_{\K}(\K\otimes W)$. In particular, 
\[
\dim_{\kk}(\kk\otimes W) - \dim_{\K}(\K\otimes W) \ge 0.
\]

Suppose $\kk\otimes f$ is injective. Then $\kk\otimes f$ is an isomorphism, so 
\[
\dim_{\kk}(\kk\otimes W) = \dim_{\kk}(\kk\otimes F),
\]
and $\K\otimes f$ is surjective, so 
\[
\dim_{\K}(\K\otimes W) \ge \dim_{\K}(\K\otimes F).
\]
We further assumed that $\dim_{\kk}(\kk\otimes F)= \dim_{\K}(\K\otimes F)$. It follows that
\[
\dim_{\kk}(\kk\otimes W) - \dim_{\K}(\K\otimes W) \le \dim_{\kk}(\kk\otimes F) - \dim_{\K}(\K\otimes F) = 0.
\]
Hence, 
\[
\dim_{\K}(\K\otimes W) = \dim_{\kk}(\kk\otimes W) = \dim_{\kk}(\kk\otimes F) = \dim_{\K}(\K\otimes F).
\]
Surjectivity of $\K\otimes f$ implies that $\K\otimes f$ is an isomorphism. 
\end{proof}

\begin{lemma}\label{L:tech-base-change-lemma-2}
Let $W,F$ be $\A$-modules and assume that $W$ is finitely generated over $\A$ and $F$ is free and finitely generated over $\A$. Let $f:W\rightarrow F$ be an $\A$-module homomorphism. Assume further that for $\R\in \{\kk, \K\}$, there are vector spaces $F_{\R}$ and linear maps $b_{\R}:\R\otimes F\rightarrow F_{\R}$, such that $b_{\K}$ is injective. Suppose that $b_{\R}\circ (\R\otimes f):\R\otimes W\rightarrow F_{\R}$ is surjective for $\R\in \{\kk, \K\}$, and that $\dim_{\kk}F_{\kk} = \dim_{\K}F_{\K}$. If $b_{\kk}\circ (\kk\otimes f)$ is injective, then $b_{\K}\circ (\K\otimes f)$ is injective. 
\end{lemma}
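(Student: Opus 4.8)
\textbf{Proof plan for Lemma \ref{L:tech-base-change-lemma-2}.}

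The strategy is to reduce this statement to Lemma \ref{L:technical-base-change-lemma} by replacing $F$ with a suitable free submodule that captures the image of $f$. First I would set $W' := b_{\K}(\K\otimes F)\cap$-preimage considerations aside and instead work directly with the composite $g := b_{\K}\circ(\K\otimes f)$. The key observation is that since $F$ is free and finitely generated over $\A$, and $b_{\K}$ is injective, the image of $f$ inside $F$ sits inside an $\A$-submodule on which the relevant dimension count is controlled. Concretely, let $F' := f(W)\cdot \A \subseteq F$ be the $\A$-submodule of $F$ generated by the image of $f$; since $\A$ is a PID and $F$ is free and finitely generated, $F'$ is also free and finitely generated over $\A$, and $f$ factors as $W\xrightarrow{f'} F'\hookrightarrow F$. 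I would then aim to apply Lemma \ref{L:technical-base-change-lemma} to the map $f':W\to F'$, which requires checking: (i) $\R\otimes f'$ is surjective for $\R\in\{\kk,\K\}$, and (ii) $\dim_{\kk}(\kk\otimes F') = \dim_{\K}(\K\otimes F')$.

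For (i): surjectivity of $b_{\R}\circ(\R\otimes f)$ onto $F_{\R}$, together with injectivity of $b_{\K}$ (and the hypothesis structure), forces $\R\otimes f$ to hit a full-rank piece — I would argue that $\R\otimes f'$ is surjective because $\R\otimes F'$ is by construction spanned by the image of $\R\otimes f$ (base change commutes with taking the submodule generated by an image, at least after identifying $\R\otimes F'$ with its image in $\R\otimes F$, using flatness of $\K$ over $\A$ for $\R = \K$ and a direct check for $\R = \kk$). For (ii): here I would use that $\dim_{\K}(\K\otimes F') = \operatorname{rk}_{\A}F'$ since $F'$ is free, and that $\dim_{\kk}(\kk\otimes F') = \operatorname{rk}_{\A}F'$ as well — so (ii) holds automatically once $F'$ is free and finitely generated; the content is really in showing $F'$ has these properties, which follows from $F$ being free and finitely generated over the PID $\A$. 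Then Lemma \ref{L:technical-base-change-lemma} applies: since $b_{\kk}\circ(\kk\otimes f)$ is injective and factors through $\kk\otimes F'\hookrightarrow F_{\kk}$ (using $b_{\kk}$ restricted), the map $\kk\otimes f'$ is injective, hence $\K\otimes f'$ is injective, hence $b_{\K}\circ(\K\otimes f') $, and therefore $b_{\K}\circ(\K\otimes f)$, is injective.

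The main obstacle I anticipate is bookkeeping around the dimension hypothesis $\dim_{\kk}F_{\kk} = \dim_{\K}F_{\K}$: this is what links the two specializations, and I must make sure it propagates correctly to the statement ``$\dim_{\kk}(\kk\otimes F') \leq \dim_{\K}(\K\otimes F')$'' or its reverse at the right point — essentially I need $b_{\kk}\circ(\kk\otimes f)$ injective to force $\dim_{\kk}(\kk\otimes W)$ to match $\dim_{\kk}F_{\kk}$, then chase inequalities exactly as in the proof of Lemma \ref{L:technical-base-change-lemma}. A cleaner route, which I would actually pursue, is to avoid introducing $F'$ at all and instead mimic the proof of Lemma \ref{L:technical-base-change-lemma} line by line: write $W\cong \A^{r}\oplus T$ with $\K\otimes T = 0$ and $T$ torsion, observe $\dim_{\kk}(\kk\otimes W) - \dim_{\K}(\K\otimes W)\geq 0$, then use injectivity of $b_{\kk}\circ(\kk\otimes f)$ to get $\dim_{\kk}(\kk\otimes W) = \dim_{\kk}F_{\kk}$, use surjectivity of $b_{\K}\circ(\K\otimes f)$ to get $\dim_{\K}(\K\otimes W)\geq \dim_{\K}F_{\K}$, combine with $\dim_{\kk}F_{\kk} = \dim_{\K}F_{\K}$ to squeeze $\dim_{\K}(\K\otimes W) = \dim_{\K}F_{\K}$, and conclude that the surjection $b_{\K}\circ(\K\otimes f):\K\otimes W\to F_{\K}$ between equidimensional spaces is an isomorphism, hence injective. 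This is the shortest path and requires no new constructions beyond what already appears in Lemma \ref{L:technical-base-change-lemma}.
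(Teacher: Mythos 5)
Your ``cleaner route'' at the end is correct and is a genuinely different argument from the paper's. The paper first establishes that both $b_{\kk}$ and $b_{\K}$ are isomorphisms --- using surjectivity of $b_{\R}\circ(\R\otimes f)$ to get surjectivity of $b_{\R}$, injectivity of $b_{\K}$ to get that $b_{\K}$ is an isomorphism, and then freeness of $F$ to deduce $\dim_{\kk}(\kk\otimes F) = \rk_{\A}F = \dim_{\K}(\K\otimes F)$, which forces $b_{\kk}$ to be an isomorphism as well --- and only then invokes Lemma \ref{L:technical-base-change-lemma}. Your route instead replays the dimension-chase from the proof of Lemma \ref{L:technical-base-change-lemma} directly on the composites $b_{\R}\circ(\R\otimes f)$: it uses finite generation of $W$ over the PID $\A$ to get $\dim_{\kk}(\kk\otimes W) - \dim_{\K}(\K\otimes W) \ge 0$, uses that $b_{\kk}\circ(\kk\otimes f)$ is a bijection to pin down $\dim_{\kk}(\kk\otimes W) = \dim_{\kk}F_{\kk}$, uses surjectivity of $b_{\K}\circ(\K\otimes f)$ to get $\dim_{\K}(\K\otimes W) \ge \dim_{\K}F_{\K}$, and squeezes with $\dim_{\kk}F_{\kk} = \dim_{\K}F_{\K}$. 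Notably, your version never uses the freeness of $F$ nor the injectivity of $b_{\K}$, so it actually proves a slightly stronger statement with fewer hypotheses. The trade-off is that you don't get the (reusable) intermediate facts that $b_{\kk}$ and $b_{\K}$ are isomorphisms, which the paper's structure highlights.

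Your first route (factoring $f$ through the submodule $F' = \A\cdot f(W) \subseteq F$) is the weaker of the two and has a real gap that you half-acknowledge: $\kk\otimes(-)$ is not exact, so the inclusion $F' \hookrightarrow F$ need not remain injective after tensoring with $\kk$, and more to the point, surjectivity of $b_{\R}\circ(\R\otimes f)$ onto $F_{\R}$ does not directly yield surjectivity of $\R\otimes f'$ onto $\R\otimes F'$ without extra work. You correctly abandon it in favor of the direct dimension count, which is the right call.
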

\begin{proof}
Let $\R\in \{\kk, \K\}$. Since $b_{\R}\circ (\R\otimes f)$ is surjective, it follows that $b_{\R}$ is surjective so $\dim_{\R} F_{\R}\le \dim_{\R}(\R\otimes F)$. Since $b_{\K}$ is injective, it follows that $\dim_{\K}(\K\otimes F)\le \dim_{\K}F_{\K}$. Thus, $b_{\K}$ is an isomorphism and $\dim_{\K}(\K\otimes F) = \dim_{\K}F_{\K}$. 

Using that $F$ is free over $\A$, we find
\[
\dim_{\kk}(\kk\otimes F) = \rk_{\A}F = \dim_{\K}(\K\otimes F)= \dim_{\K}F_{\K} = \dim_{\kk}F_{\kk}.
\]
Thus, surjectivity of $b_{\kk}$ implies $b_{\kk}$ is an isomorphism. 

Suppose $b_{\kk}\circ (\kk\otimes f)$ is injective. Since $b_{\R}$ is an isomorphism for $\R\in\{\kk, \K\}$, the claim follows from Lemma \ref{L:technical-base-change-lemma}.
\end{proof}

\begin{thm}\label{T:PhiK-std-is-faithful}
The functor 
\[
\Phi_{\K}|_{\StdWebK}:\StdWebK\longrightarrow \StdRepK
\]
is faithful.
\end{thm}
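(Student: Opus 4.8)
The plan is to deduce faithfulness of $\Phi_{\K}|_{\StdWebK}$ from faithfulness of $\Phi_{\kk}|_{\StdWebC}$ (Corollary \ref{C:PhiC-std-is-faithful}) via the base change machinery, exactly in the spirit of the strategy outlined in the introduction: the functor is full over $\K$, an equivalence over $\kk$, and $\A$ is a PID and local ring, so we can interpolate. Concretely, fix objects $1^{\otimes b}$ and $1^{\otimes c}$ in $\StdWebA$. I would apply Lemma \ref{L:tech-base-change-lemma-2} with $W = \Hom_{\StdWebA}(1^{\otimes b}, 1^{\otimes c})$, with $F = \Hom_{\FundA}(\Lambda_{\A}^{\otimes b}, \Lambda_{\A}^{\otimes c})$ (which is free and finitely generated over $\A$ by Lemma \ref{L:FundA-homs-free-fin-gen}), with $f = \Phi_{\A}$ on these Hom-spaces, with $F_{\R} = \Hom_{U_{\R}(\om)}(V_{\R}^{\otimes b}, V_{\R}^{\otimes c})$, and with $b_{\R} = \smbase_{\R}$ (composed with the identifications $\iota$). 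The hypothesis that $W$ is finitely generated over $\A$ is Proposition \ref{P:StdWeb-finitely-generated}. The hypothesis that $\smbase_{\K}$ is injective is Remark \ref{R:K-is-flat-A-mod} (flatness of $\K$ over $\A$), and the fact that $\base_{\K}$ is faithful is recorded in Lemma \ref{L:sufficestocheckoverK}. Surjectivity of $b_{\R}\circ (\R\otimes f)$ for $\R \in \{\kk, \K\}$ is precisely fullness of $\Phi_{\R}|_{\StdWeb}$, which is Theorem \ref{T:full}, together with the identification $\Phi_{\R} = \base_{\R}\circ (\R\otimes \Phi_{\A})$ from Theorem \ref{T:existence-of-functor}.

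The remaining hypotheses of Lemma \ref{L:tech-base-change-lemma-2} are the dimension equality $\dim_{\kk}F_{\kk} = \dim_{\K}F_{\K}$ and the injectivity of $b_{\kk}\circ (\kk\otimes f)$. The dimension equality is Proposition \ref{P:dimhomrep} applied with all $\wtk_i = \wtl_j = 1$ (i.e. $\dim_{\kk}\Hom_{U_{\kk}(\om)}(V_{\kk}^{\otimes b}, V_{\kk}^{\otimes c}) = \dim_{\K}\Hom_{U_{\K}(\om)}(V_{\K}^{\otimes b}, V_{\K}^{\otimes c})$), possibly also invoking Lemma \ref{L:dim1}. The injectivity of $b_{\kk}\circ (\kk\otimes f)$ is the statement that $\Phi_{\kk}|_{\StdWebC}$ is injective on the Hom-space $\Hom_{\StdWebC}(1^{\otimes b}, 1^{\otimes c})$, which holds by Corollary \ref{C:PhiC-std-is-faithful}. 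Applying Lemma \ref{L:tech-base-change-lemma-2} then yields that $b_{\K}\circ (\K\otimes \Phi_{\A}) = \Phi_{\K}$ is injective on $\Hom_{\StdWebK}(1^{\otimes b}, 1^{\otimes c})$. Since $b, c$ were arbitrary, $\Phi_{\K}|_{\StdWebK}$ is faithful.

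One technical point to handle carefully is the bookkeeping of the identifications $\iota_{\wtk}$ and the equality $\Phi_{\R} = \base_{\R}\circ(\R\otimes \Phi_{\A})$, so that "surjectivity of $\Phi_{\R}$" on Hom-spaces translates cleanly into "surjectivity of $b_{\R}\circ(\R\otimes f)$" in the notation of Lemma \ref{L:tech-base-change-lemma-2}; this is essentially chasing definitions from Section 3 and Theorem \ref{T:existence-of-functor}, but it must be stated precisely. I do not expect any real obstacle here: all the substantive work (fullness over $\K$, faithfulness over $\kk$, finite generation, the dimension count) has already been done, and this theorem is the final assembly step. For the write-up I would simply state the choices of $W$, $F$, $f$, $F_{\R}$, $b_{\R}$, verify each hypothesis of Lemma \ref{L:tech-base-change-lemma-2} with a one-line citation, and conclude.

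Here is the proof.

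\begin{proof}
Fix $b, c\in \mathbb{Z}_{\ge 0}$. We apply Lemma \ref{L:tech-base-change-lemma-2} to the $\A$-module homomorphism
\[
f:=\Phi_{\A}:W:=\Hom_{\StdWebA}(1^{\otimes b}, 1^{\otimes c})\longrightarrow F:=\Hom_{\FundA}(\Lambda_{\A}^{\otimes b}, \Lambda_{\A}^{\otimes c}),
\]
with $F_{\R}:=\Hom_{U_{\R}(\om)}(V_{\R}^{\otimes b}, V_{\R}^{\otimes c})$ and $b_{\R}:=\smbase_{\R}$ (composed with the identifications $\iota$ of Remark \ref{R:Lambda-over-R}), for $\R\in \{\kk, \K\}$. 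The module $W$ is finitely generated over $\A$ by Proposition \ref{P:StdWeb-finitely-generated}, and $F$ is free and finitely generated over $\A$ by Lemma \ref{L:FundA-homs-free-fin-gen}. The map $b_{\K}$ is injective since $\K$ is flat over $\A$, see Remark \ref{R:K-is-flat-A-mod} and Lemma \ref{L:sufficestocheckoverK}.

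By Theorem \ref{T:existence-of-functor} we have $\Phi_{\R} = \base_{\R}\circ (\R\otimes \Phi_{\A})$, so $b_{\R}\circ (\R\otimes f)$ is identified with the map
\[
\Phi_{\R}:\Hom_{\StdWeb}(1^{\otimes b}, 1^{\otimes c})\longrightarrow \Hom_{U_{\R}(\om)}(V_{\R}^{\otimes b}, V_{\R}^{\otimes c}).
\]
By Theorem \ref{T:full} this map is surjective for $\R\in \{\kk, \K\}$. Proposition \ref{P:dimhomrep}, applied with all labels equal to $1$, gives $\dim_{\kk}F_{\kk} = \dim_{\K}F_{\K}$. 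Finally, Corollary \ref{C:PhiC-std-is-faithful} says that $b_{\kk}\circ (\kk\otimes f)$ is injective.

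Lemma \ref{L:tech-base-change-lemma-2} now implies that $b_{\K}\circ (\K\otimes f) = \Phi_{\K}$ is injective on $\Hom_{\StdWebK}(1^{\otimes b}, 1^{\otimes c})$. As $b$ and $c$ were arbitrary, $\Phi_{\K}|_{\StdWebK}$ is faithful.
\end{proof}
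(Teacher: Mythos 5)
Your proposal is correct and matches the paper's own proof essentially line for line: both apply Lemma \ref{L:tech-base-change-lemma-2} with the same choices of $W$, $F$, $f$, $F_{\R}$, $b_{\R}$, and both discharge the hypotheses by citing Proposition \ref{P:StdWeb-finitely-generated}, Lemma \ref{L:FundA-homs-free-fin-gen}, Lemma \ref{L:sufficestocheckoverK}, Theorem \ref{T:full}, Proposition \ref{P:dimhomrep}, and Corollary \ref{C:PhiC-std-is-faithful}. The only differences are cosmetic (variable names, writing $\Hom_{\FundA}$ vs.\ $\Hom_{\StdRepA}$ for what is the same module).
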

\begin{proof}
For $\R\in \{\kk, \K\}$ and $d,e\in \mathbb{Z}_{\ge 0}$ we have induced maps
\[
\Phi_{\R }:\Hom_{\StdWeb}(1^{\otimes d}, 1^{\otimes e})\rightarrow \Hom_{\StdRep}(V_{\R }^{\otimes d}, V_{\R }^{\otimes e}).
\]
It suffices to show that $\Phi_{\K}$ is injective for all $d,e\in \mathbb{Z}_{\ge 0}$. Recall from Theorem \ref{T:existence-of-functor} that $\Phi_\R  = \base_{\R}\circ (\R \otimes \Phi_{\A}) $. So we will use Lemma \ref{L:tech-base-change-lemma-2} when $W = \Hom_{\StdWebA}(1^{\otimes d}, 1^{\otimes e})$, $F= \Hom_{\StdRepA}(V_{\A}^{\otimes d}, V_{\A}^{\otimes e})$, $f = \Phi_{\A}$, $F_{\R}= \Hom_{\StdRep}(V_{\R}^{\otimes d}, V_{\R}^{\otimes e})$, and $b_{\R}=\base_{\R}$.

Proposition \ref{P:StdWeb-finitely-generated} says that $\Hom_{\StdWebA}(1^{\otimes d}, 1^{\otimes e})$ is a finitely generated $\A$-module, and Lemma \ref{L:FundA-homs-free-fin-gen} implies that $\Hom_{\StdRepA}(V_{\A}^{\otimes d}, V_{\A}^{\otimes e})$ is a free and finitely generated $\A$-module. Lemma \ref{L:sufficestocheckoverK} implies that $\base_{\K}$ is injective.

Theorem \ref{T:full} implies that $\Phi_{\R} = \base_{\R}\circ (\R\otimes \Phi_{\A})$ is surjective for $\R\in \{\kk, \K\}$. Proposition \ref{P:dimhomrep} says that $\dim_{\K} \Hom_{\StdRepK}(V_{\K}^{\otimes d}, V_{\K}^{\otimes e}) = \dim_{\kk} \Hom_{\StdRepC}(V_{\kk}^{\otimes d}, V_{\kk}^{\otimes e})$. 

By Corollary \ref{C:PhiC-std-is-faithful} we know that $\Phi_{\kk}$ is injective. Thus, Lemma \ref{L:tech-base-change-lemma-2} implies that $\Phi_{\K}$ is injective.
\end{proof}

\subsection{Main theorem}

We now prove Theorem \ref{T:main-thm}

\begin{proof}
Thanks to Lemma \ref{L:reduction-to-allones}, it  follows from Theorem \ref{T:full}, Corollary \ref{C:PhiC-std-is-faithful}, and Theorem \ref{T:PhiK-std-is-faithful} that $\Phi_\R$ is full and faithful. Since the objects of $\Fund$ are tensor products of $\Lambda_\R^k$, for $k\in \{0, 1, \dots, m\}$, and $\Phi_\R(k) = \Lambda_\R^k$, it follows that $\Phi_\R$ is essentially surjective. Hence, $\Phi_\R$ is an equivalence of $\R$-linear pivotal categories. 
\end{proof}

\begin{remark}
Since $\Fund$ is a ribbon category, with braiding $\brrep$, we can use the equivalence $\Phi_\R$ to define a braiding on $\Web$. We know that $\Phi_\R(\brweb) = \brrep$ and $\Phi_\R(\brwebi) = \brrepi$. Naturality of the braiding on $\Fund$ allows us to define a braiding on $\Web$, as in \cite[Section 5.9]{bodish2021type}. The functor $\Phi_\R$ can then be treated as an equivalence of braided pivotal (in fact ribbon) categories.
\end{remark}

\bibliographystyle{alpha}

\bibliography{mastercopy}

\end{document}